\documentclass[a4paper]{cas-sc}

\usepackage[authoryear,longnamesfirst]{natbib}
\usepackage{tikz-cd}
\usepackage{cancel}
\usepackage{booktabs}
\usepackage{tabularx}
\usepackage{array}

\def\tsc#1{\csdef{#1}{\textsc{\lowercase{#1}}\xspace}}
\tsc{WGM}
\tsc{QE}
\newtheorem{theorem}{Theorem}
\newtheorem{lemma}[theorem]{Lemma}
\newtheorem{proposition}[theorem]{Proposition}
\newtheorem{corollary}[theorem]{Corollary}
\newdefinition{remark}{Remark}
\newproof{proof}{Proof}

\newcommand{\N}{\mathbb{N}}

\newcommand{\p}{\partial}

\makeatletter
\providecommand{\llangle}{}
\providecommand{\rrangle}{}

\renewcommand{\llangle}{\@ifnextchar\bgroup{\llangle@big}{\llangle@normal}}
\renewcommand{\rrangle}{\@ifnextchar\bgroup{\rrangle@big}{\rrangle@normal}}

\newcommand{\llangle@normal}{\mathopen{\langle\!\langle}}
\newcommand{\rrangle@normal}{\mathclose{\rangle\!\rangle}}

\newcommand{\llangle@big}[1]{\mathopen{#1\langle\!#1\langle}}
\newcommand{\rrangle@big}[1]{\mathclose{#1\rangle\!#1\rangle}}
\makeatother

\begin{document}
\let\WriteBookmarks\relax
\def\floatpagepagefraction{1}
\def\textpagefraction{.001}

% Short title
\shorttitle{Connection towers and higher-order Sasaki metrics}

% Short author
\shortauthors{Camarinha and Goodman}

% Main title of the paper
\title{Connection Towers and Sasaki Metrics on Higher-Order Tangent Bundles}

\author[1]{Margarida Camarinha}[orcid=0000-0003-4587-7861]

% Footnote of the first author
\fnmark[1]

% Email id of the first author
\ead{mmlsc@mat.uc.pt}

% URL of the first author
\ead[url]{https://www.cienciavitae.pt/portal/en/AC1F-473B-BEE5}

% Address/affiliation of the first author
\affiliation[1]{organization={CMUC, Department of Mathematics, University of Coimbra},
    postcode={3000-143},
    city={Coimbra},
    country={Portugal}}

\author[2]{Jacob Goodman}[orcid=0000-0002-6357-9326]

% Corresponding author indication
\cormark[1]

% Footnote of the second author
\fnmark[2]

% Email id of the second author
\ead{jacob.goodman@ntnu.no}

% URL of the second author
\ead[url]{https://www.ntnu.edu/employees/jacob.goodman}

% Address/affiliation of the second author
\affiliation[2]{organization={Department of Mathematical Sciences, Norwegian University of Science and Technology},
    city={Trondheim},
    country={Norway}}

% Corresponding author text
\cortext[1]{Corresponding author}

% Footnote text
\fntext[1]{M. Camarinha was supported by the Centre for Mathematics of the University of Coimbra (CMUC, https://doi.org/10.54499/UID/00324/2025) under the Portuguese Foundation for Science and Technology (FCT), Grants UID/00324/2025 and UID/PRR/00324/2025.}

\fntext[2]{J. Goodman was supported by the Marie Skłodowska-Curie grant agreement No. 101206748 (GNACS).}

% For a title note without a number/mark
%\nonumnote{}

% Here goes the abstract
\begin{abstract}
Higher-order tangent bundles possess a rich tower of fibrations, suggesting the existence of geometric structures compatible with their iterated bundle structure. In this paper, we introduce the notion of a connection tower on a higher-order tangent bundle and study the geometric structures induced by such towers. In particular, we show that connection towers determine natural multiconnections, adapted splittings of the tangent bundle, and canonical vector bundle structures on higher-order tangent bundles.
We then construct a specific connection tower induced by the Levi–Civita connection of a Riemannian manifold. This construction extends the classical Dombrowski connection map on the tangent bundle and leads naturally to a family of higher-order Sasaki metrics. We study the associated lifts of vector fields and derive explicit Lie bracket formulas for these lifts, together with structural identities for the induced multiconnection.
Finally, we determine the Levi–Civita connection of the higher-order Sasaki metrics and derive explicit geodesic equations on the second- and third-order tangent bundles. We also obtain characterization results relating geodesics of the higher-order Sasaki metrics to geodesics on the base manifold.
\end{abstract}

% The Levi–Civita connection induces a nonlinear connection in the tangent bundle. This induced structure can be described in terms of a connection map. When combined with the Riemannian metric, the connection map defines a natural Riemannian metric on the tangent bundle, known as the Sasaki metric. In this paper, we introduce a connection map in the higher-order tangent bundles of a Riemannian manifold and study the associated multiconnection. We examine how vector fields can be lifted by the multiconnection and describe the Lie bracket using these lifts. This connection map allows one to define a Riemannian metric in the higher-order tangent bundles, which coincides with the Sasaki metric in the tangent bundle. We also determine the corresponding Levi–Civita connection and derive the geodesic equations for the second- and third-order tangent bundles. Finally, we extend classical results concerning Sasaki geodesics to higher-order bundles. 

% Use if graphical abstract is present
%\begin{graphicalabstract}
%\includegraphics{}
%\end{graphicalabstract}

% Research highlights

% Keywords
% Each keyword is seperated by \sep
\begin{keywords}
 Nonlinear connections\sep Connection maps\sep Riemannian metrics\sep Geodesics
\end{keywords}

\maketitle

\section{Introduction}

Nonlinear connections on higher-order tangent bundles, as well as on fiber bundles in general, play a fundamental role in modern differential geometry. They provide a natural way to distinguish between vertical directions, corresponding to motions within the fibers, and horizontal directions, representing motion along the base manifold. This decomposition gives rise, in a purely geometric manner, to important notions such as covariant differentiation, curvature, and torsion. The relevance of these geometric structures extends well beyond pure mathematics, particularly when considered in the context of higher-order tangent bundles.

Higher-order tangent bundles are natural generalizations of the notion of the tangent bundle, where equivalence classes of higher-order contact curves replace the first-order ones. They  serve as evolution spaces for higher-order differential equations and, therefore, provide a natural framework for their study. Thanks to the contributions of Morimoto \cite{Morimoto1970}, León and  Rodrigues \cite{DeLeonRod1985}, Crampin et al. \cite{Crampin1986}, Saunders  \cite{Saunders1989}, Andrés et al. \cite{Andres1991}, Miron \cite{MironBook1997}, \cite{Miron2003}, Bucătaru \cite{Buc2007}, \cite{Buc2013}, Suri \cite{Suri2017} and others, who have highlighted the importance of these fiber bundles in areas such as the calculus of variations, Lagrangian and Hamiltonian mechanics, classical field theory and Finsler geometry, these geometric structures have been attracting increasing attention in recent years. 

In general, nonlinear connections do not arise from linear connections, as no linear structure is assumed on the fibers of a general fiber bundle. However, in the case of a Riemannian manifold, the Levi–Civita connection naturally induces a nonlinear connection in the tangent bundle. In fact, this can be done in a more general setting, when the fiber bundle is vectorial, or even principal, and endowed with a linear connection (\cite{ Kobayashi1957},\cite{YanoKob1966},\cite{ Vilms1967}). What is particularly interesting in the case of a Riemannian manifold is that the induced structure can be described in terms of a connection map. Furthermore, the induced connection map, when combined with the Riemannian metric, defines a natural Riemannian metric on the tangent bundle, known as the Sasaki metric (\cite{ Sasaki1958},\cite{Gudmundsson2002}). This metric is characterized by the orthogonality of the horizontal and vertical distributions determined by the induced connection. Analogous constructions also appear in Finsler geometry (\cite{BaoChernShen2000},\cite{BejancuFarran2006}).

Connection maps in the tangent bundle were originally introduced by Dombrowski \cite{ Dombrowski1962} and extended to vector bundles by Miron \cite{MironAtanasiu1996}. Later, Bucătaru characterized nonlinear connections in higher-order tangent bundles using connection maps and developed a geometric setting that associates nonlinear connections with higher-order differential equations  (\cite{Buc1997},\cite{ Buc2011}). Higher-order tangent bundles can naturally be viewed as forming a tower of fiber bundle submersions, due to the variety of fibrations they possess, with the projection onto the base manifold arising in the limit. This perspective suggests the possibility of adapting the notion of connection maps in order to define a tower of nonlinear connections. In this context, many questions may be raised. In the present paper, our purpose is to study a specific connection tower induced by the Levi--Civita connection and, subsequently, the resulting Riemannian structure in higher-order tangent bundles.

The main contributions of this paper are the following.
\begin{enumerate}
    \item We introduce the notion of a connection tower on a higher-order tangent bundle and develop its basic geometric properties. In particular, we show that connection towers induce canonical multiconnection structures and vector bundle structures on higher-order tangent bundles.
    \item We construct a specific connection tower induced by the Levi–Civita connection of a Riemannian manifold. This construction extends the classical Dombrowski connection map on the tangent bundle to the higher-order setting.
    \item Using the induced multiconnection, we study horizontal and vertical lifts of vector fields and derive explicit formulas for the Lie brackets of these lifts.
    \item We define a family of higher-order Sasaki metrics associated with the Levi–Civita-induced connection tower and determine the corresponding Levi–Civita connections.
    \item We derive explicit geodesic equations on the second- and third-order tangent bundles and obtain characterization results relating higher-order Sasaki geodesics to geodesics on the base manifold.
\end{enumerate}

The paper is organized as follows. In Section 2, we briefly recall basic definitions and notations of the theory of higher-order tangent bundles. We also provide a more detailed exposition of the theory of nonlinear connections and connection maps. Then we introduce the concept of nonlinear connection tower in terms of connection maps. The main results of this paper are stated and proved in Sections 3 and 4. In these two sections, the presentation is structured to emphasize the analogy with the results formulated in Subsections \ref{subsec TM 1}, \ref{subsec TM 2} and \ref{subsec TM 3}. In Section 3, we construct a specific connection tower induced by the Levi--Civita connection. We then examine how vector fields can be lifted by this connection tower  and describe the Lie bracket in terms of  these lifts. In Section 4, we define a family of higher-order Sasaki metrics that recover the classical Sasaki metric in the base case. We also determine the Levi–Civita connection and derive the geodesic equations for the second- and third-order tangent bundles. Finally, we extend classical results concerning Sasaki geodesics to higher-order bundles. Additional background information, computations, and technical lemmas are placed in the appendices. Table \ref{tab:notation} containing the notation frequently used in the paper can be found at the end of the manuscript. 

\section{The geometry of higher-order tangent bundles}
\label{sec:geometry-higher-order}

This section contains the geometric background on higher-order tangent bundles and nonlinear connections used throughout the paper.  We adopt the formalism of describing nonlinear connections by means of connection maps.  We first recall the tangent-bundle model, in the form needed later for comparison with the higher-order constructions.  We then review the basic geometry of $T^{(k)}M$, including its almost-tangent structure, vertical flag, connection maps, multiconnections, and adapted bases.  Finally, we introduce connection towers, which are special connection maps compatible with the canonical projections of higher-order tangent bundles, and we record the lift conventions that will be used in the sequel.

Additional background and auxiliary details are collected in Appendix~\ref{app:connection-background}.  This includes the fuller tangent-bundle discussion, the adapted-basis inversion formulas, the proof that connection towers induce vector bundle structures on the higher-order tangent bundles, and the basic technical lemmas for lifted vector fields. For convenience, Table \ref{tab:notation} at the end contains the notation commonly used throughout the article.

\subsection{The tangent-bundle model}\label{subsec TM 1}

Let $M$ be an $n$-dimensional smooth manifold and let $\tau:TM\to M$ denote the tangent-bundle projection.  We write a vector $u\in T_pM$ as $u=(p,v)$ when it is useful to distinguish the base point and fiber component, and we denote by $(q^{(0)i},q^{(1)i})$ the tangent-lifted local coordinates on $TM$ induced by local coordinates $(q^i)$ on $M$.

For $u\in TM$, the vertical space is
\[
        V_u=\ker(\tau_*\vert_u)\subset T_uTM,
\]
and the vertical bundle is $V=\bigsqcup_{u\in TM}V_u$.  If $u=(p,v)$ and $z\in T_pM$, the vertical lift of $z$ at $u$ is the tangent vector $z^v$ at $t=0$ of the curve $t\mapsto v+tz$.  Thus the vertical lift at $u$ gives a linear isomorphism $T_pM\cong V_u$.  The vertical lift of a vector field $X\in\mathfrak X(M)$ is the vector field $X^v$ on $TM$ given by
\[
        X^v(u)=(X(p))^v.
\]
In local coordinates, if $X=X^i\partial_i$, then
\[
        X^v=X^i\frac{\partial}{\partial q^{(1)i}}.
\]

The canonical almost-tangent structure $J$ on $TM$ is the $(1,1)$-tensor field
\[
        J(X)=(\tau_*X)^v,\qquad X\in T_uTM.
\]
In local coordinates,
\[
        J=\frac{\partial}{\partial q^{(1)i}}\otimes dq^{(0)i},
\]
so that $J^2=0$ and
\[
        V_u=\operatorname{Im}(J\vert_u)=\ker(J\vert_u).
\]

A nonlinear connection on $TM$ is a vector subbundle $H\subset TTM$ complementary to the vertical bundle
\begin{equation}\label{eq: direct_sum_TM}
        TTM=H\oplus V.
\end{equation}
Equivalently, a nonlinear connection can be defined via a \emph{connection map}. That is, a vector bundle morphism $K:TTM\to TM$ with base function $\tau$ such that
\[
        K\circ J=\tau_*,
\]
in which case the corresponding horizontal bundle is given by $H=\ker(K)$.  Conversely, every nonlinear connection is realized as the kernel of a unique connection map.  Locally,
\[
        K=\left(dq^{(1)i}+K^i_jdq^{(0)j}\right)\otimes \partial_i,
\]
where $K^i_j$ are the connection coefficients.  The connection map induces a canonical identification
\[
        \Phi_u:T_uTM\longrightarrow T_pM\oplus T_pM,\qquad
        \Phi_u(X)=(\tau_*X,K(X)),
\]
for each $u\in T_pM$ and $X \in T_u TM$.

A tangent vector $z\in T_pM$ has a unique horizontal lift $z^h\in H_u$ at $u = (p,v)\in T_pM$, characterized by
\[
        \tau_*(z^h)=z,\qquad K(z^h)=0.
\]
Vector fields are lifted similarly: if $X\in\mathfrak X(M)$, then $X^h(u)=(X(p))^h$ for $u\in T_pM$. 

\begin{remark}\label{rmk:pullback_bundle}
    We note that the horizontal lift depends on the chosen fiber velocity $v \in T_p M$ such that $u = (p,v)$. A more geometrically invariant construction is therefore to define lifts (and connection maps) on the \emph{pullback bundle} $\mathrm{pr}_1: \tau^\ast TM \to M$. However, we choose to proceed without making use of this formalism here, and often suppress the dependence on the fiber velocity when it is sufficiently clear from context.
\end{remark}

If $\nabla$ is a linear connection on $M$, then it induces a connection map on $TM$ by the Dombrowski construction \cite{Dombrowski1962}.  Namely, for $X\in T_uTM$, choose a family of curves $\gamma_s(t)=\gamma(s,t)$ on $M$ adapted to $X$, and set
\begin{equation}\label{eq: connection_map_TM}
        K(X)=\nabla_{\frac{\partial\gamma}{\partial t}}\frac{\partial\gamma}{\partial s}\Big\vert_{(s,t)=(0,0)}.
\end{equation}
For torsion-free $\nabla$ this agrees with the equivalent expression obtained by interchanging the two derivatives. In the present paper, the connection of interest is the Levi--Civita connection associated to a Riemannian metric $g$ on $M$, which is torsion-free by definition. We note, however, that many of the constructions and results obtained in the paper can be studied for general linear connections as well, with additional torsion components appearing when commutation of covariant derivatives are applied. 

\subsection{Higher-order tangent bundles}

We now recall the definitions and notation for tangent bundles of order $k$ that will be used in the rest of the paper; see \cite{DeLeonRod1985} for further details. Let $M$ be a smooth manifold of dimension $n$.  Two curves $q$ and $\tilde q$ on $M$ with a common initial point $p$ are said to be equivalent up to order $k$ at $p$ if there is a chart $(U,\varphi)$ around $p$ such that
\[
        \frac{d^\alpha(\varphi\circ q)}{dt^\alpha}(0)
        =
        \frac{d^\alpha(\varphi\circ \tilde q)}{dt^\alpha}(0),
        \qquad \alpha=1,\ldots,k.
\]
Denote by $j^k_0(q)$ the equivalence class of $q$, and define the \emph{tangent space of order} $k$ at $p$ by
\[
        T_p^{(k)}M=\{j^k_0(q) \ \vert \ q(0)=p\}.
\]
The \emph{tangent bundle of order} $k$ is then given by the disjoint union of all $k^{\text{th}}$-order tangent spaces
\[
        T^{(k)}M=\bigsqcup_{p\in M}T_p^{(k)}M,
\]
together with the fiber bundle structure $\tau_k: T^{(k)}M \to M$ defined by $\tau_k(j^k_0(q)) = q(0)$. These fiber bundles were introduced by Ehresmann \cite{Ehresmann1951} within the broader setting of jet manifolds.

A chart $(U,\varphi)$ on $M$, with local coordinates $(q^i)$, induces local coordinates $(q^{(0)i},q^{(1)i},\ldots,q^{(k)i})$
on $\tau_k^{-1}(U)$ by
\[
        q^{(0)i}=q^i,\qquad
        q^{(\alpha)i}(j^k_0(q)):=\frac{1}{\alpha!}\frac{d^\alpha q^i}{dt^\alpha}(0),
        \qquad \alpha=1,\ldots,k.
\]
Given a curve $q$ on $M$, its $k$-jet is the curve $j^kq:I\to T^{(k)}M$ defined by
\[
        (j^kq)(t)=j^k_0(q_t),\qquad q_t(s)=q(t+s).
\]
In local coordinates,
\[
        (j^kq)(t)=
        \left(
        q^i(t),\frac{dq^i}{dt}(t),\frac1{2}\frac{d^2q^i}{dt^2}(t),\ldots,
        \frac1{k!}\frac{d^kq^i}{dt^k}(t)
        \right).
\]

For $u\in T^{(k)}M$, every tangent vector $X\in T_uT^{(k)}M$ can be written locally as
\[
        X=
        X^{(0)i}\frac{\partial}{\partial q^{(0)i}}\Big\vert_u
        +\cdots+
        X^{(k)i}\frac{\partial}{\partial q^{(k)i}}\Big\vert_u.
\]
A family of curves $\gamma_s(t)=\gamma(s,t)$ on $M$ is said to be $k$-adapted to $X$ if the curve
        $s\longmapsto (j^k\gamma_s)(0)$
on $T^{(k)}M$ is adapted to $X$. That is,
\[
        (j^k\gamma_s)(0)\Big\vert_{s=0}=u,\qquad
        \frac{\partial}{\partial s}\Big\vert_{s=0}(j^k\gamma_s)(0)=X.
\]
In local coordinates this gives
\[
        X^{(\alpha)i}
        =
        \frac1{\alpha!}
        \frac{\partial^{\alpha+1}\gamma^i}{\partial s\,\partial t^\alpha}
        \Big\vert_{(s,t)=(0,0)},
        \qquad \alpha=0,\ldots,k.
\]

The almost-tangent structure on $T^{(k)}M$ is the $(1,1)$-tensor field given in coordinates by
\[
        J=
        \frac{\partial}{\partial q^{(1)i}}\otimes dq^{(0)i}
        +\cdots+
        \frac{\partial}{\partial q^{(k)i}}\otimes dq^{(k-1)i}.
\]
Thus
\[
        J(X)=
        X^{(0)i}\frac{\partial}{\partial q^{(1)i}}\Big\vert_u
        +\cdots+
        X^{(k-1)i}\frac{\partial}{\partial q^{(k)i}}\Big\vert_u.
\]
The endomorphism $J\vert_u:T_uT^{(k)}M\to T_uT^{(k)}M$ is nilpotent of index $k+1$, and
\[
        \operatorname{Im}(J^\alpha)=\ker(J^{k+1-\alpha}),\qquad \alpha=1,\ldots,k.
\]
We denote the \emph{vertical space of order} $\alpha$ at $u \in T^{(k)}M$ by 
$$V_\alpha(u) = \operatorname{Im}((J\vert_u)^\alpha) =
        \operatorname{span}\left\{
        \frac{\partial}{\partial q^{(\alpha)i}}\Big\vert_u,\ldots,
        \frac{\partial}{\partial q^{(k)i}}\Big\vert_u
        \right\}.$$  
In particular, $V_1(u)=\ker(\tau_{k*}\vert_u)$ is the vertical subspace with respect to $\tau_k$. It is easily seen that the vertical spaces satisfy the following chain of inclusions
\[
        0\subset V_k(u)\subset V_{k-1}(u)\subset\cdots\subset V_1(u).
\]

For $0\leq \alpha\leq k$, let
\[
        \overset{(k)}{\tau}_{\!\alpha}:T^{(k)}M\to T^{(\alpha)}M,\qquad
        \overset{(k)}{\tau}_{\!\alpha}(j^k_0q)=j^\alpha_0q,
\]
where $T^{(0)}M:=M$ and $\overset{(k)}{\tau}_0=\tau_k$.  Then the vertical bundle of the fibration
\[
        T^{(k)}M\stackrel{\overset{(k)}{\tau}_{\!\alpha}}{\longrightarrow}T^{(\alpha)}M
\]
is $V_{\alpha+1}$, for $\alpha=0,\ldots,k-1$, i.e.
\[
        V_{\alpha+1}(u)=\ker(\overset{(k)}{\tau}_{\!\alpha\ast}\vert_u).
\]
This gives exact sequences
\[
        0\longrightarrow V_{\alpha+1}
        \longrightarrow TT^{(k)}M
        \stackrel{\overset{(k)}{\tau}_{\!\alpha\ast}}{\longrightarrow}
        TT^{(\alpha)}M
        \longrightarrow 0,
        \qquad \alpha=0,\ldots,k-1.
\]

\subsection{Connection maps and multiconnections}

A nonlinear connection on $T^{(k)}M$ is a vector subbundle $H\subset TT^{(k)}M$, again refered to as the horizontal bundle, which is complementary to the vertical bundle $V:=V_1$
\begin{equation}\label{eq: nonlinear-connection-direct-sum}
        TT^{(k)}M=H\oplus V.
\end{equation}
As before, nonlinear connections can be equivalently described by connection maps.  A connection map on $T^{(k)}M$ is a $\tau_k$-morphism
\[
        K=(K_1,\ldots,K_k):TT^{(k)}M\to (TM)_\oplus^{k}
\]
such that
\begin{enumerate}
    \item $K_\alpha:T_uT^{(k)}M\to T_{\tau_k(u)}M$ is linear for $\alpha=1,\ldots,k$;
    \item $K_{\alpha+1}\circ J=K_\alpha$ for $\alpha=1,\ldots,k-1$;
    \item $K_1\circ J=\tau_{k*}$.
\end{enumerate}
Here, the notation $(TM)_\oplus^k$ refers to the Whitney sum of $k$ tangent bundles, i.e. $(TM)_\oplus^k = \bigoplus_{i=1}^k TM$.
The corresponding horizontal bundle is then given by $H=\ker(K)$, and
\begin{equation}\label{eq:directsum}
        TT^{(k)}M=\ker(K)\oplus \operatorname{Im}(J).
\end{equation}

In local coordinates a connection map is determined by $kn^2$ smooth functions $(K_\alpha)^i_j:T^{(k)}M\to\mathbb R$, called connection coefficients, through
\begin{equation}\label{eq: connection map coefficients}
        K_\alpha=
        \left(
        dq^{(\alpha)i}
        +(K_1)^i_jdq^{(\alpha-1)j}
        +\cdots+
        (K_\alpha)^i_jdq^{(0)j}
        \right)\otimes\partial_i,
        \qquad \alpha=1,\ldots,k.
\end{equation}
Equivalently,
\[
        K_\alpha\left(\frac{\partial}{\partial q^{(\alpha)i}}\Big\vert_u\right)=\partial_i\Big\vert_{\tau_k(u)},
        \qquad
        K_\alpha\left(\frac{\partial}{\partial q^{(\beta)i}}\Big\vert_u\right)=0
        \quad\text{for }\beta>\alpha.
\]

\begin{theorem}[Bucătaru, \cite{Buc1997}]\label{thm: nonlinear connection equivalence}
A nonlinear connection on $T^{(k)}M$ uniquely determines a connection map of order $k$ in $M$. Conversely, if
\[
        K:TT^{(k)}M\to (TM)_\oplus^k
\]
satisfies the three conditions above, then $\ker(K)$ is a nonlinear connection.
\end{theorem}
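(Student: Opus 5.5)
The plan is to work pointwise at $u\in T^{(k)}M$ with $p=\tau_k(u)$, using the nilpotent structure of $J$ and the exact sequence $0\to V\to T_uT^{(k)}M\xrightarrow{\tau_{k*}} T_pM\to 0$. For the converse direction I would first check that $\ker(K|_u)\cap V_1(u)=0$. Suppose $X\in V_1(u)=\operatorname{Im}(J)$ with $K(X)=0$. Let $\alpha\ge1$ be the largest index with $X\in V_\alpha(u)$. Write $X=J^\alpha Y$ with $\tau_{k*}Y\neq 0$, which is possible since $V_\alpha=\operatorname{Im}J^\alpha$ and $X\notin V_{\alpha+1}$. Iterating conditions (2) and (3) gives $K_\alpha\circ J^\alpha=K_1\circ J=\tau_{k*}$, so $K_\alpha(X)=\tau_{k*}Y\neq0$, a contradiction. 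Next, $K|_u:T_uT^{(k)}M\to (T_pM)^k$ is surjective. Indeed, the same identities show that on the graded pieces $V_\alpha/V_{\alpha+1}$ the map $K$ is block triangular with identity diagonal blocks, which is immediate from the coordinate expression \eqref{eq: connection map coefficients}. Hence $\dim\ker K|_u=(k+1)n-kn=n$. Since $\dim V_1=kn$ and the intersection is trivial, we get $T_uT^{(k)}M=\ker K\oplus V$. Smoothness and constant rank follow because $K$ is a smooth bundle morphism of constant rank $kn$, so $\ker K$ is a smooth subbundle. This gives the nonlinear connection $H=\ker(K)$, and also \eqref{eq:directsum}.

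For the forward direction, given $H$ with $TT^{(k)}M=H\oplus V$, I would construct $K$ by prescribing it on each summand. Set $K|_H=0$. On $V=\operatorname{Im}J$, the conditions force a definition: for $X=JY$, put $K_1(X)=\tau_{k*}Y$ and $K_{\alpha+1}(X)=K_\alpha(Y)$. The main obstacle is that this is recursive and depends on the choice of $Y$, which is determined only modulo $\ker J=V_k$.

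To resolve it, I would use induction on the filtration $V_1\supset\cdots\supset V_k$. First, $Y$ may be chosen in $H\oplus V_1$, and modulo $V_k$. Changing $Y$ by an element of $V_k$ does not affect $\tau_{k*}Y$. It does change $K_\alpha(Y)$, but only through the values of $K_\alpha$ on $V_k$, which are recursively $K_{\alpha}(J^{k}Z)=\tau_{k*}Z$ when $\alpha=k$ and $0$ when $\alpha<k$. These contributions must be tracked consistently.

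A cleaner route, which I would actually take, is to define $K$ via the adapted decomposition. Let $h:T_pM\to H_u$ be the inverse of $\tau_{k*}|_{H_u}$. The vectors $J^{\beta}h(z)$ for $\beta=0,\dots,k$ span $T_uT^{(k)}M$: indeed $J^\beta H$ is complementary to $V_{\beta+1}$ in $V_\beta$, since $J^\beta$ maps a complement of $V_1$ isomorphically onto a complement of $V_{\beta+1}$ in $V_\beta$. This gives the decomposition $T_uT^{(k)}M=\bigoplus_{\beta=0}^k J^\beta H_u$. Now define
\[
K_\alpha\bigl(J^\beta h(z)\bigr)=z \text{ if } \beta=\alpha,\qquad 0 \text{ otherwise}.
\]
Conditions (1)–(3) can then be checked directly on this basis, and $\ker K=H$ holds by construction.

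For uniqueness, any connection map with kernel $H$ must satisfy $K_\alpha(J^\beta h(z))=K_{\alpha-\beta}(h(z))=0$ when $\beta<\alpha$. When $\beta=\alpha$ it gives $K_1(Jh(z))=\tau_{k*}h(z)=z$. When $\beta>\alpha$ it is $K_1(J^{\beta-\alpha+1}h(z))=\tau_{k*}(J^{\beta-\alpha}h(z))=0$. So $K$ is forced on a spanning set, hence unique. The one delicate point is verifying the claimed complement property of $J^\beta H$ in the decomposition; everything else is linear bookkeeping.
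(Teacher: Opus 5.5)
Your proof is correct. The paper does not prove this statement; it only cites Buc\u{a}taru. The natural comparison is therefore with the coordinate route implicit in the surrounding material. There, conditions (2)--(3) force the unit lower-triangular local form \eqref{eq: connection map coefficients}, so $\ker K$ is the graph of a map $X^{(0)}\mapsto(X^{(1)},\dots,X^{(k)})$ and hence an $n$-dimensional complement of $V_1$. Conversely, a nonlinear connection is locally spanned by the $\delta/\delta q^{(0)i}$ with dual coefficients $(C_\alpha)^i_j$, and inverting the triangular recursion of Proposition~\ref{prop: dual-coefficient-recursion} produces the unique $(K_\alpha)^i_j$.

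You argue intrinsically instead, via the filtration $V_1\supset\cdots\supset V_k$. Injectivity of $K$ on $V_1$ comes from $K_\alpha\circ J^\alpha=\tau_{k*}$. For the forward direction you use the splitting $T_uT^{(k)}M=\bigoplus_{\beta=0}^kJ^\beta H_u$ and set $K_\alpha(J^\beta h(z))=\delta_{\alpha\beta}z$. This is exactly the identification $\Phi_u(X)=(\tau_{k*}X_0,\ldots,\tau_{k*}X_k)$ and the decomposition of Theorem~\ref{teo: decomp}, which the paper records after the theorem; you use them as the definition. Your route is global from the start, with no patching or transformation laws. It also shows that only $V_1=\ker\tau_{k*}=\operatorname{Im}J$, $J^{k+1}=0$ and $\dim V_\beta/V_{\beta+1}=n$ are used. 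The coordinate route, by contrast, delivers the explicit coefficients and adapted frames the paper relies on later.

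A few tightenings:
\begin{itemize}
\item Your justification of the complement property restates the claim. The real reason is that $J^\beta$ maps $T_uT^{(k)}M$ onto $V_\beta$ and maps $V_1=\operatorname{Im}J$ onto $V_{\beta+1}$. It therefore induces a surjection $T_uT^{(k)}M/V_1\to V_\beta/V_{\beta+1}$ between $n$-dimensional spaces, which is an isomorphism. Hence $J^\beta|_{H_u}$ is injective with image complementary to $V_{\beta+1}$ in $V_\beta$.
\item Surjectivity of $K|_u$ needs no coordinates: $K|_{V_1(u)}$ is injective by your first step and $\dim V_1=kn$, so it is already an isomorphism onto $(T_pM)^k$.
\item Add a word on smoothness of the constructed $K$; it follows from local frames of $H$ and smoothness of $J$.
\item The abandoned recursive sketch can be dropped.
\end{itemize}
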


A connection map gives a canonical identification
\[
        \Phi:TT^{(k)}M\to (TM)_\oplus^{k+1},\qquad
        \Phi_u(X)=(\tau_{k*}X,K_1(X),\ldots,K_k(X)).
\]
For each $u\in T^{(k)}M$, the map $\Phi_u$ is a linear isomorphism.  Under this isomorphism, horizontal vectors are identified with vectors of the form $(v,0,\ldots,0)$, whereas vertical vectors with respect to $\tau_k$ are identified with vectors of the form $(0,v_1,\ldots,v_k)$.

Given a nonlinear connection $H$, the associated multiconnection is the family of subspaces
\[
        H_0(u),\ldots,H_{k-1}(u)\subset T_uT^{(k)}M
\]
defined by
\[
        H_0(u)=H(u),\qquad H_i(u)=J^i(H_0(u)),\quad i=1,\ldots,k-1.
\]

\begin{theorem}[Bucătaru, \cite{Buc2011}]\label{teo: decomp}
Suppose that $H$ is a nonlinear connection.  Then, for all $\alpha=1,\ldots,k$,
\begin{equation}\label{eq: multiconnection direct sum}
        TT^{(k)}M=\bigoplus_{i=0}^{\alpha-1}H_i\oplus V_\alpha.
\end{equation}
\end{theorem}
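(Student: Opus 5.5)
We argue by a dimension count combined with an independence argument, working fibrewise at a fixed $u\in T^{(k)}M$ with the linear-algebra facts about $J\vert_u$ recalled above. Fix $\alpha\in\{1,\ldots,k\}$. Since $H_0(u)=H(u)$ is complementary to $V_1(u)$, it has dimension $n$. The space $V_\alpha(u)$ is spanned by $\partial/\partial q^{(\beta)i}$ for $\beta\geq\alpha$, so it has dimension $(k+1-\alpha)n$. It therefore suffices to show two things: each $H_i(u)$, $0\le i\le\alpha-1$, has dimension $n$, and the sum $H_0(u)+\cdots+H_{\alpha-1}(u)+V_\alpha(u)$ is direct. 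The total dimension is then $\alpha n+(k+1-\alpha)n=(k+1)n=\dim T_uT^{(k)}M$.

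\textbf{Step 1: $J^i$ is injective on $H_0(u)$ for $i\le k-1$.} Let $X\in H_0(u)$ with $J^iX=0$. Then $X\in\ker(J^i)=\operatorname{Im}(J^{k+1-i})=V_{k+1-i}(u)$. Since $k+1-i\ge 2$, this gives $X\in V_{k+1-i}(u)\subset V_1(u)$. Because $H_0(u)\cap V_1(u)=0$, we get $X=0$. Hence $\dim H_i(u)=n$, and moreover $H_i(u)\subset\operatorname{Im}(J^i)=V_i(u)$.

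\textbf{Step 2: directness.} Suppose
\[
X_0+JX_1+\cdots+J^{\alpha-1}X_{\alpha-1}+Y=0,
\]
with $X_j\in H_0(u)$ and $Y\in V_\alpha(u)$. We show inductively that each $X_j=0$, using the filtration. Every term after the first lies in $V_1(u)$, so $X_0\in H_0(u)\cap V_1(u)=0$. Suppose now that $X_0=\cdots=X_{j-1}=0$ for some $j\le\alpha-1$. The remaining relation is
\[
J^j\Bigl(X_j+JX_{j+1}+\cdots+J^{\alpha-1-j}X_{\alpha-1}\Bigr)=-Y\in V_\alpha(u)=\operatorname{Im}(J^\alpha).
\]
We need to peel off $J^j$ here, and this is the main obstacle. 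Writing $-Y=J^{j}J^{\alpha-j}Z$, the element $W:=X_j+\cdots+J^{\alpha-1-j}X_{\alpha-1}-J^{\alpha-j}Z$ lies in $\ker J^j=V_{k+1-j}(u)\subset V_{\alpha-j}(u)$. The last inclusion holds because $k+1-j\ge\alpha-j$. Then
\[
X_j=W-JX_{j+1}-\cdots+J^{\alpha-j}Z\in V_1(u),
\]
since every term other than $X_j$ lies in $\operatorname{Im}J=V_1(u)$ (and $\alpha-j\ge1$). Hence $X_j\in H_0(u)\cap V_1(u)=0$.

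Once all $X_j$ vanish, $Y=0$ follows, so the sum is direct. Combined with the dimension count, this gives the decomposition \eqref{eq: multiconnection direct sum} at each $u$. Smoothness of each $H_i$ as a subbundle follows from $J$ being a smooth tensor with constant rank on $H_0$.

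As a consistency check, the same argument can be read through the connection map. By \eqref{eq:directsum} and the relations $K_{\beta+1}\circ J=K_\beta$, one has $K_\beta(J^iX)=K_{\beta-i}(X)$, which is $0$ when $\beta\neq i$ and equals $\tau_{k*}X$ when $\beta=i$, for $X\in H$. So under $\Phi_u$, the space $H_i$ corresponds to vectors supported in slot $i$, while $V_\alpha$ corresponds to vectors with slots $0,\ldots,\alpha-1$ zero. This gives an alternative and cleaner proof of directness that bypasses the inductive peeling.
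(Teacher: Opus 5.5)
Your proof is correct. The paper does not prove this statement itself; it is cited from \cite{Buc2011}. However, the facts the paper records immediately afterwards are exactly your closing ``consistency check'': namely $\Phi_u(X_0+JX_1+\cdots+J^kX_k)=(\tau_{k*}X_0,\ldots,\tau_{k*}X_k)$ for $X_i\in H_0$, and $V_\alpha=\ker(\tau_{k*},K_1,\ldots,K_{\alpha-1})$. So the paper's implicit route is the connection-map one, and your main argument is a genuinely different one.

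\textbf{Your main argument.} It is purely linear-algebraic. It uses only the nilpotent flag $\ker J^i=\operatorname{Im}J^{k+1-i}$ and $H\cap V_1=0$. It therefore applies to any complement of $V_1$ without invoking the correspondence between $H$ and its connection map.

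\textbf{The $\Phi_u$ route.} This needs that correspondence (Theorem~\ref{thm: nonlinear connection equivalence}), the relations $K_{\beta+1}\circ J=K_\beta$ and $K_1\circ J=\tau_{k*}$, and injectivity of $\Phi_u$, which is just $\ker\Phi_u=V_1\cap H=0$. In return, directness becomes immediate by reading off slots. It also yields at once the kernel descriptions of $N_\alpha$ and $V_\alpha$ that the paper uses next.

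\textbf{Shortening the peeling step.} Apply $J^{k-j}$ to the relation $J^jX_j+\cdots+J^{\alpha-1}X_{\alpha-1}+Y=0$. This kills every term except $J^kX_j$, because $J^{k+1}=0$ and $J^{k-j}Y\in\operatorname{Im}J^{k-j+\alpha}=0$ (since $\alpha\geq j+1$). Hence $X_j\in\ker J^k=\operatorname{Im}J=V_1$, so $X_j=0$, with no auxiliary $Z$ or $W$ needed.
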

Taking $\alpha=k$ in Equation \eqref{eq: multiconnection direct sum}, every vector $X\in T_uT^{(k)}M$ can be uniquely decomposed as
\[
        X=h_0(X)+h_1(X)+\cdots+h_k(X),
\]
where $h_\alpha$ is the projection onto $H_\alpha$ for $\alpha=0,\ldots,k-1$, and $h_k=v_k$ is the projection onto $V_k$.  If
\[
        X=X_0+J(X_1)+\cdots+J^{k-1}(X_{k-1})+J^k(X_k),
        \qquad X_i\in H_0,
\]
then
\[
        \Phi_u(X)=\big(\tau_{k*}X_0,\tau_{k*}X_1,\ldots,\tau_{k*}X_k\big).
\]

For each $\alpha=1,\ldots,k$, define
\[
        N_\alpha=\bigoplus_{i=0}^{\alpha-1}H_i.
\]
Then
\begin{equation}\label{eq:horizontal_splittings}
    TT^{(k)}M=N_\alpha\oplus V_\alpha,
\end{equation}
        
where
\[
        N_\alpha=\ker(K_\alpha,\ldots,K_k),\qquad
        V_\alpha=\operatorname{Im}(J^\alpha)=\ker(\tau_{k*},K_1,\ldots,K_{\alpha-1}).
\]
A vector $X\in T_uT^{(k)}M$ is called $\alpha$-vertical if $X\in V_\alpha$, and $\alpha$-horizontal if $X\in N_\alpha$.  We denote by $n_\alpha$ and $v_\alpha$ the projections onto $N_\alpha$ and $V_\alpha$, respectively.  These definitions are extended to vector fields and curves in the usual way.

\subsection{Adapted bases}

Define covectors $\delta q^{(\alpha)i}$ by
\[
        \delta q^{(\alpha)i}
        =
        dq^{(\alpha)i}
        +(K_1)^i_jdq^{(\alpha-1)j}
        +\cdots+
        (K_\alpha)^i_jdq^{(0)j},
        \qquad \alpha=1,\ldots,k,
\]
and set $\delta q^{(0)i}=dq^{(0)i}$. We refer to the collection $\{\delta q^{(\alpha)i} \ \vert \ 0 \le \alpha \le k, \, 1 \le i \le n\}$ as the \textit{adapted basis}. With respect to the adapted basis, the connection map and canonical projection take the simplied forms: 
\[
        K_\alpha=\delta q^{(\alpha)i}\otimes \partial_i,\qquad
        \tau_{k*}=\delta q^{(0)i}\otimes\partial_i.
\]
Consider the local frame of vector fields $\displaystyle \left\{\frac{\delta}{\delta q^{(\alpha)i}}\right\}$ dual to the dapated basis. This frame is adapted to the decomposition given in Equation \eqref{eq: multiconnection direct sum} in the sense that, for each fixed $\alpha$, the vector fields $\displaystyle \left\{\frac{\delta}{\delta q^{(\alpha)i}}\right\}_{i=1}^n$
span $H_\alpha$, while
        $\displaystyle \left\{\frac{\partial}{{\partial q^{(k)i}}}\right\}_{i=1}^n$
span $V_k$.  With respect to these bases, the projections similarly take the form
\[
        h_\alpha=\delta q^{(\alpha)i}\otimes\frac{\delta}{\delta q^{(\alpha)i}},
\]
while the almost-tangent structure is given by
\[
        J=
        \frac{\delta}{\delta q^{(1)i}}\otimes\delta q^{(0)i}
        +\cdots+
        \frac{\delta}{\delta q^{(k)i}}\otimes\delta q^{(k-1)i}.
\]

The coordinate basis and adapted basis are related by dual coefficient functions $(C_\alpha)^i_j$.  The details are recorded in Appendix \ref{app:adapted-basis-details}; the formulas most often used later are
\begin{equation}\label{eq: coordinate basis to adapted basis}
        dq^{(\alpha)i}
        =
        \delta q^{(\alpha)i}
        -(C_1)^i_j\delta q^{(\alpha-1)j}
        -\cdots
        -(C_\alpha)^i_j\delta q^{(0)j},
        \qquad \alpha=1,\ldots,k,
\end{equation}
and
\begin{equation}\label{eq: coordinate basis to adapted basis vector field}
        \frac{\partial}{\partial q^{(\alpha)i}}
        =
        \frac{\delta}{\delta q^{(\alpha)i}}
        +(K_1)^j_i\frac{\delta}{\delta q^{(\alpha+1)j}}
        +\cdots+
        (K_{k-\alpha})^j_i\frac{\delta}{\delta q^{(k)j}}.
\end{equation}

\subsection{Connection towers}\label{Subsec: connection tower}

Throughout the remainder of the article, we use the notation $\overset{(\alpha)}{\cdot}$ to signify that an object is being considered with respect to $T^{(\alpha)}M$, with the exception of the projection $\tau_k$ and the coefficients of certain objects in local coordinates for simplicity. In some places, the overset notation may be dropped when it is sufficiently clear from context which space an object pertains to. 

The higher-order tangent bundle $T^{(k)}M$ carries a natural tower of fibrations
\begin{equation}\label{eq:fiber_bundle_tower}
        T^{(k)}M
        \stackrel{\overset{(k)}{\tau}_{\!k-1}}{\longrightarrow}
        T^{(k-1)}M
        \stackrel{\overset{(k-1)}{\tau}_{\!\!\!\!\!k-2}}{\longrightarrow}
        \cdots
        \stackrel{\overset{(2)}{\tau}_{\!1}}{\longrightarrow}
        TM
        \stackrel{\overset{(1)}{\tau}_{\!0}}{\longrightarrow}
        M.
\end{equation}
A standard nonlinear connection on $T^{(k)}M$ is adapted only to the projection onto the base manifold.  However, the iterated bundle structure suggests the existence of compatible connection data simultaneously adapted to all intermediate fibrations.  This leads to the following definition.

We say that a connection map $\overset{(k)}{K}=(\overset{(k)}{K}_1,\ldots,\overset{(k)}{K}_k)$ on $T^{(k)}M$ is a \emph{connection tower of order} $k$ provided that, for each $\alpha=1,\ldots,k$, there exists a unique map
\[
        \overset{(\alpha)}{K}_\alpha:TT^{(\alpha)}M\to TM
\]
such that
\begin{equation}\label{eq: tower}
        \overset{(k)}{K}_\alpha=\overset{(\alpha)}{K}_\alpha\circ \overset{(k)}{\tau}_{\!\alpha\ast}.
\end{equation}
Given a connection tower $\overset{(k)}{K}$ of order $k$, define
\[
        \overset{(\alpha)}{K}=(\overset{(\alpha)}{K}_1,\ldots,\overset{(\alpha)}{K}_\alpha),
        \qquad
        \overset{(\alpha)}{K}_\mu=\overset{(\mu)}{K}_\mu\circ \overset{(\alpha)}{\tau}_{\!\mu*},
        \quad \mu=1,\ldots,\alpha.
\]
Then $\overset{(\alpha)}{K}$ is a connection map on $T^{(\alpha)}M$ for each $\alpha=1,\ldots,k$.  By construction, $\overset{(k)}{K}_\mu$ and $\overset{(\alpha)}{K}_\mu$ have the same connection coefficients pulled back through $\overset{(k)}{\tau}_{\!\alpha}$.  In local coordinates, this compatibility means that
\[
        \frac{\partial (K_\alpha)^i_j}{\partial q^{(\mu)l}}=0
        \qquad\text{whenever }\mu>\alpha.
\]
Conversely, a connection map whose connection coefficients satisfy such a relation uniquely determines a connection tower.

The adapted bases on $T^{(k)}M$ and $T^{(\alpha)}M$ are naturally related through the projections.  For $\mu\leq \alpha$,
\[
        \overset{(k)}{\tau}_{\!\alpha\ast}
        \left(
        \frac{\overset{(k)}{\delta}}{\delta q^{(\mu)i}}
        \right)
        =
        \frac{\overset{(\alpha)}{\delta}}{\delta q^{(\mu)i}},
\]
whereas the adapted basis vectors with $\mu>\alpha$ are annihilated by $\overset{(k)}{\tau}_{\!\alpha\ast}$. Equivalently,
\[
        \overset{(k)}{\tau}_{\!\alpha\ast}
        =
        \frac{\overset{(\alpha)}{\delta}}{\delta q^{(0)i}}
        \otimes
        \overset{(k)}{\delta}q^{(0)i}
        +\cdots+
        \frac{\overset{(\alpha)}{\delta}}{\delta q^{(\alpha)i}}
        \otimes
        \overset{(k)}{\delta}q^{(\alpha)i}.
\] 

A connection tower gives the commutative diagram:
\[
\begin{tikzcd}
TT^{(k)}M  \arrow[r, "\overset{(k)}{\tau}_{\!k-1\ast}"] \arrow[d, "\overset{(k)}{K}"] &
TT^{(k-1)}M \arrow[r, "\overset{(k-1)}{\tau}_{\!\!\!\!\!k-2\ast}"]  \arrow[d, "\overset{(k-1)}{K}"] &
TT^{(k-2)}M \arrow[r, "\overset{(k-2)}{\tau}_{\!\!\!\!\!k-3\ast}"]  \arrow[d, "\overset{(k-2)}{K}"] &
\cdots \arrow[r, "\overset{(3)}{\tau}_{\!2\ast}"]  &
TT^{(2)}M \arrow[r, "\overset{(2)}{\tau}_{\!1\ast}"] \arrow[d, "\overset{(2)}{K}"]&
TTM  \arrow[d, "\overset{(1)}{K}"]   \\
(TM)_\oplus^k \arrow[r, "\overset{(k)}{\mathrm{pr}}_{k-1}"] &
(TM)_\oplus^{k-1} \arrow[r, "\overset{(k-1)}{\mathrm{pr}}_{\!\!\!\! k-2}"] &
(TM)_\oplus^{k-2} \arrow[r, "\overset{(k-2)}{\mathrm{pr}}_{\!\!\!\! k-3}"] &
\cdots \arrow[r, "\overset{(3)}{\mathrm{pr}}_{2}"] &
(TM)_\oplus^2  \arrow[r, "\overset{(2)}{\mathrm{pr}}_{1}"] &
TM
\end{tikzcd}
\]
where $\overset{(\alpha)}{\mathrm{pr}}_{\alpha-1}$ denotes the projection of $(TM)_\oplus^\alpha$ onto $(TM)_\oplus^{\alpha-1}$ obtained by dropping the last component.

Applying $\overset{(k)}{\tau}_{\!\alpha\ast}$ to the multiconnection decomposition on $T^{(k)}M$ gives the corresponding decomposition on $T^{(\alpha)}M$.  In particular, for all $\mu=0,\ldots,\alpha$,
\[
        \overset{(k)}{\tau}_{\!\alpha\ast}\big(\overset{(k)}{H}_\mu(u)\big)
        =
        \overset{(\alpha)}{H}_\mu(\overset{(k)}{\tau}_\alpha(u)), \qquad \overset{(k)}{\tau}_{\!\alpha\ast}\big(\overset{(k)}{V}_{\alpha+1}(u)\big)=0.
\]
Thus a connection tower induces a tower of $\alpha$-horizontal subbundles
\[
        \overset{(k)}{N}_k
        \stackrel{\overset{(k)}{\tau}_{\!k-1\ast}}{\longrightarrow}
        \overset{(k-1)}{N}_{\!\!\!\! k-1}
        \stackrel{\overset{(k-1)}{\tau}_{\!\!\!\!k-2\ast}}{\longrightarrow}
        \cdots
        \stackrel{\overset{(2)}{\tau}_{\!1\ast}}{\longrightarrow}
        \overset{(1)}{N}_1,
\]
where $\overset{(\alpha)}{N}_\alpha=\bigoplus_{i=0}^{\alpha-1}\overset{(\alpha)}{H}_i$.  This justifies the terminology connection tower.

\subsection{Vector bundle structures induced by connection towers}

Connection towers also induce vector bundle structures on the higher-order tangent bundles themselves. For \(\alpha\geq 1\), we define the \emph{Tulczyjew operator} $\overset{(\alpha)}{d_T}:T^{(\alpha)}M\longrightarrow TT^{(\alpha-1)}M$
by
\[
        \overset{(\alpha)}{d_T}(u)
        =
        \frac{d}{dt}\Big\vert_{t=0}(j^{\alpha-1}q)(t),
        \qquad
        u=j^\alpha_0(q),
\]
which is well-defined since \(\overset{(\alpha)}{d_T}(u)\) depends only on the \(\alpha\)-jet of \(q\) \cite{Tulczyjew1976}.  In local coordinates,
\[
        \overset{(\alpha)}{d_T}
        =
        q^{(1)i}\frac{\partial}{\partial q^{(0)i}}
        +2q^{(2)i}\frac{\partial}{\partial q^{(1)i}}
        +\cdots
        +\alpha q^{(\alpha)i}\frac{\partial}{\partial q^{(\alpha-1)i}}.
\]

\begin{theorem}\label{thm:connection-tower-vector-bundle}
Let \(\overset{(k)}{K}\) be a connection tower of order \(k\) in \(M\).  For each \(\alpha=1,\ldots,k\), let
\[
        \sigma:(TM)_\oplus^\alpha\to M
\]
denote the natural projection, and define
\[
        \overset{(\alpha)}{F}:T^{(\alpha)}M\to (TM)_\oplus^\alpha
\]
by
\begin{equation}\label{eq: F}
        \overset{(\alpha)}{F}(u)
        =
        \Big(
        \overset{(1)}{d_T}(\overset{(\alpha)}{\tau}_1(u)),\,
        \overset{(1)}{K}_1\big(\overset{(2)}{d_T}(\overset{(\alpha)}{\tau}_2(u))\big),\,
        2!\,\overset{(2)}{K}_2\big(\overset{(3)}{d_T}(\overset{(\alpha)}{\tau}_3(u))\big),\,
        \ldots,\,
        (\alpha-1)!\,\overset{(\alpha-1)}{K}_{\alpha-1}
        \big(\overset{(\alpha)}{d_T}(u)\big)
        \Big).
\end{equation}
Then \(\overset{(\alpha)}{F}\) is a diffeomorphism for all \(\alpha=1,\ldots,k\).  Moreover,
\[
        \sigma\circ \overset{(\alpha)}{F}=\tau_\alpha .
\]
Consequently, the vector bundle structure of \((TM)_\oplus^\alpha\) over \(M\) pulls back through
\(\overset{(\alpha)}{F}\) to a vector bundle structure on \(T^{(\alpha)}M\).
\end{theorem}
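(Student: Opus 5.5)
The plan is to work in the tangent-lifted coordinates $(q^{(0)i},\ldots,q^{(\alpha)i})$ and show that $\overset{(\alpha)}{F}$ has a triangular form that can be inverted explicitly.

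First I check that $\overset{(\alpha)}{F}$ is well defined and covers $\tau_\alpha$. Each component is built from intrinsic maps: the projections $\overset{(\alpha)}{\tau}_\beta$, the Tulczyjew operators, and the connection maps $\overset{(\beta)}{K}_\beta$. Fix $u\in T^{(\alpha)}M$ with $p=\tau_\alpha(u)$ and set $w=\overset{(\alpha)}{\tau}_{\beta+1}(u)$. Then $\overset{(\beta+1)}{d_T}(w)$ is a tangent vector to $T^{(\beta)}M$ at $\overset{(\beta+1)}{\tau}_\beta(w)$. Since $\overset{(\beta)}{K}_\beta$ is a $\tau_\beta$-morphism, it sends this vector into $T_pM$. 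Likewise $\overset{(1)}{d_T}(\overset{(\alpha)}{\tau}_1(u))\in T_pM$. Hence $\overset{(\alpha)}{F}(u)$ lies in the fiber of $(TM)^\alpha_\oplus$ over $p$, so $\sigma\circ\overset{(\alpha)}{F}=\tau_\alpha$. Smoothness is clear from the local formulas below.

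Next comes the local computation. Let $w$ have coordinates $(q^{(0)},\ldots,q^{(\beta+1)})$. Then
\[
\overset{(\beta+1)}{d_T}(w)=\sum_{m=0}^{\beta}(m+1)q^{(m+1)i}\frac{\partial}{\partial q^{(m)i}}.
\]
Applying the coordinate form \eqref{eq: connection map coefficients} of $\overset{(\beta)}{K}_\beta$ gives
\[
\beta!\,\overset{(\beta)}{K}_\beta\big(\overset{(\beta+1)}{d_T}(w)\big)
=\Big((\beta+1)!\,q^{(\beta+1)i}+\beta!\sum_{r=1}^{\beta}(\beta+1-r)\,(K_r)^i_j\,q^{(\beta+1-r)j}\Big)\partial_i .
\]
The key input is the tower compatibility from Subsection~\ref{Subsec: connection tower}: $\partial (K_r)^i_j/\partial q^{(\mu)l}=0$ for $\mu>r$. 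Because of this, all coefficients $(K_r)^i_j$ with $r\le\beta$ depend only on $q^{(0)},\ldots,q^{(\beta)}$.

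Consequently, in the coordinates $(q^i,y_1^i,\ldots,y_\alpha^i)$ on $(TM)^\alpha_\oplus$, the map $\overset{(\alpha)}{F}$ takes the form
\[
y_1=q^{(1)},\qquad y_{\beta+1}=(\beta+1)!\,q^{(\beta+1)}+f_{\beta+1}\big(q^{(0)},\ldots,q^{(\beta)}\big),
\]
with $f_{\beta+1}$ smooth and $q^{(0)}$ unchanged. This system is triangular with invertible diagonal terms, so it can be solved recursively:
\[
q^{(1)}=y_1,\qquad q^{(\beta+1)}=\frac{1}{(\beta+1)!}\Big(y_{\beta+1}-f_{\beta+1}\big(q^{(0)},\ldots,q^{(\beta)}\big)\Big),
\]
where the earlier $q^{(m)}$ on the right are already expressed in terms of $y$. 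This produces a smooth local inverse over each chart domain. Since $\overset{(\alpha)}{F}$ is a fiber-preserving, globally defined map covering the identity of $M$, these local inverses agree on overlaps and glue to a smooth global inverse. Hence $\overset{(\alpha)}{F}$ is a diffeomorphism.

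Finally, the vector bundle structure is transported. Define addition and scalar multiplication fiberwise by $u+u'=(\overset{(\alpha)}{F})^{-1}\big(\overset{(\alpha)}{F}(u)+\overset{(\alpha)}{F}(u')\big)$, and similarly for scalars. Local trivializations are obtained by composing those of $(TM)^\alpha_\oplus$ with $\overset{(\alpha)}{F}$. This makes $\overset{(\alpha)}{F}$ a vector bundle isomorphism over $M$.

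The main obstacle is the triangularity step. It depends entirely on the tower condition that the coefficients $(K_r)^i_j$ involve only coordinates of order at most $r$. For a general connection map this fails: the $\beta$-th component of $\overset{(\alpha)}{F}$ could then depend nonlinearly on higher $q^{(\mu)}$, and invertibility would be lost. The definition itself also needs this condition, since it is what guarantees that the maps $\overset{(\beta)}{K}_\beta$ exist.
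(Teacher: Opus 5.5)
Your proposal is correct and follows essentially the same route as the paper. Both write $\overset{(\alpha)}{F}$ in induced coordinates as the triangular system $X_\mu^i=(\mu+1)!\,q^{(\mu+1)i}+(\text{terms in }q^{(0)},\ldots,q^{(\mu)})$, solve it recursively for the fiber coordinates to obtain a smooth inverse, and read off $\sigma\circ\overset{(\alpha)}{F}=\tau_\alpha$ from the fact that every component lies in $T_{\tau_\alpha(u)}M$; your explicit coefficient formula matches the paper's, and your gluing remark just spells out the paper's ``bijective local diffeomorphism, hence a diffeomorphism'' step.
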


\begin{proof}
Let
\[
        (X_0,\ldots,X_{\alpha-1})\in (TM)_\oplus^\alpha
\]
be a point over \(p\in M\).  In local coordinates, the equation
\[
        \overset{(\alpha)}{F}(u)=(X_0,\ldots,X_{\alpha-1})
\]
is triangular.  Indeed, if \(u=(q^{(0)},\ldots,q^{(\alpha)})\), then
        $X_0^i=q^{(1)i}$,
and for \(\mu=1,\ldots,\alpha-1\),
\[
        X_\mu^i
        =
        \mu!\left(
        (\mu+1)q^{(\mu+1)i}
        +\mu q^{(\mu)j}(K_1)^i_j
        +\cdots
        +q^{(1)j}(K_\mu)^i_j
        \right).
\]
Here \((K_l)^i_j\), for \(l=1,\ldots,\mu\), denotes the corresponding coefficient of the connection map on
\(T^{(\mu)}M\), pulled back to \(T^{(\alpha)}M\) through \(\overset{(\alpha)}{\tau}_\mu\).  The first equations are
\[
        X_0^i=q^{(1)i},
\]
\[
        X_1^i=2q^{(2)i}+q^{(1)j}(K_1)^i_j,
\]
\[
        X_2^i=
        2!\left(
        3q^{(3)i}
        +2q^{(2)j}(K_1)^i_j
        +q^{(1)j}(K_2)^i_j
        \right).
\]
Since the coefficient of \(q^{(\mu+1)i}\) in the \(\mu\)th equation is \((\mu+1)!\), these equations can be solved recursively:
\[
        q^{(1)i}=X_0^i,
\]
\[
        q^{(2)i}
        =
        \frac{1}{2}
        \left(
        X_1^i-(K_1)^i_jX_0^j
        \right),
\]
and, in general,
\[
        q^{(\mu+1)i}
        =
        \frac{1}{(\mu+1)!}
        \left(
        X_\mu^i-H_{\mu+1}^i(X_0,\ldots,X_{\mu-1})
        \right)
\]
for smooth functions \(H_{\mu+1}^i\) determined by the connection coefficients and the previously solved coordinates.
Hence the fiber coordinates
        $q^{(1)i},\ldots,q^{(\alpha)i}$
are uniquely and smoothly determined by \((X_0,\ldots,X_{\alpha-1})\), while the base coordinates \(q^{(0)i}\) are
determined by the common base point \(p\).  Therefore \(\overset{(\alpha)}{F}\) is a bijective local diffeomorphism, hence a diffeomorphism.

Finally, each component of \(\overset{(\alpha)}{F}(u)\) lies in \(T_{\tau_\alpha(u)}M\), so
        $\sigma\circ \overset{(\alpha)}{F}=\tau_\alpha $.
The vector bundle structure of \((TM)_\oplus^\alpha\) over \(M\) therefore pulls back through \(\overset{(\alpha)}{F}\) to a vector bundle
structure on \(T^{(\alpha)}M\).
\qed
\end{proof}

\subsection{Lifting vector fields on $M$ to higher-order tangent bundles}\label{subsec lifts}

Let $\overset{(k)}{K}$ be a connection tower of order $k$.  By Theorem \ref{teo: decomp},
\begin{equation}\label{eq: multiconnection direct sum2}
        TT^{(k)}M=\bigoplus_{\alpha=0}^{k-1}\overset{(k)}{H}_\alpha\oplus \overset{(k)}{V}_k.
\end{equation}
We use the convention
\[
        \overset{(k)}{K}_0:=\tau_{k*},\qquad \overset{(k)}{h}_k:=\overset{(k)}{v}_k.
\]
Let $z\in T_pM$ and let $u\in T^{(k)}M$ with $\tau_k(u)=p$.  For each $\alpha=0,\ldots,k$, there exists a unique vector
\[
        z^{\overset{(k)}{h}_\alpha}\in T_uT^{(k)}M
\]
such that
\[
        \overset{(k)}{K}_\alpha(z^{\overset{(k)}{h}_\alpha})=z,\qquad
        \overset{(k)}{K}_\beta(z^{\overset{(k)}{h}_\alpha})=0
        \quad\text{for all }\beta\neq\alpha.
\]
For $\alpha=0,\ldots,k-1$, the vector $z^{\overset{(k)}{h}_\alpha}$ belongs to $\overset{(k)}{H}_\alpha$ and is called the \emph{$\overset{(k)}{h}_\alpha$-lift} of $z$ to $T^{(k)}M$ at $u$. In particular, we see that $z^{\overset{(k)}{h}_0}$ is $1$-horizontal while $z^{\overset{(k)}{v}_k}$ is $k$-vertical, so that we alternatively refer to the $\overset{(k)}{h}_0$-lift as the \emph{$1$-horizontal lift} and the $\overset{(k)}{v}_k$-lift as the \emph{$k$-vertical lift}. 

Similarly, for vector fields $X\in\mathfrak X(M)$, we define the $\overset{(k)}{h}_\alpha$-lift of $X$ as the vector field $X^{\overset{(k)}{h}_\alpha}\in\mathfrak X(T^{(k)}M)$ satisfying
\[
        \overset{(k)}{K}_\alpha\circ X^{\overset{(k)}{h}_\alpha}=X\circ \tau_k,\qquad
        \overset{(k)}{K}_\beta\circ X^{\overset{(k)}{h}_\alpha}=0
        \quad\text{for }\beta\neq\alpha.
\]
As before, we refer to the $\overset{(k)}{v}_k$-lift of $X$ as the $k$-vertical lift, and the $\overset{(k)}{h}_0$ lift as the $1$-horizontal lift. The adapted basis is described by lifts of coordinate vector fields
\[
        \frac{\overset{(k)}{\delta}}{\delta q^{(\alpha)j}}
        =
        (\partial_j)^{\overset{(k)}{h}_\alpha},
        \qquad \alpha=0,\ldots,k.
\]

\begin{lemma}\label{lemma: fX}\label{lemma: prop}
Let \(X,Y\in\mathfrak X(M)\) and \(f\in C^\infty(M)\).  Then
\[
        (X+Y)^{\overset{(k)}{h}_\alpha}=X^{\overset{(k)}{h}_\alpha}+Y^{\overset{(k)}{h}_\alpha},
        \qquad
        (fX)^{\overset{(k)}{h}_\alpha}=(f\circ\tau_k)X^{\overset{(k)}{h}_\alpha},
\]
for \(\alpha=0,\ldots,k\).  If \(X=X^j\partial_j\), then
\[
        X^{\overset{(k)}{h}_\alpha}=(X^j\circ\tau_k)\frac{\delta}{\delta q^{(\alpha)j}},
        \qquad \alpha=0,\ldots,k.
\]
Moreover,
\[
        X^{\overset{(k)}{h}_0}(f\circ\tau_k)=(Xf)\circ\tau_k,
        \qquad
        X^{\overset{(k)}{h}_\alpha}(f\circ\tau_k)=0,
        \quad \alpha=1,\ldots,k.
\]
\end{lemma}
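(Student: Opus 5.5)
The plan is to establish the coordinate formula first and derive the remaining identities from it.

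\emph{Step 1 (coordinate formula).} Fix a chart and write $X=X^j\partial_j$. Consider the candidate field $Z=(X^j\circ\tau_k)\frac{\delta}{\delta q^{(\alpha)j}}$. In the adapted basis the maps take the form $\overset{(k)}{K}_\beta=\delta q^{(\beta)i}\otimes\partial_i$ for $\beta=0,\ldots,k$, using the convention $\overset{(k)}{K}_0=\tau_{k*}=\delta q^{(0)i}\otimes\partial_i$. The frame $\{\delta/\delta q^{(\alpha)j}\}$ is dual to $\{\delta q^{(\beta)i}\}$. Hence
\[
\overset{(k)}{K}_\beta(Z)=\delta^\beta_\alpha\,(X^i\circ\tau_k)\,\partial_i\big\vert_{\tau_k},
\]
which is $X\circ\tau_k$ for $\beta=\alpha$ and zero otherwise.

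By uniqueness, $Z$ coincides with $X^{\overset{(k)}{h}_\alpha}$ on the chart. Uniqueness holds because $\Phi_u$ is a linear isomorphism, so a vector is determined by its values under all the $\overset{(k)}{K}_\beta$. Since the defining conditions are chart-independent, the local expressions patch together.

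\emph{Step 2 (linearity).} Additivity and $C^\infty(M)$-linearity follow by the same uniqueness argument. Each $\overset{(k)}{K}_\beta$ is fiberwise linear over $\tau_k$, so $X^{\overset{(k)}{h}_\alpha}+Y^{\overset{(k)}{h}_\alpha}$ and $(f\circ\tau_k)X^{\overset{(k)}{h}_\alpha}$ satisfy the defining conditions for $X+Y$ and $fX$ respectively. Alternatively, this can be read off directly from the coordinate formula.

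\emph{Step 3 (action on basic functions).} For $f\circ\tau_k$, only the $q^{(0)}$-derivatives are nonzero, so a vector $W$ acts on it by
\[
W(f\circ\tau_k)=df(\tau_{k*}W)\circ\tau_k.
\]
Since $\tau_{k*}=\overset{(k)}{K}_0$, Step 1 gives $\tau_{k*}X^{\overset{(k)}{h}_0}=X\circ\tau_k$ and $\tau_{k*}X^{\overset{(k)}{h}_\alpha}=0$ for $\alpha\ge1$. This yields $(Xf)\circ\tau_k$ and $0$ respectively.

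As a cross-check in coordinates, inverting the relation between the two bases gives
\[
\frac{\delta}{\delta q^{(0)j}}=\frac{\partial}{\partial q^{(0)j}}+(\text{terms in }\partial/\partial q^{(\beta)},\ \beta\ge1),
\qquad
\frac{\delta}{\delta q^{(\alpha)j}}\in\operatorname{span}\{\partial/\partial q^{(\beta)}\}_{\beta\ge\alpha}.
\]

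There is no real obstacle. The only point requiring care is the uniqueness and well-definedness of the lift, which rests on $\Phi_u$ being an isomorphism.
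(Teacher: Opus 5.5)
Your proposal is correct and uses the same ingredients as the paper's proof. These are uniqueness of the lift via the isomorphism $\Phi_u$, fiberwise linearity of $\tau_{k*},\overset{(k)}{K}_1,\ldots,\overset{(k)}{K}_k$, and the identity $W(f\circ\tau_k)=df(\tau_{k*}W)$. The only difference is the order of the steps: the paper proves linearity first and gets the coordinate formula from the identification $\delta/\delta q^{(\alpha)j}=(\partial_j)^{\overset{(k)}{h}_\alpha}$, which it simply asserts, whereas you verify that identification directly from the duality of the adapted frame and coframe and then read off linearity, which is slightly more self-contained.
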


\begin{proof}
    See Appendix \ref{app:lift-technical-lemmas}.
    \qed
\end{proof}

Every vector field $W$ on $T^{(k)}M$ admits the decomposition
\begin{equation}\label{eq: decomposition VF}
        W
        =
        (\tau_{k*}\circ W)^{\overset{(k)}{h}_0}
        +(\overset{(k)}{K}_1\circ W)^{\overset{(k)}{h}_1}
        +\cdots
        +(\overset{(k)}{K}_{k-1}\circ W)^{\overset{(k)}{h}_{k-1}}
        +(\overset{(k)}{K}_k\circ W)^{\overset{(k)}{v}_k}.
\end{equation}
Similarly, if $\Gamma$ is a curve on $T^{(k)}M$ and $q=\tau_k\circ\Gamma$, then
\[
        \dot q=\tau_{k*}\circ\dot\Gamma,\qquad
        Y^{(\alpha)}=\overset{(k)}{K}_\alpha\circ\dot\Gamma,\quad \alpha=1,\ldots,k,
\]
so that
\begin{equation}\label{eq: decomposition gamma T^k}
        \dot\Gamma
        =
        \dot q^{\overset{(k)}{h}_0}
        +(Y^{(1)})^{\overset{(k)}{h}_1}
        +\cdots
        +(Y^{(k-1)})^{\overset{(k)}{h}_{k-1}}
        +(Y^{(k)})^{\overset{(k)}{v}_k}.
\end{equation}
The curve $\Gamma$ is $\alpha$-horizontal if and only if
\[
        Y^{(\beta)}=0,\qquad \beta=\alpha,\ldots,k,
\]
and is $\alpha$-vertical if and only if
\[
        Y^{(\beta)}=0,\qquad \beta=1,\ldots,\alpha-1.
\]
A large structural advantage to connection towers is that order-$k$ computations can be reused in all higher orders, which is described by the following lemmas.
\begin{lemma}\label{lemma: projected lifts}
Let $X\in\mathfrak X(M)$.  Then $X^{\overset{(k)}{h}_\alpha}$ is $\overset{(k)}{\tau}_{\!k-1}$-related to $X^{\!\!\overset{(k-1)}{h}_{\!\!\!\alpha}}$:
\begin{equation}\label{eqtt}
        X^{\!\!\overset{(k-1)}{h}_{\!\!\!\alpha}}\, \circ\, \overset{(k)}{\tau}_{\!k-1}
        =
        \overset{(k)}{\tau}_{\!k-1\ast} \, \circ \,X^{\overset{(k)}{h}_\alpha},
        \qquad \alpha=0,\ldots,k-1.
\end{equation}
\end{lemma}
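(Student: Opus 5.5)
Since both sides of \eqref{eqtt} are vector fields along $\overset{(k)}{\tau}_{\!k-1}$ with values in $TT^{(k-1)}M$, I will show they agree pointwise. Fix $u\in T^{(k)}M$ and set $u'=\overset{(k)}{\tau}_{\!k-1}(u)$, so that $\tau_{k-1}(u')=\tau_k(u)=p$. The vector $W:=\overset{(k)}{\tau}_{\!k-1\ast}\big(X^{\overset{(k)}{h}_\alpha}(u)\big)$ lies in $T_{u'}T^{(k-1)}M$. By the canonical identification $\Phi_{u'}$ attached to the connection map $\overset{(k-1)}{K}$, a vector in $T_{u'}T^{(k-1)}M$ is determined by the values of $\tau_{k-1\ast},\overset{(k-1)}{K}_1,\ldots,\overset{(k-1)}{K}_{k-1}$ on it. I will therefore compute these values on $W$ and compare them with the defining conditions of $X^{\overset{(k-1)}{h}_\alpha}(u')$.

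The projection component follows from $\tau_{k-1}\circ\overset{(k)}{\tau}_{\!k-1}=\tau_k$, which gives $\tau_{k-1\ast}W=\tau_{k\ast}X^{\overset{(k)}{h}_\alpha}(u)$. For $1\le\mu\le k-1$, the tower compatibility \eqref{eq: tower}, together with the definition $\overset{(k-1)}{K}_\mu=\overset{(\mu)}{K}_\mu\circ\overset{(k-1)}{\tau}_{\!\mu\ast}$ and the chain rule $\overset{(k-1)}{\tau}_{\!\mu}\circ\overset{(k)}{\tau}_{\!k-1}=\overset{(k)}{\tau}_{\!\mu}$, gives
\[
\overset{(k-1)}{K}_\mu\circ\overset{(k)}{\tau}_{\!k-1\ast}=\overset{(\mu)}{K}_\mu\circ\overset{(k)}{\tau}_{\!\mu\ast}=\overset{(k)}{K}_\mu .
\]
Hence $\overset{(k-1)}{K}_\mu(W)=\overset{(k)}{K}_\mu\big(X^{\overset{(k)}{h}_\alpha}(u)\big)$ for $\mu=0,\ldots,k-1$, using the convention $\overset{(k)}{K}_0=\tau_{k\ast}$.

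By the definition of the lift, $\overset{(k)}{K}_\mu(X^{\overset{(k)}{h}_\alpha}(u))$ equals $X(p)$ when $\mu=\alpha$ and $0$ otherwise. The hypothesis $\alpha\le k-1$ guarantees that the index $\mu=\alpha$ occurs among $0,\ldots,k-1$. So $W$ satisfies exactly the defining conditions of $X^{\overset{(k-1)}{h}_\alpha}(u')$. By uniqueness, which comes from the injectivity of $\Phi_{u'}$, $W=X^{\overset{(k-1)}{h}_\alpha}(u')$. This is \eqref{eqtt} at $u$.

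The only delicate step is the identity $\overset{(k-1)}{K}_\mu\circ\overset{(k)}{\tau}_{\!k-1\ast}=\overset{(k)}{K}_\mu$, since it uses the uniqueness and transitivity built into the definition of a connection tower. If that were in doubt, I would fall back on a local check: apply the formula $\overset{(k)}{\tau}_{\!\alpha\ast}\big(\overset{(k)}{\delta}/\delta q^{(\mu)i}\big)=\overset{(\alpha)}{\delta}/\delta q^{(\mu)i}$ with $\alpha$ replaced by $k-1$ to the expression $X^{\overset{(k)}{h}_\alpha}=(X^j\circ\tau_k)\,\delta/\delta q^{(\alpha)j}$ from Lemma~\ref{lemma: fX}. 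This gives the result immediately for $\alpha\le k-1$.
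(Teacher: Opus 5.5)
Your proposal is correct and follows the same route as the paper, which simply cites the tower compatibility $\overset{(k-1)}{K}_\mu\circ\overset{(k)}{\tau}_{\!k-1\ast}=\overset{(k)}{K}_\mu$ for $\mu=0,\ldots,k-1$. You additionally derive that compatibility from the definitions via the chain rule, and you make explicit the uniqueness step through the isomorphism $\Phi_{u'}$, which the paper leaves implicit.
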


\begin{proof}
This follows immediately from the connection tower compatibility
\begin{equation}\label{eqt}
        \overset{(k-1)}{K}_{\!\!\!\alpha}\circ\overset{(k)}{\tau}_{\!k-1\ast}=\overset{(k)}{K}_\alpha,
        \qquad \alpha=0,\ldots,k-1.
\end{equation}
\qed
\end{proof}
Moreover, for the Lie brackets of lifts, we have the following lemma.
\begin{lemma}\label{lemma: K mu}
For each $X,Y\in\mathfrak X(M)$,
\[
        \overset{(k)}{K}_\mu\circ[X^{\overset{(k)}{h}_\alpha},Y^{\overset{(k)}{h}_\beta}]
        =
        \left(
        \overset{(k-1)}{K}_{\!\!\!\!\mu}\circ[X^{\overset{(k-1)}{h}_{\!\!\!\alpha}},Y^{\overset{(k-1)}{h}_{\!\!\!\beta}}]
        \right)\circ\overset{(k)}{\tau}_{\!k-1},
\]
for $\alpha,\beta,\mu=0,\ldots,k-1$.
\end{lemma}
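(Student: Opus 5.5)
The plan is to combine the naturality of the Lie bracket under related vector fields with the tower compatibility \eqref{eqt}. By Lemma \ref{lemma: projected lifts}, for $\alpha,\beta\in\{0,\ldots,k-1\}$ the lift $X^{\overset{(k)}{h}_\alpha}$ is $\overset{(k)}{\tau}_{\!k-1}$-related to $X^{\overset{(k-1)}{h}_{\!\!\!\alpha}}$, and $Y^{\overset{(k)}{h}_\beta}$ is $\overset{(k)}{\tau}_{\!k-1}$-related to $Y^{\overset{(k-1)}{h}_{\!\!\!\beta}}$. Brackets of related vector fields are related, so
\[
\overset{(k)}{\tau}_{\!k-1\ast}\circ[X^{\overset{(k)}{h}_\alpha},Y^{\overset{(k)}{h}_\beta}]
=[X^{\overset{(k-1)}{h}_{\!\!\!\alpha}},Y^{\overset{(k-1)}{h}_{\!\!\!\beta}}]\circ\overset{(k)}{\tau}_{\!k-1}.
\]

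Next, for $\mu\in\{0,\ldots,k-1\}$, I would compose this identity with $\overset{(k-1)}{K}_{\!\!\!\mu}$. By \eqref{eqt}, which for $\mu=0$ reads $\tau_{k-1*}\circ\overset{(k)}{\tau}_{\!k-1\ast}=\tau_{k*}$, the left-hand side becomes $\overset{(k)}{K}_\mu\circ[X^{\overset{(k)}{h}_\alpha},Y^{\overset{(k)}{h}_\beta}]$. The right-hand side becomes $\big(\overset{(k-1)}{K}_{\!\!\!\mu}\circ[X^{\overset{(k-1)}{h}_{\!\!\!\alpha}},Y^{\overset{(k-1)}{h}_{\!\!\!\beta}}]\big)\circ\overset{(k)}{\tau}_{\!k-1}$, which is exactly the claim.

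The one step that needs care is \eqref{eqt} itself, that is, $\overset{(k-1)}{K}_{\!\!\!\mu}\circ\overset{(k)}{\tau}_{\!k-1\ast}=\overset{(k)}{K}_\mu$ for $\mu\le k-1$. This follows from the definition of the tower. We have $\overset{(k)}{K}_\mu=\overset{(\mu)}{K}_\mu\circ\overset{(k)}{\tau}_{\!\mu\ast}$ by \eqref{eq: tower}, and $\overset{(k-1)}{K}_{\!\!\!\mu}=\overset{(\mu)}{K}_\mu\circ\overset{(k-1)}{\tau}_{\!\mu\ast}$ by the definition of $\overset{(\alpha)}{K}$. Functoriality of the pushforward, together with $\overset{(k-1)}{\tau}_{\!\mu}\circ\overset{(k)}{\tau}_{\!k-1}=\overset{(k)}{\tau}_{\!\mu}$, then gives the identity.

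I also need to track base points, since both sides are maps $T^{(k)}M\to TM$ covering $\tau_k$. Using $\tau_{k-1}\circ\overset{(k)}{\tau}_{\!k-1}=\tau_k$, both sides are vectors at $\tau_k(u)$, so the identity is pointwise well posed. I expect no genuine obstacle here; the only subtlety is the restriction $\alpha,\beta\le k-1$. It is needed because the $k$-vertical lift $v_k$ projects to zero rather than to a lift on $T^{(k-1)}M$, so Lemma \ref{lemma: projected lifts} does not apply to it.
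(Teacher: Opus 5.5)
Your proposal is correct and follows the paper's proof essentially verbatim: $\overset{(k)}{\tau}_{\!k-1}$-relatedness of the lifts (Lemma~\ref{lemma: projected lifts}), naturality of the Lie bracket under related vector fields, and then composition with the tower compatibility \eqref{eqt}. Your derivation of \eqref{eqt} from \eqref{eq: tower} via functoriality of the pushforward, together with your handling of the case $\mu=0$ and of base points, is correct and supplies details the paper leaves implicit.
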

\begin{proof}
By Lemma~\ref{lemma: projected lifts}, the lifted vector fields
$X^{\overset{(k)}{h}_\alpha}$ and $Y^{\overset{(k)}{h}_\beta}$ are $\overset{(k)}{\tau}_{\!k-1}$-related to
$X^{\!\!\overset{(k-1)}{h}_{\!\!\!\alpha}}$ and $Y^{\!\!\overset{(k-1)}{h}_{\!\!\!\!\beta}}$, respectively.  Their brackets are
therefore also $\overset{(k)}{\tau}_{\!k-1}$-related.  Applying the connection tower compatibility
$\overset{(k-1)}{K}_{\!\!\!\mu}\circ\overset{(k)}{\tau}_{\!k-1\ast}=\overset{(k)}{K}_\mu$ gives the identity. \qed
\end{proof}
In particular, the $\overset{(k)}{h}_\mu$-components of the Lie brackets $[X^{\overset{(k)}{h}_\alpha},Y^{\overset{(k)}{h}_\beta}]$ with $\mu<k$ are obtained from the corresponding brackets on $T^{(k-1)}M$, so that the genuinely new information in order $k$ is the $k$-vertical components.
Hence, if
\[
        \left[
        \frac{\overset{(k)}{\delta}}{\delta q^{(\alpha)i}},
        \frac{\overset{(k)}{\delta}}{\delta q^{(\beta)l}}
        \right]
        =
        \sum_{j=1}^n\sum_{\mu=0}^k
        {}^kS^{(\mu)j}_{(\alpha)i,(\beta)l}
        \frac{\overset{(k)}{\delta}}{\delta q^{(\mu)j}},
\]
then, for $\mu=0,\ldots,k-1$,
\[
        {}^kS^{(\mu)j}_{(\alpha)i,(\beta)l}
        =
        {}^{k-1}S^{(\mu)j}_{(\alpha)i,(\beta)l}\circ\overset{(k)}{\tau}_{\!k-1},
\]
and only the coefficients ${}^kS^{(k)j}_{(\alpha)i,(\beta)l}$ must be computed anew.

The Lie brackets of lifted vector fields will be studied in more detail in the next section.  

\section{Connection maps on $T^{(k)}M$ induced by the Riemannian metric}
\label{sec:riemannian-induced-connection-tower}

In this section, we introduce a particular connection tower on $T^{(k)}M$ induced by the Riemannian metric $g$ on $M$.  This construction extends the Dombrowski splitting of $TTM$ to higher-order tangent bundles. In Section~\ref{sec:generalized-sasaki-metrics}, we use this tower to orthogonalize the adapted splitting and thereby obtain a family of higher-order Sasaki metrics, recovering the classical Sasaki metric when $k=1$.

The curvature identities used in this section are recorded in Appendix~\ref{app:riemannian-identities}.  We keep the construction of the Levi--Civita-induced connection tower and the full second-order bracket verification in the main text, since these are the basic computations on which the later formulas rest.  The longer third-order bracket computations and the general coordinate reduction for the $k$-vertical bracket components are collected in Appendix~\ref{app:bracket-computations}.

\subsection{Riemannian conventions}\label{Subsec: Riem geo}

Consider a Riemannian manifold $(M,\langle\cdot,\cdot\rangle)$, equipped with its Levi--Civita connection $\nabla$.  Thus $\nabla$ is torsion-free and compatible with the metric:
\begin{enumerate}
    \item $\nabla_XY-\nabla_YX=[X,Y]$,
    \item $X\langle Y,Z\rangle=\langle\nabla_XY,Z\rangle+\langle Y,\nabla_XZ\rangle$, for all $X,Y,Z\in\mathfrak X(M)$.
\end{enumerate}
The Levi-Civita connection is uniquely defined by the \emph{Koszul formula}
\begin{equation}\label{eq: Koszul}
\begin{aligned}
        2\langle\nabla_XY,Z\rangle
        &=X\langle Y,Z\rangle+Y\langle X,Z\rangle-Z\langle X,Y\rangle\\
        &\quad+\langle[X,Y],Z\rangle-\langle[X,Z],Y\rangle-\langle[Y,Z],X\rangle,
\end{aligned}
\end{equation}
for all $X,Y,Z\in\mathfrak X(T^{(k)}M)$.

We denote by $\nabla Y$ the linear map defined by $(\nabla Y)(X)=\nabla_XY$, and we define the second covariant derivative by
\[
        \nabla^2_{XY}=\nabla_X\nabla_Y-\nabla_{\nabla_XY}.
\]
The curvature tensor is defined by
\begin{equation}\label{eq:rg1}
        R(X,Y)Z=\nabla_X\nabla_YZ-\nabla_Y\nabla_XZ-\nabla_{[X,Y]}Z.
\end{equation}
We also write $R(X,Y)$ for the endomorphism $Z\mapsto R(X,Y)Z$.

We use the standard curvature symmetries, the corresponding identities for $\nabla R$ and $\nabla^2R$, and the basic commutation formula for covariant derivatives along a two-parameter family of curves.  These identities are listed in Appendix~\ref{app:riemannian-identities}.  Throughout the paper, $\Gamma^k_{ij}$ denotes the Christoffel symbols of $\nabla$, so that $\nabla_{\partial_i}\partial_j=\Gamma^k_{ij}\partial_k$, and $R^l_{ijk}$ denotes the components of the curvature tensor, defined by $R(\partial_i,\partial_j)\partial_k=R^l_{ijk}\partial_l$.
The curvature tensor components of the Levi--Civita connection can be expressed in terms of the Christoffel symbols via
\begin{equation}\label{eq:rg3}
        R^i_{jkl}=\partial_j\Gamma^i_{kl}-\partial_k\Gamma^i_{jl}
        +\Gamma^m_{kl}\Gamma^i_{jm}-\Gamma^m_{jl}\Gamma^i_{km}.
\end{equation}
The components of the covariant derivative of the curvature tensor are denoted by $R^m_{i;jkl}$ and are given by
\[
        (\nabla_{\partial_i}R)(\partial_j,\partial_k)\partial_l=R^m_{i;jkl}\partial_m.
\]

\subsection{Connection map on $TM$ induced by the Riemannian structure}\label{subsec TM 2}

Given a Riemannian metric $g$ on $M$, there is a canonical choice for a connection map on $TM$.  The Levi--Civita connection of $(M,g)$ induces the connection map ${K}:TTM\to TM$ given by
\begin{equation}\label{eq: connection_map_TM2}
        {K}(X)=\nabla_{\frac{\partial\gamma}{\partial t}}\frac{\partial\gamma}{\partial s}\Big\vert_{(s,t)=(0,0)},
\end{equation}
where $\gamma_s:t\mapsto\gamma_s(t)=\gamma(s,t)$ is a family of curves on $M$ adapted to $X\in TTM$.  The connection map is well-defined, since ${K}(X)$ does not depend on the particular choice of $\gamma$.  The connection coefficients of ${K}$ are given by $K^i_j=q^{(1)l}\Gamma^i_{jl}$, and hence
\[
        {K}=\left(dq^{(1)i}+q^{(1)l}\Gamma^i_{jl}dq^{(0)j}\right)\otimes\partial_i.
\]

The nonlinear connection ${H}=\ker({K})$ leads to the decomposition given in Equation \eqref{eq: direct_sum_TM}.  To standardize the notation with the general case on $T^{(k)}M$, we write $\overset{(1)}{H}_0={H}$ and $\overset{(1)}{V}_1={V}$.  The horizontal and vertical lifts of a vector field $X$ on $M$ are denoted by $X^{\overset{(1)}{h}_0}$ and $X^{\overset{(1)}{v}_1}$, respectively.

\begin{theorem}[\cite{Dombrowski1962}]\label{k1}
Let $X, Y \in \mathfrak{X}(M)$. Then
\begin{align*}
[X^{\overset{(1)}{h}_0}, Y^{\overset{(1)}{v}_1}] &= (\nabla_X Y)^{\overset{(1)}{v}_1}, \quad [X^{\overset{(1)}{v}_1}, Y^{\overset{(1)}{v}_1}] = 0, \quad  [X^{\overset{(1)}{h}_0}, Y^{\overset{(1)}{h}_0}]   = [X, Y]^{\overset{(1)}{h}_0} -\left(R(X,Y)u\right)^{\overset{(1)}{v}_1},
    \end{align*}
   with $u\in TM$.
    \end{theorem}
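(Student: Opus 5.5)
I will prove the three bracket identities of Theorem~\ref{k1} in local coordinates, using the adapted frame of the Levi--Civita connection map ${K}$ on $TM$. Its connection coefficients are $K^i_j=q^{(1)l}\Gamma^i_{jl}$, so the adapted frame is
\[
        \frac{\delta}{\delta q^{(0)j}}=\frac{\partial}{\partial q^{(0)j}}-q^{(1)l}\Gamma^i_{jl}\frac{\partial}{\partial q^{(1)i}},
        \qquad
        \frac{\delta}{\delta q^{(1)j}}=\frac{\partial}{\partial q^{(1)j}}.
\]
This follows from Equation~\eqref{eq: coordinate basis to adapted basis vector field} with $k=1$. By Lemma~\ref{lemma: fX}, for $X=X^j\partial_j$ and $Y=Y^j\partial_j$ we have $X^{\overset{(1)}{h}_0}=(X^j\circ\tau)\frac{\delta}{\delta q^{(0)j}}$ and $Y^{\overset{(1)}{v}_1}=(Y^j\circ\tau)\frac{\partial}{\partial q^{(1)j}}$. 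We also have $X^{\overset{(1)}{v}_1}(f\circ\tau)=0$ and $X^{\overset{(1)}{h}_0}(f\circ\tau)=(Xf)\circ\tau$.

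The vertical--vertical bracket is immediate. The coefficients $Y^j\circ\tau$ do not depend on $q^{(1)}$, and the coordinate fields $\partial/\partial q^{(1)j}$ commute, so $[X^{\overset{(1)}{v}_1},Y^{\overset{(1)}{v}_1}]=0$.

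For the mixed bracket, I expand $[X^{\overset{(1)}{h}_0},Y^{\overset{(1)}{v}_1}]$ and pick up two contributions.
\begin{itemize}
\item $X^{\overset{(1)}{h}_0}$ acting on $Y^i\circ\tau$ gives $(X^j\partial_jY^i)\circ\tau\,\partial_{q^{(1)i}}$.
\item $Y^{\overset{(1)}{v}_1}$ acting on the coefficient $-X^jq^{(1)l}\Gamma^i_{jl}$ gives $-Y^lX^j\Gamma^i_{jl}\,\partial_{q^{(1)i}}$, which enters the bracket with the opposite sign.
\end{itemize}
The total is $(X^j\partial_jY^i+\Gamma^i_{jl}X^jY^l)\partial_{q^{(1)i}}$, which is $(\nabla_XY)^{\overset{(1)}{v}_1}$. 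The terms in which the derivative hits $\partial/\partial q^{(1)}$ or $\partial/\partial q^{(0)}$ vanish because these frame fields commute among coordinates.

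The horizontal--horizontal bracket is the main computation. By bilinearity and $C^\infty$-linearity, it suffices to compute $[\frac{\delta}{\delta q^{(0)j}},\frac{\delta}{\delta q^{(0)k}}]$ and then add the terms coming from differentiating the coefficients. Those coefficient terms assemble to $(X^j\partial_jY^k-Y^j\partial_jX^k)\frac{\delta}{\delta q^{(0)k}}=[X,Y]^{\overset{(1)}{h}_0}$. For the frame bracket itself:
\begin{itemize}
\item The $\partial_{q^{(0)}}$-parts commute.
\item Cross terms of the form $\partial_{q^{(0)j}}$ applied to $-q^{(1)l}\Gamma^i_{kl}$ give $-q^{(1)l}\partial_j\Gamma^i_{kl}$.
\item Vertical-on-vertical terms give $q^{(1)l}\Gamma^m_{jl}\Gamma^i_{km}$.
\end{itemize}
After antisymmetrizing in $j,k$, the coefficient of $\partial_{q^{(1)i}}$ is
\[
        -q^{(1)l}\left(\partial_j\Gamma^i_{kl}-\partial_k\Gamma^i_{jl}+\Gamma^m_{kl}\Gamma^i_{jm}-\Gamma^m_{jl}\Gamma^i_{km}\right)=-q^{(1)l}R^i_{jkl}.
\]
The identification with the curvature uses Equation~\eqref{eq:rg3}, and the symmetry $\Gamma^m_{jl}=\Gamma^m_{lj}$ of the torsion-free connection is needed to match index positions. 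This gives $-(R(X,Y)u)^{\overset{(1)}{v}_1}$ with $u=q^{(1)l}\partial_l$.

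The main obstacle I expect is this last index bookkeeping. The placement of indices in $\Gamma$ within $K^i_j=q^{(1)l}\Gamma^i_{jl}$ has to be tracked together with the sign convention of Equation~\eqref{eq:rg1}, so that the quadratic $\Gamma\Gamma$ terms land exactly on the form in Equation~\eqref{eq:rg3} rather than its negative.
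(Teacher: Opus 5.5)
Your proof is correct, and every step checks, including the signs. The paper does not prove this statement itself; it is quoted from Dombrowski. Your direct computation in the adapted frame $\{\delta/\delta q^{(0)j},\partial/\partial q^{(1)j}\}$ is the same technique the paper uses for its higher-order analogues, namely Lemma~\ref{lemma: Brackets vk} and Lemma~\ref{lemma: Brackets h1,1}.

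The only place the paper departs from this method is the top component of the $\overset{(k)}{h}_0$--$\overset{(k)}{h}_0$ bracket for $k\geq 2$ (Lemma~\ref{lemma: Brackets h0h0}). There the $q^{(0)}$-derivatives of the higher coefficients become unwieldy, so that component is recovered from the Jacobi identity and weak fiberwise linearity. At $k=1$ your direct route is the natural one.

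The obstacle you anticipate does not arise. With $K^i_j=q^{(1)l}\Gamma^i_{jl}$ as written, the $\Gamma\Gamma$ terms already match \eqref{eq:rg3} term by term, so the coefficient is exactly $-q^{(1)l}R^i_{jkl}$ with no reindexing. Torsion-freeness is really used only to identify this coefficient form with the definition \eqref{eq: connection_map_TM2}, which literally gives $q^{(1)l}\Gamma^i_{lj}$. Without that symmetry, your mixed bracket would produce $\nabla_XY$ plus a torsion term.
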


\begin{remark}
The last equation is written in abbreviated form.  Precisely, if $u\in T_pM$, then
\[
        [X^{\overset{(1)}{h}_0},Y^{\overset{(1)}{h}_0}]_u=[X,Y]^{\overset{(1)}{h}_0}_u-\big(R(X_p,Y_p)u\big)^{\overset{(1)}{v}_1}_u.
\]
We use the same convention throughout the paper for tensorial expressions evaluated at the point $u\in T^{(k)}M$.
\end{remark}

\subsection{The Levi--Civita-induced connection tower}\label{subsec:LC-connection-tower}

The goal of this subsection is to define a connection tower depending only on the Riemannian metric \(g\), as happens with the connection map \(\overset{(1)}{K}\) on \(TM\).  This tower should be related to the connection map on \(TM\) by the rule
\[
        \overset{(\alpha)}{K}_1=\overset{(1)}{K}\circ\overset{(\alpha)}{\tau}_{\!1*}.
\]
Equivalently, the connection maps \(\overset{(\alpha)}{K}=(\overset{(\alpha)}{K}_1,\ldots,\overset{(\alpha)}{K}_\alpha)\) on \(T^{(\alpha)}M\) should satisfy
\begin{equation}\label{eq: t}
        \overset{(\alpha)}{K}_\beta=\overset{(\alpha-1)}{K}_\beta\circ\overset{(\alpha)}{\tau}_{\!\alpha-1*},
        \qquad \beta=1,\ldots,\alpha-1,
\end{equation}
for \(\alpha=2,\ldots,k\), starting with \(\overset{(1)}{K}_1=\overset{(1)}{K}\).  Thus, it remains to specify the highest component \(\overset{(\alpha)}{K}_\alpha\) at each order.

Let \(u\in T^{(k)}M\) and \(X\in T_uT^{(k)}M\).  Choose a \(k\)-adapted family of curves \(\gamma_s(t)=\gamma(s,t)\) on \(M\), so that
\[
        (j^k\gamma_s)(0)\big\vert_{s=0}=u,
        \qquad
        \frac{\partial}{\partial s}\Big\vert_{s=0}(j^k\gamma_s)(0)=X.
\]
For \(\alpha=1,\ldots,k\), define
\begin{equation}\label{eq: LC tower K alpha}
        \overset{(k)}{K}_\alpha(X)
        =
        \frac1{\alpha!}\nabla^\alpha_{\frac{\partial\gamma}{\partial t}}
        \frac{\partial\gamma}{\partial s}
        \Big\vert_{(s,t)=(0,0)}.
\end{equation}
Here \(\nabla^\alpha_{\frac{\partial\gamma}{\partial t}}\) denotes repeated covariant differentiation along the \(t\)-curves. The main result of this subsection is the following theorem.

\begin{theorem}\label{thm:LC-induced-connection-tower}
The map \(\overset{(k)}{K}=(\overset{(k)}{K}_1,\ldots,\overset{(k)}{K}_k)\) defined by Equation \eqref{eq: LC tower K alpha} forms a connection map on \(T^{(k)}M\).  Moreover, the family of maps \(\{\overset{(\alpha)}{K}\}_{\alpha=1}^k\) obtained by applying the same construction at each order is a connection tower.
\end{theorem}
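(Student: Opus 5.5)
We prove the theorem by an explicit local-coordinate computation. This gives well-definedness and the three axioms of a connection map, and it shows directly that the coefficients have the triangular dependence characterizing a connection tower.

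\textbf{Step 1 (coordinate expression).} Fix a $k$-adapted family $\gamma(s,t)$ and write $W(t)=\frac{\partial\gamma}{\partial s}(0,t)$, a vector field along the curve $c(t)=\gamma(0,t)$. By induction on $\alpha$, the iterated covariant derivative has the form
\[
\nabla^\alpha_{\frac{\partial\gamma}{\partial t}}W=\Big(\frac{d^\alpha W^i}{dt^\alpha}+\sum_{\beta<\alpha} P^{i}_{\alpha\beta\,j}(c,\dot c,\ldots,c^{(\alpha-\beta)})\frac{d^\beta W^j}{dt^\beta}\Big)\partial_i ,
\]
where the coefficients $P$ are polynomial in the derivatives of $c$ up to order $\alpha-\beta\le\alpha$, with coefficients given by $\Gamma$ and its derivatives along $c$. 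The inductive step is $\nabla_{\dot c}(Z^i\partial_i)=(\dot Z^i+\Gamma^i_{jl}\dot c^jZ^l)\partial_i$.

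At $t=0$, the $k$-adaptation gives $\frac{d^\beta W^j}{dt^\beta}(0)=\beta!\,X^{(\beta)j}$ and $c^{(m)}(0)=m!\,q^{(m)}(u)$. Hence
\[
\overset{(k)}{K}_\alpha(X)=\Big(X^{(\alpha)i}+\sum_{\beta<\alpha}(K_{\alpha-\beta})^i_j\,X^{(\beta)j}\Big)\partial_i ,
\]
with coefficients $(K_{\alpha-\beta})^i_j$ that are smooth functions of $q^{(0)},\ldots,q^{(\alpha-\beta)}$ only.

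This expression depends only on $u$ and $X$, so the map is well defined and linear in $X$. The first axiom follows, and $\overset{(k)}{K}$ is a $\tau_k$-morphism.

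\textbf{Step 2 (the $J$-conditions; main obstacle).} The formula \eqref{eq: connection map coefficients} requires the coefficient in front of $dq^{(\beta)j}$ to depend only on the difference $\alpha-\beta$. Only then do $K_{\alpha+1}\circ J=K_\alpha$ and $K_1\circ J=\tau_{k*}$ hold, since $J$ shifts $X^{(\beta)}\mapsto X^{(\beta+1)}$. Establishing this shift-invariance is the main obstacle.

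The approach is to prove it intrinsically rather than by comparing coefficients. Given $X$ adapted by $\gamma(s,t)$, the vector $JX$ is adapted by a family of the form $\tilde\gamma(s,t)=\gamma(s t,t)$, suitably reparametrized so that $\partial_s\tilde\gamma=t\,\partial_s\gamma$ along $s=0$. In coordinates, $\frac{1}{\alpha!}\partial_t^\alpha(t\,W)|_0=\frac{1}{(\alpha-1)!}\partial_t^{\alpha-1}W|_0$, which reproduces the shift performed by $J$.

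Covariantly, the fields $W$ and $tW$ live along the same base curve $c$. The Leibniz rule gives
\[
\nabla^{\alpha+1}(tW)=t\,\nabla^{\alpha+1}W+(\alpha+1)\nabla^{\alpha}W ,
\]
so at $t=0$,
\[
\frac{1}{(\alpha+1)!}\nabla^{\alpha+1}(tW)\Big|_0=\frac{1}{\alpha!}\nabla^\alpha W\Big|_0 .
\]
This is exactly $K_{\alpha+1}(JX)=K_\alpha(X)$. The case $\alpha=0$ gives $K_1(JX)=W(0)=\tau_{k*}X$.

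Two points must be checked: that $\tilde\gamma$ is indeed $k$-adapted to $JX$ with the same base jet $u$, and that $\nabla_{\partial_t}$ of $t\,\partial_s\gamma$ along $c$ agrees with the covariant derivative of the $s$-variation of $\tilde\gamma$. Both reduce to a first-order computation in $s$. Together with Theorem~\ref{thm: nonlinear connection equivalence}, this shows that $\overset{(k)}{K}$ is a connection map.

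\textbf{Step 3 (tower property).} By Step 1, $\overset{(k)}{K}_\alpha(X)$ involves only the derivatives $\partial_t^\beta\partial_s\gamma$ for $\beta\le\alpha$ and jets of $c$ of order at most $\alpha$. Both are determined by the $\alpha$-jet data, namely $\overset{(k)}{\tau}_{\alpha}(u)$ and $\overset{(k)}{\tau}_{\!\alpha*}X$.

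Moreover, the same family $\gamma$ is $\alpha$-adapted to $\overset{(k)}{\tau}_{\!\alpha*}X$. Applying the construction on $T^{(\alpha)}M$ therefore yields $\overset{(k)}{K}_\alpha=\overset{(\alpha)}{K}_\alpha\circ\overset{(k)}{\tau}_{\!\alpha*}$, which is \eqref{eq: tower}. Uniqueness holds because $\overset{(k)}{\tau}_{\!\alpha*}$ is surjective.

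Equivalently, the coefficients satisfy $\partial(K_\alpha)^i_j/\partial q^{(\mu)l}=0$ for $\mu>\alpha$. The same argument at each order gives compatibility among all the $\overset{(\alpha)}{K}$, consistent with \eqref{eq: t}.
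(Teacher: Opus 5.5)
Your proof is correct, and for the central step it takes a different route from the paper. Your Step 1 matches the paper's Lemma~\ref{lemma: LC tower preliminary conditions}: an inductive coordinate expansion of $\nabla^\alpha_{\partial_t}\partial_s\gamma$, giving well-definedness and linearity.

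The paper then obtains $J$-compatibility entirely through coefficients. It derives the recursion \eqref{eq: semispray pre-connection coefficients recursion} of Lemma~\ref{lemma: recursive connection coef} and proves the shift-invariance \eqref{eq: LC-tower-coefficient-compatibility} by induction, using Miron's recursion \eqref{eq: recursive coef K}. It reads off the tower property from the fact that $(K_\alpha)^i_j$ depends only on $q^{(0)},\dots,q^{(\alpha)}$.

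You instead prove $\overset{(k)}{K}_{\alpha+1}\circ J=\overset{(k)}{K}_\alpha$ and $\overset{(k)}{K}_1\circ J=\tau_{k*}$ directly. The family $\tilde\gamma(s,t)=\gamma(st,t)$ has the same base curve and variation field $tW(t)$, so it is $k$-adapted to $JX$. The identity $\nabla^{\alpha+1}_{\dot c}(tW)|_{t=0}=(\alpha+1)\nabla^{\alpha}_{\dot c}W|_{t=0}$ then finishes the argument. In your route, shift-invariance of the coefficients is a consequence of the axioms rather than the thing to be proved. The tower property is immediate because a $k$-adapted family is automatically $\alpha$-adapted to $\overset{(k)}{\tau}_{\!\alpha*}X$. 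No extra reparametrization is needed, and both of your ``points to check'' follow at once from $\tilde\gamma(0,t)=\gamma(0,t)$ and $\partial_s\tilde\gamma(0,t)=tW(t)$.

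What each approach buys: yours is shorter, coordinate-free, and uses nothing beyond the Leibniz rule along a curve. It also shows why this ordering of derivatives is compatible with $J$: the almost-tangent structure acts by multiplying the variation field by $t$. The paper's route is heavier, but it yields the explicit coefficient recursion and its identification with Miron's formula. These are reused later, for example in \eqref{eq: partialK}--\eqref{eq: partialK-1} and in the bracket computations.

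Two minor points. Labelling the Step 1 coefficients $(K_{\alpha-\beta})^i_j$ presupposes Step 2. Also, Theorem~\ref{thm: nonlinear connection equivalence} is not needed to conclude that $\overset{(k)}{K}$ is a connection map, since the three conditions are the definition; it only tells you that $\ker\overset{(k)}{K}$ is a nonlinear connection.
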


The proof proceeds in three steps.  We first show that each component of \(\overset{(k)}{K}\) is a well-defined linear map with the required normalization on vertical directions.  The same computation also identifies the corresponding connection coefficients.  We then record the resulting recursion among these coefficients, and finally use this recursion to prove that \(\overset{(k)}{K}\) has the local form of a connection map and is compatible with the tower projections.

\begin{lemma}\label{lemma: LC tower preliminary conditions}
The map \(\overset{(k)}{K}=(\overset{(k)}{K}_1,\ldots,\overset{(k)}{K}_k)\) satisfies the following conditions:
\begin{enumerate}
    \item \(\overset{(k)}{K}_{\alpha}:T_uT^{(k)}M\to T_{\tau_k(u)}M\) is linear for all \(\alpha=1,\ldots,k\).
    \item \(\overset{(k)}{K}_{\alpha}\circ J^\alpha=\tau_{k*}\) for all \(\alpha=1,\ldots,k\).
\end{enumerate}
\end{lemma}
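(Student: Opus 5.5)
The plan is to work entirely in local coordinates. I will expand $\frac{1}{\alpha!}\nabla^\alpha_{\partial_t}\partial_s\gamma$ at $(0,0)$ as an expression that involves only the mixed partials $\frac{\partial^{\beta+1}\gamma^i}{\partial s\,\partial t^\beta}(0,0)$ for $\beta\le\alpha$ and the pure $t$-derivatives $\frac{\partial^\beta\gamma^i}{\partial t^\beta}(0,0)$. This one computation gives both well-definedness and linearity.

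\textbf{Step 1 (structure of the iterated covariant derivative).} For a vector field $W(s,t)$ along $\gamma$, write
\[
\nabla_{\partial_t}W=\Big(\partial_tW^i+\Gamma^i_{jl}(\gamma)\,\partial_t\gamma^l\,W^j\Big)\partial_i .
\]
Apply this $\alpha$ times to $W=\partial_s\gamma$ and argue by induction on $\alpha$. The conclusion is
\[
\nabla^\alpha_{\partial_t}\partial_s\gamma=\Big(\partial_t^\alpha\partial_s\gamma^i+\sum_{\beta=0}^{\alpha-1}A^{i}_{\alpha,\beta\,j}\big(\gamma,\partial_t\gamma,\dots,\partial_t^{\alpha}\gamma\big)\,\partial_t^{\beta}\partial_s\gamma^j\Big)\partial_i .
\]
Here the coefficients $A$ are smooth functions, polynomial in the $t$-derivatives, and built from $\Gamma$ and its derivatives. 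The induction step is immediate: differentiating in $t$ raises the order of the $t$-derivatives by one, and the $\Gamma$-term multiplies by an undifferentiated $W$. In particular, the expression is linear in the $s$-derivatives, with coefficient $1$ on the top term.

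\textbf{Step 2 (well-definedness and linearity).} At $(0,0)$, the quantities $\frac{1}{\beta!}\partial_t^\beta\gamma^i(0,0)$ are exactly the coordinates $q^{(\beta)i}(u)$, because $\gamma_0$ represents $u$. Likewise, $\frac{1}{\beta!}\partial_s\partial_t^\beta\gamma^i(0,0)=X^{(\beta)i}$, by the definition of a $k$-adapted family (mixed partials commute). So $\overset{(k)}{K}_\alpha(X)$ depends only on $u$ and $X$, not on the choice of family. It is also linear in $X$, because the formula is linear in the $X^{(\beta)i}$ with coefficients depending only on $u$. The resulting expression has the form $\big(X^{(\alpha)i}+\sum_{\beta<\alpha}c^i_{\beta j}(u)X^{(\beta)j}\big)\partial_i$. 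This also identifies the connection coefficients. One remark is needed here: a $k$-adapted family exists for every $X$, for instance a polynomial family in $t$ whose coefficients are affine in $s$ in a chart.

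\textbf{Step 3 (normalization).} If $X=J^\alpha(Y)$, then $X^{(\beta)}=0$ for $\beta<\alpha$ and $X^{(\alpha)i}=Y^{(0)i}=(\tau_{k*}Y)^i$. By Step 2, $\overset{(k)}{K}_\alpha(J^\alpha Y)=\tau_{k*}Y$.

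The main obstacle is Step 1, specifically confirming that no derivatives $\partial_s$ of $t$-derivatives of order higher than $\alpha$ appear, and that the leading term has coefficient exactly $1$. To handle this I will track carefully that the $\Gamma$-terms never differentiate in $s$. Only the $W$-factor carries $\partial_s$, so the top $s$-dependent term is produced solely by the pure $\partial_t^\alpha$ acting on $\partial_s\gamma$.
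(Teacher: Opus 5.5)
Your proposal is correct and follows essentially the same route as the paper. Both arguments run an induction in local coordinates showing that $\nabla^\alpha_{\partial\gamma/\partial t}\frac{\partial\gamma}{\partial s}$ is linear in the mixed partials $\frac{\partial^{\mu+1}\gamma^j}{\partial s\,\partial t^\mu}$, $\mu\le\alpha$, with leading coefficient $1$ and lower coefficients depending only on the $t$-jet of $\gamma_0$ (hence only on $u$), and then evaluate at $(s,t)=(0,0)$ to get well-definedness, linearity and $\overset{(k)}{K}_\alpha\circ J^\alpha=\tau_{k*}$; the only difference is that the paper makes the coefficients explicit through a $d_T$-recursion (reused in Lemma~\ref{lemma: recursive connection coef}), whereas you keep them implicit and also note the existence of adapted families, which the paper leaves tacit.
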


\begin{proof}
We define the connection coefficients $(K_\alpha)_{(\mu)j}^i$ of $K_\alpha$ implicitly by $K_\alpha\left(\frac{\partial}{\partial q^{(\mu)i}}\right) = (K_\alpha)_{(\mu)j}^i \partial_j$ for all $\mu = 0, \dots, k$ and $i,j = 1, \dots, n$.

We prove by induction on \(\alpha\) that
\[
        \nabla^\alpha_{\frac{\partial \gamma}{\partial t}}
        \frac{\partial \gamma}{\partial s}
        =
        \alpha!
        \sum_{\mu=0}^{\alpha}
        (L_\alpha)^i_{(\mu)j}
        \frac1{\mu!}
        \frac{\partial^{\mu+1}\gamma^j}{\partial s\partial t^\mu}
        \partial_i,
\]
where, for \(\alpha=1,\ldots,k\),
\[
        (L_\alpha)^i_{(\alpha)j}=\delta^i_j,
        \qquad
        (L_1)^i_{(0)j}
        =
        \frac{\partial \gamma^l}{\partial t}\Gamma^i_{lj}.
\]
For \(\alpha=1\), we have
\[
        \nabla_{\frac{\partial \gamma}{\partial t}}
        \frac{\partial \gamma}{\partial s}
        =
        \left(
        \frac{\partial^2 \gamma^i}{\partial s\partial t}
        +
        \frac{\partial \gamma^l}{\partial t}
        \frac{\partial \gamma^j}{\partial s}
        \Gamma^i_{lj}
        \right)\partial_i,
\]
so the asserted form follows immediately.

Now suppose that the formula holds for some fixed \(\alpha-1<k\), so that
\[
        \nabla^{\alpha-1}_{\frac{\partial \gamma}{\partial t}}
        \frac{\partial \gamma}{\partial s}
        =
        (\alpha-1)!
        \sum_{\mu=0}^{\alpha-1}
        (L_{\alpha-1})^i_{(\mu)j}
        \frac1{\mu!}
        \frac{\partial^{\mu+1}\gamma^j}{\partial s\partial t^\mu}
        \partial_i,
\]
with \((L_{\alpha-1})^i_{(\alpha-1)j}=\delta^i_j\).  Applying one more covariant derivative in the \(t\)-direction gives
\[
\begin{aligned}
        \nabla^\alpha_{\frac{\partial \gamma}{\partial t}}
        \frac{\partial \gamma}{\partial s}
        &=
        (\alpha-1)!
        \sum_{\mu=0}^{\alpha-1}
        \frac1{\mu!}
        \Bigg[
        \frac{\partial (L_{\alpha-1})^i_{(\mu)j}}{\partial t}
        \frac{\partial^{\mu+1}\gamma^j}{\partial s\partial t^\mu}
        +(L_{\alpha-1})^i_{(\mu)j}
        \frac{\partial^{\mu+2}\gamma^j}{\partial s\partial t^{\mu+1}}
        +(L_{\alpha-1})^m_{(\mu)j}
        \frac{\partial^{\mu+1}\gamma^j}{\partial s\partial t^\mu}
        \frac{\partial \gamma^l}{\partial t}\Gamma^i_{lm}
        \Bigg]\partial_i
        \\
        &=
        \frac{\partial^{\alpha+1}\gamma^i}{\partial s\partial t^\alpha}\partial_i
        +(\alpha-1)!
        \sum_{\mu=0}^{\alpha-1}
        \frac1{\mu!}
        \Bigg[
        \frac{\partial (L_{\alpha-1})^i_{(\mu)j}}{\partial t}
        +\mu(L_{\alpha-1})^i_{(\mu-1)j}
        +(L_{\alpha-1})^m_{(\mu)j}
        \frac{\partial \gamma^l}{\partial t}\Gamma^i_{lm}
        \Bigg]
        \frac{\partial^{\mu+1}\gamma^j}{\partial s\partial t^\mu}
        \partial_i
        \\
        &=
        \frac{\partial^{\alpha+1}\gamma^i}{\partial s\partial t^\alpha}\partial_i 
        +(\alpha-1)!
        \sum_{\mu=0}^{\alpha-1}
        \frac1{\mu!}
        \Bigg[
        \frac{\partial (L_{\alpha-1})^i_{(\mu)j}}{\partial t}
        +(L_{\alpha-1})^m_{(\mu)j}(L_1)^i_{(0)m}
        +\mu(L_{\alpha-1})^i_{(\mu-1)j}
        \Bigg]
        \frac{\partial^{\mu+1}\gamma^j}{\partial s\partial t^\mu}
        \partial_i .
\end{aligned}
\]
Comparison with the asserted form at order \(\alpha\) gives the induction step.

Evaluating at \((s,t)=(0,0)\), we obtain
\begin{equation}\label{eq: K and Lalpha}
        \overset{(k)}{K}_\alpha(X)
        =
        \sum_{\mu=0}^{\alpha}
        (L_\alpha)^i_{(\mu)j}\Big\vert_{(s,t)=(0,0)}
        X^{(\mu)j}\partial_i
\end{equation}
Moreover,
\[
\begin{aligned}
        (L_\alpha)^i_{(\mu)j}\Big\vert_{(s,t)=(0,0)}
        &=
        \frac1\alpha\left[
        \sum_{\nu=0}^{\alpha-1}
        (\nu+1)q^{(\nu+1)m}
        \frac{\partial (L_{\alpha-1})^i_{(\mu)j}}{\partial q^{(\nu)m}}
        +\mu(L_{\alpha-1})^i_{(\mu-1)j}
        +(L_{\alpha-1})^m_{(\mu)j}(L_1)^i_{(0)m}
        \right]
        \\
        &=
        \frac1\alpha\left[
        d_T(L_{\alpha-1})^i_{(\mu)j}
        +(L_{\alpha-1})^m_{(\mu)j}(L_1)^i_{(0)m}
        +\mu(L_{\alpha-1})^i_{(\mu-1)j}
        \right].
\end{aligned}
\]
Thus the coefficients \((L_\alpha)^i_{(\mu)j}|_{(s,t)=(0,0)}\) depend only on \(u\), so that the maps
\[
        \overset{(k)}{K}_\alpha:T_uT^{(k)}M\to T_{\tau_k(u)}M
\]
are well-defined and linear.  In coordinates, the connection coefficients satisfy
\[
        (K_\alpha)^i_{(\mu)j}
        =
        (L_\alpha)^i_{(\mu)j}\Big\vert_{(s,t)=(0,0)}
        \quad\text{for }\mu\leq\alpha, \qquad (K_\alpha)^i_{(\mu)j} = 0 \quad\text{for }\mu > \alpha
\]
Since \((L_\alpha)^i_{(\alpha)j}=\delta^i_j\), it follows that $(K_\alpha)^i_{(\alpha)j}=\delta^i_j$.
Therefore \(\overset{(k)}{K}_\alpha\circ J^\alpha=\tau_{k*}\), proving the second condition.
\qed
\end{proof}

The preceding computation also gives the following recursion satisfied by the connection coefficients.

\begin{lemma}\label{lemma: recursive connection coef}
The connection coefficients of \(\overset{(k)}{K}\) satisfy
\begin{equation}\label{eq: semispray pre-connection coefficients recursion}
        (K_\alpha)^i_{(\mu)j}
        =
        \frac1\alpha\left[
        d_T(K_{\alpha-1})^i_{(\mu)j}
        +(K_{\alpha-1})^m_{(\mu)j}(K_1)^i_{(0)m}
        +\mu(K_{\alpha-1})^i_{(\mu-1)j}
        \right],
\end{equation}
for \(\alpha=2,\ldots,k\), \(\mu=0,\ldots,\alpha-1\), and \(1\leq i,j\leq n\), where
\[
        (K_{\alpha-1})^i_{(-1)j}:=0,
        \qquad
        (K_1)^i_{(0)j}=q^{(1)l}\Gamma^i_{lj}.
\]
\end{lemma}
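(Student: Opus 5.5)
The recursion is essentially a restatement of the coefficient identity obtained at the end of the proof of Lemma~\ref{lemma: LC tower preliminary conditions}, so the plan is to extract it from there and justify the one identification it relies on. In that proof we showed, by induction on \(\alpha\), that \(\nabla^\alpha_{\partial_t\gamma}\partial_s\gamma\) has the form \(\alpha!\sum_{\mu}(L_\alpha)^i_{(\mu)j}\frac1{\mu!}\partial_s\partial_t^\mu\gamma^j\,\partial_i\). The inductive step gives the pointwise relation
\[
        \alpha\,(L_\alpha)^i_{(\mu)j}
        =
        \frac{\partial (L_{\alpha-1})^i_{(\mu)j}}{\partial t}
        +(L_{\alpha-1})^m_{(\mu)j}(L_1)^i_{(0)m}
        +\mu(L_{\alpha-1})^i_{(\mu-1)j},
\]
valid for all \((s,t)\) and for \(\mu=0,\ldots,\alpha-1\), with the convention \((L_{\alpha-1})^i_{(-1)j}=0\) absorbing the \(\mu=0\) boundary term from the reindexing of the sum. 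The factor \(1/\alpha\) comes from comparing \((\alpha-1)!/\mu!\) with \(\alpha!/\mu!\).

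First, I will show that each \((L_\beta)^i_{(\mu)j}\), viewed as a function of \((s,t)\), is a fixed smooth function of the jet \((j^{\beta}\gamma_s)(t)\). This follows by induction from the same recursion, since \((L_1)^i_{(0)j}=\partial_t\gamma^l\,\Gamma^i_{lj}(\gamma)\) depends only on the first jet. Restricting to \(s=0\) and \(t\) near \(0\), we have \((L_{\alpha-1})^i_{(\mu)j}(0,t)=(K_{\alpha-1})^i_{(\mu)j}\big((j^{k}\gamma_0)(t)\big)\). Here I use that \((K_{\alpha-1})^i_{(\mu)j}\) on \(T^{(k)}M\) is defined by the same formula as \((L_{\alpha-1})\) at \((0,0)\), applied to the curve \(\gamma_0(\cdot+t)\).

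Second, I will differentiate this in \(t\) at \(t=0\) using the chain rule along the jet curve \(t\mapsto (j^k\gamma_0)(t)\). Since \(\frac{d}{dt}q^{(\nu)m}=(\nu+1)q^{(\nu+1)m}\) along jets, the derivative \(\partial_t (L_{\alpha-1})\) evaluated at \((0,0)\) equals \(\sum_{\nu}(\nu+1)q^{(\nu+1)m}\,\partial (K_{\alpha-1})/\partial q^{(\nu)m}\), which is \(d_T(K_{\alpha-1})^i_{(\mu)j}\) evaluated at \(u\). Because \((K_{\alpha-1})\) depends on \(q^{(\nu)}\) only for \(\nu\le\alpha-1\) (inductively), the operator \(\overset{(\alpha)}{d_T}\) suffices, and higher jet coordinates never enter. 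The remaining terms evaluate directly to \((K_{\alpha-1})^m_{(\mu)j}(K_1)^i_{(0)m}\) and \(\mu(K_{\alpha-1})^i_{(\mu-1)j}\), with \((K_1)^i_{(0)m}=q^{(1)l}\Gamma^i_{lm}\). Evaluating the pointwise relation at \((0,0)\) then yields \eqref{eq: semispray pre-connection coefficients recursion}.

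The only delicate point is the first step: making rigorous that \(\partial_t\) of the \(L\)-coefficients along the family equals \(d_T\) of the \(K\)-coefficients. This requires the \(L\)'s to be genuine functions of the jet along the whole curve, not just at \(t=0\), and a check that the \(\mu\)-range and the \((-1)\) convention match at the boundary \(\mu=0\). I would handle this by carrying the statement ``\(L_\beta(s,t)=K_\beta\circ (j^k\gamma_s)(t)\)'' as part of the induction hypothesis.
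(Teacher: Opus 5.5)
Your proposal is correct and follows the paper's route: the recursion is read off from the inductive step in the proof of Lemma~\ref{lemma: LC tower preliminary conditions}, evaluated at $(s,t)=(0,0)$, with $\partial_t$ of the $L$-coefficients identified with $d_T$ of the $K$-coefficients via the chain rule along the jet curve. Carrying $L_\beta(s,t)=K_\beta\circ(j^k\gamma_s)(t)$ through the induction only makes explicit an identification the paper uses implicitly, when it rewrites $\partial_t(L_{\alpha-1})$ as $\sum_\nu(\nu+1)q^{(\nu+1)m}\,\partial(L_{\alpha-1})/\partial q^{(\nu)m}$.
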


\begin{proof}
By Equation \eqref{eq: K and Lalpha}, the coefficients of \(\overset{(k)}{K}_\alpha\) are obtained by evaluating the coefficients \((L_\alpha)^i_{(\mu)j}\) at \((s,t)=(0,0)\).  The final recursion derived in the proof of Lemma~\ref{lemma: LC tower preliminary conditions} therefore gives
\[
        (K_\alpha)^i_{(\mu)j}
        =
        \frac1\alpha\left[
        d_T(K_{\alpha-1})^i_{(\mu)j}
        +(K_{\alpha-1})^m_{(\mu)j}(K_1)^i_{(0)m}
        +\mu(K_{\alpha-1})^i_{(\mu-1)j}
        \right],
\]
for \(\alpha=2,\ldots,k\) and \(\mu=0,\ldots,\alpha-1\), with the convention \((K_{\alpha-1})^i_{(-1)j}=0\).  The expression for \((K_1)^i_{(0)j}\) follows from the base case \(\alpha=1\).
\qed
\end{proof}

In particular, for \((K_\alpha)^i_j=(K_\alpha)^i_{(0)j}\), we have
\begin{equation}\label{eq: recursive coef K}
        (K_\alpha)^i_j
        =
        \frac1\alpha\left(
        d_T(K_{\alpha-1})^i_j
        +(K_{\alpha-1})^l_j(K_1)^i_l
        \right),
        \qquad \alpha=2,\ldots,k.
\end{equation}
This relation was introduced by Miron in \cite[Theorem 9.1.1]{MironBook1997}, where it was shown to describe a connection map for any smooth choice of \((K_1)^i_j\) satisfying the corresponding transformation laws under changes of coordinates.

We now complete the proof of Theorem~\ref{thm:LC-induced-connection-tower}.

\begin{proof}[Proof of Theorem \ref{thm:LC-induced-connection-tower}]
By Lemma~\ref{lemma: LC tower preliminary conditions}, each
\[
        \overset{(k)}{K}_\alpha:T_uT^{(k)}M\to T_{\tau_k(u)}M
\]
is well-defined and linear, and satisfies \(\overset{(k)}{K}_{\alpha}\circ J^\alpha=\tau_{k*}\).  It remains to prove that \(\overset{(k)}{K}=(\overset{(k)}{K}_1,\ldots,\overset{(k)}{K}_k)\) has the local form of a connection map and that the resulting family is compatible with the tower projections.

For \(\alpha=1,\ldots,k\), let \(P(\alpha)\) denote the statement
\begin{equation}\label{eq: LC-tower-coefficient-compatibility}
        (K_\alpha)^i_{(\alpha-\beta)j}
        =
        (K_\beta)^i_{(0)j},
        \qquad
        \beta=1,\ldots,\alpha-1 .
\end{equation}
We prove \(P(\alpha)\) by induction on \(\alpha\).  The case \(\alpha=1\) is vacuous.  Suppose that \(P(\alpha-1)\) holds for some \(1<\alpha\leq k\).

First consider \(\beta=1\).  By Equation \eqref{eq: semispray pre-connection coefficients recursion},
\[
\begin{aligned}
        (K_\alpha)^i_{(\alpha-1)j}
        &=
        \frac1\alpha\left[
        d_T(K_{\alpha-1})^i_{(\alpha-1)j}
        +(K_{\alpha-1})^m_{(\alpha-1)j}(K_1)^i_{(0)m}
        +(\alpha-1)(K_{\alpha-1})^i_{(\alpha-2)j}
        \right]
        \\
        &=
        \frac1\alpha\left[
        (K_1)^i_{(0)j}
        +(\alpha-1)(K_1)^i_{(0)j}
        \right]
        =
        (K_1)^i_{(0)j}.
\end{aligned}
\]
Now let \(\beta\geq2\).  The induction hypothesis gives
\[
        (K_{\alpha-1})^i_{(\alpha-\beta)j}
        =
        (K_{\beta-1})^i_{(0)j}, \quad \text{and} \quad (K_{\alpha-1})^i_{(\alpha-\beta-1)j}
        =
        (K_\beta)^i_{(0)j}.
\]
when \(\beta\leq\alpha-2\), while the case \(\beta=\alpha-1\) is tautological for the latter.  Therefore, again by Equation \eqref{eq: semispray pre-connection coefficients recursion},
\[
\begin{aligned}
        (K_\alpha)^i_{(\alpha-\beta)j}
        &=
        \frac1\alpha\left[
        d_T(K_{\beta-1})^i_{(0)j}
        +(K_{\beta-1})^m_{(0)j}(K_1)^i_{(0)m}
        +(\alpha-\beta)(K_\beta)^i_{(0)j}
        \right]
        \\
        &=
        \frac1\alpha\left[
        \beta(K_\beta)^i_{(0)j}
        +(\alpha-\beta)(K_\beta)^i_{(0)j}
        \right]
        =
        (K_\beta)^i_{(0)j},
\end{aligned}
\]
where the second equality uses Equation \eqref{eq: recursive coef K}.  This proves Equation \eqref{eq: LC-tower-coefficient-compatibility}.

Together with
\[
        (K_\alpha)^i_{(\alpha)j}=\delta^i_j,
        \qquad
        (K_\alpha)^i_{(\mu)j}=0\quad\text{for }\mu>\alpha,
\]
Equation \eqref{eq: LC-tower-coefficient-compatibility} shows that
\[
        \overset{(k)}{K}_\alpha
        =
        \left(
        dq^{(\alpha)i}
        +(K_1)^i_jdq^{(\alpha-1)j}
        +\cdots
        +(K_\alpha)^i_jdq^{(0)j}
        \right)\otimes\partial_i .
\]
Hence \(\overset{(k)}{K}=(\overset{(k)}{K}_1,\ldots,\overset{(k)}{K}_k)\) is a connection map on \(T^{(k)}M\).

Finally, we have already shown that \((K_\alpha)^i_j\) depends only on the coordinates \(q^{(\mu)a}\) with \(0\leq\mu\leq\alpha\).  Therefore \(\overset{(k)}{K}_\alpha\) descends through \(\overset{(k)}{\tau}_\alpha\) for each \(\alpha\), and the resulting family is compatible with the tower projections.  Hence \(\{\overset{(\alpha)}{K}\}_{\alpha=1}^k\) is a connection tower.
\qed
\end{proof}

The connection coefficients satisfy
\begin{equation}\label{eq: partialK}
        \frac{\partial (K_\alpha)^j_i}{\partial q^{(\alpha)l}}=\Gamma^j_{li},
        \qquad \alpha=1,\ldots,k,
\end{equation}
and, for \(\alpha\geq2\),
\begin{equation}\label{eq: partialK-1}
        \frac{\partial (K_\alpha)^j_i}{\partial q^{(\alpha-1)l}}
        =
        \frac1\alpha q^{(1)a}dq^j\left(
        \nabla_{\partial_l}\nabla_{\partial_a}\partial_i
        +(\alpha-1)\nabla_{\partial_a}\nabla_{\partial_l}\partial_i
        \right).
\end{equation}
In general, the connection coefficients admit the intrinsic expression
\[
        (K_\alpha)^j_i
        =
        dq^j\left(\overset{(k)}{K}_\alpha\left(\frac{\partial}{\partial q^{(0)i}}\right)\right)
        =
        \frac1{\alpha!}dq^j\left(
        \nabla^\alpha_{\frac{\partial\gamma}{\partial t}}
        \partial_i
        \right)\Big\vert_{t=0}.
\]
For the first values of \(\alpha\),
\begin{align*}
        (K_1)^j_i
        &=q^{(1)a}\Gamma^j_{ai},\\
        (K_2)^j_i
        &=q^{(2)a}dq^j(\nabla_{\partial_a}\partial_i)
          +\frac12q^{(1)a}q^{(1)b}dq^j(\nabla_{\partial_a}\nabla_{\partial_b}\partial_i),\\
        (K_3)^j_i
        &=q^{(3)a}dq^j(\nabla_{\partial_a}\partial_i)
          +q^{(2)a}q^{(1)b}dq^j(\nabla_{\partial_b}\nabla_{\partial_a}\partial_i)\\
        &\qquad
          +\frac16q^{(1)a}q^{(1)b}q^{(1)c}dq^j(\nabla_{\partial_c}\nabla_{\partial_a}\nabla_{\partial_b}\partial_i)
          +\frac13q^{(2)a}q^{(1)b}R^j_{abi}.
\end{align*}

We shall also use the diffeomorphism \(\overset{(k)}{F}\) from Equation \eqref{eq: F} associated with the present connection tower.  Let \(\overset{(k)}{F}_\beta:T^{(k)}M\to TM\) be its components functions, i.e.
\[
        \overset{(k)}{F}(u)=(\overset{(k)}{F}_0(u),\overset{(k)}{F}_1(u),\ldots,\overset{(k)}{F}_{k-1}(u)).
\]
Then, if \(u=j^k_0(q)\),
\[
        \overset{(k)}{F}_\alpha(u)
        =
        \nabla^\alpha_{\frac{dq}{dt}}\frac{dq}{dt}\Big\vert_{t=0},
        \qquad \alpha=0,\ldots,k-1.
\]
If \(F_\alpha^i\) are the coordinate functions of \(\overset{(k)}{F}_\alpha\), then
\begin{equation}\label{eq: cov_alpha_coef}
        F_\alpha^i=\overset{(k)}{d_T}F_{\alpha-1}^i+F_{\alpha-1}^lq^{(1)j}\Gamma^i_{lj},
        \qquad F_0^i=q^{(1)i}.
\end{equation}
Moreover,
\begin{equation}\label{eq: coef der F}
        \frac{\partial F_\alpha^i}{\partial q^{(\alpha)j}}=(\alpha+1)!q^{(1)l}\Gamma^i_{jl},
        \qquad
        \frac{\partial F_\alpha^i}{\partial q^{(\alpha+1)j}}=(\alpha+1)!\delta^i_j,
        \qquad
        \frac{\partial F_\alpha^i}{\partial q^{(\mu)j}}=0
\end{equation}
for \(\alpha+2\leq\mu\leq k\), and consequently
\begin{equation}\label{eq: coef delta der F vanish}
        \frac{\delta F_\alpha^i}{\delta q^{(\alpha)j}}=0.
\end{equation}
We denote \(\overset{(k)}{F}_\alpha(u)\) by \(u^{(\alpha+1)}\).  Thus, through \(\overset{(k)}{F}\), we identify a point \(u\in T^{(k)}M\) with
\[
        (u^{(1)},u^{(2)},\ldots,u^{(k)})\in (TM)_\oplus^k.
\]

\begin{remark}
The above construction is not the only formal way one might try to extend the first-order Dombrowski connection map, since there is no a priori preference for the order of covariant differentiation in the components of the connection maps. For instance, a natural alternative candidate is
$$\overset{(k)}{\widetilde K}_\alpha(X)
=
\frac{1}{\alpha!}
\nabla_{\frac{\partial \gamma}{\partial s}}
\nabla_{\frac{\partial \gamma}{\partial t}}^{\alpha-1}
\frac{\partial \gamma}{\partial t}
\Big|_{(s,t)=(0,0)} ,$$
which agrees with the usual Dombrowski connection map in order one. However, we claim that already for $k=3$, this candidate fails to produce a valid connection map without additional curvature correction terms, due to the failure of the compatibility condition with the canonical almost-tangent structure $J$. Thus, among the simple iterated-covariant-derivative constructions without correction terms, the order of differentiation used in this manuscript is the natural one. It is in this sense that the resulting Levi--Civita-induced connection tower can be regarded as canonical. We leave a detailed analysis of alternative constructions to future work.
\end{remark}

\subsection{Lie brackets of lifted vector fields}\label{subsec:LC-lift-brackets}

We now calculate Lie brackets of $\overset{(k)}{h}_\alpha$-lifts of vector fields on $M$ for the Levi--Civita-induced connection tower.  The $k$-vertical components of the brackets between $\overset{(k)}{h}_0$-lifts are the most involved; the full computation for $k=2$ is given below, while the longer $k=3$ computation is collected in Appendix~\ref{app:third-order-brackets}.

We begin with the brackets whose second factor is the $k$-vertical lift of a vector field on $M$.

\begin{lemma}\label{lemma: Brackets vk}
Let $X, Y \in \mathfrak{X}(M)$. Then
    \begin{equation*}
 [X^{\overset{(k)}{h}_0}, Y^{\overset{(k)}{v}_k}] = (\nabla_X Y)^{\overset{(k)}{v}_k}, \quad [X^{\overset{(k)}{h}_\alpha}, Y^{\overset{(k)}{v}_k}]= 0, \; \forall \alpha=1, 2, \ldots, k-1, \quad [X^{\overset{(k)}{v}_k}, Y^{\overset{(k)}{v}_k}]= 0.
    \end{equation*}
\end{lemma}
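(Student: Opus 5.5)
The plan is to compute both brackets directly in local coordinates, using the adapted frame of Subsection~\ref{subsec lifts} and the coefficient structure of the Levi--Civita-induced tower established in Theorem~\ref{thm:LC-induced-connection-tower}. First, by Lemma~\ref{lemma: fX} and Equation~\eqref{eq: coordinate basis to adapted basis vector field} with $\alpha=k$, we have $\frac{\delta}{\delta q^{(k)i}}=\frac{\partial}{\partial q^{(k)i}}$. Hence, for $Y=Y^j\partial_j$,
\[
Y^{\overset{(k)}{v}_k}=(Y^j\circ\tau_k)\frac{\partial}{\partial q^{(k)j}} .
\]
Its coefficients depend only on $q^{(0)}$, and it differentiates only in the $q^{(k)}$-directions. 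The identity $[X^{\overset{(k)}{v}_k},Y^{\overset{(k)}{v}_k}]=0$ then follows at once: each field has coefficients independent of $q^{(k)}$ and only $\partial/\partial q^{(k)}$-components.

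Next I invert Equation~\eqref{eq: coordinate basis to adapted basis vector field}, equivalently dualize Equation~\eqref{eq: coordinate basis to adapted basis}, to get
\[
\frac{\delta}{\delta q^{(\alpha)i}}=\frac{\partial}{\partial q^{(\alpha)i}}-(C_1)^j_i\frac{\partial}{\partial q^{(\alpha+1)j}}-\cdots-(C_{k-\alpha})^j_i\frac{\partial}{\partial q^{(k)j}} .
\]
The triangular inversion of $\delta q^{(\alpha)i}=dq^{(\alpha)i}+(K_1)^i_jdq^{(\alpha-1)j}+\cdots$ gives the recursion
\[
C_m=K_m-\sum_{r=1}^{m-1}K_r\,C_{m-r}
\]
(up to index ordering). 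So $C_m$ is $K_m$ plus a polynomial in $K_1,\dots,K_{m-1}$. By the tower property, $K_r$ depends only on $q^{(0)},\dots,q^{(r)}$. Therefore $C_m$ depends only on $q^{(\le m)}$, and
\[
\frac{\partial C_m}{\partial q^{(m)l}}=\frac{\partial K_m}{\partial q^{(m)l}}=\Gamma^j_{li}
\]
by Equation~\eqref{eq: partialK}.

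Now compute $[X^{\overset{(k)}{h}_\alpha},Y^{\overset{(k)}{v}_k}]$ with $X^{\overset{(k)}{h}_\alpha}=X^i\frac{\delta}{\delta q^{(\alpha)i}}$. The term $X^{\overset{(k)}{h}_\alpha}(Y^j\circ\tau_k)$ equals $(XY^j)\circ\tau_k$ for $\alpha=0$ and vanishes for $\alpha\ge1$, by Lemma~\ref{lemma: fX}. The other term is $-Y^{\overset{(k)}{v}_k}$ applied to the coefficients of $X^{\overset{(k)}{h}_\alpha}$, so only $q^{(k)}$-derivatives of the $C_m$ can contribute.
\begin{itemize}
\item For $\alpha\ge1$, the indices that occur satisfy $m\le k-\alpha\le k-1$. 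The relevant $C_m$ are then independent of $q^{(k)}$, and the bracket is zero.
\item For $\alpha=0$, only the term $-(C_k)^j_iX^i\frac{\partial}{\partial q^{(k)j}}$ survives. It contributes $Y^lX^i\Gamma^j_{li}\frac{\partial}{\partial q^{(k)j}}$, which gives
\[
[X^{\overset{(k)}{h}_0},Y^{\overset{(k)}{v}_k}]=\big(X(Y^j)+\Gamma^j_{li}X^iY^l\big)\frac{\partial}{\partial q^{(k)j}}=(\nabla_XY)^{\overset{(k)}{v}_k}.
\]
\end{itemize}
As a consistency check, Lemma~\ref{lemma: K mu} predicts that the components with $\mu<k$ vanish. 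This is consistent, because $Y^{\overset{(k)}{v}_k}$ is $\overset{(k)}{\tau}_{k-1}$-related to the zero field.

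The main obstacle is the bookkeeping in the inversion step. I need to establish cleanly that $C_m$ depends only on $q^{(\le m)}$ and that its top-order derivative equals that of $K_m$. I would prove this by induction on $m$ from the triangular recursion, using that the tower condition $\partial (K_r)^i_j/\partial q^{(\mu)l}=0$ holds for $\mu>r$.
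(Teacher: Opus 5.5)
Your proposal is correct and follows essentially the paper's own argument. It is a direct coordinate computation showing that the adapted frame fields with $\alpha\ge1$ are independent of $q^{(k)}$, and that for $\alpha=0$ the bracket reduces to $\partial (K_k)^j_i/\partial q^{(k)l}=\Gamma^j_{li}$. Your inductive check that $C_m$ depends only on $q^{(\le m)}$, with top-order derivative equal to that of $K_m$, simply makes explicit a step the paper leaves implicit.
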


\begin{proof}
From Equation \eqref{eq: coordinate basis to adapted basis vector field}, the adapted basis vectors
$\delta/\delta q^{(\alpha)i}$ are independent of $q^{(k)l}$ for $\alpha=1,\ldots,k-1$. Hence
\[
        \left[\frac{\delta}{\delta q^{(\alpha)i}},\frac{\partial}{\partial q^{(k)l}}\right]=0,
        \qquad \alpha=1,\ldots,k-1.
\]
It follows from Lemma~\ref{lemma: fX} that $[X^{\overset{(k)}{h}_\alpha},Y^{\overset{(k)}{v}_k}]=0$ for $\alpha=1,\ldots,k-1$. The identity $[X^{\overset{(k)}{v}_k},Y^{\overset{(k)}{v}_k}]=0$ is immediate. Finally,
\[
\begin{aligned}
        \left[\frac{\delta}{\delta q^{(0)i}},\frac{\partial}{\partial q^{(k)l}}\right]
        &=\frac{\partial (K_k)^j_i}{\partial q^{(k)l}}\frac{\partial}{\partial q^{(k)j}}\\
        &=\Gamma^j_{li}\frac{\partial}{\partial q^{(k)j}}
        =\left(\nabla_{\partial_i}\partial_l\right)^{\overset{(k)}{v}_k}.
\end{aligned}
\]
Therefore, by Lemma~\ref{lemma: Brackets fX gY},
\[
        [X^{\overset{(k)}{h}_0},Y^{\overset{(k)}{v}_k}]
        =\left(X^i\partial_iY^j+\Gamma^j_{il}X^iY^l\right)\frac{\partial}{\partial q^{(k)j}}
        =(\nabla_XY)^{\overset{(k)}{v}_k}.
\]
\qed
\end{proof}
For $k=2$, the complete bracket table is as follows.
\begin{theorem}\label{prop: liebrac2}
Let $X, Y \in \mathfrak{X}(M)$. Then
\begin{align*}
[X^{\overset{(2)}{h}_0}, Y^{\overset{(2)}{v}_2}] &= (\nabla_X Y)^{\overset{(2)}{v}_2}, \quad [X^{\overset{(2)}{h}_1}, Y^{\overset{(2)}{v}_2}] = [X^{\overset{(2)}{v}_2}, Y^{\overset{(2)}{v}_2}] = 0\\
        [X^{\overset{(2)}{h}_0}, Y^{\overset{(2)}{h}_1}] &= (\nabla_X Y)^{\overset{(2)}{h}_1} + \frac12\left(  R(u^{(1)},X)Y -  R(X,Y)u^{(1)} \right)_u^{\overset{(2)}{v}_2}, \quad [X^{\overset{(2)}{h}_1}, Y^{\overset{(2)}{h}_1}] =  0,\\
        [X^{\overset{(2)}{h}_0}, Y^{\overset{(2)}{h}_0}] & =  [X, Y]^{\overset{(2)}{h}_0} -\big(R(X,Y)u^{(1)}\big)^{\overset{(2)}{h}_1} -\big(\frac12 R(X,Y)u^{(2)}+ (\nabla_{u^{(1)}} R)(X, Y)u^{(1)})\big)^{\overset{(2)}{v}_2},
    \end{align*}\normalsize
     where $u^{(1)}=\overset{(2)}{F}_0(u)$, $u^{(2)}= \overset{(2)}{F}_1(u)$, $u \in T^{(2)}M$.
    \end{theorem}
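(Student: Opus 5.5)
The first line of the statement is Lemma~\ref{lemma: Brackets vk} with $k=2$, so nothing needs to be proved there. For the remaining brackets I will use Lemma~\ref{lemma: fX} to reduce to adapted frame vectors. It gives $X^{\overset{(2)}{h}_\alpha}=(X^i\circ\tau_2)\,\delta/\delta q^{(\alpha)i}$, and horizontal lifts act on pulled-back functions as $X^{\overset{(2)}{h}_0}(f\circ\tau_2)=(Xf)\circ\tau_2$, while $X^{\overset{(2)}{h}_\alpha}(f\circ\tau_2)=0$ for $\alpha\ge1$. Hence each bracket equals the tensorial part $X^iY^l[\delta/\delta q^{(\alpha)i},\delta/\delta q^{(\beta)l}]$ plus the derivative terms $X^i\partial_iY^j$ coming from the $h_0$ factor. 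By Lemma~\ref{lemma: K mu}, the $h_0$- and $h_1$-components of every bracket are those of the corresponding bracket on $TM$, composed with $\overset{(2)}{\tau}_1$. Because $\overset{(2)}{\tau}_{1*}$ sends $h_1$-lifts to $v_1$-lifts, Theorem~\ref{k1} supplies these components directly:
\begin{itemize}
\item the $h_0,h_1$ part of $[X^{h_0},Y^{h_1}]$ is $(\nabla_XY)^{h_1}$;
\item the $h_0,h_1$ part of $[X^{h_1},Y^{h_1}]$ vanishes;
\item the $h_0,h_1$ part of $[X^{h_0},Y^{h_0}]$ is $[X,Y]^{h_0}-(R(X,Y)u^{(1)})^{h_1}$, with $u^{(1)}=q^{(1)}$.
\end{itemize}
So only the $v_2$-components have to be computed, and these are given by $\overset{(2)}{K}_2$ applied to the bracket.

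For the computation I will write the frame via \eqref{eq: coordinate basis to adapted basis vector field} inverted, which on $T^{(2)}M$ reads
\[
\frac{\delta}{\delta q^{(1)i}}=\frac{\partial}{\partial q^{(1)i}}-(K_1)^j_i\frac{\partial}{\partial q^{(2)j}},\qquad
\frac{\delta}{\delta q^{(0)i}}=\frac{\partial}{\partial q^{(0)i}}-(K_1)^j_i\frac{\partial}{\partial q^{(1)j}}-\big((K_2)^j_i-(K_1)^j_m(K_1)^m_i\big)\frac{\partial}{\partial q^{(2)j}}.
\]
Here $(K_1)^j_i=q^{(1)a}\Gamma^j_{ai}$ and $(K_2)^j_i$ is given by its explicit formula above. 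The vector $\partial/\partial q^{(2)j}$ is $\delta/\delta q^{(2)j}$, and its $K_2$-value is $\partial_j$. To read off the $K_2$-component I apply $\delta q^{(2)}=dq^{(2)}+K_1\,dq^{(1)}+K_2\,dq^{(0)}$ to the coordinate expression of each bracket.

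The three brackets then go as follows.
\begin{itemize}
\item $[\delta/\delta q^{(1)i},\delta/\delta q^{(1)l}]$ is proportional to $\partial_{q^{(1)i}}(K_1)^j_l-\partial_{q^{(1)l}}(K_1)^j_i=\Gamma^j_{il}-\Gamma^j_{li}=0$ by torsion-freeness, which gives $[X^{h_1},Y^{h_1}]=0$.
\item For $[\delta/\delta q^{(0)i},\delta/\delta q^{(1)l}]$ I compute the $\partial/\partial q^{(2)}$-coefficient after subtracting the $K_1$-contribution of the $h_1$-part. Using \eqref{eq: partialK-1} with $\alpha=2$, this coefficient becomes $\tfrac12 q^{(1)a}$ times a combination of $\nabla_{\partial_l}\nabla_{\partial_a}\partial_i$, $\nabla_{\partial_a}\nabla_{\partial_l}\partial_i$ and derivatives of $\Gamma$. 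I expect it to assemble into $\tfrac12\big(R(u^{(1)},\partial_i)\partial_l-R(\partial_i,\partial_l)u^{(1)}\big)$, either via \eqref{eq:rg3} or by the Bianchi identity rewriting this as $\tfrac12\big(R(u^{(1)},X)Y+R(Y,u^{(1)})X+\dots\big)$. I will check the sign and the exact combination against the claimed formula.
\item For $[\delta/\delta q^{(0)i},\delta/\delta q^{(0)l}]$ the $v_2$-coefficient is antisymmetric in $i,l$. It involves $\partial_{q^{(0)}}$ and $\partial_{q^{(1)}}$ of $(K_2)^j-(K_1K_1)^j$. The $q^{(2)a}$-linear terms reduce via \eqref{eq:rg3} to $-\tfrac12 R^j_{ila}q^{(2)a}$-type terms, and the quadratic $q^{(1)}q^{(1)}$ terms should give $-(\nabla_{u^{(1)}}R)(\partial_i,\partial_l)u^{(1)}$.
\end{itemize}

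To express the answer in intrinsic form I need to convert $q^{(2)}$ to $u^{(2)}=F_1=2q^{(2)}+K_1q^{(1)}$. This conversion produces extra terms in $q^{(1)}q^{(1)}\Gamma$, and they must cancel against the non-tensorial pieces. \textbf{The main obstacle} is this last bracket: the quadratic $q^{(1)}$ terms in the $v_2$-component are messy. To keep them under control I would evaluate at the center of normal coordinates, where $\Gamma=0$. There $u^{(2)}=2q^{(2)}$, and only first derivatives of $\Gamma$ (giving $R$) and second derivatives (giving $\nabla R$) survive. In these coordinates $(\nabla_{u^{(1)}}R)$ is $q^{(1)a}\partial_aR$, and $\partial\partial\Gamma$ symmetrizations relate to $\nabla R$ by the usual normal-coordinate identities. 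Since both sides of each identity are tensorial in $u$ and in $X,Y$, a pointwise check in normal coordinates suffices. I would use the same device to verify the $[X^{h_0},Y^{h_1}]$ curvature term.
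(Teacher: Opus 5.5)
The overall structure of your proposal is the paper's. The $\overset{(2)}{v}_2$-brackets come from Lemma~\ref{lemma: Brackets vk}, and the lower components from Lemma~\ref{lemma: K mu} together with Theorem~\ref{k1}. The $\overset{(2)}{K}_2$-components of the $(h_1,h_1)$ and $(h_0,h_1)$ brackets are computed directly, and those go through. For instance, at a point where $\Gamma(p)=0$,
\[
\overset{(2)}{K}_2\Big(\Big[\frac{\delta}{\delta q^{(0)i}},\frac{\delta}{\delta q^{(1)l}}\Big]\Big)=\Big(\frac12 q^{(1)a}(\partial_a\Gamma^j_{li}+\partial_l\Gamma^j_{ai})-q^{(1)a}\partial_i\Gamma^j_{al}\Big)\partial_j=\frac12 q^{(1)a}(R^j_{ail}-R^j_{ila})\partial_j,
\]
as claimed. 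For $[X^{\overset{(2)}{h}_0},Y^{\overset{(2)}{h}_0}]$ you take a different route. The paper avoids the direct computation and fixes the $v_2$-coefficient through the Jacobi identity with $\delta/\delta q^{(2)m}$ and $\delta/\delta q^{(1)m}$, plus weak fiberwise linearity. Your pointwise computation is legitimate and much shorter, but it does not deliver the statement.

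\textbf{The gap.} The step that fails is your expectation that the quadratic $q^{(1)}q^{(1)}$ terms give $-(\nabla_{u^{(1)}}R)(\partial_i,\partial_l)u^{(1)}$. At $u$ over $p$ with $\Gamma(p)=0$, three things hold:
\begin{itemize}
\item $K_1(u)=0$;
\item the $\partial/\partial q^{(2)}$-coefficient $(K_2)^j_i-(K_1)^j_m(K_1)^m_i$ of $\delta/\delta q^{(0)i}$ has $q^{(2)m}$-derivative $\Gamma^j_{mi}(p)=0$;
\item every product containing an undifferentiated $\Gamma$ drops.
\end{itemize}
Hence
\[
\overset{(2)}{K}_2\Big(\Big[\frac{\delta}{\delta q^{(0)i}},\frac{\delta}{\delta q^{(0)l}}\Big]_u\Big)
=\Big(\frac{\partial (K_2)^j_i}{\partial q^{(0)l}}-\frac{\partial (K_2)^j_l}{\partial q^{(0)i}}\Big)\partial_j
=\Big(q^{(2)a}R^j_{lia}+\frac12\,q^{(1)a}q^{(1)b}\,\partial_aR^j_{lib}\Big)\partial_j .
\]
Using $u^{(2)}=2q^{(2)}$ and $\partial_aR=\nabla_aR$ at $p$, this equals
\[
-\frac12 R(\partial_i,\partial_l)u^{(2)}-\frac12(\nabla_{u^{(1)}}R)(\partial_i,\partial_l)u^{(1)}.
\]
The factor $\frac12$ on the second term is the $\frac12$ in front of $q^{(1)a}q^{(1)b}$ in $(K_2)^j_i$, and nothing cancels it. So your planned check against the claimed formula fails: carried out correctly, your method contradicts the coefficient $1$ in the statement.

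An intrinsic computation confirms this. From $\nabla_s\nabla_t\partial_t\gamma=\nabla_t\nabla_t\partial_s\gamma+R(\partial_s\gamma,\partial_t\gamma)\partial_t\gamma$ one gets
\[
\overset{(2)}{K}_2=\frac12\big(Du^{(2)}-R(\tau_{2*}\cdot,u^{(1)})u^{(1)}\big).
\]
So $X^{\overset{(2)}{h}_0}$ is the Levi--Civita horizontal lift on $TM\oplus TM$ plus the vertical field $(0,R(X,u^{(1)})u^{(1)})$. The standard vector-bundle bracket formulas together with the second Bianchi identity then give the same $-\frac12\big(R(X,Y)u^{(2)}+(\nabla_{u^{(1)}}R)(X,Y)u^{(1)}\big)$.

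The paper's proof slips at exactly this point. Its value $-\frac12[(\nabla_{u^{(1)}}R)(\partial_i,\partial_j)\partial_m+(\nabla_{\partial_m}R)(\partial_i,\partial_j)u^{(1)}]$ for $\partial\Phi^l_{ij}/\partial q^{(1)m}$ is the derivative of $-\frac12(\nabla_{u^{(1)}}R)(\partial_i,\partial_j)u^{(1)}$. It then takes $\Phi$ to be $q^{(1)m}\,\partial\Phi/\partial q^{(1)m}$, which drops the Euler factor $\frac12$ for a quadratic form. The $\overset{(2)}{v}_2$-component of $[X^{\overset{(2)}{h}_0},Y^{\overset{(2)}{h}_0}]$ should therefore carry $\frac12(\nabla_{u^{(1)}}R)(X,Y)u^{(1)}$, and no argument can prove the identity as written. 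The same coefficient propagates into the third-order brackets and the Levi--Civita and geodesic formulas of Section~\ref{sec:generalized-sasaki-metrics}.

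Two smaller points:
\begin{itemize}
\item You do not need normal-coordinate identities for $\partial\partial\Gamma$. Only $\Gamma(p)=0$ is used, and antisymmetrizing in $(i,l)$ produces $\partial_aR^j_{lib}$ directly.
\item Your $q^{(2)}$-term is $-R^j_{ila}q^{(2)a}$ with coefficient $1$, not $-\frac12$. It becomes $-\frac12R(\partial_i,\partial_l)u^{(2)}$ only after substituting $u^{(2)}=2q^{(2)}$. Keep that bookkeeping explicit when the whole issue is a factor of $2$.
\end{itemize}
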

From Lemma \ref{lemma: K mu}, many of the Lie brackets are inherited directly from the first-order case studied by Dombrowski (Theorem \ref{k1}). The reamining brackets will be handled now in a series of technical lemmas. 

\begin{lemma}\label{lemma: Brackets h1,1}
Let  $X, Y \in \mathfrak{X}(M)$. Then $\overset{(2)}{K}_2([X^{\overset{(2)}{h}_1}, Y^{\overset{(2)}{h}_1}]_u) = 0$, for $u \in T^{(2)}M$.
\end{lemma}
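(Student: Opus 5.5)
The plan is to reduce the statement to a coordinate computation on the adapted frame and then extend it to arbitrary vector fields using Lemma~\ref{lemma: fX}. Write $X=X^i\partial_i$ and $Y=Y^l\partial_l$. By Lemma~\ref{lemma: fX}, $X^{\overset{(2)}{h}_1}=(X^i\circ\tau_2)\,\frac{\delta}{\delta q^{(1)i}}$, and similarly for $Y$. The same lemma gives $X^{\overset{(2)}{h}_1}(f\circ\tau_2)=0$ for every $f\in C^\infty(M)$. Expanding the bracket by the Leibniz rule, the terms in which one lift differentiates the component functions of the other therefore vanish. This leaves
\[
[X^{\overset{(2)}{h}_1},Y^{\overset{(2)}{h}_1}]=(X^iY^l\circ\tau_2)\left[\frac{\delta}{\delta q^{(1)i}},\frac{\delta}{\delta q^{(1)l}}\right].
\]
It is then enough to show that $\overset{(2)}{K}_2$ annihilates the bracket of two adapted vector fields $\delta/\delta q^{(1)i}$ and $\delta/\delta q^{(1)l}$.

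Next I make this bracket explicit. On $T^{(2)}M$ we have $\frac{\delta}{\delta q^{(2)j}}=\frac{\partial}{\partial q^{(2)j}}$. Inverting Equation \eqref{eq: coordinate basis to adapted basis vector field} with $(K_1)^j_i=q^{(1)a}\Gamma^j_{ai}$ gives
\[
\frac{\delta}{\delta q^{(1)i}}=\frac{\partial}{\partial q^{(1)i}}-q^{(1)a}\Gamma^j_{ai}\frac{\partial}{\partial q^{(2)j}}.
\]
The coefficients $q^{(1)a}\Gamma^j_{ai}$ depend only on $q^{(0)}$ and $q^{(1)}$. Hence the bracket of the two $\partial/\partial q^{(2)}$-parts vanishes. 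Only the derivatives $\partial/\partial q^{(1)}$ of these coefficients survive, and the bracket equals
\[
\left(\Gamma^j_{li}-\Gamma^j_{il}\right)\frac{\partial}{\partial q^{(2)j}}.
\]
This vanishes identically because the Levi--Civita connection is torsion-free. So the bracket of the adapted vector fields is in fact zero, which gives more than the $\overset{(2)}{K}_2$-component statement.

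As a consistency check, Lemma~\ref{lemma: K mu} shows that the lower components $\overset{(2)}{K}_\mu$ of the bracket, for $\mu=0,1$, are inherited from $TM$. There the $\overset{(1)}{h}_1$-lifts are vertical lifts, whose brackets vanish by Theorem~\ref{k1}. This agrees with the full bracket $[X^{\overset{(2)}{h}_1},Y^{\overset{(2)}{h}_1}]=0$ stated in Theorem~\ref{prop: liebrac2}.

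I do not expect a real obstacle. The only step needing care is getting the sign and the index order of $\Gamma$ right in the inverted frame formula, since it is exactly the symmetry $\Gamma^j_{li}=\Gamma^j_{il}$ that makes the two surviving terms cancel.
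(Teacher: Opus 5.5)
Your proposal is correct and follows essentially the same route as the paper. Both arguments expand $\delta/\delta q^{(1)i}=\partial/\partial q^{(1)i}-(K_1)^j_i\,\partial/\partial q^{(2)j}$ and find that the bracket of adapted fields is $\bigl(\partial (K_1)^j_i/\partial q^{(1)l}-\partial (K_1)^j_l/\partial q^{(1)i}\bigr)\partial/\partial q^{(2)j}$, which vanishes because $\nabla$ is torsion-free; both then pass to general $X,Y$ via Lemma~\ref{lemma: Brackets fX gY}, and your observation that the whole bracket vanishes, not only its $\overset{(2)}{K}_2$-component, is already implicit in the paper's computation.
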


\begin{proof}
Using Equation \eqref{eq: coordinate basis to adapted basis vector field} and Lemma \ref{lemma: Brackets fX gY}, we get
         \begin{align*}
        \left[ \frac{\delta}{\delta q^{(1)i}}, \frac{\delta}{\delta q^{(1)l}}\right] &= \left[\frac{\partial}{\partial q^{(1)i}} - (K_1)^j_i \frac{\delta}{\delta q^{(2)j}}, \ \frac{\delta}{\delta q^{(1)l}}\right] \\
        &= \left[\frac{\partial}{\partial q^{(1)i}}, \frac{\delta}{\delta q^{(1)l}}\right] + \frac{\delta (K_1)^j_i}{\delta q^{(1)l}}\frac{\delta}{\delta q^{(2)j}} \\
        &= \left[\frac{\partial}{\partial q^{(1)i}}, \frac{\partial}{\partial q^{(1)l}} - (K_1)_l^j \frac{\delta}{\delta q^{(2)j}} \right] + \frac{\delta (K_1)^j_i}{\delta q^{(1)l}}\frac{\delta}{\delta q^{(2)j}} \\
        &= -(K_1)^j_l\left[\frac{\partial}{\partial q^{(1)i}}, \frac{\delta}{\delta q^{(2)j}} \right] + \left(\frac{\partial (K_1)^j_i}{\partial q^{(1)l}} -\frac{\partial (K_1)^j_l}{\partial q^{(1)i}}\right)\frac{\delta}{\delta q^{(2)j}}  \\
        &= \left(\frac{\partial (K_1)^j_i}{\partial q^{(1)l}} -\frac{\partial (K_1)^j_l}{\partial q^{(1)i}}\right)\frac{\delta}{\delta q^{(2)j}}.
        \end{align*}
Thus, since $\nabla$ is torsion-free,
   $ \overset{(2)}{K}_2\circ  \left[ \frac{\delta}{\delta q^{(1)i}}, \frac{\delta}{\delta q^{(1)l}}\right] = 0$. Taking into account Lemma \ref{lemma: Brackets fX gY}, the result follows.
   \qed
\end{proof}

\begin{lemma}\label{lemma: Brackets h0,1}
Let  $X, Y \in \mathfrak{X}(M)$. Then $   \overset{(2)}{K}_2([X^{\overset{(2)}{h}_0}, Y^{\overset{(2)}{h}_1}]_u) = \frac12\left(  R(u^{(1)},X_p)Y_p -  R(X_p,Y_p)u^{(1)} \right)$,  where $u^{(1)}=\overset{(2)}{F}_0(u)$, $u \in T^{(2)}_pM$, $p\in M$.
\end{lemma}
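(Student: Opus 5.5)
The plan is to reduce the statement to a computation on the adapted frame and then to identify the result as a curvature expression in normal coordinates.

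\textbf{Reduction to coordinate fields.} By Lemma~\ref{lemma: fX}, $X^{\overset{(2)}{h}_0}=X^i\frac{\delta}{\delta q^{(0)i}}$ and $Y^{\overset{(2)}{h}_1}=Y^l\frac{\delta}{\delta q^{(1)l}}$, with coefficients pulled back by $\tau_2$. Expanding the bracket, as in Lemma~\ref{lemma: Brackets fX gY}, gives
\[
X^iY^l\Big[\frac{\delta}{\delta q^{(0)i}},\frac{\delta}{\delta q^{(1)l}}\Big]+X^i(\partial_iY^l)\frac{\delta}{\delta q^{(1)l}}.
\]
The extra term involving $Y^l$ applied to the derivative $\frac{\delta}{\delta q^{(1)l}}$ vanishes, since $\frac{\delta}{\delta q^{(1)l}}(X^i\circ\tau_2)=0$ by Lemma~\ref{lemma: fX}. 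The term $X(Y^l)\frac{\delta}{\delta q^{(1)l}}$ lies in $\overset{(2)}{H}_1$, so it is annihilated by $\overset{(2)}{K}_2$. Hence it suffices to show
\[
\overset{(2)}{K}_2\Big[\frac{\delta}{\delta q^{(0)i}},\frac{\delta}{\delta q^{(1)l}}\Big]
=\tfrac12\,q^{(1)a}\big(R^j_{ail}-R^j_{ila}\big)\partial_j .
\]
The right-hand side is simply $\tfrac12\big(R(u^{(1)},\partial_i)\partial_l-R(\partial_i,\partial_l)u^{(1)}\big)$, because $u^{(1)}=\overset{(2)}{F}_0(u)$ has components $q^{(1)a}$.

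\textbf{Coordinate computation.} I will write the adapted frame explicitly on $T^{(2)}M$:
\[
\frac{\delta}{\delta q^{(2)j}}=\frac{\partial}{\partial q^{(2)j}},\qquad
\frac{\delta}{\delta q^{(1)l}}=\frac{\partial}{\partial q^{(1)l}}-(K_1)^j_l\frac{\partial}{\partial q^{(2)j}},
\]
\[
\frac{\delta}{\delta q^{(0)i}}=\frac{\partial}{\partial q^{(0)i}}-(K_1)^j_i\frac{\partial}{\partial q^{(1)j}}+\big((K_1)^m_i(K_1)^j_m-(K_2)^j_i\big)\frac{\partial}{\partial q^{(2)j}}.
\]
The bracket then has components only along $\partial/\partial q^{(1)j}$ and $\partial/\partial q^{(2)j}$. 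I will apply $K_2=dq^{(2)}+(K_1)\,dq^{(1)}+(K_2)\,dq^{(0)}$ to it, using that $K_1$ and $K_2$ have no $q^{(2)}$-dependence except through $K_2$. That dependence is controlled by \eqref{eq: partialK}, namely $\partial(K_2)^j_i/\partial q^{(2)l}=\Gamma^j_{li}$. The $q^{(1)}$-derivative is given by \eqref{eq: partialK-1}. This yields an explicit expression in $\Gamma$, $\partial\Gamma$ and $q^{(1)}$, and it is cleaner to organise the result as
\[
\frac{\delta (K_1)^j_l}{\delta q^{(0)i}}-\frac{\delta (K_2)^j_i}{\delta q^{(1)l}}+(\text{quadratic }\Gamma\text{ terms}).
\]

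\textbf{Identification with curvature (main obstacle).} The delicate step is the bookkeeping that turns this expression into curvature, in particular the second-derivative terms coming from $\nabla_{\partial_l}\nabla_{\partial_a}\partial_i$ in \eqref{eq: partialK-1}. Since both sides are tensorial in the point $p$ (the left side by the reduction above), I will evaluate at $p$ in normal coordinates, so that $\Gamma(p)=0$ and all quadratic $\Gamma$ terms drop out. The expression then becomes
\[
q^{(1)a}\Big(\partial_i\Gamma^j_{al}-\tfrac12\partial_l\Gamma^j_{ai}-\tfrac12\partial_a\Gamma^j_{li}\Big).
\]
By \eqref{eq:rg3} this equals $\tfrac12q^{(1)a}(R^j_{ial}+R^j_{ail})$. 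Finally I will check, using the symmetry of $\Gamma$ and the antisymmetry and Bianchi identities listed in Appendix~\ref{app:riemannian-identities}, that this matches $\tfrac12 q^{(1)a}(R^j_{ail}-R^j_{ila})$. Sign conventions must be tracked carefully at this point, since any sign slip in $C_2$ or in \eqref{eq:rg3} would propagate directly into the final identity.
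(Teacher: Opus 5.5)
Your route is valid and differs from the paper's. Your reduction is correct: $\overset{(2)}{K}_2([X^{\overset{(2)}{h}_0},Y^{\overset{(2)}{h}_1}]_u)$ is intrinsic and depends only on $X_p,Y_p$. So you may compute it in a normal chart centred at $p$, which kills every term with an undifferentiated $\Gamma$. The paper instead works in a general chart, writes the coefficient derivatives as $q^{(1)a}dq^j(\nabla_{\partial_a}\nabla_{\partial_l}\partial_i)$ and similar, and obtains curvature by commuting second covariant derivatives and applying the first Bianchi identity.

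However, the computation you display has the wrong sign, so the final check fails as written. Write $A=\delta/\delta q^{(0)i}$ and $C=\delta/\delta q^{(1)l}$ in your coordinate form. Then the $\partial/\partial q^{(2)j}$-component of $[A,C]$ is $A\big(-(K_1)^j_l\big)-C\big((K_1)^m_i(K_1)^j_m-(K_2)^j_i\big)$. In general $\overset{(2)}{K}_2[A,C]=\frac{\delta (K_2)^j_i}{\delta q^{(1)l}}-\frac{\partial (K_1)^j_l}{\partial q^{(0)i}}$, which is the paper's expression and the opposite of your organised one. At $p$ in normal coordinates it equals
\[
q^{(1)a}\Big(\frac12\partial_l\Gamma^j_{ai}+\frac12\partial_a\Gamma^j_{li}-\partial_i\Gamma^j_{al}\Big),
\]
which is the negative of what you wrote.

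Second, your identification with $\frac12 q^{(1)a}(R^j_{ial}+R^j_{ail})$ is identically zero, by skew-symmetry of $R$ in its first two lower indices. Your displayed expression in fact equals $\frac12 q^{(1)a}(R^j_{ila}+R^j_{ial})=-\frac12 q^{(1)a}(R^j_{ail}-R^j_{ila})$. Carrying out your last step correctly would therefore produce the negative of the lemma.

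With the corrected sign, write the expression as $\frac12 q^{(1)a}\big[(\partial_l\Gamma^j_{ai}-\partial_i\Gamma^j_{al})+(\partial_a\Gamma^j_{il}-\partial_i\Gamma^j_{al})\big]$. By \eqref{eq:rg3} at $p$ this is $\frac12 q^{(1)a}(R^j_{lia}+R^j_{ail})$. Then $R^j_{lia}=-R^j_{ila}$ gives exactly $\frac12 q^{(1)a}(R^j_{ail}-R^j_{ila})$, with no Bianchi identity needed. So the method works, but the sign bookkeeping you identified as the main risk is exactly where the proposal currently breaks.
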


\begin{proof}
Using again Equation \eqref{eq: coordinate basis to adapted basis vector field} and Lemma \ref{lemma: Brackets fX gY}, we get

        \begin{align*}
    \left[\frac{\delta}{\delta q^{(0)i}}, \frac{\delta}{\delta q^{(1)l}} \right] &= \left[\frac{\partial}{\partial q^{(0)i}} - (K_1)^j_i \frac{\delta}{\delta q^{(1)j}} - (K_2)^j_i \frac{\delta}{\delta q^{(2)j}} , \  \frac{\delta}{\delta q^{(1)l}} \right] \\
    &= \left[\frac{\partial}{\partial q^{(0)i}}, \frac{\delta}{\delta q^{(1)l}}  \right] + \frac{\delta (K_1)^j_i}{\delta q^{(1)l}} \frac{\delta}{\delta q^{(1)j}} + \frac{\delta (K_2)^j_i}{\delta q^{(1)l}} \frac{\delta}{\delta q^{(2)j}}  - (K_1)^j_i \left[\frac{\delta}{\delta q^{(1)j}}, \ \frac{\delta}{\delta q^{(1)l}} \right] \\
    &= \left[\frac{\partial}{\partial q^{(0)i}}, \frac{\delta}{\delta q^{(1)l}}  \right] + \frac{\delta (K_1)^j_i}{\delta q^{(1)l}} \frac{\delta}{\delta q^{(1)j}} + \frac{\delta (K_2)^j_i}{\delta q^{(1)l}} \frac{\delta}{\delta q^{(2)j}}.
\end{align*}
Moreover,
\begin{align*}
    \left[\frac{\partial}{\partial q^{(0)i}},\ \frac{\delta}{\delta q^{(1)l}}  \right] &= \left[\frac{\partial}{\partial q^{(0)i}}, \frac{\partial}{\partial q^{(1)l}} - (K_1)^j_l \frac{\delta}{\delta q^{(2)j}}  \right] \\
    &=  -\frac{\partial(K_1)^j_l}{\partial q^{(0)i}} \frac{\delta}{\delta q^{(2)j}}- (K_1)^j_l \left[\frac{\partial}{\partial q^{(0)i}}, \frac{\delta}{\delta q^{(2)j}}\right] \\
    &= -\frac{\partial (K_1)^j_l}{\partial q^{(0)i}} \frac{\delta}{\delta q^{(2)j}}.
\end{align*}
Hence, we obtain
\begin{align*}
    \left[\frac{\delta}{\delta q^{(0)i}}, \frac{\delta}{\delta q^{(1)l}} \right] = &\frac{\partial (K_1)^j_i}{\partial q^{(1)l}} \frac{\delta}{\delta q^{(1)j}} + \left(\frac{\delta (K_2)^j_i}{\delta q^{(1)l}} -  \frac{\partial (K_1)^j_l}{\partial q^{(0)i}}\right)\frac{\delta}{\delta q^{(2)j}},
\end{align*}
from which it follows that
\begin{align*}
    \overset{(2)}{K}_2\circ \left[\frac{\delta}{\delta q^{(0)i}}, \frac{\delta}{\delta q^{(1)l}} \right]&= \left(\frac{\delta (K_2)^j_i}{\delta q^{(1)l}} -  \frac{\partial (K_1)^j_l}{\partial q^{(0)i}}\right)\partial_j \\
    &= \left(\frac{\partial (K_2)^j_i}{\partial q^{(1)l}} - (K_1)^m_l \frac{\partial (K_1)^j_i}{\partial q^{(1)m}} - \frac{\partial (K_1)^j_l}{\partial q^{(0)i}}\right)\partial_j.
\end{align*}
By direct calculation,
    \begin{align*}
        \frac{\partial (K_2)^j_i}{\partial q^{(1)l}} &= \frac12 q^{(1)a} dq^j(\nabla_{\partial_a} \nabla_{\partial_l} \partial_i) + \frac12 q^{(1)a} dq^j(\nabla_{\partial_l} \nabla_{\partial_a} \partial_i) \\
        &= q^{(1)a} dq^j (\nabla_{\partial_a} \nabla_{\partial_l} \partial_i) + \frac12 q^{(1)a} R_{lai}^j,
    \end{align*}
    and
    \begin{align*}
        (K_1)^m_l \frac{\partial (K_1)^j_i}{\partial q^{(1)m}} + \frac{\partial (K_1)^j_l}{\partial q^{(0)i}} &= q^{(1)a}dq^j(\nabla_{\partial_i}\nabla_{\partial_a} \partial_l).
    \end{align*}
Therefore,
\begin{align*}
    \overset{(2)}{K}_2\left(\left[\frac{\delta}{\delta q^{(0)i}}, \frac{\delta}{\delta q^{(1)l}} \right]_u\right) &=
   q^{(1)a} \left( dq^j\left(\nabla_{\partial_a} \nabla_{\partial_l} \partial_i - \nabla_{\partial_i}\nabla_{\partial_a} \partial_l\right) + \frac12 R_{lai}^j\right)\partial_j\Big{\vert}_{p} \\
    &= q^{(1)a}(R_{ail}^j + \frac12 R^j_{lai})\partial_j\Big{\vert}_{p} \\
    &= \frac12 q^{(1)a} (R^j_{ail} - R^j_{ila})\partial_j\Big{\vert}_{p}.
\end{align*}
Finally, taking into account Lemma \ref{lemma: Brackets fX gY}, the result follows.
\qed
\end{proof}

\begin{lemma}\label{lemma: Brackets h0h0}
Let  $X, Y \in \mathfrak{X}(M)$. Then $\overset{(2)}{K}_2([X^{\overset{(2)}{h}_0}, Y^{\overset{(2)}{h}_0}]_u)= -\frac12 R(X_p,Y_p)u^{(2)} - (\nabla_{u^{(1)}} R)(X_p, Y_p)u^{(1)},$
 where $u^{(1)}=\overset{(2)}{F}_0(u)$, $u^{(2)}= \overset{(2)}{F}_1(u)$, $u \in T^{(2)}_pM$, $p\in M$.
    \end{lemma}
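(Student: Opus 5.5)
I will follow the same pattern as Lemmas~\ref{lemma: Brackets h1,1} and \ref{lemma: Brackets h0,1}: compute the bracket of adapted basis fields in coordinates, apply $\overset{(2)}{K}_2$, identify the result tensorially, and then pass to arbitrary $X,Y$ via Lemma~\ref{lemma: Brackets fX gY}. Since the $\overset{(2)}{h}_0$- and $\overset{(2)}{h}_1$-components of the bracket are inherited from Theorem~\ref{k1} by Lemma~\ref{lemma: K mu}, only the $\overset{(2)}{K}_2$-component needs computing. Lemma~\ref{lemma: Brackets fX gY} contributes $[X,Y]^{\overset{(2)}{h}_0}$, and $\overset{(2)}{K}_2$ kills that term, so it suffices to show
\[
\overset{(2)}{K}_2\left[\frac{\delta}{\delta q^{(0)i}},\frac{\delta}{\delta q^{(0)l}}\right]=-\Big(\tfrac12 u^{(2)a}R^j_{ila}+u^{(1)b}u^{(1)a}R^j_{b;ila}\Big)\partial_j .
\]

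\textbf{Expanding the bracket.} I write $\frac{\delta}{\delta q^{(0)i}}=\partial_{q^{(0)i}}-(K_1)^j_i\frac{\partial}{\partial q^{(1)j}}-\big((K_2)^j_i-(K_1)^m_i(K_1)^j_m\big)\frac{\partial}{\partial q^{(2)j}}$. This follows from inverting \eqref{eq: coordinate basis to adapted basis vector field}, or directly from duality with $dq^{(\alpha)}$. Using this coordinate form avoids the bracket bookkeeping of adapted fields, and I then read off the $\partial/\partial q^{(2)j}$-coefficient of the bracket.

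Because $\overset{(2)}{K}_2=\delta q^{(2)j}\otimes\partial_j$ and $\delta q^{(2)j}=dq^{(2)j}+(K_1)^j_m dq^{(1)m}+(K_2)^j_m dq^{(0)m}$, the $\partial_{q^{(0)}}$-component of the bracket vanishes. Hence
\[
\overset{(2)}{K}_2[\cdot,\cdot]^j = \big[\text{coef of }\partial_{q^{(2)j}}\big]+(K_1)^j_m\big[\text{coef of }\partial_{q^{(1)m}}\big].
\]
By the $TM$ case, the $\partial_{q^{(1)m}}$-coefficient is $-q^{(1)a}R^m_{ila}$. The $\partial_{q^{(2)j}}$-coefficient is an antisymmetrization in $(i,l)$ of $\frac{\delta}{\delta q^{(0)i}}$ applied to $-(K_2)^j_l+(K_1)^m_l(K_1)^j_m$.

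\textbf{Identifying the curvature terms.} I substitute the explicit expressions $(K_1)^j_i=q^{(1)a}\Gamma^j_{ai}$ and $(K_2)^j_i=q^{(2)a}\Gamma^j_{ai}+\tfrac12 q^{(1)a}q^{(1)b}dq^j(\nabla_a\nabla_b\partial_i)$, together with \eqref{eq: partialK} and \eqref{eq: partialK-1}. I then sort the result by degree.

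The terms linear in $q^{(2)}$ should combine, exactly as for $K_1$ on $TM$, into $-q^{(2)a}R^j_{ila}$. The terms quadratic in $q^{(1)}$ should give $-\tfrac12 q^{(1)a}q^{(1)b}\big(\nabla$-derivatives of $R\big)$ plus $\Gamma\Gamma R$ pieces.

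Finally I convert $q^{(2)}$ to $u^{(2)}$ using $u^{(2)a}=2q^{(2)a}+q^{(1)b}q^{(1)c}\Gamma^a_{bc}$. This turns $-q^{(2)a}R^j_{ila}$ into $-\tfrac12 u^{(2)a}R^j_{ila}+\tfrac12 q^{(1)b}q^{(1)c}\Gamma^a_{bc}R^j_{ila}$. The extra Christoffel term should combine with the quadratic terms to produce the covariant derivative $-q^{(1)b}q^{(1)a}R^j_{b;ila}$.

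\textbf{Main obstacle.} The hard step is this last bookkeeping: showing that all non-tensorial $\Gamma\partial\Gamma$ and $\Gamma\Gamma\Gamma$ terms cancel and that the symmetric coefficient of $q^{(1)a}q^{(1)b}$ is exactly $R^j_{b;ila}$, with coefficient $1$ and not $\tfrac12$.

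To keep this manageable, I would work at the center of normal coordinates at $p$, where $\Gamma(p)=0$. There $R^j_{b;ila}=\partial_bR^j_{ila}$, the conversion from $q^{(2)}$ to $u^{(2)}$ is trivial, and only second derivatives of $\Gamma$ survive. The claim then reduces to an identity among $\partial\partial\Gamma$ terms, which I would verify using \eqref{eq:rg3} and the symmetry in $a,b$.

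A cross-check is the Jacobi identity $[X^{\overset{(2)}{h}_0},[Y^{\overset{(2)}{h}_0},Z^{\overset{(2)}{h}_1}]]+\text{cyclic}$, combined with Lemma~\ref{lemma: Brackets h0,1}; it constrains the answer through the second Bianchi identity.
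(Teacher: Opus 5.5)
\textbf{The gap.} The step you flagged as the main obstacle cannot be completed, because the coefficient $1$ in the statement is wrong; carried out, your own computation gives $\tfrac12$.

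\textbf{Comparison of routes.} Your plan is a legitimate route: expand $[\delta/\delta q^{(0)i},\delta/\delta q^{(0)l}]$ in coordinates, evaluate at a point where $\Gamma(p)=0$, then pass to arbitrary $X,Y$ by Lemma~\ref{lemma: Brackets fX gY}. The paper's proof never differentiates in $q^{(0)}$. It fixes the $q^{(2)}$- and $q^{(1)}$-gradients of the $\overset{(2)}{v}_2$-coefficient through the Jacobi identity with $\delta/\delta q^{(2)m}$ and $\delta/\delta q^{(1)m}$. It then removes the leftover $q^{(0)}$-only function by weak fiberwise linearity (Lemma~\ref{lemma: fiberlinear}).

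\textbf{Your computation, carried out.} Put $B^j_l:=-(K_2)^j_l+(K_1)^j_m(K_1)^m_l=-q^{(2)a}\Gamma^j_{al}-\tfrac12q^{(1)a}q^{(1)b}\partial_a\Gamma^j_{bl}+\tfrac12q^{(1)a}q^{(1)b}\Gamma^j_{am}\Gamma^m_{bl}$. At $u$ over $p$ with $\Gamma(p)=0$ you have $(K_1)^j_m(u)=0$ and $\partial B^j_l/\partial q^{(2)m}=-\Gamma^j_{ml}(p)=0$. So only $q^{(0)}$-derivatives survive, and
\[
\overset{(2)}{K}_2\Big[\frac{\delta}{\delta q^{(0)i}},\frac{\delta}{\delta q^{(0)l}}\Big]^j
=\frac{\partial B^j_l}{\partial q^{(0)i}}-\frac{\partial B^j_i}{\partial q^{(0)l}}
=-q^{(2)a}R^j_{ila}-\tfrac12q^{(1)a}q^{(1)b}\,\partial_aR^j_{ilb}
=-\tfrac12u^{(2)a}R^j_{ila}-\tfrac12u^{(1)a}u^{(1)b}R^j_{a;ilb}.
\]
The identity you would need is therefore equivalent to $(\nabla_{u^{(1)}}R)(\partial_i,\partial_l)u^{(1)}=0$. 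That fails in general, and since it is a tensor at $p$, no normal-coordinate identity can rescue it.

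\textbf{Intrinsic check.} Through $\overset{(2)}{F}$ one finds $\overset{(2)}{K}_1(\dot\Gamma)=\nabla_{\dot q}V^{(1)}$ and $\overset{(2)}{K}_2(\dot\Gamma)=\tfrac12\big(\nabla_{\dot q}V^{(2)}+R(V^{(1)},\dot q)V^{(1)}\big)$. Hence $X^{\overset{(2)}{h}_0}$ is the $\nabla\oplus\nabla$-horizontal lift on $TM\oplus TM$ plus the vertical field $(0,R(X,u^{(1)})u^{(1)})$. Bracketing gives the $\overset{(2)}{K}_2$-component $\tfrac12\big[-R(X,Y)u^{(2)}+(\nabla_XR)(Y,u^{(1)})u^{(1)}-(\nabla_YR)(X,u^{(1)})u^{(1)}\big]$. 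By the second Bianchi identity this equals $-\tfrac12R(X,Y)u^{(2)}-\tfrac12(\nabla_{u^{(1)}}R)(X,Y)u^{(1)}$.

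\textbf{Where the paper's proof slips.} It correctly obtains
\[
\partial\Phi^l_{ij}/\partial q^{(1)m}=-\tfrac12\big[(\nabla_{u^{(1)}}R)(\partial_i,\partial_j)\partial_m+(\nabla_{\partial_m}R)(\partial_i,\partial_j)u^{(1)}\big]^l .
\]
It then recovers the quadratic function $\Phi$ by contracting with $q^{(1)m}$, omitting the factor $\tfrac12$ from Euler's identity $q^{(1)m}\partial\Phi/\partial q^{(1)m}=2\Phi$. Differentiating the paper's $\Phi$ gives twice the required gradient. Integrated correctly, the paper's route also yields $-\tfrac12(\nabla_{u^{(1)}}R)(X_p,Y_p)u^{(1)}$.

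\textbf{Consequence.} No argument can prove the lemma as written. Your method, carried through, proves the corrected statement with $\tfrac12$ in front of the $\nabla R$ term; the factor propagates into Theorem~\ref{prop: liebrac2} and the second-order formulas built on it. Your Jacobi cross-check would expose the same factor, since it only determines the $q^{(1)}$-gradient of the quadratic term.
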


\begin{proof}
It suffices to  calculate $\overset{(2)}{v}_2\circ\left[\frac{\delta}{\delta q^{(0)i}},  \frac{\delta}{\delta q^{(0)j}}\right] $. Due to the complexity of the calculations involving the derivatives in order to  $q^{(0)i}$, we use another method to determine it. Since $\overset{(2)}{v}_2\circ \left[\frac{\delta}{\delta q^{(0)i}},  \frac{\delta}{\delta q^{(0)j}}\right] $ is the last component of $\left[\frac{\delta}{\delta q^{(0)i}},  \frac{\delta}{\delta q^{(0)j}}\right]$ to determine, we will  use the Jacobi identity and combine it with all the other components.

We first observe that \begin{equation}\label{eq: h0 h0 S}
\left[\frac{\delta}{\delta q^{(0)i}},  \frac{\delta}{\delta q^{(0)j}}\right]_u =R_{ija}^l(p)q^{(1)a}\frac{\delta}{\delta q^{(1)l}}\Big{\vert}_{u}+S_{(0)i(0)j}^{(2)l}(u)\frac{\delta}{\delta q^{(2)l}}\Big{\vert}_{u},   \; u\in T_p^{(2)}M,p\in M,
\end{equation}
where $S_{(0)i(0)j}^{(2)l}\in C^\infty(T^{(2)}M)$. Hence, it is enough  to obtain  the   smooth functions $S_{(0)i(0)j}^{(2)l}$.

From the Jacobi identity, we get

\begin{align*}
    \left[ \frac{\delta}{\delta q^{(2)m}}, \left[\frac{\delta}{\delta q^{(0)i}},  \frac{\delta}{\delta q^{(0)j}}\right]\right] &= \left[\frac{\delta}{\delta q^{(0)j}}, \left[\frac{\delta}{\delta q^{(0)i}}, \frac{\delta}{\delta q^{(2)m}}\right] \right] -\left[\frac{\delta}{\delta q^{(0)i}}, \left[\frac{\delta}{\delta q^{(0)j}}, \frac{\delta}{\delta q^{(2)m}}\right] \right] \\
    &= \left[\frac{\delta}{\delta q^{(0)j}}, \left(\nabla_{\partial_i}\partial_m \right)^{\overset{(2)}{v}_2}\right]  - \left[\frac{\delta}{\delta q^{(0)i}}, \left(\nabla_{\partial_j}\partial_m \right)^{\overset{(2)}{v}_2} \right] \\
    &= (R(\p_j, \p_i)\p_m)^{\overset{(2)}{v}_2}
\end{align*} \normalsize

On the other hand, using Equation \eqref{eq: h0 h0 S}, we obtain
\begin{align*}
     \left[ \frac{\delta}{\delta q^{(2)m}}, \left[\frac{\delta}{\delta q^{(0)i}},  \frac{\delta}{\delta q^{(0)j}}\right]\right] &=  R_{ija}^l(p)q^{(1)a}\left[ \frac{\delta}{\delta q^{(2)m}},\frac{\delta}{\delta q^{(1)l}}\right]+\frac{\delta S_{(0)i(0)j}^{(2)l}}{\delta q^{(2)m}} \frac{\delta}{\delta q^{(2)l}}+S_{(0)i(0)j}^{(2)l}\left[ \frac{\delta}{\delta q^{(2)m}}, \frac{\delta}{\delta q^{(2)l}}\right]\\
     &=  \frac{\partial S_{(0)i(0)j}^{(2)l}}{\partial q^{(2)m}} \frac{\partial}{\partial q^{(2)l}}.
\end{align*}
Combining the two last equations, it follows that
\begin{align*}
    S_{(0)i(0)j}^{(2)l} &= -\frac 1 2 R_{ijm}^l F_1^m + \Phi^{l}_{ij},
\end{align*}
where the functions $\Phi^{l}_{ij} \in C^\infty(T^{(2)}M)$ satisfy $\frac{\p \Phi^{l}_{ij}}{\p q^{(2)\lambda}}=  0$, for all $\lambda=1,\dots,n$.

From the Jacobi identity, we also have

\begin{align*}
    \left[ \frac{\delta}{\delta q^{(1)m}}, \left[\frac{\delta}{\delta q^{(0)i}},  \frac{\delta}{\delta q^{(0)j}}\right]\right] &= \left[\frac{\delta}{\delta q^{(0)j}}, \left[\frac{\delta}{\delta q^{(0)i}}, \frac{\delta}{\delta q^{(1)m}}\right] \right] -\left[\frac{\delta}{\delta q^{(0)i}}, \left[\frac{\delta}{\delta q^{(0)j}}, \frac{\delta}{\delta q^{(1)m}}\right] \right].
\end{align*} \normalsize
Let $u \in T_p^{(2)}M$, with $p\in M$. Then, from Lemma \ref{lemma: Brackets h0,1},
\begin{align*}
    \left[\frac{\delta}{\delta q^{(0)j}}, \left[\frac{\delta}{\delta q^{(0)i}}, \frac{\delta}{\delta q^{(1)m}} \right] \right]_u&= \left[\frac{\delta}{\delta q^{(0)j}}, \left(\nabla_{\p_i} \p_m \right)^{\overset{(2)}{h}_1} \right]_u + \frac12 \left[\frac{\delta}{\delta q^{(0)j}}, \left(R(u^{(1)}, \p_i)\p_m - R(\p_i, \p_m)u^{(1)}\right)^{\overset{(2)}{v}_2} \right]_u.
    \end{align*}
 We again use Lemma \ref{lemma: Brackets h0,1} to simplify  the first term.
\begin{align*}
\left[\frac{\delta}{\delta q^{(0)j}}, \left(\nabla_{\p_i} \p_m \right)^{\overset{(2)}{h}_1} \right]_u &
= (\nabla_{\p_j} \nabla_{\p_i} \p_m)_u^{\overset{(2)}{h}_1} + \frac12\left(R(u^{(1)}, \p_j)\nabla_{\p_i}\p_m - R(\p_j, \nabla_{\p_i}\p_m)u^{(1)} \right)_u^{\overset{(2)}{v}_2} \end{align*}
On the other hand, to calculate the Lie bracket
$\Big{[}\frac{\delta}{\delta q^{(0)j}}, \Big{(}R(u^{(1)}, \p_i)\p_m - R(\p_i, \p_m)u^{(1)} \Big{)}^{\overset{(2)}{v}_2} \Big{]}_u $, we first need to rewrite it
as
$\Big{[}\frac{\delta}{\delta q^{(0)j}}, F_0^a\Big{(}R(\p_a, \p_i)\p_m - R(\p_i, \p_m)\p_a \Big{)}^{\overset{(2)}{v}_2} \Big{]}_u $. Then
\begin{align*} \Big{[}\frac{\delta}{\delta q^{(0)j}}, F_0^a\Big{(}R(\p_a, \p_i)\p_m - R(\p_i, \p_m)\p_a \Big{)}^{\overset{(2)}{v}_2} \Big{]}= &
\frac{\delta F_0^a}{\delta q^{(0)j}}\Big{(}R(\p_a, \p_i)\p_m - R(\p_i, \p_m)\p_a \Big{)}^{\overset{(2)}{v}_2}\\
&+F_0^a\Big{[}\frac{\delta}{\delta q^{(0)j}}, \Big{(}R(\p_a, \p_i)\p_m - R(\p_i, \p_m)\p_a \Big{)}^{\overset{(2)}{v}_2} \Big{]}.
\end{align*}
Now, we  simplify the first term using
$\frac{\delta F_0^a}{\delta q^{(0)j}}(u)=-dq^j(\nabla_{u^{(1)}}\p_a)$ and apply Lemma \ref{lemma: Brackets vk} to the second term.
\begin{align*}
\Big{[}\frac{\delta}{\delta q^{(0)j}}, \Big{(}R(u^{(1)}, \p_i)\p_m - R(\p_i, \p_m)u^{(1)} \Big{)}^{\overset{(2)}{v}_2} \Big{]}_u=&\Big{(}(\nabla_{\p_j} R)(u^{(1)}, \p_i)\p_m + R(u^{(1)}, \nabla_{\p_j}\p_i)\p_m \\ & + R(u^{(1)}, \p_i)\nabla_{\p_j}\p_m -  (\nabla_{\p_j}R)(\p_i, \p_m)u^{(1)} \\ & - R(\nabla_{\p_j} \p_i, \p_m)u^{(1)} - R(\p_i, \nabla_{\p_j}\p_m)u^{(1)}\Big{)}_u^{\overset{(2)}{v}_2}
\end{align*}
Combining the two Lie brackets we obtain

{\small
\begin{align*}
    \overset{(2)}{v}_2\left(\left[\frac{\delta}{\delta q^{(0)j}}, \left[\frac{\delta}{\delta q^{(0)i}}, \frac{\delta}{\delta q^{(1)m}} \right] \right]_u\right)&= \frac12\left(R(u^{(1)}, \p_j)\nabla_{\p_i}\p_m - R(\p_j, \nabla_{\p_i}\p_m)u^{(1)} +(\nabla_{\p_j} R)(u^{(1)}, \p_i)\p_m + R(u^{(1)}, \nabla_{\p_j}\p_i)\p_m \right.\\
    & \quad \left. + R(u^{(1)}, \p_i)\nabla_{\p_j}\p_m -  (\nabla_{\p_j}R)(\p_i, \p_m)u^{(1)} - R(\nabla_{\p_j} \p_i, \p_m)u^{(1)} - R(\p_i, \nabla_{\p_j}\p_m)u^{(1)}\right)_u^{\overset{(2)}{v}_2} .
\end{align*}}
Applying this result twice, we finally conclude that
\begin{align*}
&\overset{(2)}{K}_2\left(\left[ \frac{\delta}{\delta q^{(1)m}}, \left[\frac{\delta}{\delta q^{(0)i}},  \frac{\delta}{\delta q^{(0)j}}\right]\right]_u\right)\\
&= \overset{(2)}{K}_2\left(\left[\frac{\delta}{\delta q^{(0)j}}, \left[\frac{\delta}{\delta q^{(0)i}}, \frac{\delta}{\delta q^{(1)m}} \right] \right]_u - \left[\frac{\delta}{\delta q^{(0)i}}, \left[\frac{\delta}{\delta q^{(0)j}}, \frac{\delta}{\delta q^{(1)m}} \right] \right]_u\right)\\
&= \frac12 \left[(\nabla_{\p_j} R)(u^{(1)}, \p_i)\p_m - (\nabla_{\p_i} R)(u^{(1)}, \p_j)\p_m - (\nabla_{\p_j}R)(\p_i, \p_m)u^{(1)} + (\nabla_{\p_i}R)(\p_j, \p_m)u^{(1)} \right]_p \\
    &= -\frac12 \left[(\nabla_{u^{(1)}} R)(\p_i, \p_j)\p_m + (\nabla_{\p_m} R)(\p_i, \p_j)u^{(1)} \right]_p.
\end{align*}
On the other hand, using again Equation \eqref{eq: h0 h0 S}, we have
\begin{align*}
    \overset{(2)}{v}_2 \circ \left[ \frac{\delta}{\delta q^{(1)m}}, \left[\frac{\delta}{\delta q^{(0)i}},  \frac{\delta}{\delta q^{(0)j}}\right]\right]
    &= \frac{\delta S_{(0)i(0)j}^{(2)l}}{\delta q^{(1)m}} \frac{\delta}{\delta q^{(2)l}}  \\
    &=  \frac{\p\Phi^{l}_{ij}}{\p q^{(1)m}} \frac{\delta}{\delta q^{(2)l}}.
\end{align*}
Hence, combining the two last equations, it follows that

\begin{align*}
    \Phi^{l}_{ij}(u)&= -\frac12 q^{(1)m}dq^l\left[(\nabla_{u^{(1)}} R)(\p_i, \p_j)\p_m + (\nabla_{\p_m} R)(\p_i, \p_j)u^{(1)} \right]_p + h^{l}_{ij}(u)\\
    &= - dq^l\left[(\nabla_{u^{(1)}} R)(\partial_i,\partial_j)u^{(1)}\right]_p+ h^{l}_{ij}(u).
\end{align*}
where the functions $h^{l}_{ij} \in C^\infty(T^{(2)}M)$  satisfy $\frac{\p h^{l}_{ij}}{\p q^{(2)\lambda}}= \frac{\p h^{l}_{ij}}{\p q^{(1)\lambda}}= 0$, for all $\lambda=1,\dots,n$.

Therefore,
\begin{align*}
    \overset{(2)}{K}_2\left( \left[\frac{\delta}{\delta q^{(0)i}},  \frac{\delta}{\delta q^{(0)j}}\right]_u \right)
    &=\left[-\frac12 R(\partial_i,\partial_j)u^{(2)} - (\nabla_{u^{(1)}} R)(\partial_i,\partial_j)u^{(1)}\right]_p+ h^{l}_{ij}(u).
\end{align*}

Finally, using  Lemma \ref{lemma: fiberlinear}, we conclude that
\begin{align*}
    \overset{(2)}{K}_2([\frac{\delta}{\delta q^{(0)i}},  \frac{\delta}{\delta q^{(0)j}}]_u) &= \left[-\frac12 R(\partial_i,\partial_j)u^{(2)} - (\nabla_{u^{(1)}} R)(\partial_i,\partial_j)u^{(1)}\right]_p,
\end{align*}
which, taking into account Lemma \ref{lemma: Brackets fX gY}, leads to the result.
\qed
\end{proof}

\begin{proof}[Proof of Theorem \ref{prop: liebrac2}]
The components below order $2$ are inherited from the fist-order bracket computations given in Theorem \ref{k1} together with Lemma \ref{lemma: K mu}, whereas the identities involving $\overset{(2)}{v}_2$ follow from Lemma~\ref{lemma: Brackets vk}.  The remaining components are given by Lemmas~\ref{lemma: Brackets h1,1}, \ref{lemma: Brackets h0,1}, and \ref{lemma: Brackets h0h0}, together with Lemma~\ref{lemma: Brackets fX gY} from Appendix~\ref{app:lift-technical-lemmas}.
\qed
\end{proof}

For $k=3$, one obtains the following higher-order analogue.

\begin{theorem}\label{thm: liebrac3}
Let $X, Y \in \mathfrak{X}(M)$. Then
\small{\begin{align*}
  [X^{\overset{(3)}{h}_0}, Y^{\overset{(3)}{v}_3}] = &(\nabla_X Y)^{\overset{(3)}{v}_3}, \quad [X^{\overset{(3)}{h}_1}, Y^{\overset{(3)}{v}_3}] = [X^{\overset{(3)}{h}_2}, Y^{\overset{(3)}{v}_3}] = [X^{\overset{(3)}{h}_3}, Y^{\overset{(3)}{v}_3}] = 0,\\
  [X^{\overset{(3)}{h}_0}, Y^{\overset{(3)}{h}_2}] = & (\nabla_X Y)^{\overset{(3)}{h}_2} - \left(  R(X,u^{(1)})Y + \frac13 R(u^{(1)},Y)X \right)^{\overset{(3)}{v}_3}, \quad [X^{\overset{(3)}{h}_1}, Y^{\overset{(3)}{h}_2}] = [X^{\overset{(3)}{h}_2}, Y^{\overset{(3)}{h}_2}] = 0,\\
   [X^{\overset{(3)}{h}_0}, Y^{\overset{(3)}{h}_1}] = &(\nabla_X Y)^{\overset{(3)}{h}_1} + \frac12 \left(R(u^{(1)}, X)Y - R(X, Y)u^{(1)}\right)^{\overset{(3)}{h}_2} \\
        & -\frac12\left(R(X, Y)u^{(2)} + \frac13 R(Y, u^{(2)})X +(\nabla_{u^{(1)}} R)(X, Y)u^{(1)} + \frac13 (\nabla_{u^{(1)}} R)(Y, u^{(1)})X\right)^{\overset{(3)}{v}_3},\\
    [X^{\overset{(3)}{h}_1}, Y^{\overset{(3)}{h}_1}] = & -\frac12 \left(R(X,Y)u^{(1)}\right)^{\overset{(3)}{v}_3}, \\
        [X^{\overset{(3)}{h}_0}, Y^{\overset{(3)}{h}_0}] = & [X, Y]^{\overset{(3)}{h}_0} -\left(R(X,Y)u^{(1)}\right)^{\overset{(3)}{h}_1} -\left(\frac12 R(X,Y)u^{(2)} + (\nabla_{u^{(1)}} R)(X, Y)u^{(1)})\right)^{\overset{(3)}{h}_2} \\
        & -\left(\frac1{3!} R(X,Y)u^{(3)} + \frac13 (\nabla_{u^{(1)}} R)(X,Y)u^{(2)} - \frac13 (\nabla_{u^{(2)}}R)(X,Y)u^{(1)} + \frac12 (\nabla^2_{u^{(1)}} R)(X,Y)u^{(1)}\right)^{\overset{(3)}{v}_3},
    \end{align*}}\normalsize
   where  $u^{(1)}=\overset{(3)}{F}_0(u)$, $u^{(2)}=\overset{(3)}{F}_1(u)$, $u^{(3)}=\overset{(3)}{F}_2(u)$, $u \in T^{(3)}M$.
    \end{theorem}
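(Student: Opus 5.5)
The plan follows the proof of Theorem~\ref{prop: liebrac2}. By Lemma~\ref{lemma: K mu}, for every bracket $[X^{\overset{(3)}{h}_\alpha},Y^{\overset{(3)}{h}_\beta}]$ with $\alpha,\beta\le 2$, the $\overset{(3)}{h}_\mu$-components with $\mu\le 2$ coincide with the corresponding components of the second-order brackets in Theorem~\ref{prop: liebrac2}, pulled back through $\overset{(3)}{\tau}_{2}$. Along this pullback $u^{(1)},u^{(2)}$ are unchanged, since $\overset{(3)}{F}_0,\overset{(3)}{F}_1$ factor through $T^{(2)}M$. For example, the $\overset{(3)}{h}_2$-component of $[X^{\overset{(3)}{h}_0},Y^{\overset{(3)}{h}_1}]$ is the $\overset{(2)}{v}_2$-component from order two.

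The brackets involving $\overset{(3)}{v}_3$ are given by Lemma~\ref{lemma: Brackets vk}. By Lemma~\ref{lemma: Brackets fX gY} and tensoriality, it then remains to compute $\overset{(3)}{K}_3$ of the adapted-frame brackets $[\delta/\delta q^{(\alpha)i},\delta/\delta q^{(\beta)l}]$ for $(\alpha,\beta)\in\{(1,2),(2,2),(1,1),(0,2),(0,1),(0,0)\}$. I would treat these in increasing order of difficulty.

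The pairs $(1,2)$ and $(2,2)$ are handled by expanding with Equation~\eqref{eq: coordinate basis to adapted basis vector field}, exactly as in Lemma~\ref{lemma: Brackets h1,1}. Only $\partial (K_1)^j_i/\partial q^{(\cdot)l}$ appears, and it is symmetric in $i,l$ by torsion-freeness, so these components vanish.

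For $(1,1)$ the same expansion produces antisymmetrized derivatives of $(K_2)^j_i$ with respect to $q^{(1)l}$. Using the formula after Lemma~\ref{lemma: LC tower preliminary conditions}, the derivative $\partial (K_2)^j_i/\partial q^{(1)l}$ has the antisymmetric part $\tfrac12 q^{(1)a}R^j_{lai}$. Via Bianchi this should give $-\tfrac12 R(X,Y)u^{(1)}$.

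For $(0,2)$ and $(0,1)$ I would mimic Lemma~\ref{lemma: Brackets h0,1}. Expanding $\delta/\delta q^{(0)i}$ and using $\partial (K_3)^j_i/\partial q^{(2)l}$ from Equation~\eqref{eq: partialK-1} handles the $(0,2)$ case. The $(0,1)$ case needs $\partial(K_3)^j_i/\partial q^{(1)l}$, which is computed from the explicit formula for $(K_3)^j_i$. Then, evaluating the covariant derivatives and using $F_1^i=2q^{(2)i}+q^{(1)a}q^{(1)b}\Gamma^i_{ab}$, I would reassemble the terms into curvature expressions in $u^{(1)},u^{(2)}$.

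The main obstacle is the $(0,0)$ case, where a direct computation requires unwieldy $q^{(0)}$-derivatives of $K_3$. I would use the Jacobi-identity device from Lemma~\ref{lemma: Brackets h0h0}. First write
\[
[\delta/\delta q^{(0)i},\delta/\delta q^{(0)j}]=(\text{known lower components})+S^{(3)l}_{(0)i(0)j}\,\delta/\delta q^{(3)l}.
\]
Bracketing with $\delta/\delta q^{(3)m}$ and using Lemma~\ref{lemma: Brackets vk} fixes the $q^{(3)}$-dependence as $-\tfrac1{3!}R(\partial_i,\partial_j)u^{(3)}$; this uses $\partial F_2/\partial q^{(3)}=3!$. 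Bracketing with $\delta/\delta q^{(2)m}$, and using the $(0,2)$ result together with the Bianchi identity for $\nabla R$, fixes the $q^{(2)}$-dependence. Bracketing with $\delta/\delta q^{(1)m}$, using the $(0,1)$ result and the identities for $\nabla^2R$ from Appendix~\ref{app:riemannian-identities}, fixes the $q^{(1)}$-dependence. The residual function of $q^{(0)}$ alone is then killed by Lemma~\ref{lemma: fiberlinear}.

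The delicate points in this step are twofold. One is correctly tracking the $\delta$-derivatives of $F_1^a$ and $F_2^a$, which involve the connection coefficients. The other is checking that the integrations in $q^{(2)}$ and $q^{(1)}$ are consistent and produce exactly the stated combination $\tfrac13(\nabla_{u^{(1)}}R)(X,Y)u^{(2)}-\tfrac13(\nabla_{u^{(2)}}R)(X,Y)u^{(1)}+\tfrac12(\nabla^2_{u^{(1)}}R)(X,Y)u^{(1)}$. I expect this consistency check to require the second Bianchi identity applied twice.
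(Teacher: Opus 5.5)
\textbf{There is a genuine gap in the $[X^{\overset{(3)}{h}_0},Y^{\overset{(3)}{h}_0}]$ entry.} Your plan follows the paper's argument step for step: lower components inherited through Lemma~\ref{lemma: K mu}, adapted-frame expansions for $(1,2),(2,2),(1,1),(0,2),(0,1)$, and the Jacobi-identity device closed off by Lemma~\ref{lemma: fiberlinear} for $(0,0)$. Every entry except $[X^{\overset{(3)}{h}_0},Y^{\overset{(3)}{h}_0}]$ comes out as stated. For that entry, your plan, carried out correctly, cannot produce ``exactly the stated combination''. Both its $\overset{(3)}{h}_2$- and $\overset{(3)}{v}_3$-components are wrong as stated.

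The $\overset{(3)}{h}_2$-component you import from Theorem~\ref{prop: liebrac2} is already off by a factor $2$ in its $\nabla R$ term. At the centre $p$ of normal coordinates, $\Gamma(p)=0$, the coefficients $(K_1)^i_j$ vanish on the fibre over $p$, and $u^{(2)}=2q^{(2)}$ there. Hence
\[
\overset{(2)}{K}_2\Big[\frac{\delta}{\delta q^{(0)i}},\frac{\delta}{\delta q^{(0)j}}\Big]
=\frac{\partial (C_2)^l_i}{\partial q^{(0)j}}-\frac{\partial (C_2)^l_j}{\partial q^{(0)i}}
=q^{(2)b}R^l_{jib}+\frac12\,q^{(1)b}q^{(1)c}R^l_{b;jic},
\]
that is, $-\frac12R(X,Y)u^{(2)}-\frac12(\nabla_{u^{(1)}}R)(X,Y)u^{(1)}$, not $-\frac12R(X,Y)u^{(2)}-(\nabla_{u^{(1)}}R)(X,Y)u^{(1)}$.

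\textbf{The culprit is the step you flagged as delicate: recovering the $q^{(1)}$-dependence.} In the order-two computation, write $S^{(2)l}_{(0)i(0)j}=-\frac12R^l_{ijm}F_1^m+\Phi^l_{ij}$. The Jacobi identity correctly gives
\[
\frac{\partial\Phi^l_{ij}}{\partial q^{(1)m}}=-\frac12\,dq^l\big[(\nabla_{u^{(1)}}R)(\partial_i,\partial_j)\partial_m+(\nabla_{\partial_m}R)(\partial_i,\partial_j)u^{(1)}\big].
\]
Up to a function of $q^{(0)}$ alone (killed by Lemma~\ref{lemma: fiberlinear}), $\Phi$ is homogeneous of degree $2$ in $q^{(1)}$. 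Euler's relation therefore gives $\Phi=\frac12\,q^{(1)m}\partial\Phi/\partial q^{(1)m}$, whereas the derivation you are reproducing takes $q^{(1)m}\partial\Phi/\partial q^{(1)m}$. At order three the same overcount occurs, now by the degree $3$, on the part of $\Psi$ cubic in $q^{(1)}$.

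Done correctly, the computation gives
\[
\overset{(3)}{K}_3[X^{\overset{(3)}{h}_0},Y^{\overset{(3)}{h}_0}]
=-\frac16\Big(R(X,Y)u^{(3)}+2(\nabla_{u^{(1)}}R)(X,Y)u^{(2)}+(\nabla_{u^{(2)}}R)(X,Y)u^{(1)}+(\nabla^2R)(u^{(1)},u^{(1)};X,Y)u^{(1)}\Big).
\]
For $u=j^3_0q$ and $X,Y$ parallel along $q$, this equals $-\frac1{3!}\frac{D^2}{dt^2}\big(R(X,Y)\dot q\big)\big|_{t=0}$. The analogous expressions $-R(X,Y)\dot q$ and $-\frac12\frac{D}{dt}\big(R(X,Y)\dot q\big)$ give the $\overset{(3)}{h}_1$-component and the corrected $\overset{(3)}{h}_2$-component.

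The mismatch cannot be removed by Bianchi rearrangements. At the centre of normal coordinates, third derivatives of $\Gamma$ enter $\overset{(3)}{K}_3[\delta/\delta q^{(0)i},\delta/\delta q^{(0)j}]$ only through $\partial(K_3)^l_i/\partial q^{(0)j}-\partial(K_3)^l_j/\partial q^{(0)i}$. That contribution is $\frac16\,q^{(1)a}q^{(1)b}q^{(1)c}\partial_a\partial_bR^l_{jic}$, while the stated formula forces the coefficient $\frac12$ there.

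An efficient global check is to transport everything to $(TM)^3_\oplus$ by $\overset{(3)}{F}$. Write $\kappa_\alpha=\nabla_s u^{(\alpha)}$ and commute covariant derivatives; this gives $\overset{(3)}{K}_1=\kappa_1$, $2\overset{(3)}{K}_2=\kappa_2-R(\tau_{3*}\,\cdot\,,u^{(1)})u^{(1)}$, and an analogous formula for $\overset{(3)}{K}_3$. Then $\kappa_\alpha[Z_1,Z_2]=\widetilde\nabla_{Z_1}(\kappa_\alpha Z_2)-\widetilde\nabla_{Z_2}(\kappa_\alpha Z_1)-R(\tau_{3*}Z_1,\tau_{3*}Z_2)u^{(\alpha)}$ reproduces every other entry of the theorem and the corrected values above.
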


\begin{proof}[Proof sketch]
The components below order $3$ are inherited from the second-order bracket formulas given in Theorem \ref{prop: liebrac2} together with Lemma~\ref{lemma: K mu}. Thus only the $3$-vertical components have to be computed. These are obtained by applying the adapted-basis bracket formula to the pairs $(\overset{(3)}{h}_0,\overset{(3)}{h}_2)$, $(\overset{(3)}{h}_1,\overset{(3)}{h}_1)$, $(\overset{(3)}{h}_0,\overset{(3)}{h}_1)$, and $(\overset{(3)}{h}_0,\overset{(3)}{h}_0)$. The first three computations use only the coefficient identities given in Equations \eqref{eq: partialK}--\eqref{eq: partialK-1} and the first Bianchi identity. The last computation, namely the $3$-vertical component of $[X^{\overset{(3)}{h}_0},Y^{\overset{(3)}{h}_0}]$, requires repeated use of the Jacobi identity and introduces the terms involving $\nabla R$ and $\nabla^2R$. These longer reductions are given in Appendix~\ref{app:third-order-brackets}. Combining them with Lemma~\ref{lemma: Brackets fX gY} gives the displayed formulas.
\qed
\end{proof}

Notice that, already in order $3$, covariant derivatives of curvature and second covariant derivatives of curvature appear. This is the main source of the combinatorial growth in the general bracket formulas. We conclude with a general reduction method for the $k$-vertical components of Lie brackets of $\overset{(k)}{h}_\alpha$-lifts on $T^{(k)}M$.  The method uses auxiliary vector fields in $T^{(k)}M$ that decompose the $\overset{(k)}{h}_\alpha$-lifts into lifts from $T^{(k-1)}M$ and $k$-vertical vector fields.  The coordinate formula is recorded in Appendix~\ref{app:general-bracket-reduction}.  We will need the following two consequences later.

\begin{proposition}\label{prop: Brackets hk-1}
Let $X, Y \in \mathfrak{X}(M)$.   Then
    \begin{equation*}
 [X^{\overset{(k)}{h}_{k-\alpha}}, Y^{\overset{(k)}{h}_{k-1}}] = 0, \alpha=1, 2, \ldots, k-1.
    \end{equation*}
\end{proposition}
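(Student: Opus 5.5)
The plan is to reduce to adapted basis fields, dispose of all components below order $k$ by the tower structure, and compute the single remaining $k$-vertical component directly in coordinates. Write $\beta=k-\alpha$, so that $\beta$ ranges over $1,\ldots,k-1$. By Lemma~\ref{lemma: fX} and Lemma~\ref{lemma: Brackets fX gY}, the bracket $[X^{\overset{(k)}{h}_\beta},Y^{\overset{(k)}{h}_{k-1}}]$ is obtained from the brackets of the adapted basis fields $\frac{\delta}{\delta q^{(\beta)i}}$ and $\frac{\delta}{\delta q^{(k-1)l}}$, plus derivative terms. Those derivative terms have the form $X^{\overset{(k)}{h}_\beta}(Y^j\circ\tau_k)$ and $Y^{\overset{(k)}{h}_{k-1}}(X^i\circ\tau_k)$, and both vanish because $\beta,k-1\geq 1$. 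It therefore suffices to show
\[
\Big[\frac{\delta}{\delta q^{(\beta)i}},\frac{\delta}{\delta q^{(k-1)l}}\Big]=0,\qquad \beta=1,\ldots,k-1 .
\]

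\textbf{Components below order $k$.} I would use Lemma~\ref{lemma: K mu}. For $\mu\leq k-1$, the component $\overset{(k)}{K}_\mu$ of the bracket equals the corresponding component of the bracket on $T^{(k-1)}M$, pulled back. On $T^{(k-1)}M$, however, $\overset{(k-1)}{h}_{k-1}$ is the $(k-1)$-vertical lift. Lemma~\ref{lemma: Brackets vk}, applied at order $k-1$, then says that the bracket of an $\overset{(k-1)}{h}_\beta$-lift with $\beta\geq 1$ against a $(k-1)$-vertical lift vanishes; this covers the case $\beta=k-1$ through $[v,v]=0$. Hence only the $k$-vertical component $\overset{(k)}{K}_k$ of the bracket remains to be computed.

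\textbf{The $k$-vertical component.} I would work in coordinates. By Equation~\eqref{eq: coordinate basis to adapted basis},
\[
\frac{\delta}{\delta q^{(k-1)l}}=\frac{\partial}{\partial q^{(k-1)l}}-(K_1)^j_l\frac{\partial}{\partial q^{(k)j}} .
\]
The field $\frac{\delta}{\delta q^{(\beta)i}}$ has components only along $\partial/\partial q^{(\gamma)}$ for $\gamma\geq\beta$. Its $\partial/\partial q^{(\gamma)j}$-coefficient is $-(C_{\gamma-\beta})^j_i$, and by the tower property this coefficient depends only on $q^{(0)},\ldots,q^{(\gamma-\beta)}$. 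Since the bracket has no $\partial/\partial q^{(0)}$ component, and its components below order $k$ already vanish, its $\partial/\partial q^{(k)j}$-coefficient equals its $\overset{(k)}{K}_k$-component. Assuming $k\geq 3$ (the case $k=2$ is Lemma~\ref{lemma: Brackets h1,1}), that coefficient is
\[
-\frac{\delta (K_1)^j_l}{\delta q^{(\beta)i}}+\frac{\partial (C_{k-\beta})^j_i}{\partial q^{(k-1)l}} .
\]
No $\partial/\partial q^{(k)}$-derivative of any coefficient appears, since none of the coefficients involved depends on $q^{(k)}$.

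\textbf{Case split on $\beta$.} For $\beta\geq 2$, both terms vanish. The function $(K_1)^j_l$ depends only on $q^{(0)},q^{(1)}$, and $\frac{\delta}{\delta q^{(\beta)i}}$ has no component along those coordinates. Likewise $C_{k-\beta}$ depends only on coordinates of order at most $k-\beta\leq k-2$, so its $q^{(k-1)}$-derivative is zero. For $\beta=1$, the first term is $-\partial (K_1)^j_l/\partial q^{(1)i}=-\Gamma^j_{il}$. For the second term, I expand $C_{k-1}$ as $K_{k-1}$ plus products of lower $K$'s; the lower $K$'s do not involve $q^{(k-1)}$, so Equation~\eqref{eq: partialK} gives $\partial(C_{k-1})^j_i/\partial q^{(k-1)l}=\Gamma^j_{li}$. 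The sum $\Gamma^j_{li}-\Gamma^j_{il}$ vanishes by torsion-freeness. This mirrors the argument of Lemma~\ref{lemma: Brackets h1,1}.

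The main obstacle is the bookkeeping in the $\beta=1$ case. One has to confirm that $C_{k-1}$ really equals $K_{k-1}$ modulo terms independent of $q^{(k-1)}$, with the sign convention fixed in Appendix~\ref{app:adapted-basis-details}. One also has to check that no further contribution to the $\partial/\partial q^{(k)}$-coefficient arises. In particular, the $\partial/\partial q^{(k-1)}$-component of $\frac{\delta}{\delta q^{(1)i}}$ acting on $(K_1)^j_l$ gives nothing, precisely because $k-1\geq 2$ when $k\geq 3$.
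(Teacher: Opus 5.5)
Your proposal is correct and follows essentially the same route as the paper: the components below order $k$ are inherited through the tower (you invoke Lemma~\ref{lemma: Brackets vk} at order $k-1$, where the paper runs an induction through Proposition~\ref{prop: coordinteLiebrac}). The $k$-vertical component vanishes for $\beta\ge 2$ because the relevant coefficients do not depend on the differentiated coordinates, and for $\beta=1$ by the torsion-free cancellation $\Gamma^j_{li}-\Gamma^j_{il}=0$ coming from Equation~\eqref{eq: partialK}, exactly as in the paper's final step. The only difference is that you compute the $\partial/\partial q^{(k)}$-coefficient directly from the explicit form of the adapted frame rather than through the general reduction formula; this is, if anything, cleaner.
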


The proof is given in Appendix~\ref{app:general-bracket-reduction}.

\begin{proposition}\label{prop: Brackets h_0 hk-1}
Let $X, Y \in \mathfrak{X}(M)$.   Then
\begin{equation}\label{equation: Brackets h_0 hk-1}
 [X^{\overset{(k)}{h}_0}, Y^{\overset{(k)}{h}_{k-1}}] = \left(\nabla_XY\right)^{\overset{(k)}{h}_{k-1}}-\frac 1 k\left(R(X,Y)u^{(1)}+(k-1)R(X,u^{(1)})Y\right)^{\overset{(k)}{v}_k},
\end{equation}
where $u^{(1)}=\overset{(k)}{F}_0(u)$, $u \in T^{(k)}M$.
\end{proposition}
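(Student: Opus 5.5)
The approach is to split the bracket according to the decomposition \eqref{eq: decomposition VF}. The components $\overset{(k)}{K}_\mu$ with $\mu<k$ will be obtained from order $k-1$ through Lemma~\ref{lemma: K mu}. The genuinely new $\overset{(k)}{v}_k$-component will be computed directly on the adapted frame, and then extended to vector fields by tensoriality (Lemma~\ref{lemma: Brackets fX gY}).

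\emph{Lower components.} By Lemma~\ref{lemma: projected lifts}, $X^{\overset{(k)}{h}_0}$ is $\overset{(k)}{\tau}_{\!k-1}$-related to $X^{\overset{(k-1)}{h}_0}$, and $Y^{\overset{(k)}{h}_{k-1}}$ is related to $Y^{\overset{(k-1)}{h}_{k-1}}=Y^{\overset{(k-1)}{v}_{k-1}}$. Hence by Lemma~\ref{lemma: K mu}, for $\mu\le k-1$,
\[
\overset{(k)}{K}_\mu\circ[X^{\overset{(k)}{h}_0},Y^{\overset{(k)}{h}_{k-1}}]
=\Big(\overset{(k-1)}{K}_\mu\circ[X^{\overset{(k-1)}{h}_0},Y^{\overset{(k-1)}{v}_{k-1}}]\Big)\circ\overset{(k)}{\tau}_{\!k-1}.
\]
Lemma~\ref{lemma: Brackets vk}, applied at order $k-1$, gives $[X^{\overset{(k-1)}{h}_0},Y^{\overset{(k-1)}{v}_{k-1}}]=(\nabla_XY)^{\overset{(k-1)}{v}_{k-1}}$. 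Therefore the $\overset{(k)}{h}_{k-1}$-component of the bracket is $\nabla_XY$ and all components with $\mu<k-1$ vanish. It remains to identify $\overset{(k)}{K}_k$ of the bracket.

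\emph{Top component on the frame.} By Lemma~\ref{lemma: Brackets fX gY}, it suffices to compute $\overset{(k)}{K}_k\big[\frac{\delta}{\delta q^{(0)i}},\frac{\delta}{\delta q^{(k-1)l}}\big]$. I will imitate the proof of Lemma~\ref{lemma: Brackets h0,1}. By \eqref{eq: coordinate basis to adapted basis vector field},
\[
\frac{\delta}{\delta q^{(k-1)l}}=\frac{\partial}{\partial q^{(k-1)l}}-(K_1)^j_l\frac{\partial}{\partial q^{(k)j}} ,
\qquad
\frac{\delta}{\delta q^{(0)i}}=\frac{\partial}{\partial q^{(0)i}}-\sum_{\beta=1}^{k}(K_\beta)^j_i\frac{\delta}{\delta q^{(\beta)j}} .
\]
I will expand the bracket and discard every term whose image under $\overset{(k)}{K}_k$ vanishes. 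For this I use $[\frac{\delta}{\delta q^{(\beta)j}},\frac{\delta}{\delta q^{(k-1)l}}]=0$ for $\beta\ge1$ (Proposition~\ref{prop: Brackets hk-1} together with Lemma~\ref{lemma: Brackets vk}), and the fact that $\frac{\delta}{\delta q^{(k-1)l}}$ kills $(K_\beta)^j_i$ for $\beta<k-1$. This should leave
\[
\overset{(k)}{K}_k\Big[\tfrac{\delta}{\delta q^{(0)i}},\tfrac{\delta}{\delta q^{(k-1)l}}\Big]
=\Big(\frac{\partial (K_k)^j_i}{\partial q^{(k-1)l}}-(K_1)^m_l\frac{\partial (K_k)^j_i}{\partial q^{(k)m}}-\frac{\partial (K_1)^j_l}{\partial q^{(0)i}}-(K_1)^m_i\frac{\partial (K_1)^j_l}{\partial q^{(1)m}}\Big)\partial_j .
\]
Here the last two terms come from $\frac{\delta}{\delta q^{(0)i}}$ acting on the coefficient $-(K_1)^j_l$. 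The term $(K_{k-1})^m_i\frac{\delta}{\delta q^{(k-1)m}}$ from the expansion of $\frac{\delta}{\delta q^{(0)i}}$ only contributes to the $\overset{(k)}{h}_{k-1}$-slot.

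Next I substitute \eqref{eq: partialK-1} and \eqref{eq: partialK}. As in the proof of Lemma~\ref{lemma: Brackets h0,1}, the last three terms combine to $-q^{(1)a}dq^j(\nabla_{\partial_i}\nabla_{\partial_a}\partial_l)$. The first term is $\frac1k q^{(1)a}dq^j\big(\nabla_{\partial_l}\nabla_{\partial_a}\partial_i+(k-1)\nabla_{\partial_a}\nabla_{\partial_l}\partial_i\big)$. Writing $\nabla_i\nabla_a\partial_l$ as $\frac1k+\frac{k-1}k$ of itself and using torsion-freeness, the total becomes
\[
\frac1k q^{(1)a}\Big(R(\partial_l,\partial_i)\partial_a+(k-1)R(\partial_a,\partial_i)\partial_l\Big)^j .
\]
I will then verify with the first Bianchi identity that this equals $-\frac1k\big(R(\partial_i,\partial_l)u^{(1)}+(k-1)R(\partial_i,u^{(1)})\partial_l\big)$, using $u^{(1)}=q^{(1)a}\partial_a$ (since $F_0^a=q^{(1)a}$). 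As a consistency check, this reproduces Theorem~\ref{prop: liebrac2} for $k=2$ and Theorem~\ref{thm: liebrac3} for $k=3$.

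\emph{Main obstacle.} The delicate step is the bookkeeping in the expansion of $\frac{\delta}{\delta q^{(0)i}}$. One must make sure that no other coefficient $(K_\beta)^j_i$ has a nonzero $\frac{\delta}{\delta q^{(k-1)l}}$-derivative that lands in the $\overset{(k)}{v}_k$-slot. One must also check that the only $q^{(k)}$-dependence entering is $\partial(K_k)/\partial q^{(k)}=\Gamma$, which holds by the tower property. If this becomes messy, I would fall back on evaluating at a point in normal coordinates, where $\Gamma(p)=0$ and the second derivatives of $\Gamma$ are expressed through $R$, so that only $\partial(K_k)/\partial q^{(k-1)}$ and $\partial_i(K_1)$ survive.
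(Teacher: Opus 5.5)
Your reduction is sound. The components below order $k$ come from Lemma~\ref{lemma: K mu} together with $[X^{\overset{(k-1)}{h}_0},Y^{\overset{(k-1)}{v}_{k-1}}]=(\nabla_XY)^{\overset{(k-1)}{v}_{k-1}}$, exactly as in the paper, and Lemma~\ref{lemma: Brackets fX gY} reduces the top component to the frame. The error is in the frame computation, at exactly the step you flagged.

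In the expansion you describe, $\frac{\delta}{\delta q^{(0)i}}=\frac{\partial}{\partial q^{(0)i}}-\sum_{\beta=1}^{k}(K_\beta)^m_i\frac{\delta}{\delta q^{(\beta)m}}$, each $\beta$-summand contributes $\frac{\delta (K_\beta)^m_i}{\delta q^{(k-1)l}}\frac{\delta}{\delta q^{(\beta)m}}-(K_\beta)^m_i\big[\frac{\delta}{\delta q^{(\beta)m}},\frac{\delta}{\delta q^{(k-1)l}}\big]$. The coefficient $-(K_1)^j_l$ of $\frac{\delta}{\delta q^{(k-1)l}}$ is differentiated only by $\frac{\partial}{\partial q^{(0)i}}$, via $\big[\frac{\partial}{\partial q^{(0)i}},\frac{\delta}{\delta q^{(k-1)l}}\big]=-\frac{\partial (K_1)^j_l}{\partial q^{(0)i}}\frac{\partial}{\partial q^{(k)j}}$. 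The piece $-(K_1)^m_i\frac{\delta}{\delta q^{(1)m}}$ meets $(K_1)^j_l$ only inside $\big[\frac{\delta}{\delta q^{(1)m}},\frac{\delta}{\delta q^{(k-1)l}}\big]$, which you have already set to zero. The correct outcome is therefore
\[
\overset{(k)}{K}_k\Big[\frac{\delta}{\delta q^{(0)i}},\frac{\delta}{\delta q^{(k-1)l}}\Big]=\Big(\frac{\partial (K_k)^j_i}{\partial q^{(k-1)l}}-(K_1)^m_l\frac{\partial (K_k)^j_i}{\partial q^{(k)m}}-\frac{\partial (K_1)^j_l}{\partial q^{(0)i}}\Big)\partial_j ,
\]
which has the same shape as in Lemmas~\ref{lemma: Brackets h0,1} and~\ref{lemma: Brackets h2 3}. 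Your fourth term $-(K_1)^m_i\frac{\partial (K_1)^j_l}{\partial q^{(1)m}}=-q^{(1)a}\Gamma^m_{ai}\Gamma^j_{ml}$ is spurious. You may have had the other bookkeeping in mind, expanding $\frac{\delta}{\delta q^{(k-1)l}}$ first, where the full $\frac{\delta}{\delta q^{(0)i}}$ does act on $-(K_1)^j_l$. Even there it yields $+(K_1)^m_i\Gamma^j_{ml}$, and this is cancelled by the contribution of $\big[\frac{\delta}{\delta q^{(1)m}},\frac{\partial}{\partial q^{(k-1)l}}\big]=\Gamma^r_{lm}\frac{\partial}{\partial q^{(k)r}}$.

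Your next claim is then false as stated. Since $dq^j(\nabla_{\partial_i}\nabla_{\partial_a}\partial_l)=\partial_i\Gamma^j_{al}+\Gamma^m_{al}\Gamma^j_{im}$, only $-(K_1)^m_l\frac{\partial (K_k)^j_i}{\partial q^{(k)m}}-\frac{\partial (K_1)^j_l}{\partial q^{(0)i}}$ combine to $-q^{(1)a}dq^j(\nabla_{\partial_i}\nabla_{\partial_a}\partial_l)$. That leaves the non-tensorial remainder $-q^{(1)a}\Gamma^m_{ai}\Gamma^j_{ml}$, which cannot survive in a tensorial answer. The two slips cancel, which is why you land on the right formula. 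Your normal-coordinate fallback would have hidden the error, since the extra term vanishes where $\Gamma=0$.

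With the three-term expression, everything afterwards goes through unchanged: the $\frac1k+\frac{k-1}{k}$ splitting, torsion-freeness, and the final rewriting. That last step needs only skew-symmetry of $R$, not Bianchi.

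Once repaired, your route genuinely differs from the paper's. The paper does not expand the frame directly; it invokes the general reduction of Proposition~\ref{prop: coordinteLiebrac}. There each adapted field is written as an auxiliary field projecting to its order-$(k-1)$ counterpart, plus a $k$-vertical correction with the dual coefficients $C_{k-\alpha}$. The paper then inserts the inherited component ${}^kS^{(k-1)j}_{(0)i,(k-1)l}=\Gamma^j_{il}$, the extra Christoffel products cancel, and the same three-term expression results. Your direct expansion avoids the dual coefficients and mirrors the $k=2,3$ computations. However, it takes Proposition~\ref{prop: Brackets hk-1} as input, and the paper proves that proposition through the same reduction.
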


The proof is given in Appendix~\ref{app:general-bracket-reduction}.

\section{Higher-order Sasaki metrics}
\label{sec:generalized-sasaki-metrics}

In this section, we use the Levi--Civita-induced connection tower constructed in Subsection~\ref{subsec:LC-connection-tower} to define a higher-order Sasaki metric on each higher-order tangent bundle.  We first recall the classical Sasaki metric on $TM$, since it provides the model for the higher-order construction.  We then define the $k$-Sasaki metric on $T^{(k)}M$, record the basic metric properties needed later, and compute the corresponding Levi--Civita connection in the cases $k=2$ and $k=3$.  Finally, we derive the geodesic equations on $(T^{(2)}M,\overset{(2)}g)$ and $(T^{(3)}M,\overset{(3)}g)$, and prove the general jet lift characterization for arbitrary order.

Throughout this section, $\overset{(k)}{K}=(\overset{(k)}{K}_1,\ldots,\overset{(k)}{K}_k)$ denotes the Levi--Civita-induced connection tower on $T^{(k)}M$. 

\subsection{The Sasaki metric on $TM$}\label{subsec TM 3}

There is also a canonical choice for the Riemannian metric on $TM$, called the \textit{Sasaki metric}, with respect to which the splitting Equation \eqref{eq: direct_sum_TM} is an orthogonal decomposition. The Sasaki metric $\overset{(1)}g$ is defined by
\[
        \overset{(1)}g(X,Y)=g(\tau_{1*}X,\tau_{1*}Y)+g(\overset{(1)}{K}(X),\overset{(1)}{K}(Y)),
\]
where $X,Y\in T_uTM$, $u\in TM$.

The following standard formulas will serve as the first-order model for the higher-order computations.

\begin{theorem}[Dombrowski, \cite{Dombrowski1962}]\label{prop: Levi--Civita Sasaki}
Let $X,Y\in\mathfrak X(M)$. Then
\begin{align*}
        \overset{(1)}{\nabla}_{X^{\overset{(1)}{v}_1}}Y^{\overset{(1)}{v}_1}&=0,\\
        \overset{(1)}{\nabla}_{X^{\overset{(1)}{h}_0}}Y^{\overset{(1)}{v}_1}&=\frac12\left(R(u,Y)X\right)^{\overset{(1)}{h}_0}+(\nabla_XY)^{\overset{(1)}{v}_1},\\
        \overset{(1)}{\nabla}_{X^{\overset{(1)}{h}_0}}Y^{\overset{(1)}{h}_0}&=(\nabla_XY)^{\overset{(1)}{h}_0}-\frac12\left(R(X,Y)u\right)^{\overset{(1)}{v}_1},
\end{align*}
where $u\in TM$.
\end{theorem}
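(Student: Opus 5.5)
The plan is to use the Koszul formula \eqref{eq: Koszul} on $TM$ with the Sasaki metric $\overset{(1)}g$, applied to lifted vector fields. By Equation \eqref{eq: decomposition VF} with $k=1$, the lifts $Z^{\overset{(1)}{h}_0}$ and $Z^{\overset{(1)}{v}_1}$ for $Z\in\mathfrak X(M)$ span $T_uTM$ pointwise, so each covariant derivative is determined once we know its inner product with $Z^{\overset{(1)}{h}_0}$ and with $Z^{\overset{(1)}{v}_1}$ for arbitrary $Z$. By definition of $\overset{(1)}g$, these inner products are $g(\tau_{1*}\,\cdot\,,Z)$ and $g(\overset{(1)}{K}\,\cdot\,,Z)$, and they read off the horizontal and vertical components respectively.

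The inputs to Koszul come from two places. For the directional derivative terms: lifts are orthonormal-type, so $\overset{(1)}g(X^{\overset{(1)}{h}_0},Y^{\overset{(1)}{h}_0})=\overset{(1)}g(X^{\overset{(1)}{v}_1},Y^{\overset{(1)}{v}_1})=g(X,Y)\circ\tau$ and mixed products vanish. By Lemma \ref{lemma: fX}, a horizontal lift acts on $f\circ\tau$ as $(Xf)\circ\tau$ and a vertical lift kills it. For the bracket terms we use Theorem \ref{k1}. The cases then go as follows.

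\emph{Vertical--vertical.} All derivative terms vanish, since vertical lifts kill basic functions. All brackets among vertical fields vanish as well. Pairing with $Z^{\overset{(1)}{h}_0}$ leaves $-\langle[X^v,Z^h],Y^v\rangle-\langle[Y^v,Z^h],X^v\rangle$, which are multiples of $\langle(\nabla_ZX)^v,Y^v\rangle$. Together with the $Z^h$ derivative term $-Z^h\langle X^v,Y^v\rangle=-Z\langle X,Y\rangle$, these cancel by metric compatibility, so the result is $0$.

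\emph{Horizontal--horizontal.} Pairing with $Z^{\overset{(1)}{h}_0}$ reproduces exactly the base Koszul formula, giving the horizontal part $(\nabla_XY)^{\overset{(1)}{h}_0}$. Pairing with $Z^{\overset{(1)}{v}_1}$, the derivative terms vanish. The term $\langle[X^h,Y^h],Z^v\rangle$ contributes $-\langle R(X,Y)u,Z\rangle$. The other two brackets give $-\langle(\nabla_XZ)^v,Y^h\rangle$-type terms, which are zero. Halving gives $-\tfrac12(R(X,Y)u)^{\overset{(1)}{v}_1}$.

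\emph{Horizontal--vertical.} Pairing with $Z^{\overset{(1)}{v}_1}$ yields $\langle\nabla_XY,Z\rangle$ by the same computation as the base Koszul formula restricted to these terms, giving $(\nabla_XY)^{\overset{(1)}{v}_1}$. Pairing with $Z^{\overset{(1)}{h}_0}$ leaves only the term $-\langle[X^h,Z^h],Y^v\rangle=\langle R(X,Z)u,Y\rangle$. The other brackets are vertical-against-horizontal and vanish. Using the pair symmetry $\langle R(X,Z)u,Y\rangle=\langle R(u,Y)X,Z\rangle$, this gives the horizontal part $\tfrac12(R(u,Y)X)^{\overset{(1)}{h}_0}$.

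The main care point is this last curvature symmetry rewrite, together with the sign bookkeeping of the bracket terms. One subtlety is that $u$ varies, so $\langle R(X,Z)u,Y\rangle$ must be treated pointwise at $u$. This is legitimate because Koszul is tensorial in $Z$ and everything is evaluated at a fixed $u\in T_pM$. Finally, $\tau$-related functions such as $g(X,Y)\circ\tau$ are killed by vertical lifts. This uses the fact that the vertical lift of $Z$ differentiates only along fibers, which Lemma \ref{lemma: fX} provides with $\alpha=k=1$.
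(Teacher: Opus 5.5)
Your proposal is correct and follows essentially the same route as the paper: Koszul's formula applied to triples of horizontal and vertical lifts, the bracket identities of Theorem \ref{k1}, and nondegeneracy of the orthogonal horizontal--vertical splitting to read off each component. Your case computations check out, including the pair-symmetry rewrite $\langle R(X,Z)u,Y\rangle=\langle R(u,Y)X,Z\rangle$. One small slip: in the vertical--vertical case the two surviving bracket terms are $\langle\nabla_ZX,Y\rangle$ and $\langle\nabla_ZY,X\rangle$, not both multiples of $\langle(\nabla_ZX)^v,Y^v\rangle$. They do nonetheless cancel against $-Z\langle X,Y\rangle$ by metric compatibility, exactly as you conclude.
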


\begin{proof}
Applying
Koszul's formula \eqref{eq: Koszul} to triples of horizontal and vertical lifts, and using the bracket
identities in Theorem~\ref{k1}, gives the displayed components after pairing with
arbitrary horizontal and vertical lifts.  The metric nondegeneracy of the horizontal--vertical
splitting then determines the covariant derivatives uniquely.
\qed \end{proof}

Let $\Gamma$ be a curve on $TM$. Denote by $q$ the curve obtained by projecting $\Gamma$ onto $M$, $q:=\tau_1\circ\Gamma$, and by $V$ the vector field along $q$ defined by the curve $\Gamma$. The velocity vector field $\dot \Gamma$ gives rise to the vector fields along $q$
\[
        \dot q=\tau_{1*}\circ \dot\Gamma,\qquad Y=\overset{(1)}{K}\circ \dot\Gamma,
\]
with respect to which
\[
        \dot\Gamma=\dot q^{\overset{(1)}{h}_0}+Y^{\overset{(1)}{v}_1}.
\]

\begin{theorem}\label{thm: Sasaki geodesic TM}
A curve $\Gamma$ on $TM$ is a Riemannian geodesic of the Sasaki metric if and only if
\begin{align*}
        0&=\nabla_{\dot q}\dot q+R(V,Y)\dot q,\\
        0&=\nabla_{\dot q}Y.
\end{align*}
\end{theorem}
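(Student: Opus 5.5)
We will compute the covariant acceleration $\overset{(1)}{\nabla}_{\dot\Gamma}\dot\Gamma$ directly from the decomposition $\dot\Gamma=\dot q^{\overset{(1)}{h}_0}+Y^{\overset{(1)}{v}_1}$ and the Levi--Civita formulas of Theorem~\ref{prop: Levi--Civita Sasaki}. A geodesic is characterized by $\overset{(1)}{\nabla}_{\dot\Gamma}\dot\Gamma=0$. By the Sasaki splitting, this is equivalent to the vanishing of both $\tau_{1*}(\overset{(1)}{\nabla}_{\dot\Gamma}\dot\Gamma)$ and $\overset{(1)}{K}(\overset{(1)}{\nabla}_{\dot\Gamma}\dot\Gamma)$. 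So the plan is to identify these two components as $\nabla_{\dot q}\dot q+R(V,Y)\dot q$ and $\nabla_{\dot q}Y$, respectively.

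First we localize. In a chart we write $\dot q=\dot q^i\partial_i$ and $Y=Y^i\partial_i$ along $q$. By Lemma~\ref{lemma: fX}, along $\Gamma$ we have
\[
\dot\Gamma=\dot q^i(\partial_i)^{\overset{(1)}{h}_0}+Y^i(\partial_i)^{\overset{(1)}{v}_1},
\]
where $\dot q^i,Y^i$ are functions of $t$. The Leibniz rule for covariant derivatives along $\Gamma$ then gives
\[
\overset{(1)}{\nabla}_{\dot\Gamma}\dot\Gamma=\ddot q^i(\partial_i)^{\overset{(1)}{h}_0}+\dot Y^i(\partial_i)^{\overset{(1)}{v}_1}+\dot q^i\,\overset{(1)}{\nabla}_{\dot\Gamma}(\partial_i)^{\overset{(1)}{h}_0}+Y^i\,\overset{(1)}{\nabla}_{\dot\Gamma}(\partial_i)^{\overset{(1)}{v}_1}.
\]
Next we expand $\overset{(1)}{\nabla}_{\dot\Gamma}$ of the lifts using the same decomposition of $\dot\Gamma$, and evaluate at $u=\Gamma(t)=V(t)$ via Theorem~\ref{prop: Levi--Civita Sasaki}. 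We also need $\overset{(1)}{\nabla}_{X^{v}}Y^{h}$. This follows from torsion-freeness together with Theorem~\ref{k1}: since $[X^h,Y^v]=(\nabla_XY)^v$, we get $\overset{(1)}{\nabla}_{X^v}Y^h=\overset{(1)}{\nabla}_{Y^h}X^v-(\nabla_YX)^v=\tfrac12(R(u,X)Y)^{h}$.

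Collecting terms, the horizontal part is
\[
\ddot q^i+\dot q^j\dot q^i\Gamma^\cdot_{ji}+\tfrac12 R(V,Y)\dot q+\tfrac12 R(V,Y)\dot q.
\]
The first two terms give $\nabla_{\dot q}\dot q$. The two curvature contributions come from the $h$--$v$ and $v$--$h$ cross terms; we should check the second one reads $\tfrac12 R(V,Y)\dot q$, consistent with the ordering of arguments in $\tfrac12(R(u,Y)X)^h$. Summing, the horizontal part is $\nabla_{\dot q}\dot q+R(V,Y)\dot q$.

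The vertical part consists of $\dot Y^i+\dot q^jY^i\Gamma^\cdot_{ji}=\nabla_{\dot q}Y$, plus a term $-\tfrac12 R(\dot q,\dot q)V$. This last term vanishes by antisymmetry of $R$, and the $v$--$v$ term is zero. Hence the vertical part is $\nabla_{\dot q}Y$.

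The main obstacle is bookkeeping: applying the tensorial formulas of Theorem~\ref{prop: Levi--Civita Sasaki}, stated for lifts of vector fields on $M$, along a curve whose point $u=V(t)$ also varies. Because those formulas are $C^\infty(M)$-linear in the first slot and obey the Leibniz rule in the second, the coordinate-frame reduction above handles this. The other point needing care is tracking argument order in the curvature terms, so that the two half-contributions add rather than cancel; the pair symmetry $\langle R(a,b)c,d\rangle=\langle R(c,d)a,b\rangle$ can be used to cross-check the $v$--$h$ term. Since $\Phi_u$ is an isomorphism, the two components vanishing is equivalent to the geodesic equation, which gives the equivalence.
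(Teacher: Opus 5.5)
Your proposal is correct and follows the same route as the paper: decompose $\dot\Gamma=\dot q^{\overset{(1)}{h}_0}+Y^{\overset{(1)}{v}_1}$, apply Dombrowski's Levi--Civita formulas, use torsion-freeness to get $\overset{(1)}{\nabla}_{X^{\overset{(1)}{v}_1}}Y^{\overset{(1)}{h}_0}=\tfrac12\big(R(u,X)Y\big)^{\overset{(1)}{h}_0}$, and conclude from the orthogonal horizontal--vertical splitting. Your coordinate-frame Leibniz expansion makes explicit the bookkeeping the paper leaves implicit, and the two cross terms do each contribute $\tfrac12R(V,Y)\dot q$, as you anticipated.
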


\begin{proof}
Using the decomposition $\dot\Gamma=\dot q^{\overset{(1)}{h}_0}+Y^{\overset{(1)}{v}_1}$ and the formulas of
Theorem~\ref{prop: Levi--Civita Sasaki}, together with torsion-freeness to compute
$\overset{(1)}{\nabla}_{Y^{\overset{(1)}{v}_1}}\dot q^{\overset{(1)}{h}_0}$, one obtains
\[
        \overset{(1)}{\nabla}_{\dot\Gamma}\dot\Gamma
        =
        \left(\nabla_{\dot q}\dot q+R(V,Y)\dot q\right)^{\overset{(1)}{h}_0}
        +
        \left(\nabla_{\dot q}Y\right)^{\overset{(1)}{v}_1}.
\]
Since the horizontal and vertical distributions are orthogonal and complementary, the
covariant acceleration vanishes if and only if both displayed components vanish.
\qed \end{proof}

\begin{remark}\label{rem: hgeod}
If $\Gamma$ is a horizontal geodesic on $TM$, then $\tau_1\circ\Gamma$ is a geodesic on $M$.
\end{remark}

Given a curve $q$ on $M$, the curve $j^1q=(q,\dot q)$ is called the \textit{tangent lift} of $q$ (or \textit{first-jet lift}). If $\Gamma=j^1q$, then
\[
        \dot\Gamma=\dot q^{\overset{(1)}{h}_0}+(\nabla_{\dot q}\dot q)^{\overset{(1)}{v}_1}.
\]

\begin{corollary}\label{cor: 1jetgeod}
Let $q$ be a curve on $M$. The first-jet lift $\Gamma=j^1q$ is a geodesic on $TM$ if and only if $q$ is a geodesic on $M$.
\end{corollary}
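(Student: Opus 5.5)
The plan is to specialize Theorem~\ref{thm: Sasaki geodesic TM} to the curve $\Gamma=j^1q$ and read off the two geodesic conditions. First I identify the data attached to $\Gamma$. Here $\tau_1\circ\Gamma=q$, so the projected curve is $q$ itself, and the vector field along $q$ defined by $\Gamma$ is $V=\dot q$. To compute $Y=\overset{(1)}{K}\circ\dot\Gamma$ from the Dombrowski definition \eqref{eq: connection_map_TM2}, I choose the family $\gamma(s,t)=q(t+s)$. For fixed $s$ the curve $t\mapsto\gamma_s(t)$ has $1$-jet at $t=0$ equal to $j^1q(s)$, so this family is adapted to $\dot\Gamma(s)$ at $s=0$ after a shift of parameter. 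Since $\partial\gamma/\partial t=\partial\gamma/\partial s=\dot q$, we get $Y=\nabla_{\dot q}\dot q$. This recovers the stated decomposition $\dot\Gamma=\dot q^{\overset{(1)}{h}_0}+(\nabla_{\dot q}\dot q)^{\overset{(1)}{v}_1}$, which I could alternatively check in coordinates from $K^i_j=q^{(1)l}\Gamma^i_{jl}$.

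Next I substitute $V=\dot q$ and $Y=\nabla_{\dot q}\dot q$ into the two equations of Theorem~\ref{thm: Sasaki geodesic TM}. The system becomes
\[
0=\nabla_{\dot q}\dot q+R(\dot q,\nabla_{\dot q}\dot q)\dot q,\qquad 0=\nabla_{\dot q}\nabla_{\dot q}\dot q .
\]
If $q$ is a geodesic, then $\nabla_{\dot q}\dot q=0$ and both equations hold trivially, so $\Gamma=j^1q$ is a geodesic of the Sasaki metric.

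The converse is the main obstacle, because the first equation contains a curvature term. I would take the inner product of the first equation with $A:=\nabla_{\dot q}\dot q$. The term $\langle R(\dot q,A)\dot q,A\rangle$ vanishes by the skew-symmetry $\langle R(X,Y)Z,W\rangle=-\langle R(X,Y)W,Z\rangle$ taken together with the pair symmetry. More directly, $\langle R(\dot q,A)\dot q,A\rangle=R(\dot q,A,\dot q,A)=-R(\dot q,A,A,\dot q)$, and this is the sectional-curvature-type term; it is not obviously zero, so I need to be careful with it. The safer route is to use the second equation instead. It says that $A$ is parallel along $q$, so $|A|$ is constant and $\langle A,\dot q\rangle$ has derivative $|A|^2$. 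On the other hand, the first equation gives $A=-R(\dot q,A)\dot q$, and pairing with $\dot q$ gives $\langle A,\dot q\rangle=-\langle R(\dot q,A)\dot q,\dot q\rangle=0$ by skew-symmetry in the last two slots. Hence $\langle A,\dot q\rangle\equiv0$, its derivative $|A|^2$ vanishes, and therefore $A=0$, so $q$ is a geodesic. The key point I expect to verify carefully is this differentiation, namely $\frac{d}{dt}\langle A,\dot q\rangle=\langle\nabla_{\dot q}A,\dot q\rangle+\langle A,A\rangle=|A|^2$, which uses metric compatibility and the second equation.
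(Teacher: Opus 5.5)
Your final argument is correct and matches the paper's method. You specialize Theorem~\ref{thm: Sasaki geodesic TM} with $V=\dot q$ and $Y=\nabla_{\dot q}\dot q=:A$, pair the first equation with $\dot q$ to get $\langle A,\dot q\rangle=0$, and differentiate using $\nabla_{\dot q}A=0$ to get $\|A\|^2=0$. This is the same reasoning the paper uses in Corollaries~\ref{cor: 2jetcurve} and~\ref{cor: k lift}, specialized to first order. In a clean write-up, delete your opening claim that $\langle R(\dot q,A)\dot q,A\rangle$ vanishes by the curvature symmetries: it is a sectional-curvature-type term and is nonzero in general, so retracting it was correct.
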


\subsection{The $k$-Sasaki metric}

For each $u\in T^{(k)}M$, we consider the inner product $\llangle\cdot,\cdot\rrangle_u$ on $T_uT^{(k)}M$ defined by
\begin{equation}\label{eq: Sasaki metric}
        \llangle X,Y\rrangle_u
        =\langle\tau_{k*}X,\tau_{k*}Y\rangle
        +\langle \overset{(k)}{K}_1(X),\overset{(k)}{K}_1(Y)\rangle+\cdots+
        \langle \overset{(k)}{K}_k(X),\overset{(k)}{K}_k(Y)\rangle,
\end{equation}
where $X,Y\in T_uT^{(k)}M$.  These inner products define a Riemannian metric on $T^{(k)}M$ \cite[Theorem 9.1.3]{MironBook1997}. We call this metric the \textit{$k$-Sasaki metric} associated with the connection tower $\overset{(k)}{K}$, and denote it by $\overset{(k)}g$.

The $k$-Sasaki metric makes the multiconnection decomposition given in Equation \eqref{eq: multiconnection direct sum} orthogonal. Indeed, in terms of the adapted covector basis,
\[
        \overset{(k)}g
        =\sum_{\alpha=0}^k\sum_{i,j=1}^{\dim M}g_{ij}\,
        \delta q^{(\alpha)i}\otimes\delta q^{(\alpha)j},
\]
where $g_{ij}$ are the coordinates of the Riemannian metric on $M$.

\begin{remark}\label{rem: Rsubm}
The maps $\overset{(k)}{\tau}_{\!\alpha}$ are Riemannian submersions from $(T^{(k)}M,\overset{(k)}g)$ to $(T^{(\alpha)}M,\overset{(\alpha)}g)$, for $\alpha=0,1,\ldots,k-1$; see \cite{ONeill1966}. This will be relevant to the study of geodesics on these jet bundles.
\end{remark}

\begin{remark}\label{rem: flat-check}
If the base manifold is flat, then all curvature terms in the formulas below vanish. In this case the $k$-Sasaki metric is locally the product metric induced by the connection coordinates, and the geodesic equations reduce to
\[
        \nabla_{\dot q}\dot q=0,
        \qquad
        \nabla_{\dot q}Y^{(\alpha)}=0,
        \quad \alpha=1,\ldots,k.
\]
This provides a useful check on the curvature terms in the second- and third-order formulas.
\end{remark}

We now study Riemannian geodesics on $(T^{(k)}M,\overset{(k)}g)$, especially when the curves are $k$-horizontal or $k$-jets of curves on $M$.  If $q$ is a curve on $M$ and $\Gamma=j^kq$, then the velocity vector field of $\Gamma$ is
\begin{equation}\label{eq: decomposition curve T^k}
        \dot\Gamma
        =\dot q^{\overset{(k)}{h}_0}
        +\left(\nabla_{\dot q}\dot q\right)^{\overset{(k)}{h}_1}
        +\frac12\left(\nabla_{\dot q}^2\dot q\right)^{\overset{(k)}{h}_2}
        +\cdots+
        \frac1{k!}\left(\nabla_{\dot q}^k\dot q\right)^{\overset{(k)}{v}_k}.
\end{equation}

Let $\overset{(k)}\nabla$ denote the Levi--Civita connection of $(T^{(k)}M,\overset{(k)}g)$. Our aim is to now establish results relating the Levi--Civita connections on tangent bundles of different orders, as we previously did for Lie brackets in Lemma \ref{lemma: K mu}. The main computational tool here is the Koszul formula \ref{eq: Koszul}, together with the Lie bracket identities we established in the previous section. 

Due to the direct sum decomposition given in Equation \eqref{eq: multiconnection direct sum} and the $C^\infty(T^{(k)}M)$-bilinearity and non-degeneracy of the $k$-Sasaki metric, it suffices to apply the Koszul formula when $X,Y,Z$ are lifts of vector fields on $M$. The following three results isolate the general computations that are used repeatedly.  Their proofs are given in Appendix \ref{app:sasaki-koszul-reductions}.

\begin{lemma}\label{lem: appKoszul}
Let $X,Y,Z\in\mathfrak X(M)$. Then
\[
        \left(X^{\overset{(k)}{h}_\alpha}\llangle Y^{\overset{(k)}{h}_\beta},Z^{\overset{(k)}{h}_\mu}\rrangle\right)\circ\tau_k
        =\delta_{\alpha0}\delta_{\beta\mu}
        \left(\langle\nabla_XY,Z\rangle+\langle Y,\nabla_XZ\rangle\right).
\]
\end{lemma}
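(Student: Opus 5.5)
The plan is to compute the left-hand side directly from the definition \eqref{eq: Sasaki metric} of the $k$-Sasaki metric together with Lemma~\ref{lemma: fX}. First we evaluate the function $\llangle Y^{\overset{(k)}{h}_\beta},Z^{\overset{(k)}{h}_\mu}\rrangle$ on $T^{(k)}M$. By the defining property of the lifts, $\overset{(k)}{K}_\gamma(Y^{\overset{(k)}{h}_\beta})=\delta_{\gamma\beta}\,Y\circ\tau_k$ for $\gamma=0,\ldots,k$, with the convention $\overset{(k)}{K}_0=\tau_{k*}$. The same holds for $Z^{\overset{(k)}{h}_\mu}$. Summing over $\gamma$ in \eqref{eq: Sasaki metric} gives
\[
        \llangle Y^{\overset{(k)}{h}_\beta},Z^{\overset{(k)}{h}_\mu}\rrangle=\delta_{\beta\mu}\,\langle Y,Z\rangle\circ\tau_k .
\]
In adapted coordinates this is simply $\delta_{\beta\mu}(g_{ij}Y^iZ^j)\circ\tau_k$, since the adapted frame is orthonormal blockwise with respect to $g_{ij}$. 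The key point is that this function is a pullback of a function on $M$.

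Next we apply $X^{\overset{(k)}{h}_\alpha}$ to this pullback function and use the last part of Lemma~\ref{lemma: fX} with $f=\langle Y,Z\rangle$. This gives
\[
        X^{\overset{(k)}{h}_\alpha}\big(\langle Y,Z\rangle\circ\tau_k\big)=\delta_{\alpha0}\,(X\langle Y,Z\rangle)\circ\tau_k ,
\]
because all lifts with $\alpha\ge 1$ annihilate functions pulled back from $M$. This holds for $\alpha=1,\ldots,k$, including the $k$-vertical lift $\overset{(k)}{h}_k=\overset{(k)}{v}_k$. Finally, metric compatibility of the Levi--Civita connection gives $X\langle Y,Z\rangle=\langle\nabla_XY,Z\rangle+\langle Y,\nabla_XZ\rangle$. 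Combining these yields the stated identity, where both sides are understood as functions on $T^{(k)}M$ (the right-hand side pulled back by $\tau_k$, as in the paper's abbreviated notation).

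No step is a genuine obstacle; this is essentially bookkeeping. The only points needing care are two conventions. The first is that $\overset{(k)}{K}_0:=\tau_{k*}$ enters the metric as its zeroth summand. The second is that the claim $X^{\overset{(k)}{h}_\alpha}(f\circ\tau_k)=0$ for $\alpha\ge1$ holds because these lifts lie in $V_1=\ker\tau_{k*}$: they satisfy $\tau_{k*}X^{\overset{(k)}{h}_\alpha}=\overset{(k)}{K}_0(X^{\overset{(k)}{h}_\alpha})=0$. This last fact is exactly what Lemma~\ref{lemma: fX} records.
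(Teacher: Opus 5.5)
Your proposal is correct and follows the same route as the paper's proof: it uses the identity $\llangle Y^{\overset{(k)}{h}_\beta},Z^{\overset{(k)}{h}_\mu}\rrangle=\delta_{\beta\mu}\langle Y,Z\rangle\circ\tau_k$, the fact from Lemma~\ref{lemma: prop} that only the $\overset{(k)}{h}_0$-lift differentiates pulled-back functions nontrivially, and then metric compatibility on $M$. You are also right to read the left-hand side as equal to the right-hand side pulled back by $\tau_k$, which is the intended meaning of the paper's abbreviated notation.
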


\begin{lemma}\label{lemma: nablahh}
Let $X,Y\in\mathfrak X(M)$. Then
\[
        \overset{(k)}{K}_\mu\circ\left(\overset{(k)}\nabla_{X^{\overset{(k)}{h}_\alpha}}Y^{\overset{(k)}{h}_\beta}\right)
        =\overset{(k-1)}{K}_\mu\circ\left(\overset{(k-1)}\nabla_{X^{\overset{(k-1)}{h}_\alpha}}Y^{\overset{(k-1)}{h}_\beta}\right)\circ\overset{(k)}{\tau}_{\!k-1},
\]
for $\mu=0,1,\ldots,k-1$, and
\[
        \overset{(k)}{K}_k\circ\left(\overset{(k)}\nabla_{X^{\overset{(k)}{h}_\alpha}}Y^{\overset{(k)}{h}_\beta}\right)
        =\frac12\overset{(k)}{K}_k\circ\left([X^{\overset{(k)}{h}_\alpha},Y^{\overset{(k)}{h}_\beta}]\right),
\]
for all $\alpha,\beta=0,1,\ldots,k-1$.
\end{lemma}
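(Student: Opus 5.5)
The statement to prove is Lemma \ref{lemma: nablahh}. The plan is to work entirely with the Koszul formula \eqref{eq: Koszul} on $(T^{(k)}M,\overset{(k)}g)$, pairing $\overset{(k)}\nabla_{X^{\overset{(k)}{h}_\alpha}}Y^{\overset{(k)}{h}_\beta}$ against a test lift $Z^{\overset{(k)}{h}_\mu}$. Because the $k$-Sasaki metric makes the decomposition \eqref{eq: multiconnection direct sum2} orthogonal and $\llangle V,Z^{\overset{(k)}{h}_\mu}\rrangle=\langle\overset{(k)}{K}_\mu V,Z\rangle\circ\tau_k$, knowing these pairings for all $Z\in\mathfrak X(M)$ determines $\overset{(k)}{K}_\mu$ of the covariant derivative.

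First I handle the three derivative terms in Koszul. Each has the form $A\llangle B,C\rrangle$ with $A,B,C$ lifts, and Lemma \ref{lem: appKoszul} shows it equals $\delta_{a0}\delta_{bc}(\langle\nabla B,C\rangle+\langle B,\nabla C\rangle)\circ\tau_k$. This quantity depends only on the lift indices and on the base fields, not on $k$.

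Next I handle the three bracket terms $\llangle[A,B],C\rrangle$. By orthogonality these reduce to $\langle\overset{(k)}{K}_c[A,B],C_0\rangle$, where $C_0$ is the base field and $c$ is the index of $C$. When all indices are at most $k-1$, Lemma \ref{lemma: K mu} identifies $\overset{(k)}{K}_c[A,B]$ with the corresponding order-$(k-1)$ quantity composed with $\overset{(k)}{\tau}_{k-1}$.

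For $\mu\le k-1$, every term of the Koszul expression for the pairing with $Z^{\overset{(k)}{h}_\mu}$ then equals the corresponding term on $T^{(k-1)}M$, pulled back by $\overset{(k)}{\tau}_{k-1}$. Hence $2\langle\overset{(k)}{K}_\mu\overset{(k)}\nabla_{X^{h_\alpha}}Y^{h_\beta},Z\rangle$ agrees with its order-$(k-1)$ counterpart. Since $Z$ is arbitrary, this gives the first identity. One could alternatively invoke Remark \ref{rem: Rsubm} and O'Neill's formula for horizontal lifts, but the Koszul argument above is self-contained.

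For $\mu=k$, take $Z^{\overset{(k)}{v}_k}$ as the third argument. All three derivative terms vanish by Lemma \ref{lem: appKoszul}: the term $Z^{v_k}\llangle\cdot,\cdot\rrangle$ has $\alpha=k\neq0$, and the other two pair an index in $\{\alpha,\beta\}\subset\{0,\dots,k-1\}$ against $k$, so $\delta_{\beta\mu}=0$.

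Among the bracket terms, $\llangle[X^{h_\alpha},Z^{v_k}],Y^{h_\beta}\rrangle$ is zero. By Lemma \ref{lemma: Brackets vk}, the bracket $[X^{h_\alpha},Z^{v_k}]$ is either $0$ or $(\nabla_XZ)^{v_k}$, and in either case it is $k$-vertical, hence orthogonal to $H_\beta$ since $\beta<k$. The same argument kills $\llangle[Y^{h_\beta},Z^{v_k}],X^{h_\alpha}\rrangle$.

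What remains is $2\llangle\overset{(k)}\nabla_{X^{h_\alpha}}Y^{h_\beta},Z^{v_k}\rrangle=\llangle[X^{h_\alpha},Y^{h_\beta}],Z^{v_k}\rrangle$, which gives the second identity.

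The main obstacle is bookkeeping rather than substance. One has to make sure that the lift-to-lift Koszul pairings genuinely determine the covariant derivative via $C^\infty$-bilinearity. One also has to check that Lemma \ref{lem: appKoszul} is applicable as stated, with composition by $\tau_k$ and the metric functions pulled back, including for the term $Z^{v_k}\llangle X^{h_\alpha},Y^{h_\beta}\rrangle$, where the first lift index is $k$.
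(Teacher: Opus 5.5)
Your proposal is correct and follows the paper's proof: the components with $\mu\le k-1$ come from the Koszul formula combined with Lemma~\ref{lem: appKoszul} and the bracket compatibility of Lemma~\ref{lemma: K mu}, exactly as the paper argues. The only difference is in the $k$-vertical component, where you apply Koszul directly to the triple $(X^{\overset{(k)}{h}_\alpha},Y^{\overset{(k)}{h}_\beta},Z^{\overset{(k)}{v}_k})$, whereas the paper first proves $\overset{(k)}{K}_k\circ\bigl(\overset{(k)}\nabla_WW\bigr)=0$ for $k$-horizontal $W$ and then polarizes with $W=X^{\overset{(k)}{h}_\alpha}+Y^{\overset{(k)}{h}_\beta}$; both use the same ingredients (Lemma~\ref{lem: appKoszul} and Lemma~\ref{lemma: Brackets vk}) and are equivalent, though the paper's intermediate identity for arbitrary $k$-horizontal $W$ is what is reused later in Lemma~\ref{lem: nablaww}.
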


\begin{proposition}\label{prop: Levi--Civita kSasaki}
Let $\overset{(k)}\nabla$ be the Levi--Civita connection of $(T^{(k)}M,\overset{(k)}g)$. Then
\begin{align*}
        \overset{(k)}{K}_k\circ\left(\overset{(k)}\nabla_{X^{\overset{(k)}{v}_k}}Y^{\overset{(k)}{v}_k}\right)
        &=\overset{(k)}{K}_k\circ\left(\overset{(k)}\nabla_{X^{\overset{(k)}{h}_1}}Y^{\overset{(k)}{v}_k}\right)=\cdots=\overset{(k)}{K}_k\circ\left(\overset{(k)}\nabla_{X^{\overset{(k)}{h}_{k-1}}}Y^{\overset{(k)}{v}_k}\right)=0,\\
        \overset{(k)}{K}_k\circ\left(\overset{(k)}\nabla_{X^{\overset{(k)}{h}_0}}Y^{\overset{(k)}{v}_k}\right)&=(\nabla_XY)\circ\tau_k,\\
        \overset{(k)}{K}_\mu\circ\left(\overset{(k)}\nabla_{X^{\overset{(k)}{v}_k}}Y^{\overset{(k)}{v}_k}\right)&=0,
        \qquad \mu=0,1,\ldots,k-1,
\end{align*}
for all $X,Y\in\mathfrak X(M)$.
\end{proposition}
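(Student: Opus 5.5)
We will apply the Koszul formula \eqref{eq: Koszul} to triples $X^{\overset{(k)}{h}_\alpha},Y^{\overset{(k)}{v}_k},Z^{\overset{(k)}{h}_\mu}$. Here $\alpha\in\{0,\dots,k\}$, with the convention $\overset{(k)}{h}_k=\overset{(k)}{v}_k$, and $\mu\in\{0,\dots,k\}$ is the test index. The $k$-Sasaki metric makes the splitting \eqref{eq: multiconnection direct sum2} orthogonal, and $\llangle Z^{\overset{(k)}{h}_\mu},W\rrangle=\langle Z,\overset{(k)}{K}_\mu(W)\rangle\circ\tau_k$. Hence $\overset{(k)}{K}_\mu\circ\overset{(k)}\nabla_{X^{h_\alpha}}Y^{v_k}$ is determined by $2\llangle\overset{(k)}\nabla_{X^{h_\alpha}}Y^{v_k},Z^{h_\mu}\rrangle$ as $Z$ ranges over $\mathfrak X(M)$. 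By $C^\infty$-linearity (Lemma~\ref{lemma: fX}) it suffices to work pointwise with arbitrary vector fields on $M$.

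\textbf{Metric terms.} The three derivative terms are handled by Lemma~\ref{lem: appKoszul}: $X^{h_\alpha}\llangle Y^{v_k},Z^{h_\mu}\rrangle$ is nonzero only if $\alpha=0$ and $\mu=k$, in which case it equals $\langle\nabla_XY,Z\rangle+\langle Y,\nabla_XZ\rangle$. The term $Y^{v_k}\llangle X^{h_\alpha},Z^{h_\mu}\rrangle$ vanishes, since $v_k$ is not $h_0$. The term $Z^{h_\mu}\llangle X^{h_\alpha},Y^{v_k}\rrangle$ is nonzero only if $\mu=0$ and $\alpha=k$, where it equals $\langle\nabla_ZX,Y\rangle+\langle X,\nabla_ZY\rangle$.

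\textbf{Bracket terms.} For these we use Lemma~\ref{lemma: Brackets vk}, which gives $[W^{h_\beta},Y^{v_k}]=0$ for $\beta\ge1$ (including $\beta=k$) and $[W^{h_0},Y^{v_k}]=(\nabla_WY)^{v_k}$. The bracket $[X^{h_\alpha},Z^{h_\mu}]$ enters only through its pairing with $Y^{v_k}$, i.e.\ through $\overset{(k)}{K}_k[X^{h_\alpha},Z^{h_\mu}]$.

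\textbf{Case analysis.}
\begin{enumerate}
\item Take $\alpha=k$ (both fields vertical). The bracket $[X^{v_k},Y^{v_k}]$ vanishes. For $\mu=k$ everything vanishes. For $\mu=0$ the surviving terms are
\[
\langle\nabla_ZX,Y\rangle+\langle X,\nabla_ZY\rangle-\langle\nabla_ZX,Y\rangle-\langle\nabla_ZY,X\rangle=0,
\]
where the last two come from $-\llangle[X^{v},Z^{h_0}],Y^v\rrangle-\llangle[Y^v,Z^{h_0}],X^v\rrangle$ with signs via Lemma~\ref{lemma: Brackets vk}. For $1\le\mu\le k-1$ all brackets vanish. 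This gives $\overset{(k)}{K}_\mu\circ\overset{(k)}\nabla_{X^{v_k}}Y^{v_k}=0$ for every $\mu$, which is the first and third claims.
\item Take $1\le\alpha\le k-1$ and $\mu=k$. The metric terms vanish. The brackets $[X^{h_\alpha},Y^{v_k}]$ and $[Y^{v_k},Z^{v_k}]$ vanish. The remaining term $\llangle[X^{h_\alpha},Z^{v_k}],Y^{v_k}\rrangle$ also vanishes, so $\overset{(k)}{K}_k\overset{(k)}\nabla_{X^{h_\alpha}}Y^{v_k}=0$.
\item Take $\alpha=0$ and $\mu=k$. We get
\[
\langle\nabla_XY,Z\rangle+\langle Y,\nabla_XZ\rangle+\langle\nabla_XY,Z\rangle-\langle\nabla_XZ,Y\rangle=2\langle\nabla_XY,Z\rangle,
\]
so $\overset{(k)}{K}_k\circ\overset{(k)}\nabla_{X^{h_0}}Y^{v_k}=(\nabla_XY)\circ\tau_k$.
\end{enumerate}

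The main care point is bookkeeping of signs in the Koszul formula and making sure no bracket between $h_\alpha$-lifts contributes a $v_k$-component when paired with $Y^{v_k}$. The latter is not an issue here, since in the cases with $\mu=k$ the bracket paired with $Y^{v_k}$ is $[X^{h_\alpha},Z^{v_k}]$, which is controlled by Lemma~\ref{lemma: Brackets vk}. The components $\mu<k$ for mixed $\alpha$ are not claimed, and they would involve curvature through $\overset{(k)}{K}_k[X^{h_\alpha},Z^{h_\mu}]$.
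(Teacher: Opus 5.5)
Your proposal is correct and follows essentially the same route as the paper: Koszul's formula on triples of lifts, with Lemma~\ref{lem: appKoszul} killing every derivative term except those along an $\overset{(k)}{h}_0$-lift and Lemma~\ref{lemma: Brackets vk} controlling every bracket that appears. You also spell out the $\overset{(k)}{K}_k$-component of $\overset{(k)}\nabla_{X^{\overset{(k)}{v}_k}}Y^{\overset{(k)}{v}_k}$, which the paper leaves implicit.

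One cosmetic point concerns the case $\alpha=k$, $\mu=0$. There the term $-Z^{\overset{(k)}{h}_0}\llangle X^{\overset{(k)}{v}_k},Y^{\overset{(k)}{v}_k}\rrangle$ enters Koszul's formula with a minus sign, and the two bracket terms enter with plus signs, because $[X^{\overset{(k)}{v}_k},Z^{\overset{(k)}{h}_0}]=-(\nabla_ZX)^{\overset{(k)}{v}_k}$. Your displayed signs are therefore globally flipped, which does not affect the cancellation.
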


\subsection{Geodesics on $(T^{(2)}M,\overset{(2)}g)$}

We first calculate the covariant derivatives of $\overset{(2)}\nabla$ and then obtain the geodesic equations on $(T^{(2)}M,\overset{(2)}g)$.  The proof of the following theorem is included below, since it gives the model for the higher-order Koszul computations.

\begin{theorem}\label{teo: Levi--Civita Sasaki 2}
Let $\overset{(2)}\nabla$ be the Levi--Civita connection of $(T^{(2)}M,\overset{(2)}g)$. Then
\begin{align*}
        \overset{(2)}\nabla_{X^{\overset{(2)}{v}_2}}Y^{\overset{(2)}{v}_2}&=0,\\
        \overset{(2)}\nabla_{X^{\overset{(2)}{h}_1}}Y^{\overset{(2)}{v}_2}&=\frac14\left(R(X,Y)u^{(1)}+R(u^{(1)},Y)X\right)^{\overset{(2)}{h}_0},\\
        \overset{(2)}\nabla_{X^{\overset{(2)}{h}_0}}Y^{\overset{(2)}{v}_2}
        &=\left(\frac14R(u^{(2)},Y)X+\frac12(\nabla_{u^{(1)}}R)(u^{(1)},Y)X\right)^{\overset{(2)}{h}_0}\\
        &\quad+\left(\frac14R(u^{(1)},X)Y+\frac14R(u^{(1)},Y)X\right)^{\overset{(2)}{h}_1}
        +(\nabla_XY)^{\overset{(2)}{v}_2},\\
        \overset{(2)}\nabla_{X^{\overset{(2)}{h}_1}}Y^{\overset{(2)}{h}_1}&=0,\\
        \overset{(2)}\nabla_{X^{\overset{(2)}{h}_0}}Y^{\overset{(2)}{h}_1}
        &=\frac12\left(R(u^{(1)},Y)X\right)^{\overset{(2)}{h}_0}+(\nabla_XY)^{\overset{(2)}{h}_1}
        -\left(\frac14R(X,Y)u^{(1)}+\frac14R(X,u^{(1)})Y\right)^{\overset{(2)}{v}_2},\\
        \overset{(2)}\nabla_{X^{\overset{(2)}{h}_0}}Y^{\overset{(2)}{h}_0}
        &=(\nabla_XY)^{\overset{(2)}{h}_0}-\frac12\left(R(X,Y)u^{(1)}\right)^{\overset{(2)}{h}_1}
        -\left(\frac14R(X,Y)u^{(2)}+\frac12(\nabla_{u^{(1)}}R)(X,Y)u^{(1)}\right)^{\overset{(2)}{v}_2},
\end{align*}
for all $X,Y\in\mathfrak X(M)$, where $u^{(1)}=\overset{(2)}{F}_0(u)$, $u^{(2)}=\overset{(2)}{F}_1(u)$, and $u\in T^{(2)}M$.
\end{theorem}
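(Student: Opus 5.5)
The plan is to determine each covariant derivative through its components $\overset{(2)}{K}_\mu$, $\mu=0,1,2$, exploiting the orthogonal decomposition $TT^{(2)}M=\overset{(2)}{H}_0\oplus\overset{(2)}{H}_1\oplus\overset{(2)}{V}_2$. Two reductions come first. By Lemma~\ref{lemma: nablahh}, the $\overset{(2)}{h}_0$- and $\overset{(2)}{h}_1$-components of $\overset{(2)}\nabla_{X^{h_\alpha}}Y^{h_\beta}$ with $\alpha,\beta\in\{0,1\}$ are just the $T M$ Sasaki components. These are given by Theorem~\ref{prop: Levi--Civita Sasaki}, with the identification $\overset{(1)}{h}_0\mapsto\overset{(2)}{h}_0$, $\overset{(1)}{v}_1\mapsto\overset{(2)}{h}_1$ and $u\mapsto u^{(1)}$. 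Their $\overset{(2)}{v}_2$-components are half the $\overset{(2)}{K}_2$-components of the brackets, which Theorem~\ref{prop: liebrac2} supplies. Concretely:
\begin{itemize}
\item $\overset{(2)}\nabla_{X^{h_1}}Y^{h_1}=0$, since both the first-order $\nabla_{v}v$ vanishes and $[X^{h_1},Y^{h_1}]=0$.
\item $\overset{(2)}\nabla_{X^{h_0}}Y^{h_1}$ has $\overset{(2)}{v}_2$-part $\tfrac14(R(u^{(1)},X)Y-R(X,Y)u^{(1)})$, which matches the stated form because $-R(X,u^{(1)})Y=R(u^{(1)},X)Y$.
\item $\overset{(2)}\nabla_{X^{h_0}}Y^{h_0}$ has $\overset{(2)}{v}_2$-part $-\tfrac14R(X,Y)u^{(2)}-\tfrac12(\nabla_{u^{(1)}}R)(X,Y)u^{(1)}$.
\end{itemize}

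Next come the derivatives involving $Y^{v_2}$. Proposition~\ref{prop: Levi--Civita kSasaki} already gives their $\overset{(2)}{v}_2$-components, together with the full statement $\overset{(2)}\nabla_{X^{v_2}}Y^{v_2}=0$. The remaining $\overset{(2)}{h}_0$- and $\overset{(2)}{h}_1$-components come from the Koszul formula \eqref{eq: Koszul}, evaluated against $Z^{h_0}$ and $Z^{h_1}$. Lemma~\ref{lem: appKoszul} makes all derivative-of-metric terms vanish, because the lifts involved are of different types or are not $h_0$-lifts. So only bracket terms survive, and these are pairings $\llangle [\cdot,\cdot],\cdot\rrangle$ in which only $\overset{(2)}{K}_2$-components of brackets with a $v_2$ factor contribute. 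For $\overset{(2)}\nabla_{X^{h_1}}Y^{v_2}$, the brackets $[X^{h_1},Y^{v_2}]$ and $[Y^{v_2},Z^{h_1}]$ vanish by Lemma~\ref{lemma: Brackets vk}, and $[Y^{v_2},Z^{h_0}]$ is $v_2$-valued, so pairing it with $X^{h_1}$ gives zero. This leaves
\[
2\llangle\nabla_{X^{h_1}}Y^{v_2},Z^{h_0}\rrangle=-\llangle [X^{h_1},Z^{h_0}],Y^{v_2}\rrangle=\llangle [Z^{h_0},X^{h_1}],Y^{v_2}\rrangle .
\]
By Theorem~\ref{prop: liebrac2} the right-hand side equals $\tfrac12\langle R(u^{(1)},Z)X-R(Z,X)u^{(1)},Y\rangle$. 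Using the pair symmetry of $R$, I would rewrite this as $\langle \tfrac12(R(X,Y)u^{(1)}+R(u^{(1)},Y)X),Z\rangle$, which gives the coefficient $\tfrac14$. The $Z^{h_1}$-pairing vanishes by Proposition~\ref{prop: Brackets hk-1} and the brackets above.

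For $\overset{(2)}\nabla_{X^{h_0}}Y^{v_2}$ the computation is similar. I would pair against $Z^{h_1}$ using $[X^{h_0},Z^{h_1}]$, and against $Z^{h_0}$ using $[X^{h_0},Z^{h_0}]$. The term $[Y^{v_2},Z^{h_0}]=-(\nabla_ZY)^{v_2}$ pairs to zero with $X^{h_0}$. The $Z^{h_0}$ pairing then gives
\[
\tfrac12\Big\langle -\tfrac12R(X,Z)u^{(2)}-(\nabla_{u^{(1)}}R)(X,Z)u^{(1)},\,Y\Big\rangle\cdot(-1),
\]
with the sign fixed by the Koszul ordering. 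Pair symmetry then converts this to $\langle \tfrac14R(u^{(2)},Y)X+\tfrac12(\nabla_{u^{(1)}}R)(u^{(1)},Y)X,Z\rangle$; the symmetry $\langle(\nabla R)(A,B)C,D\rangle=\langle(\nabla R)(C,D)A,B\rangle$ handles the $\nabla R$ term.

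The main obstacle I expect is sign and symmetry bookkeeping in the Koszul terms, in particular matching the $\overset{(2)}{h}_1$-component $\tfrac14(R(u^{(1)},X)Y+R(u^{(1)},Y)X)$. That component comes from $-\llangle[X^{h_0},Z^{h_1}],Y^{v_2}\rrangle$ combined with $\llangle[Z^{h_1},Y^{v_2}],X^{h_0}\rrangle=0$. Finally, all formulas extend from coordinate fields to general fields by $C^\infty$-linearity (Lemma~\ref{lemma: fX}).
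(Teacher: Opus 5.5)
Your proposal is correct and follows the paper's proof. The purely $\overset{(2)}{h}_0$/$\overset{(2)}{h}_1$ components come from Lemma~\ref{lemma: nablahh} with Theorems~\ref{prop: Levi--Civita Sasaki} and~\ref{prop: liebrac2}, the $\overset{(2)}{v}_2$-components from Proposition~\ref{prop: Levi--Civita kSasaki}, and the remaining horizontal components from the Koszul formula paired against $\overset{(2)}{h}_0$- and $\overset{(2)}{h}_1$-lifts; your signs and curvature symmetries all check out. The only cosmetic difference is that the paper's worked case uses metric compatibility, $\llangle \overset{(2)}\nabla_{X^{\overset{(2)}{h}_0}}Y^{\overset{(2)}{v}_2},Z^{\overset{(2)}{h}_1}\rrangle=-\llangle Y^{\overset{(2)}{v}_2},\overset{(2)}\nabla_{X^{\overset{(2)}{h}_0}}Z^{\overset{(2)}{h}_1}\rrangle$, which by Lemma~\ref{lemma: nablahh} is exactly the term $-\frac12\llangle[X^{\overset{(2)}{h}_0},Z^{\overset{(2)}{h}_1}],Y^{\overset{(2)}{v}_2}\rrangle$ that you read off directly from Koszul.
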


\begin{proof}
The components involving only $\overset{(2)}{h}_0$- and $\overset{(2)}{h}_1$-lifts follow from Lemma \ref{lemma: nablahh} and Theorem \ref{prop: liebrac2}.  The $2$-vertical components involving one $\overset{(2)}{v}_2$-lift are given by Proposition \ref{prop: Levi--Civita kSasaki}.  It remains to obtain the horizontal components of the derivatives involving one $2$-vertical lift.

For example, for $Z\in\mathfrak X(M)$,
\begin{align*}
        \llangle \overset{(2)}\nabla_{X^{\overset{(2)}{h}_0}}Y^{\overset{(2)}{v}_2},Z^{\overset{(2)}{h}_1}\rrangle
        &=-\llangle Y^{\overset{(2)}{v}_2},\overset{(2)}\nabla_{X^{\overset{(2)}{h}_0}}Z^{\overset{(2)}{h}_1}\rrangle\\
        &=\left\langle Y,
        \frac14R(X,Z)u^{(1)}+\frac14R(X,u^{(1)})Z
        \right\rangle\\
        &=\left\langle
        \frac14R(u^{(1)},X)Y+\frac14R(u^{(1)},Y)X,
        Z\right\rangle.
\end{align*}
Thus
\[
        \overset{(2)}{K}_1\left(\overset{(2)}\nabla_{X^{\overset{(2)}{h}_0}}Y^{\overset{(2)}{v}_2}\right)
        =\frac14R(u^{(1)},X)Y+\frac14R(u^{(1)},Y)X.
\]
The remaining components are obtained analogously by applying the Koszul formula against $\overset{(2)}{h}_0$-, $\overset{(2)}{h}_1$-, and $\overset{(2)}{v}_2$-lifts and using the second-order bracket formulas from Theorem \ref{prop: liebrac2}.  Combining these components gives the displayed formulas.
\qed \end{proof}

Let $\Gamma$ be a curve on $T^{(2)}M$. Let $q=\tau_2\circ\Gamma$, and let
\[
        V^{(1)}=\overset{(2)}{F}_0\circ\Gamma,
        \qquad
        V^{(2)}=\overset{(2)}{F}_1\circ\Gamma.
\]
We also write
\[
        Y^{(\alpha)}=\overset{(2)}{K}_\alpha\circ\dot\Gamma,
        \qquad \alpha=1,2.
\]
Then
\begin{equation}\label{eq: decomposition gamma T^2}
        \dot\Gamma=\dot q^{\overset{(2)}{h}_0}+(Y^{(1)})^{\overset{(2)}{h}_1}+(Y^{(2)})^{\overset{(2)}{v}_2}.
\end{equation}

\begin{theorem}\label{teo: geod eq 2}
A curve $\Gamma$ is a Riemannian geodesic on $(T^{(2)}M,\overset{(2)}g)$ if and only if
\begin{align*}
        0&=\nabla_{\dot q}\dot q
        +R(V^{(1)},Y^{(1)})\dot q
        +\frac12R(V^{(2)},Y^{(2)})\dot q
        +(\nabla_{V^{(1)}}R)(V^{(1)},Y^{(2)})\dot q\\
        &\quad+\frac12R(Y^{(1)},Y^{(2)})V^{(1)}
        +\frac12R(V^{(1)},Y^{(2)})Y^{(1)},\\
        0&=\nabla_{\dot q}Y^{(1)}
        +\frac12R(V^{(1)},\dot q)Y^{(2)}
        +\frac12R(V^{(1)},Y^{(2)})\dot q,\\
        0&=\nabla_{\dot q}Y^{(2)}.
\end{align*}
\end{theorem}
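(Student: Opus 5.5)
We follow the model of Theorem \ref{thm: Sasaki geodesic TM}. Expand $\overset{(2)}\nabla_{\dot\Gamma}\dot\Gamma$ using the decomposition \eqref{eq: decomposition gamma T^2} and the covariant derivatives of Theorem \ref{teo: Levi--Civita Sasaki 2}. Then read off the three components $\tau_{2*}$, $\overset{(2)}{K}_1$, $\overset{(2)}{K}_2$. Since the splitting \eqref{eq: multiconnection direct sum} is orthogonal for $\overset{(2)}g$, $\Gamma$ is a geodesic if and only if all three components vanish.

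\textbf{Reduction to lifts.} Work locally. Write $\dot q=\dot q^i(\partial_i\circ q)$, and similarly for $Y^{(1)}$ and $Y^{(2)}$. By Lemma \ref{lemma: fX}, the velocity is $\dot\Gamma=\dot q^i\frac{\delta}{\delta q^{(0)i}}+Y^{(1)i}\frac{\delta}{\delta q^{(1)i}}+Y^{(2)i}\frac{\partial}{\partial q^{(2)i}}$. Then
\[
\overset{(2)}\nabla_{\dot\Gamma}\dot\Gamma=\sum_{A}\Big(\tfrac{d}{dt}Z_A^i\Big)E_{A,i}+\sum_{A,B}Z_A^iZ_B^j\,\overset{(2)}\nabla_{E_{A,i}}E_{B,j},
\]
where $E_{A,i}$ runs over the adapted frame and $Z_A=(\dot q,Y^{(1)},Y^{(2)})$. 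The first sum, combined with the $\nabla_{\partial_i}\partial_j$ terms of the diagonal pieces, reconstructs $\nabla_{\dot q}\dot q$, $\nabla_{\dot q}Y^{(1)}$ and $\nabla_{\dot q}Y^{(2)}$ in the respective slots.

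\textbf{Covariant derivatives needed.} Theorem \ref{teo: Levi--Civita Sasaki 2} lists derivatives with an $h_0$-lift or $h_1$-lift in the first slot only in certain orders. For the reversed pairs, namely $\overset{(2)}\nabla_{Y^{v_2}}X^{h_0}$, $\overset{(2)}\nabla_{Y^{v_2}}X^{h_1}$ and $\overset{(2)}\nabla_{Y^{h_1}}X^{h_0}$, use torsion-freeness: $\overset{(2)}\nabla_AB=\overset{(2)}\nabla_BA+[A,B]$, together with the brackets of Theorem \ref{prop: liebrac2}. The pointwise quantities $u^{(1)},u^{(2)}$ evaluated along $\Gamma$ become $V^{(1)},V^{(2)}$.

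\textbf{Collecting the components.}
\begin{itemize}
\item $v_2$ component. The curvature contributions from $(h_0,h_0)$ and $(h_0,h_1)$ cancel with the corresponding terms of the reversed orderings. This is because the symmetric part of $\overset{(2)}\nabla$ in two identical arguments kills the antisymmetric bracket halves. For example, $-\frac14R(\dot q,\dot q)V^{(2)}=0$. The cross terms $\dot q,Y^{(1)}$ give
\[
-\tfrac14\big(R(\dot q,Y^{(1)})V^{(1)}+R(\dot q,V^{(1)})Y^{(1)}\big)
\]
from one ordering. The other ordering, computed via torsion-freeness and the bracket of Theorem \ref{prop: liebrac2}, contributes exactly the negatives. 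Only $\nabla_{\dot q}Y^{(2)}$ survives.
\item $h_1$ component. This gathers $\nabla_{\dot q}Y^{(1)}$, $-\frac12R(\dot q,\dot q)V^{(1)}=0$, and the mixed terms $h_0$–$v_2$ in both orders. These yield
\[
\tfrac14\big(R(V^{(1)},\dot q)Y^{(2)}+R(V^{(1)},Y^{(2)})\dot q\big),
\]
doubled by the reversed order plus the bracket correction $\frac12(R(u^{(1)},X)Y-R(X,Y)u^{(1)})$. The result should simplify, via the first Bianchi identity, to the stated $\frac12R(V^{(1)},\dot q)Y^{(2)}+\frac12R(V^{(1)},Y^{(2)})\dot q$.
\item $h_0$ component. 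This gathers $\nabla_{\dot q}\dot q$, the $h_0$–$h_1$ terms giving $R(V^{(1)},Y^{(1)})\dot q$ (as in the $TM$ case), the $h_0$–$v_2$ terms giving $\frac12R(V^{(2)},Y^{(2)})\dot q+(\nabla_{V^{(1)}}R)(V^{(1)},Y^{(2)})\dot q$, and the $h_1$–$v_2$ terms giving $\frac12\big(R(Y^{(1)},Y^{(2)})V^{(1)}+R(V^{(1)},Y^{(2)})Y^{(1)}\big)$.
\end{itemize}

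\textbf{Main obstacle.} The hard part is the bookkeeping of the reversed-order derivatives and the coefficient factors of $\frac14$ versus $\frac12$. I would handle it by writing the symmetric sum $\overset{(2)}\nabla_{A}B+\overset{(2)}\nabla_{B}A=2\overset{(2)}\nabla_AB-[A,B]$ for each mixed pair, so that only the listed formulas and the bracket table are needed. Then I would simplify using the Bianchi identity and the pair symmetry $\langle R(a,b)c,d\rangle=\langle R(c,d)a,b\rangle$. As a sanity check, in the flat case the result reduces to Remark \ref{rem: flat-check}.
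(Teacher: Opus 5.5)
Your strategy is the paper's. The paper also writes $\overset{(2)}\nabla_{\dot\Gamma}\dot\Gamma$ as a sum of terms $2\overset{(2)}\nabla_AB-[A,B]$ over the pairs in the decomposition \eqref{eq: decomposition gamma T^2}, and reads off the three components from Theorem~\ref{teo: Levi--Civita Sasaki 2} and Theorem~\ref{prop: liebrac2}. Your local-frame expansion is a slightly more careful way of handling fields that live only along $q$. Your $v_2$- and $h_0$-bullets are correct as written.

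The $h_1$-bullet, however, has a bookkeeping error that would break the computation if carried out as stated.

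\textbf{Where the error is.} The only bracket entering the $h_0$--$v_2$ pair is $[X^{\overset{(2)}{h}_0},Y^{\overset{(2)}{v}_2}]=(\nabla_XY)^{\overset{(2)}{v}_2}$, which has no $\overset{(2)}{K}_1$-component. The term $\frac12\big(R(u^{(1)},X)Y-R(X,Y)u^{(1)}\big)$ that you add is the $\overset{(2)}{v}_2$-component of $[X^{\overset{(2)}{h}_0},Y^{\overset{(2)}{h}_1}]$. It belongs only to the $v_2$-slot, where, as you correctly note in your first bullet, it cancels. If you insert it in the $h_1$-slot with $X=\dot q$, $Y=Y^{(2)}$ and apply the first Bianchi identity, you get $\nabla_{\dot q}Y^{(1)}+\frac32R(V^{(1)},\dot q)Y^{(2)}$. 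That is not the stated second equation, so the hoped-for simplification cannot occur.

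\textbf{The correct bookkeeping.} It is simpler than your sketch and needs no Bianchi identity:
\begin{itemize}
\item The $\overset{(2)}{K}_1$-part of $2\overset{(2)}\nabla_{\dot q^{\overset{(2)}{h}_0}}(Y^{(2)})^{\overset{(2)}{v}_2}-[\dot q^{\overset{(2)}{h}_0},(Y^{(2)})^{\overset{(2)}{v}_2}]$ is just twice $\frac14\big(R(V^{(1)},\dot q)Y^{(2)}+R(V^{(1)},Y^{(2)})\dot q\big)$.
\item The $h_0$--$h_1$ pair contributes only $\nabla_{\dot q}Y^{(1)}$, since the $\overset{(2)}{K}_1$-parts of $\overset{(2)}\nabla_{X^{\overset{(2)}{h}_0}}Y^{\overset{(2)}{h}_1}$ and of the corresponding bracket are both $\nabla_XY$.
\item The $h_1$--$v_2$, $h_1$--$h_1$ and $h_0$--$h_0$ pairs contribute nothing to this slot.
\end{itemize}

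Applying your own rule $\overset{(2)}\nabla_AB+\overset{(2)}\nabla_BA=2\overset{(2)}\nabla_AB-[A,B]$ uniformly, instead of the ad hoc correction, gives exactly the stated equation.
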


\begin{proof}
Using the decomposition given in Equation \eqref{eq: decomposition gamma T^2} and the torsion-free property of $\overset{(2)}\nabla$, the covariant acceleration of $\Gamma$ can be written as
\begin{align*}
        \overset{(2)}\nabla_{\dot\Gamma}\dot\Gamma
        &=\overset{(2)}\nabla_{\dot q^{\overset{(2)}{h}_0}}\dot q^{\overset{(2)}{h}_0}
        +2\overset{(2)}\nabla_{\dot q^{\overset{(2)}{h}_0}}(Y^{(1)})^{\overset{(2)}{h}_1}
        -[\dot q^{\overset{(2)}{h}_0},(Y^{(1)})^{\overset{(2)}{h}_1}]\\
        &\quad+2\overset{(2)}\nabla_{\dot q^{\overset{(2)}{h}_0}}(Y^{(2)})^{\overset{(2)}{v}_2}
        -[\dot q^{\overset{(2)}{h}_0},(Y^{(2)})^{\overset{(2)}{v}_2}]
        +2\overset{(2)}\nabla_{(Y^{(1)})^{\overset{(2)}{h}_1}}(Y^{(2)})^{\overset{(2)}{v}_2}.
\end{align*}
Applying Theorem \ref{teo: Levi--Civita Sasaki 2} and the corresponding bracket formulas gives
\begin{align*}
        \tau_{2*}\left(\overset{(2)}\nabla_{\dot\Gamma}\dot\Gamma\right)
        &=\nabla_{\dot q}\dot q
        +R(V^{(1)},Y^{(1)})\dot q
        +\frac12R(V^{(2)},Y^{(2)})\dot q
        +(\nabla_{V^{(1)}}R)(V^{(1)},Y^{(2)})\dot q\\
        &\quad+\frac12R(Y^{(1)},Y^{(2)})V^{(1)}
        +\frac12R(V^{(1)},Y^{(2)})Y^{(1)},\\
        \overset{(2)}{K}_1\left(\overset{(2)}\nabla_{\dot\Gamma}\dot\Gamma\right)
        &=\nabla_{\dot q}Y^{(1)}+\frac12R(V^{(1)},\dot q)Y^{(2)}+\frac12R(V^{(1)},Y^{(2)})\dot q,\\
        \overset{(2)}{K}_2\left(\overset{(2)}\nabla_{\dot\Gamma}\dot\Gamma\right)
        &=\nabla_{\dot q}Y^{(2)}.
\end{align*}
Since the maps $\tau_{2*},\overset{(2)}{K}_1,\overset{(2)}{K}_2$ determine the decomposition of $TT^{(2)}M$, the vanishing of the covariant acceleration is equivalent to the three displayed equations in Theorem \ref{teo: geod eq 2}.
\qed \end{proof}

\begin{remark}\label{rem: horizgeo}
If $\Gamma$ is a $2$-horizontal geodesic on $T^{(2)}M$, then $\overset{(2)}{\tau}_1\circ\Gamma$ is a geodesic on $TM$.
\end{remark}

Now let $\Gamma=j^2q$ be the second-jet lift of a curve $q$ on $M$. Then
\[
        V^{(1)}=\dot q,
        \qquad
        V^{(2)}=\nabla_{\dot q}\dot q,
\]
and
\begin{equation}\label{eq: decomposition curve T^2}
        \dot\Gamma=\dot q^{\overset{(2)}{h}_0}+(\nabla_{\dot q}\dot q)^{\overset{(2)}{h}_1}
        +\frac12\left(\nabla_{\dot q}^2\dot q\right)^{\overset{(2)}{v}_2}.
\end{equation}

\begin{corollary}\label{cor: 2jetcurve}
Let $q$ be a curve on $M$. The second-jet lift $\Gamma=j^2q$ is a geodesic on $T^{(2)}M$ if and only if $q$ is a geodesic on $M$.
\end{corollary}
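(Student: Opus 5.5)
Substitute the data of the second-jet lift into the geodesic equations of Theorem~\ref{teo: geod eq 2}. For $\Gamma=j^2q$, Equation~\eqref{eq: decomposition curve T^2} gives
\[
        V^{(1)}=Y^{(0)}:=\dot q,\qquad Y^{(1)}=V^{(2)}=\nabla_{\dot q}\dot q,\qquad Y^{(2)}=\tfrac12\nabla^2_{\dot q}\dot q .
\]
The proof then splits into the two implications.

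\emph{If $q$ is a geodesic of $M$.} Then $\nabla_{\dot q}\dot q=0$, so $Y^{(1)}=V^{(2)}=0$. Differentiating along $q$ also gives $Y^{(2)}=\tfrac12\nabla_{\dot q}\nabla_{\dot q}\dot q=0$. Every term in the three equations of Theorem~\ref{teo: geod eq 2} is then either $\nabla_{\dot q}\dot q$, $\nabla_{\dot q}Y^{(1)}$, $\nabla_{\dot q}Y^{(2)}$, or a curvature expression that is linear in one of $Y^{(1)}$, $Y^{(2)}$, $V^{(2)}$. Hence all terms vanish and $\Gamma$ is a geodesic of $\overset{(2)}g$.

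\emph{If $\Gamma=j^2q$ is a geodesic.} The main step is to get $\nabla_{\dot q}\dot q=0$ out of the coupled system.

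\begin{enumerate}
\item The third equation gives $\nabla_{\dot q}Y^{(2)}=0$, that is, $\nabla^3_{\dot q}\dot q=0$.
\item Write the second equation with the identifications above:
\[
        \nabla^2_{\dot q}\dot q+\tfrac12 R(\dot q,\dot q)Y^{(2)}+\tfrac12 R(\dot q,Y^{(2)})\dot q=0 .
\]
The middle term vanishes by antisymmetry, so
\[
        \nabla^2_{\dot q}\dot q=-\tfrac14 R(\dot q,\nabla^2_{\dot q}\dot q)\dot q .
\]
\item Pair this with $\nabla^2_{\dot q}\dot q$. By the skew-symmetry $\langle R(X,Y)Z,W\rangle=-\langle R(X,Y)W,Z\rangle$, the right side is skew in its last two slots, so $\langle R(\dot q,\nabla^2_{\dot q}\dot q)\dot q,\nabla^2_{\dot q}\dot q\rangle$ is the sectional-curvature-type quantity, which need not vanish. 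I therefore expect this step to be the main obstacle.
\item Instead, pair with $\dot q$. The right side gives $\langle R(\dot q,W)\dot q,\dot q\rangle=0$, so $\langle\nabla^2_{\dot q}\dot q,\dot q\rangle=0$.
\item Combine with the first equation. With the identifications, it reads
\[
        \nabla_{\dot q}\dot q+R(\dot q,\nabla_{\dot q}\dot q)\dot q+\tfrac14R(\nabla_{\dot q}\dot q,\nabla^2_{\dot q}\dot q)\dot q+\tfrac12(\nabla_{\dot q}R)(\dot q,\nabla^2_{\dot q}\dot q)\dot q+\tfrac14R(\nabla_{\dot q}\dot q,\nabla^2_{\dot q}\dot q)\dot q+\tfrac14R(\dot q,\nabla^2_{\dot q}\dot q)\nabla_{\dot q}\dot q=0 .
\]
Pairing with $\dot q$ kills every curvature term whose last two arguments can be moved to $(\dot q,\dot q)$. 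This gives $\langle\nabla_{\dot q}\dot q,\dot q\rangle=0$, so $|\dot q|$ is constant.
\item Then $\langle\nabla^2_{\dot q}\dot q,\dot q\rangle=-|\nabla_{\dot q}\dot q|^2$. Together with step 4 this forces $\nabla_{\dot q}\dot q=0$, which is the desired conclusion. The remaining term $\langle R(\dot q,\nabla^2_{\dot q}\dot q)\nabla_{\dot q}\dot q,\dot q\rangle$ in step 5 does not obviously vanish; however, $\langle\nabla_{\dot q}\dot q,\dot q\rangle=0$ is not needed for step 6, since $\tfrac{d}{dt}\langle\nabla_{\dot q}\dot q,\dot q\rangle=\langle\nabla^2_{\dot q}\dot q,\dot q\rangle+|\nabla_{\dot q}\dot q|^2$ already combines with step 4.
\end{enumerate}

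\emph{Fallback if the pairing in steps 4--6 fails.} The cleanest route may be an energy argument. Since $\nabla^3_{\dot q}\dot q=0$, the function $f=\langle\nabla_{\dot q}\dot q,\dot q\rangle$ satisfies $f''=3\langle\nabla^2_{\dot q}\dot q,\nabla_{\dot q}\dot q\rangle$ plus terms controlled by steps 2 and 4. A short computation shows $f'=|\nabla_{\dot q}\dot q|^2$, because $\langle\nabla^2_{\dot q}\dot q,\dot q\rangle=0$. I would then check, using step 2 again, that $|\nabla_{\dot q}\dot q|^2$ is constant. A nonzero constant would make $f$ grow linearly; this needs a separate argument to exclude, so the first thing to try is the direct identity $f'=|\nabla_{\dot q}\dot q|^2$. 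If that leaves only linear growth, I would use the first equation paired with $\nabla_{\dot q}\dot q$ to close the argument.
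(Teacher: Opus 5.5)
Your outline matches the paper's: steps 1, 2 and 4, then pairing the first equation with $\dot q$ and differentiating. The forward direction is fine. The converse, however, has a genuine gap at steps 5--6.

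Write $A=\nabla^2_{\dot q}\dot q$ and $B=\nabla_{\dot q}\dot q$. Pairing the first equation with $\dot q$ actually gives
\[
\langle B,\dot q\rangle+\frac14\langle R(\dot q,A)B,\dot q\rangle=0,
\]
and, as you note in step 6, the last term is not killed by antisymmetry. Your claim that $\langle B,\dot q\rangle=0$ is then ``not needed'' is wrong. The identity $\langle A,\dot q\rangle=-|B|^2$ in step 6 is exactly the derivative of $\langle B,\dot q\rangle\equiv 0$. From step 4 alone you only get $\frac{d}{dt}\langle B,\dot q\rangle=|B|^2$, which says $\langle B,\dot q\rangle$ is nondecreasing and forces nothing. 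This is the ``linear growth'' your fallback admits it cannot exclude. Pairing the first equation with $B$ does not rescue it, since that only produces curvature terms of no definite sign.

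The missing idea is one more differentiation. Since $\langle A,\dot q\rangle\equiv 0$ (step 4) and $\nabla_{\dot q}A=0$ (step 1), differentiating along $q$ gives $\langle A,B\rangle=0$. Then, by the skew-symmetry of $R$ in its last two slots and step 2 (which says $R(\dot q,A)\dot q=-4A$),
\[
\frac14\langle R(\dot q,A)B,\dot q\rangle=-\frac14\langle R(\dot q,A)\dot q,B\rangle=\langle A,B\rangle=0 .
\]
So step 5 does yield $\langle B,\dot q\rangle=0$. Differentiating this and using step 4 gives $|B|^2=0$, which is exactly the paper's argument.

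Alternatively, differentiate $\langle A,B\rangle=0$ once more, using $\nabla_{\dot q}A=0$ and $\nabla_{\dot q}B=A$, to get $|A|^2=0$. Then the troublesome term vanishes outright and the same conclusion follows.
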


\begin{proof}
If $q$ is a geodesic on $M$, then Equation \eqref{eq: decomposition curve T^2} gives $\dot\Gamma=\dot q^{\overset{(2)}{h}_0}$ and Theorem \ref{teo: Levi--Civita Sasaki 2} gives $\overset{(2)}\nabla_{\dot\Gamma}\dot\Gamma=0$.

Conversely, suppose that $\Gamma=j^2q$ is a geodesic.  Then Theorem \ref{teo: geod eq 2} gives, after substituting
\[
        V^{(1)}=\dot q,
        \qquad
        V^{(2)}=\nabla_{\dot q}\dot q,
        \qquad
        Y^{(1)}=\nabla_{\dot q}\dot q,
        \qquad
        Y^{(2)}=\frac12\nabla^2_{\dot q}\dot q,
\]
a system whose final two equations imply
\[
        \nabla_{\dot q}^2\dot q+\frac14R(\dot q,\nabla^2_{\dot q}\dot q)\dot q=0,
        \qquad
        \nabla_{\dot q}^3\dot q=0.
\]
Let $A=\nabla^2_{\dot q}\dot q$.  Taking the inner product of the first equation with $\dot q$ gives $\langle A,\dot q\rangle=0$, and the second equation gives $\nabla_{\dot q}A=0$.  Differentiating $\langle A,\dot q\rangle=0$ once gives $\langle A,\nabla_{\dot q}\dot q\rangle=0$.  The first component equation, paired with $\dot q$, then gives $\langle\nabla_{\dot q}\dot q,\dot q\rangle=0$.  Differentiating this last identity and using $\langle A,\dot q\rangle=0$ yields $\|\nabla_{\dot q}\dot q\|^2=0$.  Hence $q$ is a geodesic.
\qed \end{proof}

\subsection{Geodesics on $(T^{(3)}M,\overset{(3)}g)$}

We now calculate the covariant derivatives of $\overset{(3)}\nabla$ and obtain the geodesic equations on $(T^{(3)}M,\overset{(3)}g)$.  The proof of the next theorem is given in Appendix \ref{app:sasaki-third-order-lc}.

\begin{theorem}\label{teo: Levi--Civita Sasaki 3}
Let $\overset{(3)}\nabla$ be the Levi--Civita connection of $(T^{(3)}M,\overset{(3)}g)$. Then
\begin{align*}
        \overset{(3)}\nabla_{X^{\overset{(3)}{v}_3}}Y^{\overset{(3)}{v}_3}&=0,\\
        \overset{(3)}\nabla_{X^{\overset{(3)}{h}_2}}Y^{\overset{(3)}{v}_3}
        &=\left(\frac12R(X,Y)u^{(1)}+\frac16R(u^{(1)},X)Y\right)^{\overset{(3)}{h}_0},\\
        \overset{(3)}\nabla_{X^{\overset{(3)}{h}_1}}Y^{\overset{(3)}{v}_3}
        &=\left(\frac14R(u^{(2)},Y)X+\frac14(\nabla_{u^{(1)}}R)(u^{(1)},Y)X+\frac1{12}R(X,u^{(2)})Y\right.\\
        &\qquad\left.+\frac1{12}(\nabla_{u^{(1)}}R)(X,u^{(1)})Y\right)^{\overset{(3)}{h}_0}
        +\frac14\left(R(u^{(1)},Y)X\right)^{\overset{(3)}{h}_1},\\
        \overset{(3)}\nabla_{X^{\overset{(3)}{h}_0}}Y^{\overset{(3)}{v}_3}
        &=\left(\frac1{12}R(u^{(3)},Y)X+\frac16(\nabla_{u^{(1)}}R)(u^{(2)},Y)X-\frac16(\nabla_{u^{(2)}}R)(u^{(1)},Y)X\right.\\
        &\qquad\left.+\frac14(\nabla_{u^{(1)}}^2R)(u^{(1)},Y)X\right)^{\overset{(3)}{h}_0}\\
        &\quad+\left(\frac14R(u^{(2)},Y)X+\frac14(\nabla_{u^{(1)}}R)(u^{(1)},Y)X-\frac1{12}R(X,Y)u^{(2)}\right.\\
        &\qquad\left.-\frac1{12}(\nabla_{u^{(1)}}R)(X,Y)u^{(1)}\right)^{\overset{(3)}{h}_1}\\
        &\quad+\left(\frac16R(X,Y)u^{(1)}+\frac12R(u^{(1)},X)Y\right)^{\overset{(3)}{h}_2}
        +(\nabla_XY)^{\overset{(3)}{v}_3},\\
        \overset{(3)}\nabla_{X^{\overset{(3)}{h}_2}}Y^{\overset{(3)}{h}_2}&=0,\\
        \overset{(3)}\nabla_{X^{\overset{(3)}{h}_1}}Y^{\overset{(3)}{h}_2}&=\frac14\left(R(X,Y)u^{(1)}+R(u^{(1)},Y)X\right)^{\overset{(3)}{h}_0},\\
        \overset{(3)}\nabla_{X^{\overset{(3)}{h}_0}}Y^{\overset{(3)}{h}_2}
        &=\left(\frac14R(u^{(2)},Y)X+\frac12(\nabla_{u^{(1)}}R)(u^{(1)},Y)X\right)^{\overset{(3)}{h}_0}\\
        &\quad+\left(\frac14R(u^{(1)},X)Y+\frac14R(u^{(1)},Y)X\right)^{\overset{(3)}{h}_1}
        +(\nabla_XY)^{\overset{(3)}{h}_2}\\
        &\quad+\left(\frac12R(u^{(1)},X)Y-\frac16R(u^{(1)},Y)X\right)^{\overset{(3)}{v}_3},\\
        \overset{(3)}\nabla_{X^{\overset{(3)}{h}_1}}Y^{\overset{(3)}{h}_1}&=-\frac14\left(R(X,Y)u^{(1)}\right)^{\overset{(3)}{v}_3},\\
        \overset{(3)}\nabla_{X^{\overset{(3)}{h}_0}}Y^{\overset{(3)}{h}_1}
        &=\frac12\left(R(u^{(1)},Y)X\right)^{\overset{(3)}{h}_0}+(\nabla_XY)^{\overset{(3)}{h}_1}
        -\left(\frac14R(X,Y)u^{(1)}+\frac14R(X,u^{(1)})Y\right)^{\overset{(3)}{h}_2}\\
        &\quad-\left(\frac14R(X,Y)u^{(2)}+\frac14(\nabla_{u^{(1)}}R)(X,Y)u^{(1)}+\frac1{12}R(Y,u^{(2)})X\right.\\
        &\qquad\left.+\frac1{12}(\nabla_{u^{(1)}}R)(Y,u^{(1)})X\right)^{\overset{(3)}{v}_3},\\
        \overset{(3)}\nabla_{X^{\overset{(3)}{h}_0}}Y^{\overset{(3)}{h}_0}
        &=(\nabla_XY)^{\overset{(3)}{h}_0}-\frac12\left(R(X,Y)u^{(1)}\right)^{\overset{(3)}{h}_1}
        -\left(\frac14R(X,Y)u^{(2)}+\frac12(\nabla_{u^{(1)}}R)(X,Y)u^{(1)}\right)^{\overset{(3)}{h}_2}\\
        &\quad-\left(\frac1{12}R(X,Y)u^{(3)}+\frac16(\nabla_{u^{(1)}}R)(X,Y)u^{(2)}-\frac16(\nabla_{u^{(2)}}R)(X,Y)u^{(1)}\right.\\
        &\qquad\left.+\frac14(\nabla_{u^{(1)}}^2R)(X,Y)u^{(1)}\right)^{\overset{(3)}{v}_3},
\end{align*}
for all $X,Y\in\mathfrak X(M)$, where $u^{(1)}=\overset{(3)}{F}_0(u)$, $u^{(2)}=\overset{(3)}{F}_1(u)$, $u^{(3)}=\overset{(3)}{F}_2(u)$, and $u\in T^{(3)}M$.
\end{theorem}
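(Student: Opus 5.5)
The plan follows the template of Theorem~\ref{teo: Levi--Civita Sasaki 2}. Every covariant derivative $\overset{(3)}\nabla_{A}B$, with $A,B$ lifts of vector fields on $M$, is determined by its four components $\tau_{3*},\overset{(3)}{K}_1,\overset{(3)}{K}_2,\overset{(3)}{K}_3$. By the orthogonality of the decomposition \eqref{eq: multiconnection direct sum} under $\overset{(3)}g$, each component is read off by pairing against $Z^{\overset{(3)}{h}_0},Z^{\overset{(3)}{h}_1},Z^{\overset{(3)}{h}_2},Z^{\overset{(3)}{v}_3}$ for arbitrary $Z\in\mathfrak X(M)$.

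\textbf{Purely horizontal derivatives.} For derivatives between $\overset{(3)}{h}_\alpha$-lifts with $\alpha,\beta\le 2$, the first part of Lemma~\ref{lemma: nablahh} gives the $\tau_{3*}$, $\overset{(3)}{K}_1$, $\overset{(3)}{K}_2$ components. These are the pullbacks of the second-order formulas in Theorem~\ref{teo: Levi--Civita Sasaki 2}, reading $h_2$ on $T^{(3)}M$ in place of $v_2$ on $T^{(2)}M$, and using $\overset{(3)}{F}_0,\overset{(3)}{F}_1$ for $u^{(1)},u^{(2)}$. The second part of Lemma~\ref{lemma: nablahh} gives the $\overset{(3)}{K}_3$ component as half of the $3$-vertical component of the bracket. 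For the pairs $(h_0,h_0)$, $(h_0,h_1)$, $(h_1,h_1)$ and $(h_0,h_2)$ this is taken from Theorem~\ref{thm: liebrac3}; for instance the $\frac1{12},\frac16,-\frac16,\frac14$ coefficients in $\overset{(3)}\nabla_{X^{h_0}}Y^{h_0}$ are exactly half those of the bracket. For the ordering $(h_2,h_0)$ versus $(h_0,h_2)$, use torsion-freeness: $\overset{(3)}\nabla_{X^{h_2}}Y^{h_0}=\overset{(3)}\nabla_{Y^{h_0}}X^{h_2}+[X^{h_2},Y^{h_0}]$. The vanishing of $\overset{(3)}\nabla_{X^{h_2}}Y^{h_2}$ and the value of $\overset{(3)}\nabla_{X^{h_1}}Y^{h_2}$ then follow because the lower components are inherited from the second order and the $3$-vertical brackets vanish by Proposition~\ref{prop: Brackets hk-1}.

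\textbf{Derivatives involving a $3$-vertical lift.} Proposition~\ref{prop: Levi--Civita kSasaki} supplies the $\overset{(3)}{K}_3$ components and shows that $\overset{(3)}\nabla_{X^{v_3}}Y^{v_3}=0$. The remaining horizontal components of $\overset{(3)}\nabla_{X^{h_\alpha}}Y^{v_3}$ follow by metric compatibility, exactly as in the displayed step of the proof of Theorem~\ref{teo: Levi--Civita Sasaki 2}. By Lemma~\ref{lem: appKoszul}, $\llangle Y^{v_3},Z^{h_\mu}\rrangle=0$ is constant, so
\[
\llangle \overset{(3)}\nabla_{X^{h_\alpha}}Y^{v_3},Z^{h_\mu}\rrangle=-\llangle Y^{v_3},\overset{(3)}\nabla_{X^{h_\alpha}}Z^{h_\mu}\rrangle=-\langle Y,\overset{(3)}{K}_3(\overset{(3)}\nabla_{X^{h_\alpha}}Z^{h_\mu})\rangle,
\]
where the last factor is already known from the horizontal step. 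Applying the curvature symmetries (pair symmetry, $\langle R(A,B)C,D\rangle=\langle R(C,D)A,B\rangle$, and the analogous identities for $\nabla R$ and $\nabla^2R$ from Appendix~\ref{app:riemannian-identities}) moves $Y$ into an argument slot and $Z$ out, which yields the displayed $h_0,h_1,h_2$ components. As an example, the $h_2$ component of $\overset{(3)}\nabla_{X^{h_0}}Y^{v_3}$ comes from pairing with $Z^{h_2}$ and using the $v_3$ component of $\overset{(3)}\nabla_{X^{h_0}}Z^{h_2}$, namely $\frac12R(u^{(1)},X)Z-\frac16R(u^{(1)},Z)X$. Dualizing gives $\frac16R(X,Y)u^{(1)}+\frac12R(u^{(1)},X)Y$, after a Bianchi rearrangement.

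\textbf{Main obstacle.} The difficulty is bookkeeping rather than any conceptual step. The $h_0$ component of $\overset{(3)}\nabla_{X^{h_0}}Y^{v_3}$ requires dualizing the $\nabla R$ and $\nabla^2R$ terms, in particular $(\nabla^2_{u^{(1)}}R)(X,Z)u^{(1)}$ and the $\nabla_{u^{(2)}}R$ term. For this I would use the pair symmetry of $\nabla^mR$ with the differentiating directions fixed, $\langle(\nabla^m_{\cdot}R)(A,B)C,D\rangle=\langle(\nabla^m_{\cdot}R)(C,D)A,B\rangle$, and check signs against the flat case of Remark~\ref{rem: flat-check} and against consistency with the $k=2$ formulas under the Riemannian submersion of Remark~\ref{rem: Rsubm}. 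A second point of care is the $(h_1,v_3)$ case: its $h_0$ component mixes the $\frac14$ and $\frac1{12}$ coefficients coming from the two terms of $\overset{(3)}{K}_3(\overset{(3)}\nabla_{X^{h_1}}Z^{h_0})$. That quantity must be obtained from $\overset{(3)}\nabla_{Z^{h_0}}X^{h_1}$ via torsion-freeness and the bracket $[Z^{h_0},X^{h_1}]$ of Theorem~\ref{thm: liebrac3}, and the sign there must be handled carefully.
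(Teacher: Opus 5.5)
Your proposal is correct and is essentially the paper's own argument. The purely horizontal derivatives come from Lemma~\ref{lemma: nablahh} together with Theorem~\ref{teo: Levi--Civita Sasaki 2} and the brackets of Theorem~\ref{thm: liebrac3}. The $\overset{(3)}{v}_3$-components come from Proposition~\ref{prop: Levi--Civita kSasaki}. The remaining components of $\overset{(3)}\nabla_{X^{\overset{(3)}{h}_\alpha}}Y^{\overset{(3)}{v}_3}$ follow from metric compatibility against $Z^{\overset{(3)}{h}_\mu}$, exactly as in the paper's displayed $\overset{(3)}{h}_0$-component computation.

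Two minor simplifications are available. The second part of Lemma~\ref{lemma: nablahh} holds for every ordering of $\alpha,\beta\le 2$, so you can read $\overset{(3)}{K}_3\circ\overset{(3)}\nabla_{X^{\overset{(3)}{h}_1}}Z^{\overset{(3)}{h}_0}$ off directly as half the bracket, without the torsion-freeness detour. The $\overset{(3)}{h}_2$-dualization needs only pair symmetry and skew-symmetry, not Bianchi.
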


Let $\Gamma$ be a curve on $T^{(3)}M$. Let $q=\tau_3\circ\Gamma$, and let
\[
        V^{(1)}=\overset{(3)}{F}_0\circ\Gamma,
        \qquad
        V^{(2)}=\overset{(3)}{F}_1\circ\Gamma,
        \qquad
        V^{(3)}=\overset{(3)}{F}_2\circ\Gamma.
\]
We also write
\[
        Y^{(\alpha)}=\overset{(3)}{K}_\alpha\circ\dot\Gamma,
        \qquad \alpha=1,2,3.
\]
Then
\begin{equation}\label{eq: decomposition gamma T^3}
        \dot\Gamma=\dot q^{\overset{(3)}{h}_0}+(Y^{(1)})^{\overset{(3)}{h}_1}+(Y^{(2)})^{\overset{(3)}{h}_2}+(Y^{(3)})^{\overset{(3)}{v}_3}.
\end{equation}

\begin{theorem}\label{teo: geod eq 3}
A curve $\Gamma$ on $T^{(3)}M$ is a Riemannian geodesic of the $3$-Sasaki metric if and only if
\begin{align*}
        0&=\nabla_{\dot q}\dot q+R(V^{(1)},Y^{(1)})\dot q
        +\frac12R(V^{(2)},Y^{(2)})\dot q+(\nabla_{V^{(1)}}R)(V^{(1)},Y^{(2)})\dot q\\
        &\quad+\frac16R(V^{(3)},Y^{(3)})\dot q
        +\frac13(\nabla_{V^{(1)}}R)(V^{(2)},Y^{(3)})\dot q
        -\frac13(\nabla_{V^{(2)}}R)(V^{(1)},Y^{(3)})\dot q\\
        &\quad+\frac12(\nabla_{V^{(1)}}^2R)(V^{(1)},Y^{(3)})\dot q
        +\frac12R(Y^{(1)},Y^{(2)})V^{(1)}+
        \frac12R(V^{(1)},Y^{(2)})Y^{(1)}\\
        &\quad+\frac12R(V^{(2)},Y^{(3)})Y^{(1)}
        +\frac12(\nabla_{V^{(1)}}R)(V^{(1)},Y^{(3)})Y^{(1)}
        +\frac16R(Y^{(1)},V^{(2)})Y^{(3)}\\
        &\quad+\frac16(\nabla_{V^{(1)}}R)(Y^{(1)},V^{(1)})Y^{(3)}
        +R(Y^{(2)},Y^{(3)})V^{(1)}
        +\frac13R(V^{(1)},Y^{(2)})Y^{(3)},\\
        0&=\nabla_{\dot q}Y^{(1)}
        +\frac12R(V^{(1)},\dot q)Y^{(2)}
        +\frac12R(V^{(1)},Y^{(2)})\dot q
        +\frac12R(V^{(2)},Y^{(3)})\dot q\\
        &\quad+\frac12(\nabla_{V^{(1)}}R)(V^{(1)},Y^{(3)})\dot q
        -\frac16(\nabla_{V^{(1)}}R)(\dot q,Y^{(3)})V^{(1)}
        -\frac16R(\dot q,Y^{(3)})V^{(2)}\\
        &\quad+\frac12R(V^{(1)},Y^{(3)})Y^{(1)},\\
        0&=\nabla_{\dot q}Y^{(2)}+R(V^{(1)},\dot q)Y^{(3)}+\frac13R(\dot q,Y^{(3)})V^{(1)},\\
        0&=\nabla_{\dot q}Y^{(3)}.
\end{align*}
\end{theorem}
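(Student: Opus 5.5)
The plan is to mirror the proof of Theorem \ref{teo: geod eq 2}. I would start from the decomposition \eqref{eq: decomposition gamma T^3} of $\dot\Gamma$ and expand $\overset{(3)}\nabla_{\dot\Gamma}\dot\Gamma$ bilinearly. Because the coefficients $\dot q$, $Y^{(\alpha)}$ are fields along $q$ rather than vector fields on $M$, I would use Lemma \ref{lemma: fX} to write each lift as a combination of lifts of coordinate fields with coefficients that are functions of $t$. The derivative of such a coefficient along $\dot\Gamma$ only sees $\dot q^{\overset{(3)}{h}_0}$, which produces the terms $\nabla_{\dot q}\dot q$ and $\nabla_{\dot q}Y^{(\alpha)}$.

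For the mixed terms I would use torsion-freeness, $\overset{(3)}\nabla_{A}B+\overset{(3)}\nabla_{B}A=2\overset{(3)}\nabla_AB-[A,B]$, where $A$ is the lower-index lift. This gives
\[
\overset{(3)}\nabla_{\dot\Gamma}\dot\Gamma=\sum_{\alpha}\overset{(3)}\nabla_{(Y^{(\alpha)})^{h_\alpha}}(Y^{(\alpha)})^{h_\alpha}+\sum_{\alpha<\beta}\Big(2\overset{(3)}\nabla_{(Y^{(\alpha)})^{h_\alpha}}(Y^{(\beta)})^{h_\beta}-[(Y^{(\alpha)})^{h_\alpha},(Y^{(\beta)})^{h_\beta}]\Big),
\]
with the conventions $Y^{(0)}=\dot q$ and $h_3=v_3$. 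The entries are then read off from two sources: the covariant derivatives come from Theorem \ref{teo: Levi--Civita Sasaki 3}, and the brackets from Theorem \ref{thm: liebrac3}, which also covers Lemma \ref{lemma: Brackets vk} and Proposition \ref{prop: Brackets hk-1}. Terms such as $\overset{(3)}\nabla_{v_3}v_3$, $[h_1,v_3]$ and $[h_2,h_2]$ vanish.

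Next I would project onto the four components with $\tau_{3*}$, $\overset{(3)}{K}_1$, $\overset{(3)}{K}_2$ and $\overset{(3)}{K}_3$. The $\tau_{3*}$, $\overset{(3)}{K}_1$ and $\overset{(3)}{K}_2$ components of terms built only from $h_0,h_1,h_2$ lifts are inherited from order two by Lemma \ref{lemma: nablahh} and Lemma \ref{lemma: K mu}, and they reproduce the equations of Theorem \ref{teo: geod eq 2} with $Y^{(2)}$ now an $h_2$-lift. So the genuinely new contributions are of two kinds: the $v_3$-components, and every term involving $Y^{(3)}$.

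For the last equation, the only term with nonzero $\overset{(3)}{K}_3$-component and no curvature should come from $2\overset{(3)}\nabla_{h_0}v_3-[h_0,v_3]=(\nabla_{\dot q}Y^{(3)})^{v_3}$. I then need to check that all curvature contributions to $\overset{(3)}{K}_3$ cancel. Lemma \ref{lemma: nablahh} gives $\overset{(3)}{K}_3\nabla=\tfrac12\overset{(3)}{K}_3[\cdot,\cdot]$ for horizontal pairs, so for $\alpha<\beta$ the combination $2\nabla-[\,]$ has zero $v_3$-part. The diagonal terms are skew in their arguments and vanish when both arguments are the same field, for example $R(\dot q,\dot q)$ and $R(Y^{(1)},Y^{(1)})u^{(1)}$. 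This yields $\nabla_{\dot q}Y^{(3)}=0$.

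The third equation gathers, in the $\overset{(3)}{K}_2$-component, $\nabla_{\dot q}Y^{(2)}$ together with the $h_2$-parts of $2\overset{(3)}\nabla_{h_0}v_3$ and $2\overset{(3)}\nabla_{h_1}v_3$. Only the first of these is nonzero; it contributes $R(V^{(1)},\dot q)Y^{(3)}+\tfrac13R(\dot q,Y^{(3)})V^{(1)}$. The first two equations gather the $h_0$ and $h_1$ parts of $\overset{(3)}\nabla_{h_\alpha}v_3$, and of $\overset{(3)}\nabla_{h_0}h_2$, $\overset{(3)}\nabla_{h_1}h_2$, with $V^{(\alpha)}=u^{(\alpha)}$ along $\Gamma$. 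Finally, since $\tau_{3*}$ and the $\overset{(3)}{K}_\alpha$ jointly give an isomorphism, the geodesic condition is equivalent to the vanishing of the four components.

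The main obstacle is the bookkeeping of the first ($\tau_{3*}$) equation. There, many curvature terms from $\overset{(3)}\nabla_{h_0}v_3$, $\overset{(3)}\nabla_{h_1}v_3$, $\overset{(3)}\nabla_{h_2}v_3$ and $\overset{(3)}\nabla_{h_0}h_2$ must be combined, and I must track carefully which argument of $R$ is $\dot q$. I would reduce every term to the form $R(\cdot,\cdot)\dot q$, or a fixed alternative form, using the pair symmetry $\langle R(a,b)c,d\rangle=\langle R(c,d)a,b\rangle$ and the first Bianchi identity. For example, the $\tfrac1{12}R(X,u^{(2)})Y$ and $\tfrac14R(u^{(2)},Y)X$ pieces combine to give the stated coefficients, and the flat case of Remark \ref{rem: flat-check} serves as a check.
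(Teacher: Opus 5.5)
Your proposal is correct and follows the paper's proof. Like the paper, you expand $\overset{(3)}\nabla_{\dot\Gamma}\dot\Gamma$ by torsion-freeness into diagonal terms plus $2\overset{(3)}\nabla_AB-[A,B]$, then read off the four components from Theorem \ref{teo: Levi--Civita Sasaki 3} and the Section 3 bracket formulas. Your use of Lemmas \ref{lemma: nablahh} and \ref{lemma: K mu} to inherit the $Y^{(3)}$-free part from Theorem \ref{teo: geod eq 2} is a harmless shortcut.

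One correction to your bookkeeping remark: no symmetry reduction is needed in the first equation, because every term is read off directly from Theorem \ref{teo: Levi--Civita Sasaki 3}. For example, the $\frac14R(u^{(2)},Y)X$ and $\frac1{12}R(X,u^{(2)})Y$ pieces of $2\overset{(3)}\nabla_{X^{\overset{(3)}{h}_1}}Y^{\overset{(3)}{v}_3}$ do not combine; they give the separate terms $\frac12R(V^{(2)},Y^{(3)})Y^{(1)}$ and $\frac16R(Y^{(1)},V^{(2)})Y^{(3)}$.
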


The proof is given in Appendix \ref{app:sasaki-third-order-geodesics}.

\begin{remark}\label{cor: hgeod3}
If $\Gamma$ is a $3$-horizontal geodesic on $T^{(3)}M$, then $\overset{(3)}{\tau}_2\circ\Gamma$ is a geodesic on $T^{(2)}M$.
\end{remark}

If $\Gamma=j^3q$ is the third-jet lift of a curve $q$ on $M$, then
\[
        V^{(1)}=\dot q,
        \qquad
        V^{(2)}=\nabla_{\dot q}\dot q,
        \qquad
        V^{(3)}=\nabla^2_{\dot q}\dot q,
\]
and
\[
        \overset{(3)}{K}_1(\dot\Gamma)=\nabla_{\dot q}\dot q,
        \qquad
        \overset{(3)}{K}_2(\dot\Gamma)=\frac1{2!}\nabla^2_{\dot q}\dot q,
        \qquad
        \overset{(3)}{K}_3(\dot\Gamma)=\frac1{3!}\nabla^3_{\dot q}\dot q.
\]

The following corollary to Theorem \ref{teo: geod eq 3} can be proven by similar arguments to those used in the proof of Corollary  \ref{cor: 2jetcurve}.
\begin{corollary}\label{cor: 1jetgeod3}
Let $q$ be a curve on $M$. The third-jet lift $\Gamma=j^3q$ is a geodesic on $T^{(3)}M$ if and only if $q$ is a geodesic on $M$.
\end{corollary}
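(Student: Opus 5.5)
One direction is immediate. If $q$ is a geodesic on $M$, then $\nabla_{\dot q}\dot q=0$, so all higher covariant derivatives vanish as well. Hence $V^{(2)}=V^{(3)}=0$ and $Y^{(1)}=Y^{(2)}=Y^{(3)}=0$, and $\dot\Gamma=\dot q^{\overset{(3)}{h}_0}$. By the last formula of Theorem~\ref{teo: Levi--Civita Sasaki 3} with $u^{(2)}=u^{(3)}=0$, we get
\[
\overset{(3)}\nabla_{\dot\Gamma}\dot\Gamma=(\nabla_{\dot q}\dot q)^{\overset{(3)}{h}_0}-\tfrac12\left(R(\dot q,\dot q)\dot q\right)^{\overset{(3)}{h}_1}-\tfrac12\left((\nabla_{\dot q}R)(\dot q,\dot q)\dot q\right)^{\overset{(3)}{h}_2}-\tfrac14\left((\nabla^2_{\dot q}R)(\dot q,\dot q)\dot q\right)^{\overset{(3)}{v}_3}.
\]
Every term on the right vanishes, by the geodesic equation and the antisymmetry of $R$ and its covariant derivatives in the first two slots. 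Equivalently, one can check directly that all four equations of Theorem~\ref{teo: geod eq 3} hold.

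For the converse, I substitute into Theorem~\ref{teo: geod eq 3} the values
\[
V^{(1)}=\dot q,\quad V^{(2)}=Y^{(1)}=\nabla_{\dot q}\dot q,\quad V^{(3)}=\nabla^2_{\dot q}\dot q,\quad Y^{(2)}=\tfrac12\nabla^2_{\dot q}\dot q,\quad Y^{(3)}=\tfrac16\nabla^3_{\dot q}\dot q.
\]
Write $A=\nabla_{\dot q}\dot q$, $B=\nabla^2_{\dot q}\dot q$ and $C=\nabla^3_{\dot q}\dot q$. The fourth equation gives $\nabla_{\dot q}C=0$. The third equation becomes
\[
\tfrac12 C+\tfrac16R(\dot q,\dot q)C+\tfrac1{18}R(\dot q,C)\dot q=0,
\]
that is, $C+\tfrac19R(\dot q,C)\dot q=0$. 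Pairing with $\dot q$ and using $\langle R(\dot q,C)\dot q,\dot q\rangle=0$ gives $\langle C,\dot q\rangle=0$. Pairing with $C$ and using the sign of sectional curvature is not available, so I keep only the $\dot q$-pairing, as in Corollary~\ref{cor: 2jetcurve}.

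Next I pair the second and first equations with $\dot q$. Every curvature term of the form $\langle R(\dot q,\cdot)\cdot,\dot q\rangle$ or $\langle (\nabla R)(\dot q,\cdot)\cdot,\dot q\rangle$ drops out by antisymmetry. Terms of the form $\langle R(\cdot,\cdot)\dot q,\dot q\rangle$ also vanish, since $R(X,Y)$ is skew. Carrying this out, the second equation gives $\langle B,\dot q\rangle=-\tfrac12\langle R(\dot q,\dot q)B,\dot q\rangle+\dots$; with the skew-symmetry I expect everything except $\langle B,\dot q\rangle$ to cancel. Some terms, such as $\langle R(\dot q,C)A,\dot q\rangle$, need the pair symmetry $\langle R(X,Y)Z,W\rangle=\langle R(Z,W)X,Y\rangle$ to kill or relate them. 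The aim is an identity $\langle B,\dot q\rangle=0$, and from the first equation $\langle A,\dot q\rangle=0$.

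Finally I run the differentiation chain. Differentiating $\langle A,\dot q\rangle=0$ gives $\langle B,\dot q\rangle+\|A\|^2=0$. Combined with $\langle B,\dot q\rangle=0$, this yields $\|A\|^2=0$, so $q$ is a geodesic. If the $\dot q$-pairing of the second equation leaves residual curvature terms, I fall back on the third-order chain instead. Differentiating $\langle C,\dot q\rangle=0$ twice, using $\nabla_{\dot q}C=0$, gives $\langle C,A\rangle=0$ and then $\langle C,B\rangle=0$. Likewise $\langle B,\dot q\rangle$ differentiates to $\langle C,\dot q\rangle+\langle B,A\rangle$, so I can bootstrap through $\|A\|^2$ with $\tfrac{d}{dt}\|A\|^2=2\langle B,A\rangle$. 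The main obstacle is this bookkeeping: verifying that every curvature term in the $\dot q$-pairings of the first two equations really cancels, using the Bianchi and pair symmetries, especially the $\nabla R$ and $\nabla^2R$ terms. I would check these term by term first, since the rest is the same elementary argument as in Corollary~\ref{cor: 2jetcurve}.
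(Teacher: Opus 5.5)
Your forward direction is fine. Your reduction of the last two equations of Theorem~\ref{teo: geod eq 3} to $\nabla_{\dot q}C=0$ and $C+\tfrac19R(\dot q,C)\dot q=0$, and hence to $\langle C,\dot q\rangle=0$, is also correct.

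\textbf{The gap.} The next step fails: the curvature terms in the $\dot q$-pairings of the first two equations do \emph{not} cancel by skew, pair or Bianchi symmetries. With your substitutions the second equation is
\[
B+\tfrac14R(\dot q,B)\dot q+\tfrac1{12}R(A,C)\dot q+\tfrac1{18}(\nabla_{\dot q}R)(\dot q,C)\dot q+\tfrac1{18}R(\dot q,C)A=0 .
\]
Pairing with $\dot q$ gives $\langle B,\dot q\rangle+\tfrac1{18}\langle R(\dot q,C)A,\dot q\rangle=0$. The last term equals $-\tfrac1{18}\langle R(\dot q,C)\dot q,A\rangle$, and it is not killed by symmetry. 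In constant curvature $\kappa$ it is a multiple of $\kappa\bigl(\langle C,\dot q\rangle\langle \dot q,A\rangle-|\dot q|^2\langle C,A\rangle\bigr)$. Pair symmetry only rewrites it as $\langle R(A,\dot q)\dot q,C\rangle$.

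The first equation is worse. After pairing with $\dot q$, all of the following survive:
\begin{itemize}
\item $\tfrac14\langle R(\dot q,B)A,\dot q\rangle$,
\item $\tfrac1{12}\langle R(A,C)A,\dot q\rangle$,
\item $\tfrac1{12}\langle(\nabla_{\dot q}R)(\dot q,C)A,\dot q\rangle$,
\item $\tfrac1{36}\langle(\nabla_{\dot q}R)(A,\dot q)C,\dot q\rangle$,
\item $\tfrac1{36}\langle R(\dot q,B)C,\dot q\rangle$.
\end{itemize}
So neither $\langle B,\dot q\rangle=0$ nor $\langle A,\dot q\rangle=0$ follows as planned. Your fallback only produces $\langle C,A\rangle=\langle C,B\rangle=0$ and statements about derivatives of $\|A\|^2$, never a vanishing norm.

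\textbf{The fix.} You stopped differentiating one step too early. Since $\nabla_{\dot q}C=0$,
\[
\frac{d^r}{dt^r}\langle C,\dot q\rangle=\langle C,\nabla^r_{\dot q}\dot q\rangle ,
\]
so the third derivative of $\langle C,\dot q\rangle=0$ gives $\|C\|^2=0$. Hence $Y^{(3)}=\tfrac16C=0$, so $j^3q$ is $3$-horizontal; this is Corollary~\ref{cor: k lift} with $k=3$.

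You can then finish in either of two ways:
\begin{itemize}
\item Apply Proposition~\ref{Prop: horizontalgeo} (Remark~\ref{cor: hgeod3}) to get that $j^2q$ is a geodesic of $T^{(2)}M$, and conclude with Corollary~\ref{cor: 2jetcurve}. This is the paper's route.
\item Observe that with $C=0$ every $Y^{(3)}$-term drops out and the system becomes that of Theorem~\ref{teo: geod eq 2}. The one remaining residual, $\tfrac14\langle R(\dot q,B)A,\dot q\rangle$, is not killed by symmetry. Instead, the reduced second equation $B+\tfrac14R(\dot q,B)\dot q=0$ rewrites it as $\langle B,A\rangle$, exactly as in the proof of Corollary~\ref{cor: 2jetcurve}. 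Your chain $\langle B,\dot q\rangle=0\Rightarrow\langle B,A\rangle=0\Rightarrow\langle A,\dot q\rangle=0\Rightarrow\|A\|^2=0$ then goes through.
\end{itemize}
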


\subsection{Geodesics on $(T^{(k)}M,\overset{(k)}g)$}

The full geodesic system on $T^{(k)}M$ becomes combinatorially large.  However, the final two components admit a uniform closed form, which is sufficient for the jet lift characterization below.

\begin{lemma}\label{lem: nablaww}
Let $\overset{(k)}\nabla$ be the Levi--Civita connection of $(T^{(k)}M,\overset{(k)}g)$. Then, for all $W\in\mathfrak X(T^{(k)}M)$,
\begin{align}
        \overset{(k)}{K}_k\circ\left(\overset{(k)}\nabla_WW\right)
        &=\nabla_{\tau_{k*}\circ W}(\overset{(k)}{K}_k\circ W),\label{eq: nablaxx}\\
        \overset{(k)}{K}_{k-1}\circ\left(\overset{(k)}\nabla_WW\right)
        &=\nabla_{\tau_{k*}\circ W}(\overset{(k)}{K}_{k-1}\circ W)
        +\frac1kR(u^{(1)},\overset{(k)}{K}_{k-1}\circ W)(\tau_{k*}\circ W)\nonumber\\
        &\quad+\frac{k-1}{k}R(u^{(1)},\tau_{k*}\circ W)(\overset{(k)}{K}_{k-1}\circ W),\label{eq: nablaxx2}
\end{align}
where $u^{(1)}=\overset{(k)}{F}_0(u)$, $u\in T^{(k)}M$.
\end{lemma}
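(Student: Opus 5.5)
We decompose $W$ as in Equation \eqref{eq: decomposition VF}, writing $W=\sum_{\alpha=0}^{k}(W_\alpha)^{\overset{(k)}{h}_\alpha}$ with $W_0=\tau_{k*}\circ W$, $W_\alpha=\overset{(k)}{K}_\alpha\circ W$ and the convention $\overset{(k)}{h}_k=\overset{(k)}{v}_k$. Here the $W_\alpha$ are sections of the pullback bundle, i.e. $TM$-valued functions on $T^{(k)}M$. Both identities are pointwise and tensorial in the lifted components, so we may work in the adapted frame. We write $W=\sum_{\alpha,i} W_\alpha^i\,\delta/\delta q^{(\alpha)i}$ and expand
\[
\overset{(k)}\nabla_WW=\sum_{\alpha,\beta}W_\alpha^iW_\beta^j\,\overset{(k)}\nabla_{\delta/\delta q^{(\alpha)i}}\frac{\delta}{\delta q^{(\beta)j}}+\sum_\beta W(W_\beta^j)\frac{\delta}{\delta q^{(\beta)j}} .
\]
Applying $\overset{(k)}{K}_\mu$ for $\mu=k,k-1$, the derivative term contributes $W(W_\mu^j)\partial_j$. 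The quadratic term is handled by the symmetric part $\frac12(\overset{(k)}\nabla_AB+\overset{(k)}\nabla_BA)$ of the Levi--Civita connection on pairs of lifts. The sum $W(W_\mu^j)\partial_j$ plus the pieces $W_0^iW_\mu^j\,\overset{(k)}K_\mu(\overset{(k)}\nabla_{\delta_{(0)i}}\delta_{(\mu)j})$ coming from $\nabla_XY$ terms assembles into $\nabla_{W_0}W_\mu$. For this we use that the $(\nabla_XY)^{h_\mu}$ contributions reproduce the Christoffel part, while the $\overset{(k)}{h}_\beta$ with $\beta\ge1$ directions act trivially on functions pulled back from $M$ (Lemma \ref{lemma: fX}). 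The pieces of $W(W_\mu)$ along $\beta\ge1$ combine with the covariant-derivative components for the intended interpretation of $\nabla_{\tau_{k*}W}$ along the curve/field, as in the proof of Theorem \ref{thm: Sasaki geodesic TM}.

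For \eqref{eq: nablaxx} we then need $\overset{(k)}{K}_k$ of the symmetrized derivatives $\overset{(k)}\nabla_{X^{h_\alpha}}Y^{h_\beta}+\overset{(k)}\nabla_{Y^{h_\beta}}X^{h_\alpha}$ for all $\alpha,\beta$. By Proposition \ref{prop: Levi--Civita kSasaki}, the pairs involving a $v_k$-lift give only $(\nabla_XY)$ when the other factor is an $h_0$-lift, and give $0$ otherwise. For $\alpha,\beta\le k-1$, Lemma \ref{lemma: nablahh} gives $\overset{(k)}{K}_k\overset{(k)}\nabla_{A}B=\frac12\overset{(k)}{K}_k[A,B]$, which is antisymmetric in $A,B$, so its symmetrization vanishes. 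Hence only the term $\nabla_{W_0}W_k$ survives.

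For \eqref{eq: nablaxx2} we need the $\overset{(k)}{K}_{k-1}$-components. By Lemma \ref{lemma: nablahh} the pairs with $\alpha,\beta\le k-1$ descend to $T^{(k-1)}M$, where the $(k-1)$-component is the top component. The same antisymmetry argument on $T^{(k-1)}M$ then kills them, except for the $\nabla_XY$ part, which feeds $\nabla_{W_0}W_{k-1}$. The remaining contributions are the pairs involving $W_k^{v_k}$, namely $\overset{(k)}{K}_{k-1}$ of $\overset{(k)}\nabla_{X^{h_\alpha}}Y^{v_k}$ and $\overset{(k)}\nabla_{Y^{v_k}}X^{h_\alpha}$. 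These we compute via metric compatibility against $Z^{h_{k-1}}$, exactly as in the proof of Theorem \ref{teo: Levi--Civita Sasaki 2}:
\[
\llangle \overset{(k)}\nabla_{X^{h_\alpha}}Y^{v_k},Z^{h_{k-1}}\rrangle=-\llangle Y^{v_k},\overset{(k)}\nabla_{X^{h_\alpha}}Z^{h_{k-1}}\rrangle=-\tfrac12\langle Y,\overset{(k)}K_k[X^{h_\alpha},Z^{h_{k-1}}]\rangle .
\]
Proposition \ref{prop: Brackets hk-1} makes this vanish for $\alpha\ge1$. For $\alpha=0$, Proposition \ref{prop: Brackets h_0 hk-1} gives $\frac1{2k}\langle Y,R(X,Z)u^{(1)}+(k-1)R(X,u^{(1)})Z\rangle$, which we convert using the curvature symmetries into $\langle \frac1{2k}(R(u^{(1)},Y)X\cdot\text{terms}),Z\rangle$. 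The torsion-free swap $\overset{(k)}\nabla_{Y^{v_k}}X^{h_0}=\overset{(k)}\nabla_{X^{h_0}}Y^{v_k}-[X^{h_0},Y^{v_k}]$, together with Lemma \ref{lemma: Brackets vk}, shows that the bracket has no $K_{k-1}$-part, so the two cross terms double.

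Summing, with $X=W_0$ and $Y=W_k$, should yield the stated curvature terms. The main obstacle is this last bookkeeping. Substituting the symmetrized-pair argument consistently, the surviving terms actually appear to pair $W_0$ with $W_k$ rather than with $W_{k-1}$. I would therefore recheck which component (likely $W_{k-1}$ paired with $W_0$ through the non-antisymmetric part of Proposition \ref{prop: Brackets h_0 hk-1} on the $(k-1)$-level via Lemma \ref{lemma: nablahh}) produces $\frac1kR(u^{(1)},W_{k-1})W_0+\frac{k-1}kR(u^{(1)},W_0)W_{k-1}$. I would then verify the coefficients against the $k=2$ case of Theorem \ref{teo: geod eq 2}, where the formula reads $\frac12R(V^{(1)},\dot q)Y^{(2)}+\frac12R(V^{(1)},Y^{(2)})\dot q$.
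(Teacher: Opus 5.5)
Your bookkeeping is right. The discrepancy you ran into is in the statement, not in your argument. Carried out as you set it up, your argument proves \eqref{eq: nablaxx} together with
\[
\overset{(k)}{K}_{k-1}\circ\big(\overset{(k)}\nabla_WW\big)=\nabla_{\tau_{k*}\circ W}(\overset{(k)}{K}_{k-1}\circ W)+\frac1kR\big(u^{(1)},\overset{(k)}{K}_{k}\circ W\big)(\tau_{k*}\circ W)+\frac{k-1}{k}R\big(u^{(1)},\tau_{k*}\circ W\big)(\overset{(k)}{K}_{k}\circ W).
\]
The ingredients are the frame expansion, antisymmetry from Lemma \ref{lemma: nablahh} for pairs of $\overset{(k)}{h}_\alpha$-lifts, Proposition \ref{prop: Levi--Civita kSasaki} for pairs containing a $k$-vertical lift, and metric compatibility against $Z^{\overset{(k)}{h}_{k-1}}$ combined with Propositions \ref{prop: Brackets hk-1} and \ref{prop: Brackets h_0 hk-1}. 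The displayed identity is \eqref{eq: nablaxx2} with $\overset{(k)}{K}_{k-1}\circ W$ replaced by $\overset{(k)}{K}_{k}\circ W$ inside both curvature terms.

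As printed, \eqref{eq: nablaxx2} is false. Take $k=2$ and $W=X^{\overset{(2)}{h}_0}+X^{\overset{(2)}{h}_1}$. Theorems \ref{teo: Levi--Civita Sasaki 2} and \ref{prop: liebrac2} give $\overset{(2)}{K}_1\circ(\overset{(2)}\nabla_WW)=\nabla_XX$. The printed formula instead adds $R(u^{(1)},X)X$, which is nonzero on a round sphere whenever $u^{(1)}\neq0$ is orthogonal to $X\neq0$.

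The corrected version is also what the rest of the paper uses:
\begin{itemize}
\item It reproduces the second equation of Theorem \ref{teo: geod eq 2}, the one you quote, where $Y^{(2)}=\overset{(2)}{K}_2\circ\dot\Gamma$.
\item It reproduces the third equation of Theorem \ref{teo: geod eq 3} after one use of the first Bianchi identity: $R(V^{(1)},\dot q)Y^{(3)}+\frac13R(\dot q,Y^{(3)})V^{(1)}=\frac23R(V^{(1)},\dot q)Y^{(3)}+\frac13R(V^{(1)},Y^{(3)})\dot q$.
\item It matches the proof of Corollary \ref{cor: k lift}, where the vector inside the curvature term is $A=\nabla^k_{\dot q}\dot q$, coming from $Y^{(k)}$. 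The coefficient there actually works out to $1/k^2$, which does not affect that argument.
\end{itemize}

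So the repair you propose at the end would not succeed, because there is no curvature coupling between $W_0$ and $W_{k-1}$. By Lemma \ref{lemma: projected lifts}, the pair $(\overset{(k)}{h}_0,\overset{(k)}{h}_{k-1})$ descends to $(\overset{(k-1)}{h}_0,\overset{(k-1)}{v}_{k-1})$. Proposition \ref{prop: Levi--Civita kSasaki} at order $k-1$, with Lemma \ref{lemma: Brackets vk} for the reversed order, gives top components $\nabla_XY$ and $0$, with no curvature. The curvature terms of Proposition \ref{prop: Brackets h_0 hk-1} sit in the $\overset{(k)}{v}_k$-component of the bracket. They reach $\overset{(k)}{K}_{k-1}\circ(\overset{(k)}\nabla_WW)$ only through metric duality with $\overset{(k)}{v}_k$, i.e.\ paired with $W_k$, which is exactly what you computed.

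The paper's proof follows the same route: Equation \eqref{eq: conKk-app} for the top component, then Koszul against $Z^{\overset{(k)}{h}_{k-1}}$. It carries the same slip. The $\overset{(k)}{v}_k$-part of $[\overset{(k)}{h}_0\circ W,Z^{\overset{(k)}{h}_{k-1}}]$ is $-\frac1k\big(R(\tau_{k*}\circ W,Z)u^{(1)}+(k-1)R(\tau_{k*}\circ W,u^{(1)})Z\big)$, not the expression in $\overset{(k)}{K}_{k-1}\circ W$ written there.

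One small point to tidy in your write-up. The coefficients $W_\mu^j$ are not pulled back from $M$, so Lemma \ref{lemma: fX} does not dispose of their derivatives. Instead, read $\nabla_{\tau_{k*}\circ W}(\overset{(k)}{K}_\mu\circ W)$ as the pullback connection $(\tau_k^*\nabla)_W$, namely $\big(W(W_\mu^l)+\Gamma^l_{ij}W_0^iW_\mu^j\big)\partial_l$. This is precisely what your frame expansion produces, and it equals $\nabla_{\dot q}Y^{(\mu)}$ when $W=\dot\Gamma$. With the statement corrected, your argument is complete.
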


\begin{proof}
Let $W\in\mathfrak X(T^{(k)}M)$.  Using Equation \eqref{eq: conKk-app}, the bracket formulas involving $\overset{(k)}{v}_k$-lifts, and Proposition \ref{prop: Levi--Civita kSasaki}, we obtain
\begin{align*}
        \overset{(k)}{K}_k\circ\left(\overset{(k)}\nabla_WW\right)
        &=2\overset{(k)}{K}_k\circ\left(\overset{(k)}\nabla_{\overset{(k)}{h}_0\circ W}(\overset{(k)}{v}_k\circ W)\right)
        +\overset{(k)}{K}_k\circ\left([\overset{(k)}{v}_k\circ W,\overset{(k)}{h}_0\circ W]\right)\\
        &=\nabla_{\tau_{k*}\circ W}(\overset{(k)}{K}_k\circ W).
\end{align*}
This proves Equation \eqref{eq: nablaxx}.

For the next component, let $Z\in\mathfrak X(M)$.  Applying the Koszul formula gives
\begin{align*}
        \llangle\overset{(k)}\nabla_WW,Z^{\overset{(k)}{h}_{k-1}}\rrangle
        &=(\overset{(k)}{h}_0\circ W)\llangle W,Z^{\overset{(k)}{h}_{k-1}}\rrangle
        -\llangle \overset{(k)}{h}_{k-1}\circ W,(\nabla_{\tau_{k*}\circ W}Z)^{\overset{(k)}{h}_{k-1}}\rrangle\\
        &\quad+\frac1k\llangle \overset{(k)}{v}_k\circ W,
        (R(\overset{(k)}{K}_{k-1}\circ W,Z)u^{(1)}+(k-1)R(\overset{(k)}{K}_{k-1}\circ W,u^{(1)})Z)^{\overset{(k)}{v}_k}\rrangle.
\end{align*}
Using the curvature symmetries, this becomes
\begin{align*}
        \llangle\overset{(k)}\nabla_WW,Z^{\overset{(k)}{h}_{k-1}}\rrangle
        &=\left\langle \nabla_{\tau_{k*}\circ W}(\overset{(k)}{K}_{k-1}\circ W),Z\right\rangle\\
        &\quad+\frac1k\left\langle R(u^{(1)},\tau_{k*}\circ W)(\overset{(k)}{K}_{k-1}\circ W),Z\right\rangle\\
        &\quad+\frac{k-1}{k}\left\langle R(u^{(1)},\overset{(k)}{K}_{k-1}\circ W)(\tau_{k*}\circ W),Z\right\rangle.
\end{align*}
Since this holds for all $Z$, Equation \eqref{eq: nablaxx2} follows.
\qed \end{proof}

Let $\Gamma$ be a curve on $T^{(k)}M$ and let $q=\tau_k\circ\Gamma$.  We denote by
\[
        V^{(\alpha)}=\overset{(k)}{F}_{\alpha-1}\circ\Gamma,
        \qquad \alpha=1,\ldots,k,
\]
the vector fields along $q$ determined by the connection tower vector bundle structure, and by
\[
        Y^{(\alpha)}=\overset{(k)}{K}_\alpha\circ\dot\Gamma,
        \qquad \alpha=1,\ldots,k,
\]
the components of the velocity vector field. Then
\begin{equation}\label{eq: decomposition gamma general T^k}
        \dot\Gamma=\dot q^{\overset{(k)}{h}_0}+(Y^{(1)})^{\overset{(k)}{h}_1}+\cdots+(Y^{(k)})^{\overset{(k)}{v}_k}.
\end{equation}

\begin{lemma}\label{lem: C}
Let $\Gamma$ be a curve on $T^{(k)}M$. If $\Gamma$ is a geodesic on $T^{(k)}M$, then
\begin{align}
        \nabla_{\dot q}Y^{(k-1)}
        +\frac1kR(V^{(1)},Y^{(k-1)})\dot q
        +\frac{k-1}{k}R(V^{(1)},\dot q)Y^{(k-1)}&=0,\label{eq: 2geoeq}\\
        \nabla_{\dot q}Y^{(k)}&=0.\label{eq: 2geoeq1}
\end{align}
\end{lemma}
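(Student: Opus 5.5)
The idea is to apply Lemma~\ref{lem: nablaww} along the curve $\Gamma$. The statement is pointwise along $\Gamma$, while Lemma~\ref{lem: nablaww} is phrased for vector fields $W$ on $T^{(k)}M$. So I first need a version of that lemma for $\overset{(k)}\nabla_{\dot\Gamma}\dot\Gamma$. There are two ways to get it.

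The first is to argue that both sides of Equations~\eqref{eq: nablaxx} and~\eqref{eq: nablaxx2} depend only on $W$ along an integral curve. The left-hand side $\overset{(k)}\nabla_WW$ at a point depends only on $W$ along the integral curve through it. On the right-hand side, $\nabla_{\tau_{k*}\circ W}(\cdot)$ is a covariant derivative along the projected curve, and the curvature terms are tensorial. Locally (where $\dot\Gamma\neq 0$) I can extend $\dot\Gamma$ to a vector field $W$ having $\Gamma$ as an integral curve.

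The second, more robust, way is to redo the proof of Lemma~\ref{lem: nablaww} directly along curves. I write $\dot\Gamma$ as in Equation~\eqref{eq: decomposition gamma general T^k}, expand $\overset{(k)}\nabla_{\dot\Gamma}\dot\Gamma$ using lifts of the vector fields $Y^{(\alpha)}$ and $\dot q$ along $q$, and apply Proposition~\ref{prop: Levi--Civita kSasaki}, Lemma~\ref{lemma: Brackets vk} and Proposition~\ref{prop: Brackets h_0 hk-1}, as in the proof of Theorem~\ref{teo: geod eq 2}. Points where $\dot\Gamma=0$ can be handled by continuity, or they are trivial since a nonconstant geodesic never has vanishing velocity.

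With that in hand, I substitute $\tau_{k*}\circ\dot\Gamma=\dot q$, $\overset{(k)}{K}_{k-1}\circ\dot\Gamma=Y^{(k-1)}$, $\overset{(k)}{K}_k\circ\dot\Gamma=Y^{(k)}$ and $u^{(1)}=\overset{(k)}{F}_0\circ\Gamma=V^{(1)}$. Since $\Gamma$ is a geodesic, $\overset{(k)}\nabla_{\dot\Gamma}\dot\Gamma=0$, so applying $\overset{(k)}{K}_k$ and $\overset{(k)}{K}_{k-1}$ gives zero. Equation~\eqref{eq: nablaxx} then yields Equation~\eqref{eq: 2geoeq1} immediately. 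Equation~\eqref{eq: nablaxx2} yields
\[
\nabla_{\dot q}Y^{(k-1)}+\frac1kR(V^{(1)},Y^{(k-1)})\dot q+\frac{k-1}{k}R(V^{(1)},\dot q)Y^{(k-1)}=0,
\]
which is Equation~\eqref{eq: 2geoeq}.

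The main obstacle is justifying that the vector-field identities transfer to curves. In particular, the term $(\overset{(k)}{h}_0\circ W)\llangle W,Z^{\overset{(k)}{h}_{k-1}}\rrangle$ in the Koszul computation must become $\frac{d}{dt}\langle Y^{(k-1)},Z\rangle$ along $q$. This follows from Lemma~\ref{lem: appKoszul}: only the $\overset{(k)}{h}_0$-component differentiates functions pulled back from $M$, and the other lifts annihilate them. That gives $\nabla_{\dot q}$ rather than a derivative involving all components.

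As a consistency check, I would compare with Theorem~\ref{teo: geod eq 2} for $k=2$, where Equation~\eqref{eq: 2geoeq} should become $\nabla_{\dot q}Y^{(1)}+\frac12R(V^{(1)},Y^{(1)})\dot q+\frac12R(V^{(1)},\dot q)Y^{(1)}=0$. The paper's displayed equation there has $Y^{(2)}$ in these slots instead of $Y^{(1)}$, which looks like a misprint. Checking this discrepancy carefully is where I would expect any error to surface.
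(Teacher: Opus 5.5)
Your argument is the paper's. Its proof is the single line ``apply Lemma~\ref{lem: nablaww} to $W=\dot\Gamma$ and use $\overset{(k)}\nabla_{\dot\Gamma}\dot\Gamma=0$'', and your remarks on passing from vector fields to curves only add care the paper omits. The trouble is that this input is wrong. The $k=2$ discrepancy you flagged is real, but you put the misprint on the wrong side: Theorem~\ref{teo: geod eq 2} is correct, while Equation~\eqref{eq: nablaxx2}, and hence Equation~\eqref{eq: 2geoeq} as printed, is false.

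Two checks using the paper's own results show this.
\begin{itemize}
\item Take $W=X^{\overset{(2)}{h}_0}+Y^{\overset{(2)}{h}_1}$. Theorem~\ref{teo: Levi--Civita Sasaki 2} and Theorem~\ref{prop: liebrac2} give $\overset{(2)}{K}_1(\overset{(2)}\nabla_WW)=2\nabla_XY-\nabla_XY=\nabla_XY$. Equation~\eqref{eq: nablaxx2} would add $\frac12R(u^{(1)},Y)X+\frac12R(u^{(1)},X)Y$. On the unit sphere with $u^{(1)}=e_1$ and $X_p=Y_p=e_2$, this extra term equals $e_1\neq0$.
\item Take a geodesic of $T^{(2)}M$ with $Y^{(2)}(0)=0$. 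Theorem~\ref{teo: geod eq 2} gives $\nabla_{\dot q}Y^{(1)}=0$ at $t=0$. The printed Equation~\eqref{eq: 2geoeq} would then force $R(V^{(1)},Y^{(1)})\dot q+R(V^{(1)},\dot q)Y^{(1)}=0$ for arbitrary initial data, which fails on the sphere.
\end{itemize}
So by citing Lemma~\ref{lem: nablaww} you would be proving a false identity.

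The slip is in the Koszul step. Write
$\llangle\overset{(k)}\nabla_WW,Z^{\overset{(k)}{h}_{k-1}}\rrangle=W\llangle W,Z^{\overset{(k)}{h}_{k-1}}\rrangle-\frac12Z^{\overset{(k)}{h}_{k-1}}\llangle W,W\rrangle-\llangle[W,Z^{\overset{(k)}{h}_{k-1}}],W\rrangle$.
\begin{itemize}
\item The derivatives of the coefficients of $W$ cancel between the last two terms.
\item By Proposition~\ref{prop: Brackets hk-1} and Lemma~\ref{lemma: Brackets vk}, only the $\overset{(k)}{h}_0$-part of $W$ has a nonzero bracket with $Z^{\overset{(k)}{h}_{k-1}}$.
\item By Proposition~\ref{prop: Brackets h_0 hk-1} with $X=\tau_{k*}\circ W$, the $k$-vertical part of that bracket is $-\frac1k\big(R(\tau_{k*}\circ W,Z)u^{(1)}+(k-1)R(\tau_{k*}\circ W,u^{(1)})Z\big)^{\overset{(k)}{v}_k}$. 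It is paired with $\overset{(k)}{K}_k\circ W$. The paper's proof instead puts $\overset{(k)}{K}_{k-1}\circ W$ into the first slot of $R$.
\end{itemize}

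Your appeal to Lemma~\ref{lem: appKoszul} also needs fixing. The function $\llangle W,Z^{\overset{(k)}{h}_{k-1}}\rrangle=\langle\overset{(k)}{K}_{k-1}\circ W,Z\circ\tau_k\rangle$ is not pulled back from $M$. Along $\Gamma$ it is $\langle Y^{(k-1)},Z\circ q\rangle$, and it is differentiated by all of $\dot\Gamma$. The resulting term $\langle Y^{(k-1)},\nabla_{\dot q}Z\rangle$ cancels the $\overset{(k)}{h}_{k-1}$-part of the bracket.

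Using pair symmetry and skew-symmetry of $R$, Equation~\eqref{eq: nablaxx2} holds with $\overset{(k)}{K}_{k-1}\circ W$ replaced by $\overset{(k)}{K}_k\circ W$ inside the curvature terms. The correct conclusion is therefore
\[
\nabla_{\dot q}Y^{(k-1)}+\frac1kR(V^{(1)},Y^{(k)})\dot q+\frac{k-1}{k}R(V^{(1)},\dot q)Y^{(k)}=0,
\]
together with $\nabla_{\dot q}Y^{(k)}=0$, which is correct as stated.
\begin{itemize}
\item For $k=2$ this agrees with Theorem~\ref{teo: geod eq 2}.
\item For $k=3$ it agrees, after the first Bianchi identity, with the third equation of Theorem~\ref{teo: geod eq 3}.
\item In Corollary~\ref{cor: k lift} it yields $A+\frac1{k^2}R(\dot q,A)\dot q=0$, matching the $\frac14$ in Corollary~\ref{cor: 2jetcurve}. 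The skew-adjointness argument there still goes through.
\end{itemize}
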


\begin{proof}
This follows immediately by applying Lemma \ref{lem: nablaww} to $W=\dot\Gamma$ and using $\overset{(k)}\nabla_{\dot\Gamma}\dot\Gamma=0$.
\qed \end{proof}

\begin{corollary}\label{cor: k lift}
Let $q$ be a curve on $M$. If the $k$-jet lift $\Gamma=j^kq$ is a geodesic on $T^{(k)}M$, then $\Gamma$ is $k$-horizontal.
\end{corollary}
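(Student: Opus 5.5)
The plan is to show directly that the top velocity component vanishes, since by the characterization after Equation \eqref{eq: decomposition gamma T^k} the curve $\Gamma=j^kq$ is $k$-horizontal exactly when $Y^{(k)}=0$. By Equation \eqref{eq: decomposition curve T^k} this component is $Y^{(k)}=\frac1{k!}\nabla^k_{\dot q}\dot q$. So the goal is to prove $B:=\nabla^k_{\dot q}\dot q=0$.

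I treat $k\geq2$; the case $k=1$ is already covered by Corollary \ref{cor: 1jetgeod}. For $\Gamma=j^kq$ we have
\[
        V^{(1)}=\dot q,\qquad Y^{(k-1)}=\tfrac1{(k-1)!}A,\qquad Y^{(k)}=\tfrac1{k!}\nabla_{\dot q}A,
        \qquad A:=\nabla^{k-1}_{\dot q}\dot q .
\]
Substitute these into Lemma \ref{lem: C}. In Equation \eqref{eq: 2geoeq} the term $R(\dot q,\dot q)A$ vanishes by skew-symmetry of $R$. After multiplying by $(k-1)!$, the two equations become
\[
        B+\tfrac1kR(\dot q,A)\dot q=0,\qquad \nabla_{\dot q}B=0 .
\]
So $B$ is parallel along $q$. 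Pairing the first equation with $\dot q$ and using $\langle R(\dot q,A)\dot q,\dot q\rangle=0$ gives $\langle B,\dot q\rangle=0$.

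Next I run the inductive differentiation argument modelled on the proof of Corollary \ref{cor: 2jetcurve}. Set $f_j=\langle B,\nabla^j_{\dot q}\dot q\rangle$ for $j=0,\ldots,k$. Since $B$ is parallel, metric compatibility gives $f_j'=f_{j+1}$. We know $f_0\equiv0$ on the whole interval. Differentiating repeatedly gives $f_1\equiv\cdots\equiv f_{k-1}\equiv0$. One more derivative gives $f_k=\langle B,B\rangle=\|B\|^2\equiv0$. Hence $B=0$, so $Y^{(k)}=0$, and $\Gamma$ is $k$-horizontal.

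The main obstacle is the first step: correctly identifying $Y^{(k-1)}$ and $Y^{(k)}$ for the jet lift and checking the normalizing factorials, so that Equation \eqref{eq: 2geoeq} reduces to the clean relation between $B$ and $R(\dot q,A)\dot q$. Once that reduction is in place, the curvature term drops out upon pairing with $\dot q$, and the rest is the elementary chain of derivatives.
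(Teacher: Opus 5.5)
Your proposal is correct and follows the paper's proof essentially step for step. You substitute the jet-lift components into Lemma~\ref{lem: C} and pair the $\overset{(k)}{K}_{k-1}$-component equation with $\dot q$, so the curvature term drops out by skew-adjointness of $R(\dot q,\cdot)$. You then differentiate $\langle\nabla^k_{\dot q}\dot q,\dot q\rangle=0$ repeatedly, using that $\nabla^k_{\dot q}\dot q$ is parallel, until you reach $\|\nabla^k_{\dot q}\dot q\|^2=0$.

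The one discrepancy is cosmetic. Lemma~\ref{lem: C} as printed puts $\nabla^{k-1}_{\dot q}\dot q$ in the curvature slot, as you have it, whereas the paper's proof writes $R(\dot q,\nabla^k_{\dot q}\dot q)\dot q$; the latter is what the explicit $k=2,3$ geodesic equations give, since they have $Y^{(k)}$ rather than $Y^{(k-1)}$ there. Any term of the form $R(\dot q,Z)\dot q$ is orthogonal to $\dot q$, so your argument goes through unchanged with either version.
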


\begin{proof}
Suppose that $\Gamma=j^kq$ is a geodesic.  Then, by Lemma \ref{lem: C} and Equation \eqref{eq: decomposition curve T^k}, the vector field
\[
        A:=\nabla_{\dot q}^k\dot q
\]
satisfies
\[
        A+\frac1kR(\dot q,A)\dot q=0,
        \qquad
        \nabla_{\dot q}A=0.
\]
Taking the inner product of the first equation with $\dot q$ gives $\langle A,\dot q\rangle=0$, since $R(\dot q,A)$ is skew-adjoint.  Since $A$ is parallel along $q$, repeated differentiation yields
\[
        0=\frac{d^r}{dt^r}\langle A,\dot q\rangle
        =\left\langle A,\nabla_{\dot q}^r\dot q\right\rangle,
        \qquad r=0,1,\ldots,k.
\]
Taking $r=k$ gives $\|A\|^2=0$. Hence $A=0$, which is precisely the condition that $\Gamma$ is $k$-horizontal.
\qed \end{proof}

\begin{proposition}\label{Prop: horizontalgeo}
If $\Gamma$ is a $k$-horizontal geodesic on $T^{(k)}M$, then $\overset{(k)}{\tau}_{\!k-1}\circ\Gamma$ is a geodesic on $T^{(k-1)}M$.
\end{proposition}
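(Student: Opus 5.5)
The plan is to reduce the statement to the classical fact about Riemannian submersions, \emph{a geodesic of the total space whose velocity is horizontal projects to a geodesic of the base} \cite{ONeill1966}. This is combined with Remark~\ref{rem: Rsubm}, which says that $\overset{(k)}{\tau}_{\!k-1}:(T^{(k)}M,\overset{(k)}g)\to(T^{(k-1)}M,\overset{(k-1)}g)$ is a Riemannian submersion.

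\textbf{Step 1: identify the submersion's horizontal distribution.} The vertical distribution of $\overset{(k)}{\tau}_{\!k-1}$ is $\ker\overset{(k)}{\tau}_{\!k-1*}=\overset{(k)}{V}_k$. Since the $k$-Sasaki metric makes the decomposition \eqref{eq: multiconnection direct sum2} orthogonal, the orthogonal complement of $\overset{(k)}{V}_k$ is $\overset{(k)}{N}_k=\bigoplus_{\alpha=0}^{k-1}\overset{(k)}{H}_\alpha$. I will verify that $\overset{(k)}{\tau}_{\!k-1*}$ restricted to $\overset{(k)}{N}_k$ is an isometry onto $T T^{(k-1)}M$. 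This follows from the tower compatibility $\overset{(k)}{K}_\mu=\overset{(k-1)}{K}_{\!\!\!\mu}\circ\overset{(k)}{\tau}_{\!k-1*}$ for $\mu=0,\dots,k-1$ and from the definition \eqref{eq: Sasaki metric}. On $\overset{(k)}{N}_k$ the term $\langle\overset{(k)}{K}_k\cdot,\overset{(k)}{K}_k\cdot\rrangle$ vanishes, so the two metrics agree term by term. This step justifies Remark~\ref{rem: Rsubm} in the form needed here.

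\textbf{Step 2: translate the hypothesis.} By \eqref{eq: decomposition gamma T^k}, $\Gamma$ is $k$-horizontal exactly when $Y^{(k)}=\overset{(k)}{K}_k\circ\dot\Gamma=0$, that is, when $\dot\Gamma\in\overset{(k)}{N}_k$. In the language of submersions, this says $\dot\Gamma$ is horizontal for $\overset{(k)}{\tau}_{\!k-1}$ at every time.

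\textbf{Step 3: project the geodesic.} Set $\bar\Gamma=\overset{(k)}{\tau}_{\!k-1}\circ\Gamma$. O'Neill's fundamental equations give
\[
\overset{(k-1)}{\nabla}_{\dot{\bar\Gamma}}\dot{\bar\Gamma}=\overset{(k)}{\tau}_{\!k-1*}\big(h(\overset{(k)}{\nabla}_{\dot\Gamma}\dot\Gamma)\big)
\]
for horizontal curves, where $h$ is the horizontal projection. Since $\overset{(k)}\nabla_{\dot\Gamma}\dot\Gamma=0$, the curve $\bar\Gamma$ is a geodesic. I would cite O'Neill for this and not redo it.

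\textbf{Backup argument.} If a self-contained argument is preferred, I would work with the horizontal part directly. Decompose $\dot\Gamma=\dot q^{\overset{(k)}{h}_0}+\sum_{\alpha=1}^{k-1}(Y^{(\alpha)})^{\overset{(k)}{h}_\alpha}$, a combination of horizontal lifts with coefficients along $\Gamma$. Expand $\overset{(k)}\nabla_{\dot\Gamma}\dot\Gamma$ bilinearly and apply $\overset{(k)}{K}_\mu$ for $\mu<k$. By Lemma~\ref{lemma: nablahh}, each term $\overset{(k)}{K}_\mu\circ\overset{(k)}\nabla_{X^{\overset{(k)}{h}_\alpha}}Y^{\overset{(k)}{h}_\beta}$ equals the corresponding order-$(k-1)$ quantity composed with $\overset{(k)}{\tau}_{\!k-1}$. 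The derivative terms of the coefficients transfer through Lemma~\ref{lemma: projected lifts}, because $\dot{\bar\Gamma}=\overset{(k)}{\tau}_{\!k-1*}\dot\Gamma$ has the same components $\dot q,Y^{(1)},\dots,Y^{(k-1)}$. Hence $\overset{(k-1)}{K}_{\!\!\!\mu}(\overset{(k-1)}\nabla_{\dot{\bar\Gamma}}\dot{\bar\Gamma})=\overset{(k)}{K}_\mu(\overset{(k)}\nabla_{\dot\Gamma}\dot\Gamma)=0$ for all $\mu\le k-1$, which is the geodesic equation on $T^{(k-1)}M$.

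\textbf{Main obstacle.} The only delicate point is this backup bookkeeping. Lemma~\ref{lemma: nablahh} is stated for lifts of vector fields on $M$, while the components of $\dot\Gamma$ are vector fields along $q$. I would handle this by extending $\dot q,Y^{(\alpha)}$ locally, or by writing everything in the adapted frame $\delta/\delta q^{(\alpha)i}$ with coefficient functions of $t$. Under that reduction, the tensorial terms and the $\frac{d}{dt}$-terms each match between orders $k$ and $k-1$.
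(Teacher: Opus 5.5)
Your argument is correct, but your primary route is not the one the paper writes out. The paper's proof is essentially your backup argument. It writes $\dot\Gamma=\overset{(k)}{n}_k\circ\dot\Gamma$, applies $\overset{(k)}{K}_\mu$ for $\mu=0,\ldots,k-1$ to $\overset{(k)}\nabla_{\dot\Gamma}\dot\Gamma=0$, and invokes Lemma~\ref{lemma: nablahh} to identify the result with $\overset{(k-1)}{K}_{\mu}\circ\overset{(k-1)}\nabla_{\dot{\widetilde\Gamma}}\dot{\widetilde\Gamma}$. The Riemannian-submersion argument appears only in the remark after the proposition, citing Hermann, and Remark~\ref{rem: Rsubm} is asserted there with a reference but no proof.

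Your Step~1 supplies that verification. It shows $\overset{(k)}{N}_k\perp\overset{(k)}{V}_k=\ker\overset{(k)}{\tau}_{\!k-1*}$, and it uses tower compatibility to show $\overset{(k)}{\tau}_{\!k-1*}$ is an isometry on $\overset{(k)}{N}_k$. Your Step~3 is the standard O'Neill/Hermann fact.

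The two routes are the same mathematics in different clothing. For $\alpha,\beta,\mu\le k-1$, Lemma~\ref{lemma: nablahh} is exactly O'Neill's statement that $\mathcal{H}\nabla_XY$ is basic and related to the base connection when $X,Y$ are basic horizontal fields. Here it is specialized to the basic fields $X^{\overset{(k)}{h}_\alpha}$, which are basic by Lemma~\ref{lemma: projected lifts}.

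Your route is conceptual and avoids redoing any Koszul computation at the level of $T^{(k)}M$. It also gives Remark~\ref{rem: hor geod} at once, since $\overset{(k)}{\tau}_{\!\alpha-1}$ is again a Riemannian submersion with horizontal distribution $\overset{(k)}{N}_\alpha$. The paper's route stays inside the connection-map formalism and reuses a lemma it needs anyway.

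Your worry about the backup is legitimate. The paper applies Lemma~\ref{lemma: nablahh}, which is stated for lifts of vector fields on $M$, directly to $\overset{(k)}{n}_k\circ\dot\Gamma$, whose components are fields along $q$. Your fix makes that step rigorous: expand in the adapted frame with $t$-dependent coefficients and use $\overset{(k)}{\tau}_{\!k-1*}(\overset{(k)}{\delta}/\delta q^{(\alpha)i})=\overset{(k-1)}{\delta}/\delta q^{(\alpha)i}$ for $\alpha\le k-1$. The derivative-of-coefficient terms then match trivially, and the quadratic terms match by the lemma. This is the same computation that is hidden inside the O'Neill formula you cite.
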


\begin{proof}
Suppose that $\Gamma$ is a $k$-horizontal geodesic on $T^{(k)}M$, that is, $\overset{(k)}\nabla_{\dot\Gamma}\dot\Gamma=0$ and $\overset{(k)}{K}_k\circ\dot\Gamma=0$. Then $\dot\Gamma=\overset{(k)}{n}_k\circ\dot\Gamma$. Let $\widetilde\Gamma=\overset{(k)}{\tau}_{\!k-1}\circ\Gamma$.  Its velocity vector field is
\[
        \dot{\widetilde\Gamma}=\overset{(k)}{\tau}_{\!k-1*}\circ \overset{(k)}{n}_k\circ\dot\Gamma.
\]
By Lemma \ref{lemma: nablahh},
\[
        0=\overset{(k)}{K}_\mu\circ\left(\overset{(k)}\nabla_{\overset{(k)}{n}_k\circ\dot\Gamma}(\overset{(k)}{n}_k\circ\dot\Gamma)\right)
        =\overset{(k-1)}{K}_{\!\!\!\!\mu}\circ\left(\overset{(k-1)}\nabla_{\!\!\!\!\dot{\widetilde\Gamma}}\dot{\widetilde\Gamma}\right),
\]
for $\mu=0,1,\ldots,k-1$.  Hence $\overset{(k-1)}\nabla_{\!\!\!\!\dot{\widetilde\Gamma}}\dot{\widetilde\Gamma}=0$.
\qed \end{proof}

\begin{remark}\label{rem: hor geod}
It follows that an $\alpha$-horizontal geodesic on $T^{(k)}M$ projects onto a geodesic on $T^{(\alpha-1)}M$, for each $\alpha=1,\ldots,k$.
\end{remark}

\begin{remark}
The results in Proposition \ref{Prop: horizontalgeo} and Remark \ref{rem: hor geod} also follow from the fact that the projections $\overset{(k)}{\tau}_{\!\alpha}$ are Riemannian submersions; see \cite[Proposition 3.1]{Herm1960}.  The proof above records the corresponding connection map argument.
\end{remark}

\begin{proposition}\label{prop: kth-jet-geodesic}
Let $q$ be a curve on $M$. The $k$-jet $\Gamma=j^kq$ is a geodesic on $T^{(k)}M$ if and only if $q$ is a geodesic on $M$.
\end{proposition}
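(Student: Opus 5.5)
We argue by induction on $k$. The case $k=1$ is Corollary \ref{cor: 1jetgeod}; the cases $k=2,3$ (Corollaries \ref{cor: 2jetcurve} and \ref{cor: 1jetgeod3}) serve as consistency checks but are not needed.

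\emph{Sufficiency.} Suppose $q$ is a geodesic on $M$. Then $\nabla^\alpha_{\dot q}\dot q=0$ for all $\alpha\geq 1$, so by Equation \eqref{eq: decomposition curve T^k} we have $\dot\Gamma=\dot q^{\overset{(k)}{h}_0}$ along $\Gamma=j^kq$, with $\dot q$ parallel. We must show $\overset{(k)}\nabla_{\dot\Gamma}\dot\Gamma=0$ and will check each component $\overset{(k)}{K}_\mu$ for $\mu=0,\dots,k$.

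For $\mu\le k-1$, Lemma \ref{lemma: nablahh} (extended to vector fields along curves by the same tensorial argument used in Proposition \ref{Prop: horizontalgeo}) reduces the component to the corresponding one on $T^{(k-1)}M$ along $\overset{(k)}{\tau}_{\!k-1}\circ\Gamma=j^{k-1}q$. That component vanishes because $j^{k-1}q$ is a geodesic by the induction hypothesis.

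For $\mu=k$, Equation \eqref{eq: nablaxx} of Lemma \ref{lem: nablaww} with $W=\dot\Gamma$ gives $\nabla_{\dot q}\big(\overset{(k)}{K}_k\circ\dot\Gamma\big)=\nabla_{\dot q}\big(\tfrac1{k!}\nabla^k_{\dot q}\dot q\big)=0$.

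An alternative route is to observe that, since $\dot\Gamma$ is $k$-horizontal, Proposition \ref{Prop: horizontalgeo} together with the Riemannian submersion property (Remark \ref{rem: Rsubm}) shows that $\Gamma$ is the horizontal lift of the geodesic $j^{k-1}q$. Horizontal lifts of geodesics under Riemannian submersions are geodesics, and this fact could replace the componentwise check.

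\emph{Necessity.} Suppose $j^kq$ is a geodesic. By Corollary \ref{cor: k lift}, $\Gamma$ is $k$-horizontal, so $\nabla^k_{\dot q}\dot q=0$. Proposition \ref{Prop: horizontalgeo} then shows that $\overset{(k)}{\tau}_{\!k-1}\circ\Gamma=j^{k-1}q$ is a geodesic on $T^{(k-1)}M$. By the induction hypothesis, $q$ is a geodesic on $M$. Equivalently, one can iterate Corollary \ref{cor: k lift} and Proposition \ref{Prop: horizontalgeo} downward in $k$ until reaching Corollary \ref{cor: 1jetgeod}.

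The main obstacle is the sufficiency step for the components $\mu\le k-1$. Lemma \ref{lemma: nablahh} is stated for lifts of vector fields on $M$, while $\dot q$ is only defined along $q$. We handle this by writing $\dot\Gamma=\dot q^i(\partial_i)^{\overset{(k)}{h}_0}$ and using $C^\infty$-linearity in the lower slot. The derivative term $\frac{d\dot q^i}{dt}(\partial_i)^{\overset{(k)}{h}_0}$ projects, via Lemma \ref{lemma: projected lifts}, onto the corresponding term at order $k-1$. This mirrors exactly the computation already carried out in the proof of Proposition \ref{Prop: horizontalgeo}, so we can cite that argument directly.
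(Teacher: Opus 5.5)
Your proposal is correct and is essentially the paper's argument. Necessity goes through Corollary \ref{cor: k lift} followed by Proposition \ref{Prop: horizontalgeo}, iterated down to $M$, and sufficiency goes through the tower reduction of Lemma \ref{lemma: nablahh}. The paper compresses that reduction into one line, whereas you make the induction explicit and handle the top component with Equation \eqref{eq: nablaxx} instead of the second half of Lemma \ref{lemma: nablahh}; either way it vanishes because $\overset{(k)}{K}_k\circ\dot\Gamma=0$ along $j^kq$.
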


\begin{proof}
Suppose that $\Gamma=j^kq$ is a geodesic on $T^{(k)}M$. By Corollary \ref{cor: k lift}, $\Gamma$ is $k$-horizontal.  Therefore, by Proposition \ref{Prop: horizontalgeo}, the curve $\overset{(k)}{\tau}_{\!k-1}\circ\Gamma$ is a geodesic on $T^{(k-1)}M$. Since this projection is the $(k-1)$-jet of $q$, the argument can be repeated.  Inductively, all lower jet lifts are geodesics, and in particular $q$ is a geodesic on $M$.

Conversely, if $q$ is a geodesic on $M$, then all higher covariant derivatives $\nabla^r_{\dot q}\dot q$ vanish for $r\geq1$. Hence the velocity field of $j^kq$ is $1$-horizontal, and Lemma \ref{lemma: nablahh} gives $\overset{(k)}\nabla_{\dot\Gamma}\dot\Gamma=0$.
\qed \end{proof}

\appendix
\makeatletter
\@addtoreset{theorem}{section}
\makeatother
\renewcommand{\thetheorem}{\thesection.\arabic{theorem}}
\section{Background material for connection towers}\label{app:connection-background}

\subsection{Adapted-basis details}\label{app:adapted-basis-details}

This appendix records the coordinate relations between the coordinate bases and the adapted bases associated with a connection map.

The adapted covectors are
\[
        \delta q^{(\alpha)i}
        =
        dq^{(\alpha)i}
        +(K_1)^i_jdq^{(\alpha-1)j}
        +\cdots+
        (K_\alpha)^i_jdq^{(0)j},
        \qquad \alpha=1,\ldots,k,
\]
and $\delta q^{(0)i}=dq^{(0)i}$.  Let $\delta/\delta q^{(\alpha)i}$ denote the dual basis.  The dual coefficients $(C_\alpha)^i_j$ are defined by
\[
        \frac{\delta}{\delta q^{(\alpha)i}}
        =
        \frac{\partial}{\partial q^{(\alpha)i}}
        -(C_1)^j_i\frac{\partial}{\partial q^{(\alpha+1)j}}
        -\cdots
        -(C_{k-\alpha})^j_i\frac{\partial}{\partial q^{(k)j}}.
\]

\begin{proposition}[Miron \cite{MironBook1997}]\label{prop: coordinate basis to adapted basis}
The dual coefficients satisfy
\[
        dq^{(\alpha)i}
        \left(
        \frac{\delta}{\delta q^{(\mu)j}}
        \right)
        =-(C_{\alpha-\mu})^i_j
\]
for all $\alpha=1,\ldots,k$, $\mu=0,\ldots,\alpha-1$, and $i,j=1,\ldots,n$.  In particular,
\[
        dq^{(\alpha)i}
        =
        \delta q^{(\alpha)i}
        -(C_1)^i_j\delta q^{(\alpha-1)j}
        -\cdots
        -(C_\alpha)^i_j\delta q^{(0)j},
\]
for all $\alpha=1,\ldots,k$.  Similarly,
\[
        \frac{\partial}{\partial q^{(\alpha)i}}
        =
        \frac{\delta}{\delta q^{(\alpha)i}}
        +(K_1)^j_i\frac{\delta}{\delta q^{(\alpha+1)j}}
        +\cdots+
        (K_{k-\alpha})^j_i\frac{\delta}{\delta q^{(k)j}}.
\]
\end{proposition}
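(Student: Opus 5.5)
The plan is to prove the three groups of identities in Proposition~\ref{prop: coordinate basis to adapted basis} in order. Everything is pointwise linear algebra between two dual pairs of frames, and the triangular structure of the adapted covectors makes each step a finite recursion.

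\emph{Step 1: the pairing formula.} I will evaluate $dq^{(\alpha)i}$ on the defining expression
\[
\frac{\delta}{\delta q^{(\mu)j}}=\frac{\partial}{\partial q^{(\mu)j}}-\sum_{r\ge1}(C_r)^l_j\frac{\partial}{\partial q^{(\mu+r)l}}.
\]
For $\alpha>\mu$, only the term with $r=\alpha-\mu$ contributes, which gives $-(C_{\alpha-\mu})^i_j$. The first claim follows directly from the definition of the $C$'s, so this step is essentially bookkeeping.

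\emph{Step 2: the covector formula.} Since $\{\delta q^{(\beta)j}\}$ is dual to $\{\delta/\delta q^{(\beta)j}\}$, any covector $\omega$ satisfies
\[
\omega=\sum_{\beta,j}\omega\!\left(\frac{\delta}{\delta q^{(\beta)j}}\right)\delta q^{(\beta)j}.
\]
I will apply this with $\omega=dq^{(\alpha)i}$. The pairings are:
\begin{itemize}
\item $\delta^i_j$ when $\beta=\alpha$;
\item $-(C_{\alpha-\beta})^i_j$ when $\beta<\alpha$, by Step 1;
\item $0$ when $\beta>\alpha$, because $\delta/\delta q^{(\beta)j}$ involves only $\partial/\partial q^{(\gamma)}$ with $\gamma\ge\beta$.
\end{itemize}
Substituting these values gives the displayed expansion of $dq^{(\alpha)i}$.

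\emph{Step 3: the vector formula.} I will expand $\partial/\partial q^{(\alpha)i}=\sum_{\beta,j}\delta q^{(\beta)j}(\partial/\partial q^{(\alpha)i})\,\delta/\delta q^{(\beta)j}$. From the definition of $\delta q^{(\beta)j}$, only the term $(K_{\beta-\alpha})^j_l\,dq^{(\alpha)l}$ contributes for $\beta>\alpha$. It gives the coefficient $(K_{\beta-\alpha})^j_i$, with the convention $K_0=\mathrm{id}$. For $\beta<\alpha$ the pairing vanishes, since $\delta q^{(\beta)j}$ involves only $dq^{(\gamma)}$ with $\gamma\le\beta$.

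\emph{Consistency check.} As a byproduct, I will record that the inverse-matrix identity between the lower-unitriangular block matrices built from $K$ and from $-C$ yields the Miron recursion
\[
(C_r)=(K_r)-\sum_{s=1}^{r-1}(K_{r-s})(C_s)
\]
(in a suitable index order). This is not needed for the statement.

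\emph{Main obstacle.} The only real risk is index and sign bookkeeping: the transposition of $i$ and $j$ between the $K$- and $C$-coefficients, and the unit-triangular ordering. I will handle this by writing the change of frame as a block unitriangular matrix and checking the $k=2$ case explicitly.
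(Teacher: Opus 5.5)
Your proposal is correct and is essentially the paper's argument. The paper inverts the block unitriangular matrix expressing $\delta q^{(\alpha)i}$ in terms of $dq^{(\beta)j}$ and takes the dual (transposed) relation for the vector fields, which is exactly what your dual-frame pairings in Steps~2 and~3 compute.

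Your Step~1 silently takes one fact from the definition: that the dual frame really has the displayed form, with coefficients $C_r$ depending only on the level difference $r$. This holds because inverses of block unitriangular Toeplitz matrices are again of that type, which is the same computation as in your consistency check, so it is worth stating in one line.
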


\begin{proof}
The result follows by inverting the triangular change-of-basis matrix relating the coordinate covectors $dq^{(\alpha)i}$ to the adapted covectors $\delta q^{(\alpha)i}$.  The vector-field identity is the dual triangular relation.
\qed \end{proof}

\begin{proposition}[Miron \cite{MironBook1997}]\label{prop: dual-coefficient-recursion}
The dual coefficients are constructed recursively in terms of the connection coefficients by
\begin{equation}\label{eq: dual coefficients recursion}
        (C_\alpha)^i_j
        =
        (K_\alpha)^i_j
        -
        \sum_{\mu=1}^{\alpha-1}
        (K_\mu)^i_l(C_{\alpha-\mu})^l_j,
        \qquad
        (C_1)^i_j=(K_1)^i_j.
\end{equation}
\end{proposition}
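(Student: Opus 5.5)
The plan is to prove the recursion by unwinding the duality relations between the coordinate coframe and the adapted coframe. By Proposition \ref{prop: coordinate basis to adapted basis}, we have $dq^{(\alpha)i}(\delta/\delta q^{(\mu)j}) = -(C_{\alpha-\mu})^i_j$ for $\mu<\alpha$. We also have the defining identities $\delta q^{(\alpha)i}(\delta/\delta q^{(\mu)j}) = \delta^{\alpha}_{\mu}\delta^i_j$. Since everything is triangular and depends only on the difference $\alpha-\mu$ in its index pattern, it suffices to work with $\mu=0$.

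Fix $\alpha\geq 1$ and evaluate the adapted covector
\[
\delta q^{(\alpha)i} = dq^{(\alpha)i} + (K_1)^i_l\,dq^{(\alpha-1)l} + \cdots + (K_{\alpha-1})^i_l\,dq^{(1)l} + (K_\alpha)^i_l\,dq^{(0)l}
\]
on the vector field $\delta/\delta q^{(0)j}$. The left side vanishes because $\alpha\neq 0$. On the right, each term $dq^{(\alpha-\mu)l}(\delta/\delta q^{(0)j})$ with $\alpha-\mu\geq 1$ equals $-(C_{\alpha-\mu})^l_j$. The last term equals $(K_\alpha)^i_l\delta^l_j = (K_\alpha)^i_j$, because $\delta/\delta q^{(0)j}$ has $\partial/\partial q^{(0)j}$ as its only $q^{(0)}$-component. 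This gives
\[
0 = -(C_\alpha)^i_j - \sum_{\mu=1}^{\alpha-1}(K_\mu)^i_l(C_{\alpha-\mu})^l_j + (K_\alpha)^i_j .
\]
Rearranging yields exactly the recursion \eqref{eq: dual coefficients recursion}. For $\alpha=1$ the sum is empty, which gives $(C_1)^i_j=(K_1)^i_j$.

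The only point needing care is the use of the correct index convention. The coefficient $(C_{\alpha-\mu})$ must come from the definition of $\delta/\delta q^{(\mu)i}$ in terms of coordinate vectors, and the same $C$'s must be the ones appearing in the covector formula. Proposition \ref{prop: coordinate basis to adapted basis} already guarantees this consistency.

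As an alternative check, I would phrase the argument as matrix inversion. Let $\mathcal K$ be the unipotent lower-triangular block matrix with entries $K_{\alpha-\mu}$, and let $\mathcal C$ be the one built from the blocks $-C_{\alpha-\mu}$, with identity blocks on the diagonal in both. Then $\mathcal K\mathcal C=I$, and reading off the $(\alpha,0)$ block gives the same identity.
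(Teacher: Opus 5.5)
Your argument is correct and is essentially the paper's proof: evaluate the adapted covector $\delta q^{(\alpha)i}$ on $\delta/\delta q^{(0)j}$, substitute $dq^{(\beta)l}(\delta/\delta q^{(0)j})=-(C_\beta)^l_j$ for $\beta\geq 1$ from Proposition~\ref{prop: coordinate basis to adapted basis}, and rearrange. Your block-matrix remark, reading off the $(\alpha,0)$ block of $\mathcal K\mathcal C=I$, is a valid repackaging of the same computation.
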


\begin{proof}
From the definition of the adapted covector basis,
\[
        \delta q^{(\alpha)i}
        =
        dq^{(\alpha)i}
        +(K_1)^i_l dq^{(\alpha-1)l}
        +\cdots+
        (K_\alpha)^i_l dq^{(0)l}.
\]
Evaluating this on $\delta/\delta q^{(0)j}$ gives
\[
        0
        =
        dq^{(\alpha)i}\left(\frac{\delta}{\delta q^{(0)j}}\right)
        +(K_1)^i_l
        dq^{(\alpha-1)l}\left(\frac{\delta}{\delta q^{(0)j}}\right)
        +\cdots+
        (K_\alpha)^i_j.
\]
Using Proposition \ref{prop: coordinate basis to adapted basis} yields
\[
        -(C_\alpha)^i_j
        -(K_1)^i_l(C_{\alpha-1})^l_j
        -\cdots
        -(K_{\alpha-1})^i_l(C_1)^l_j
        +(K_\alpha)^i_j=0,
\]
which is precisely Equation \eqref{eq: dual coefficients recursion}.
\qed \end{proof}

\subsection{Technical lemmas for lifted vector fields}\label{app:lift-technical-lemmas}

This appendix contains the technical lift identities used in the computation of Lie brackets.

\begin{proof}[Proof of Lemma \ref{lemma: prop}]
The linearity identities follow directly from the defining characterization of
the lift.  Indeed, the maps $\tau_{k\ast}, \overset{(k)}{K}_1,\ldots,\overset{(k)}{K}_k$
are fiberwise linear on each tangent space \(T_uT^{(k)}M\).  Hence the unique
vector whose \(\alpha\)-component is prescribed and whose remaining components
are zero depends linearly on the prescribed vector in \(T_{\tau_k(u)}M\).  This
gives
\[
        (X+Y)^{\overset{(k)}{h}_\alpha}=X^{\overset{(k)}{h}_\alpha}+Y^{\overset{(k)}{h}_\alpha}, \qquad (cX)^{{\overset{(k)}{h}_\alpha}} = cX^{\overset{(k)}{h}_\alpha}.
\]
Similarly, multiplying the prescribed vector by \(f(\tau_k(u))\) gives
\[
        (fX)^{\overset{(k)}{h}_\alpha}=(f\circ\tau_k)X^{\overset{(k)}{h}_\alpha}.
\]
Since
\[
        \frac{\delta}{\delta q^{(\alpha)j}}=(\partial_j)^{\overset{(k)}{h}_\alpha},
        \qquad \alpha=0,\ldots,k,
\]
the coordinate formula follows by applying the preceding identity to
\(X=X^j\partial_j\):
\[
        X^{\overset{(k)}{h}_\alpha}=(X^j\circ\tau_k)(\partial_j)^{\overset{(k)}{h}_\alpha}
        =(X^j\circ\tau_k)\frac{\delta}{\delta q^{(\alpha)j}}.
\]
The final formulas follow immediately from the fact that $Y(f \circ \tau_k) = df(\tau_{k\ast} Y)$ for all $Y \in \mathfrak{X}(T^{(k)}M)$. 
\qed \end{proof}

\begin{lemma}\label{lemma: Brackets fX gY}
Let $X,Y\in\mathfrak X(M)$ and $f,g\in C^\infty(M)$.  Then
\[
\begin{aligned}
\big[(fX)^{\overset{(k)}{h}_\alpha},(gY)^{\overset{(k)}{h}_\beta}\big]
={}&
\delta_{0\alpha}(f\circ\tau_k)(Xg)Y^{\overset{(k)}{h}_\beta}
-\delta_{0\beta}(g\circ\tau_k)(Yf)X^{\overset{(k)}{h}_\alpha} +(f\circ\tau_k)(g\circ\tau_k)\big[X^{\overset{(k)}{h}_\alpha},Y^{\overset{(k)}{h}_\beta}\big],
\end{aligned}
\]
for $\alpha,\beta=0,\ldots,k$.
\end{lemma}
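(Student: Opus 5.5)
The plan is to reduce the bracket to coordinate computations via the local formula $X^{\overset{(k)}{h}_\alpha}=(X^j\circ\tau_k)\frac{\delta}{\delta q^{(\alpha)j}}$ from Lemma~\ref{lemma: prop}. Write $(fX)^{\overset{(k)}{h}_\alpha}=\tilde f\,X^{\overset{(k)}{h}_\alpha}$ and $(gY)^{\overset{(k)}{h}_\beta}=\tilde g\,Y^{\overset{(k)}{h}_\beta}$, where $\tilde f=f\circ\tau_k$ and $\tilde g=g\circ\tau_k$. Then expand by the Leibniz rule for brackets of vector fields multiplied by functions:
\[
[\tilde f A,\tilde g B]=\tilde f\tilde g\,[A,B]+\tilde f\,(A\tilde g)\,B-\tilde g\,(B\tilde f)\,A ,
\]
with $A=X^{\overset{(k)}{h}_\alpha}$ and $B=Y^{\overset{(k)}{h}_\beta}$. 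The first term is already the last term of the claimed identity, so no further work is needed there.

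It remains to evaluate the derivatives $A\tilde g$ and $B\tilde f$. Here I invoke the final part of Lemma~\ref{lemma: prop}. It gives $X^{\overset{(k)}{h}_0}(g\circ\tau_k)=(Xg)\circ\tau_k$, while $X^{\overset{(k)}{h}_\alpha}(g\circ\tau_k)=0$ for $\alpha\ge1$; the latter includes $\alpha=k$, the $k$-vertical lift. Both facts come from $Y(f\circ\tau_k)=df(\tau_{k*}Y)$ together with $\tau_{k*}X^{\overset{(k)}{h}_\alpha}=\delta_{\alpha0}X\circ\tau_k$. Hence $A\tilde g=\delta_{0\alpha}(Xg)\circ\tau_k$ and $B\tilde f=\delta_{0\beta}(Yf)\circ\tau_k$.

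Substituting these into the expansion gives exactly the stated formula. The only subtlety is notational: in the statement, $(Xg)$ and $(Yf)$ must be read as composed with $\tau_k$. I expect no real obstacle. The one point needing care is checking that the case $\alpha=k$ (or $\beta=k$), the $k$-vertical lift, is covered by the vanishing statement, which holds because the $k$-vertical lift lies in $\ker\tau_{k*}$.
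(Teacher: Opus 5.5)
Your proposal is correct and follows the same route as the paper: factor out $f\circ\tau_k$ and $g\circ\tau_k$ via $(fX)^{\overset{(k)}{h}_\alpha}=(f\circ\tau_k)X^{\overset{(k)}{h}_\alpha}$, expand by the Leibniz rule for brackets, and evaluate the derivatives of the pulled-back functions with the last part of Lemma~\ref{lemma: prop}, which also covers the $k$-vertical case. Your remark that $(Xg)$ and $(Yf)$ in the statement should be read as composed with $\tau_k$ is a fair notational correction.
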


\begin{proof}
Using Lemma \ref{lemma: fX},
\[
\begin{aligned}
\big[(fX)^{\overset{(k)}{h}_\alpha},(gY)^{\overset{(k)}{h}_\beta}\big]
={}&
(f\circ\tau_k)X^{\overset{(k)}{h}_\alpha}(g\circ\tau_k)Y^{\overset{(k)}{h}_\beta}
-(g\circ\tau_k)Y^{\overset{(k)}{h}_\beta}(f\circ\tau_k)X^{\overset{(k)}{h}_\alpha}
+(f\circ\tau_k)(g\circ\tau_k)\big[X^{\overset{(k)}{h}_\alpha},Y^{\overset{(k)}{h}_\beta}\big].
\end{aligned}
\]
The result follows from Lemma \ref{lemma: prop}.
\qed \end{proof}

The $\overset{(k)}{h}_\alpha$-lifts of vector fields on $M$ to $T^{(k)}M$ are compatible with the projections of the connection tower.

\section{Riemannian identities and the Levi--Civita-induced tower}\label{app:riemannian-lc-tower}
\subsection{Riemannian identities used in Section~\ref{sec:riemannian-induced-connection-tower}}
\label{app:riemannian-identities}

This appendix records the curvature identities used in the computations of Section~\ref{sec:riemannian-induced-connection-tower}.  We use the convention
\[
        R(X,Y)Z=\nabla_X\nabla_YZ-\nabla_Y\nabla_XZ-\nabla_{[X,Y]}Z.
\]

\begin{lemma}\label{lemma: curvature symmetries}
The following identities hold for all $X,Y,Z,W\in\mathfrak X(M)$:
\begin{align*}
&\textit{1. (First skew-symmetry identity)}\quad R(X,Y)+R(Y,X)=0;\\
&\textit{2. (Second skew-symmetry identity)}\quad
\langle R(X,Y)Z,W\rangle+\langle R(X,Y)W,Z\rangle=0;\\
&\textit{3. (Symmetry by pairs identity)}\quad
\langle R(X,Y)Z,W\rangle=\langle R(W,Z)Y,X\rangle;\\
&\textit{4. (First Bianchi identity)}\quad
R(X,Y)Z+R(Y,Z)X+R(Z,X)Y=0.
\end{align*}
\end{lemma}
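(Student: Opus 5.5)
The statement to prove is Lemma~\ref{lemma: curvature symmetries}, which collects the standard symmetries of the Riemann tensor of the Levi--Civita connection under the convention \eqref{eq:rg1}. I will derive each identity directly from \eqref{eq:rg1}, using only that $\nabla$ is torsion-free and metric. Since $R$ is $C^\infty(M)$-trilinear, it suffices to check every identity pointwise, and I may assume whatever convenient vector fields I like. The order of proof is (1), (4), (2), (3), because (3) is deduced from the other three.

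Identity (1) is immediate. Interchanging $X$ and $Y$ in \eqref{eq:rg1} changes the sign of the first two terms, and $[Y,X]=-[X,Y]$ changes the sign of the third.

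For (4), I will expand the cyclic sum of $R(X,Y)Z$ and regroup the nine resulting terms. Terms of the form $\nabla_X(\nabla_YZ-\nabla_ZY)$ become $\nabla_X[Y,Z]$ by torsion-freeness. Combining these with the terms $-\nabla_{[Y,Z]}X$ produces $[X,[Y,Z]]$, again by torsion-freeness. The cyclic sum is therefore the Jacobi sum of Lie brackets, which vanishes.

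For (2), it is enough to show that $\langle R(X,Y)Z,Z\rangle=0$ for all $Z$, and then polarize. Metric compatibility gives $\langle\nabla_X\nabla_YZ,Z\rangle=X\langle\nabla_YZ,Z\rangle-\langle\nabla_YZ,\nabla_XZ\rangle=\tfrac12XY\|Z\|^2-\langle\nabla_YZ,\nabla_XZ\rangle$. Subtracting the corresponding expression with $X$ and $Y$ swapped, the mixed inner products cancel, leaving $\tfrac12[X,Y]\|Z\|^2$. This is exactly cancelled by $\langle\nabla_{[X,Y]}Z,Z\rangle=\tfrac12[X,Y]\|Z\|^2$.

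The only step needing any bookkeeping is (3). I will use the classical octahedron argument. Write (4) with the fourth slot paired against each of $W,X,Y,Z$ in turn, giving four cyclic identities. Adding them and simplifying with (1) and (2), most terms cancel and one is left with $2\langle R(X,Y)Z,W\rangle-2\langle R(Z,W)X,Y\rangle=0$. Converting with (1) and (2) gives $\langle R(X,Y)Z,W\rangle=\langle R(W,Z)Y,X\rangle$, which is the form stated. The main care needed is tracking signs under the paper's ordering $\langle R(X,Y)Z,W\rangle$. To make sure the convention \eqref{eq:rg1} has not altered any sign, I will check the final identity in coordinates using \eqref{eq:rg3} at a point with normal coordinates, where $\Gamma=0$.
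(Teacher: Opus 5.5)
Your proposal is correct. It is the standard textbook derivation: (1) by antisymmetry, (4) by reducing the cyclic sum to the Jacobi identity, (2) by polarizing $\langle R(X,Y)Z,Z\rangle=0$, and (3) by the octahedron argument. This is exactly what the paper means when it calls these "the standard curvature symmetries of the Levi--Civita connection" without writing out a proof. Your closing normal-coordinate sign check is harmless but unnecessary: (3) follows purely algebraically from (1), (2) and (4), and you have already derived all three from the convention \eqref{eq:rg1}.
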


The covariant derivative of $R$ in the direction of $X$ is defined by
\begin{equation}\label{eq:rg2}
\begin{aligned}
(\nabla_XR)(Y,Z)W={}&\nabla_X(R(Y,Z)W)-R(\nabla_XY,Z)W\\
&-R(Y,\nabla_XZ)W-R(Y,Z)\nabla_XW.
\end{aligned}
\end{equation}
We regard $\nabla R$ as the $(4,1)$-tensor field determined by $(\nabla R)(X;\cdot)=\nabla_XR$.  Similarly, $\nabla^2R$ denotes the $(5,1)$-tensor field
\[
        (\nabla^2R)(X,Y;\cdot)=\nabla^2_{XY}R.
\]
The curvature endomorphism also acts on tensor fields; in particular,
\begin{align*}
(R(X,Y)R)(A,B)C={}&R(X,Y)R(A,B)C-R(R(X,Y)A,B)C\\
&-R(A,R(X,Y)B)C-R(A,B)R(X,Y)C.
\end{align*}

\begin{lemma}\label{lemma: 2nd curvature symmetries}
The following identities hold for all $X,Y,V,Z,W\in\mathfrak X(M)$:
\begin{align*}
&\textit{1. (First skew-symmetry identity)}\quad
(\nabla_XR)(Y,Z)W+(\nabla_XR)(Z,Y)W=0;\\
&\textit{2. (Second skew-symmetry identity)}\quad
\langle(\nabla_XR)(Y,Z)W,V\rangle+\langle(\nabla_XR)(Y,Z)V,W\rangle=0;\\
&\textit{3. (Symmetry by pairs identity)}\quad
\langle(\nabla_XR)(Y,Z)W,V\rangle=\langle(\nabla_XR)(V,W)Z,Y\rangle;\\
&\textit{4. (First Bianchi identity)}\quad
(\nabla_XR)(Y,Z)W+(\nabla_XR)(Z,W)Y+(\nabla_XR)(W,Y)Z=0;\\
&\textit{5. (Second Bianchi identity)}\quad
(\nabla_XR)(Y,Z)W+(\nabla_YR)(Z,X)W+(\nabla_ZR)(X,Y)W=0.
\end{align*}
\end{lemma}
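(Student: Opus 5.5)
These five identities are classical, and I would prove them all directly from the definition \eqref{eq:rg2} of $\nabla_XR$, using Lemma~\ref{lemma: curvature symmetries} and metric compatibility. The common strategy is to work pointwise. At a given point $p$, extend $Y,Z,W,V$ to vector fields whose covariant derivatives vanish at $p$; this is possible, for instance via normal coordinates or parallel extension along radial geodesics. With such extensions, $(\nabla_XR)(Y,Z)W=\nabla_X(R(Y,Z)W)$ at $p$, since every correction term in \eqref{eq:rg2} vanishes there. Because all five expressions are tensorial, it suffices to verify them at $p$ for these special fields.

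\textbf{Identities 1, 2 and 4.} Differentiate the corresponding identities of Lemma~\ref{lemma: curvature symmetries} in the direction $X$.
\begin{itemize}
\item For item 1, $\nabla_X\big(R(Y,Z)W+R(Z,Y)W\big)=0$ gives the claim at $p$.
\item For item 2, apply $X$ to $\langle R(Y,Z)W,V\rangle+\langle R(Y,Z)V,W\rangle=0$. Metric compatibility gives
\[
\langle\nabla_X(R(Y,Z)W),V\rangle+\langle\nabla_X(R(Y,Z)V),W\rangle=0
\]
at $p$, since $\nabla_XV=\nabla_XW=0$ there.
\item Item 3 follows in the same way from the pair symmetry in Lemma~\ref{lemma: curvature symmetries}.
\item Item 4 follows by applying $\nabla_X$ to the first Bianchi identity.
\end{itemize}
Equivalently, and without special extensions: each identity is a linear relation among the components of $R$ that is invariant under permuting arguments, and $\nabla_X$ commutes with contractions and permutations of tensor slots. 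So $\nabla_X$ of the zero tensor is zero.

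\textbf{Second Bianchi identity (item 5).} This is the main step. I would take $X,Y,Z$ to be coordinate fields $\partial_i,\partial_j,\partial_k$ of normal coordinates centered at $p$, so that all brackets vanish and $\nabla_{\partial_a}\partial_b=0$ at $p$. At $p$,
\[
(\nabla_XR)(Y,Z)W=\nabla_X\nabla_Y\nabla_ZW-\nabla_X\nabla_Z\nabla_YW-R(Y,Z)\nabla_XW.
\]
Take the cyclic sum over $(X,Y,Z)$. The third-order terms regroup as
\[
\sum_{\mathrm{cyc}}\big(\nabla_X\nabla_Y\nabla_ZW-\nabla_Y\nabla_X\nabla_ZW\big)
=\sum_{\mathrm{cyc}}R(X,Y)\nabla_ZW,
\]
using $[X,Y]=0$. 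This exactly cancels the remaining terms $-\sum_{\mathrm{cyc}}R(Y,Z)\nabla_XW$, which after cyclic relabeling are the same sum. The obstacle is careful bookkeeping. One needs that the terms $\nabla_X(R(\cdot,\cdot))$ acting on non-parallel $\nabla_ZW$ are handled consistently. So I would keep $W$ an arbitrary field and only choose $X,Y,Z$ to be coordinate fields, rather than forcing $\nabla W(p)=0$.

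Tensoriality in all slots then extends the identities to arbitrary vector fields.
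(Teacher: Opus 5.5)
Your proposal is correct and takes the same route as the paper for items 1--4: you covariantly differentiate the algebraic symmetries of Lemma~\ref{lemma: curvature symmetries} and use $\nabla g=0$, and your choice of extensions with vanishing covariant derivative at $p$ is just a convenient way of carrying this out. The one difference is item 5, which the paper simply quotes as the standard second Bianchi identity, whereas you supply the usual normal-coordinate proof; your regrouping $\sum_{\mathrm{cyc}}(\nabla_X\nabla_Y-\nabla_Y\nabla_X)\nabla_ZW=\sum_{\mathrm{cyc}}R(X,Y)\nabla_ZW$, which cancels against $-\sum_{\mathrm{cyc}}R(Y,Z)\nabla_XW$, is correct.
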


\begin{lemma}\label{lemma: 3rd curvature symmetries}
The following identities hold for all $X,Y,A,B,C,D\in\mathfrak X(M)$:
\begin{align*}
&\textit{1. (First skew-symmetry identity)}\quad
(\nabla^2R)(X,Y;A,B)+(\nabla^2R)(X,Y;B,A)=0;\\
&\textit{2. (Second skew-symmetry identity)}\quad
\langle(\nabla^2R)(X,Y;A,B)C,D\rangle
+\langle(\nabla^2R)(X,Y;A,B)D,C\rangle=0;\\
&\textit{3. (Symmetry by pairs identity)}\quad
\langle(\nabla^2R)(X,Y;A,B)C,D\rangle
+\langle(\nabla^2R)(X,Y;D,C)B,A\rangle=0;\\
&\textit{4. (First Bianchi identity)}\quad
(\nabla^2R)(X,Y;A,B)C+(\nabla^2R)(X,Y;B,C)A+(\nabla^2R)(X,Y;C,A)B=0;\\
&\textit{5. (Second Bianchi identity)}\quad
(\nabla^2R)(X,Y;A,B)C+(\nabla^2R)(X,A;B,Y)C+(\nabla^2R)(X,B;Y,A)C=0;\\
&\textit{6. (Nested curvature identity)}\quad
(\nabla^2R)(X,Y;A,B)C-(\nabla^2R)(Y,X;A,B)C=(R(X,Y)R)(A,B)C.
\end{align*}
\end{lemma}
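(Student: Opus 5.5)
The plan is to treat all six identities as consequences of two general facts about covariant differentiation of tensor fields. \emph{Fact A}: $\nabla_X$ commutes with permutations of tensor arguments and with contractions against the metric, since $\nabla g=0$. \emph{Fact B}: for a torsion-free connection, the second covariant derivative $\nabla^2_{XY}=\nabla_X\nabla_Y-\nabla_{\nabla_XY}$ is $C^\infty(M)$-bilinear in $(X,Y)$. Fact B justifies writing $(\nabla^2R)(X,Y;\cdot)$ as a tensor. Throughout I will check identities pointwise at $p\in M$. For this I extend the vectors $A,B,C,D$ to fields with $\nabla A=\nabla B=\nabla C=\nabla D=0$ at $p$, and extend $X,Y$ similarly. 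At $p$, $(\nabla^2R)(X,Y;A,B)C$ then reduces to $\nabla_X\nabla_Y\big(R(A,B)C\big)$ up to terms that vanish at $p$. This is only a computational convenience, since every identity is tensorial.

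For items 1--4, the key observation is the following. Let $T$ be a tensor field satisfying a linear identity built only from permutations of arguments and from pairing with $g$. By Fact A, $\nabla_XT$ satisfies the same identity in the remaining arguments, and $\nabla_{\nabla_XY}T$ does too. Applying this twice, via $\nabla^2_{XY}R=\nabla_X(\nabla_YR)-\nabla_{\nabla_XY}R$, transfers the four algebraic symmetries of Lemma~\ref{lemma: curvature symmetries} to $\nabla^2R$: first skew-symmetry, second skew-symmetry, pair symmetry, and first Bianchi. One could equally start from Lemma~\ref{lemma: 2nd curvature symmetries} and differentiate once. For item 3 I will carry the pair symmetry of Lemma~\ref{lemma: curvature symmetries}(3) through verbatim. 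I will then reconcile the sign in the stated form with the conventions, using items 1 and 2: swapping the entries within one pair, or using the skew-adjointness of the curvature operator, changes the sign. I expect this sign bookkeeping to be the only delicate point in items 1--4.

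For item 5, I differentiate the second Bianchi identity of Lemma~\ref{lemma: 2nd curvature symmetries}(5), written with arguments $(Y,A,B)$:
\[
(\nabla_YR)(A,B)C+(\nabla_AR)(B,Y)C+(\nabla_BR)(Y,A)C=0 .
\]
This is a tensor identity in $Y,A,B,C$. Applying $\nabla_X$ and using the Leibniz rule, the terms where $\nabla_X$ falls on an argument reassemble into $\nabla_{\nabla_X\cdot}$-corrections. What remains is exactly $(\nabla^2R)(X,Y;A,B)C+(\nabla^2R)(X,A;B,Y)C+(\nabla^2R)(X,B;Y,A)C=0$. This step is easiest with the parallel-at-$p$ extensions, where all correction terms vanish at $p$.

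Item 6 is the Ricci commutation identity $\nabla^2_{XY}T-\nabla^2_{YX}T=R(X,Y)T$ applied to $T=R$, where $R(X,Y)$ acts on tensors as a derivation. I prove it from the definition: by Fact B, $\nabla^2_{XY}-\nabla^2_{YX}=\nabla_X\nabla_Y-\nabla_Y\nabla_X-\nabla_{\nabla_XY-\nabla_YX}$. Torsion-freeness turns the last term into $\nabla_{[X,Y]}$, so the difference is the curvature operator acting on the tensor $T$. To identify that action, I evaluate on $(A,B,C)$ and use the Leibniz rule for $\nabla$ on $T(A,B)C$. The curvature operator then distributes over the output and the three inputs with the signs shown in the displayed formula for $(R(X,Y)R)(A,B)C$. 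The main obstacle in the whole lemma is confirming that these distribution signs, together with the convention $R(X,Y)=\nabla_X\nabla_Y-\nabla_Y\nabla_X-\nabla_{[X,Y]}$, agree exactly with the stated formula. I would check this first on a $(1,1)$-tensor as a sanity test before doing the $(3,1)$ case.
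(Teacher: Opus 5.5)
Your overall route matches the paper's one-sentence sketch. You transport the algebraic symmetries of $R$ through two covariant derivatives using $\nabla g=0$, differentiate the second Bianchi identity once more for item~5, and get item~6 from the Ricci commutation identity. With $R(X,Y)=\nabla_X\nabla_Y-\nabla_Y\nabla_X-\nabla_{[X,Y]}$, the derivation action of $R(X,Y)$ does agree with the displayed formula for $(R(X,Y)R)(A,B)C$. Items 1, 2, 4, 5 and 6 go through this way.

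\textbf{Item 3 cannot be proved as printed.} Your planned ``reconciliation of the sign'' will not work, because item~3 as printed is false. Carrying Lemma~\ref{lemma: curvature symmetries}(3) through verbatim to $T=\nabla^2_{XY}R$ gives an \emph{equality}, exactly parallel to Lemma~\ref{lemma: 2nd curvature symmetries}(3):
\[
\langle(\nabla^2R)(X,Y;A,B)C,D\rangle=\langle(\nabla^2R)(X,Y;D,C)B,A\rangle .
\]
So the printed left-hand side equals $2\langle(\nabla^2R)(X,Y;A,B)C,D\rangle$, and the printed identity is equivalent to $\nabla^2R\equiv0$. No sign bookkeeping with items 1 and 2 can produce it.

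A concrete counterexample is any surface. There $R(A,B)C=K(\langle B,C\rangle A-\langle A,C\rangle B)$, so
\[
(\nabla^2R)(X,Y;A,B)C=(\operatorname{Hess}K)(X,Y)\,(\langle B,C\rangle A-\langle A,C\rangle B).
\]
For orthonormal $A=D$, $B=C$, the printed sum is $2(\operatorname{Hess}K)(X,Y)$. This is nonzero somewhere on any connected surface of nonconstant Gaussian curvature. Item~3 is a sign misprint: the $+$ should be $-$. What you should prove, and flag, is the equality above. Any argument that lands on the printed form contains an error, and the paper's sketch does not catch this either.

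\textbf{Your pointwise shortcut is also wrong as stated.} Take all extensions parallel at $p$. Then at $p$
\[
\nabla_X\nabla_Y\big(R(A,B)C\big)=(\nabla^2R)(X,Y;A,B)C+R(\nabla_X\nabla_YA,B)C+R(A,\nabla_X\nabla_YB)C+R(A,B)\nabla_X\nabla_YC .
\]
These second-derivative terms cannot all be removed, since $\nabla_X\nabla_YA-\nabla_Y\nabla_XA=R(X,Y)A$ at $p$. After antisymmetrizing in $X,Y$ they are exactly the correction terms in $(R(X,Y)R)(A,B)C$. Your Leibniz-rule argument for item~6 avoids this, so do not use the shortcut there.

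Also, Fact~B needs no torsion-freeness. Torsion-freeness enters only in item~6, when you replace $\nabla_{\nabla_XY-\nabla_YX}$ by $\nabla_{[X,Y]}$.
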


\begin{lemma}\label{lemma: commute cov deriv family}
Let $\gamma_s:t\mapsto\gamma_s(t)=\gamma(s,t)$ be a smooth family of curves on $M$, and let $Z$ be any smooth family of vector fields along $\gamma$. Then
\[
        \nabla_{\frac{\partial\gamma}{\partial s}}\nabla_{\frac{\partial\gamma}{\partial t}}Z
        =\nabla_{\frac{\partial\gamma}{\partial t}}\nabla_{\frac{\partial\gamma}{\partial s}}Z
        +R\left(\frac{\partial\gamma}{\partial s},\frac{\partial\gamma}{\partial t}\right)Z.
\]
\end{lemma}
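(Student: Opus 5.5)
The approach is to work in local coordinates along the two-parameter map $\gamma(s,t)$ and compare the two mixed covariant derivatives by writing both in terms of Christoffel symbols. Write $\partial_s\gamma=\frac{\partial\gamma^i}{\partial s}\partial_i$, $\partial_t\gamma=\frac{\partial\gamma^j}{\partial t}\partial_j$ and $Z=Z^k\partial_k$ along $\gamma$. The covariant derivative along the $t$-curves is
\[
\nabla_{\frac{\partial\gamma}{\partial t}}Z=\left(\frac{\partial Z^l}{\partial t}+\frac{\partial\gamma^j}{\partial t}Z^k\Gamma^l_{jk}\right)\partial_l ,
\]
and the $s$-derivative has the same form. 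The Christoffel symbols $\Gamma^l_{jk}$ are evaluated at $\gamma(s,t)$.

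First I will apply $\nabla_{\partial_s\gamma}$ to this expression, then do the same computation with $s$ and $t$ interchanged, and subtract. Several groups of terms cancel by symmetry of mixed partials:
\begin{itemize}
\item the second derivatives $\frac{\partial^2 Z^l}{\partial s\partial t}$;
\item the terms $\frac{\partial^2\gamma^j}{\partial s\partial t}Z^k\Gamma^l_{jk}$;
\item the cross terms $\frac{\partial\gamma^j}{\partial t}\frac{\partial Z^k}{\partial s}\Gamma^l_{jk}$, which cancel against their counterparts after interchanging $s$ and $t$.
\end{itemize}
Torsion-freeness ($\Gamma^l_{jk}=\Gamma^l_{kj}$) is not needed for these cancellations, because the differentiated index is always the one paired with $\gamma$.

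What survives are the terms in which $\Gamma$ is differentiated along $\gamma$, namely $\frac{\partial\gamma^i}{\partial s}\frac{\partial\gamma^j}{\partial t}Z^k\partial_i\Gamma^l_{jk}$ minus the swapped term, together with the quadratic terms $\frac{\partial\gamma^i}{\partial s}\frac{\partial\gamma^j}{\partial t}Z^k\Gamma^m_{jk}\Gamma^l_{im}$ minus the swapped term. Collecting these gives
\[
\frac{\partial\gamma^i}{\partial s}\frac{\partial\gamma^j}{\partial t}Z^k\left(\partial_i\Gamma^l_{jk}-\partial_j\Gamma^l_{ik}+\Gamma^m_{jk}\Gamma^l_{im}-\Gamma^m_{ik}\Gamma^l_{jm}\right)\partial_l .
\]
By Equation \eqref{eq:rg3}, the bracket is exactly $R^l_{ijk}$, with the paper's index placement $R(\partial_i,\partial_j)\partial_k=R^l_{ijk}\partial_l$. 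The whole expression is therefore $R(\partial_s\gamma,\partial_t\gamma)Z$, which is the claimed identity.

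The only real obstacle is the bookkeeping of indices and signs: matching the order of the arguments in $R$ to the convention \eqref{eq:rg1} and to \eqref{eq:rg3}. As a sanity check, I will take the special case where $\gamma$ is a coordinate parametrization, so that $\partial_s\gamma=\partial_i$ and $\partial_t\gamma=\partial_j$ have vanishing bracket. There the identity reduces to the definition $R(X,Y)Z=\nabla_X\nabla_YZ-\nabla_Y\nabla_XZ$ when $[X,Y]=0$, which confirms the sign.

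An alternative, coordinate-free route is to extend $Z$ locally and use tensoriality of $R$. However, that route needs care where $\gamma$ fails to be an immersion, so I prefer the coordinate computation above, which holds pointwise without any such assumption.
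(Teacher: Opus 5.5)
Your proof is correct, and it takes a different, more explicit route than the paper. The paper does not compute anything: it simply cites the identity as the standard commutation formula, ``obtained from the definition of the curvature tensor and the torsion-freeness'' of $\nabla$.

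You instead expand both mixed covariant derivatives in Christoffel symbols. The cancellations you list are exactly the ones that occur. The $\frac{\partial^2 Z^l}{\partial s\partial t}$ and $\frac{\partial^2\gamma^j}{\partial s\partial t}$ terms cancel by equality of mixed partials, and the first-order cross terms cancel after relabelling. The surviving bracket is precisely $R^l_{ijk}$ as in Equation~\eqref{eq:rg3}, with the paper's index placement consistent with Equation~\eqref{eq:rg1}, so the sign and argument order come out right.

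Your route buys two things. First, it is self-contained and pointwise. It needs no extension of $\frac{\partial\gamma}{\partial s}$, $\frac{\partial\gamma}{\partial t}$, $Z$ to vector fields on $M$, which is unavailable where $\gamma$ is not an immersion. A definition-based argument must handle this, for instance via the pullback connection on $\gamma^*TM$, and the paper leaves it implicit. Second, you correctly observe that torsion-freeness plays no role in this identity; it holds for any linear connection. Torsion enters only in the companion identity $\nabla_{\frac{\partial\gamma}{\partial s}}\frac{\partial\gamma}{\partial t}=\nabla_{\frac{\partial\gamma}{\partial t}}\frac{\partial\gamma}{\partial s}$, so the paper's appeal to torsion-freeness here is superfluous. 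The paper's approach is shorter and coordinate-free, at the cost of those unstated steps.

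One small addition would help: state that the computation is local in $(s,t)$. Near any given parameter value the image of $\gamma$ lies in a single chart, and this suffices because the identity is pointwise.
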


\begin{proof}[Proof of Lemmas~\ref{lemma: curvature symmetries}--\ref{lemma: commute cov deriv family}]
The identities in Lemma~\ref{lemma: curvature symmetries} are the standard curvature
symmetries of the Levi--Civita connection.  Lemma~\ref{lemma: 2nd curvature symmetries}
follows by covariantly differentiating these identities and using $\nabla g=0$, together
with the second Bianchi identity.  Lemma~\ref{lemma: 3rd curvature symmetries} follows
by applying a second covariant derivative and commuting the two covariant derivatives;
the commutator is precisely the curvature action on the tensor $R$.  Finally,
Lemma~\ref{lemma: commute cov deriv family} is the defining commutation identity for
covariant derivatives along a two-parameter family, obtained from the definition of the
curvature tensor and the torsion-freeness of the Levi--Civita connection.
\qed \end{proof}

\section{Lie bracket computations}\label{app:bracket-computations}

\subsection{Weak fiberwise-linearity}\label{app:weak-fiberwise-linearity}

Let $0_p$ be the null element in the fiber $\tau_k^{-1}(p)$ for all $p\in M$, that is, $0_p=j^k_0(q)$, where $q$ is the constant curve through $p$. We say that a function $f\in C^\infty(T^{(k)}M)$ is \textit{weakly fiberwise linear} if $f(0_p)=0$ for all $p\in M$. Similarly, a vector field $X\in\Gamma(TT^{(k)}M)$ is weakly fiberwise linear if $X_{0_p}=0$ for all $p\in M$.

It follows from the definition of the Levi--Civita-induced connection coefficients that $(K_\alpha)^i_j$ and $\partial (K_\alpha)^i_j/\partial q^{(0)l}$ are weakly fiberwise linear for all $\alpha=1,\ldots,k$ and all $i,j,l$.

\begin{lemma}\label{lemma: fiberlinear}
The vector field
\[
        \left[\frac{\delta}{\delta q^{(0)i}},\frac{\delta}{\delta q^{(0)j}}\right]
\]
is weakly fiberwise linear.
\end{lemma}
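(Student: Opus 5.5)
We write the bracket in coordinates and show that every coefficient vanishes on the zero section. By Proposition~\ref{prop: coordinate basis to adapted basis},
\[
        \frac{\delta}{\delta q^{(0)i}}=\frac{\partial}{\partial q^{(0)i}}-(C_1)^a_i\frac{\partial}{\partial q^{(1)a}}-\cdots-(C_k)^a_i\frac{\partial}{\partial q^{(k)a}}.
\]
A direct expansion then gives
\[
        \left[\frac{\delta}{\delta q^{(0)i}},\frac{\delta}{\delta q^{(0)j}}\right]
        =\sum_{\alpha=1}^k\left(-\frac{\delta (C_\alpha)^a_j}{\delta q^{(0)i}}+\frac{\delta (C_\alpha)^a_i}{\delta q^{(0)j}}\right)\frac{\partial}{\partial q^{(\alpha)a}},
\]
since the coordinate fields commute. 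Each coefficient expands as
\[
        \frac{\delta (C_\alpha)^a_j}{\delta q^{(0)i}}=\frac{\partial (C_\alpha)^a_j}{\partial q^{(0)i}}-\sum_{\beta\ge1}(C_\beta)^b_i\frac{\partial (C_\alpha)^a_j}{\partial q^{(\beta)b}}.
\]
It therefore suffices to prove two facts. First, every $(C_\alpha)^a_j$ and every $\partial (C_\alpha)^a_j/\partial q^{(0)i}$ is weakly fiberwise linear. Second, the partial derivatives $\partial (C_\alpha)^a_j/\partial q^{(\beta)b}$ are smooth, so they stay bounded in the product with $(C_\beta)^b_i$. Granting these, each term vanishes at $0_p$: the first term by the claim for $\partial C/\partial q^{(0)}$, and the second because the factor $(C_\beta)^b_i$ vanishes at $0_p$.

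For the first fact we use the recursion of Proposition~\ref{prop: dual-coefficient-recursion},
\[
        (C_\alpha)=(K_\alpha)-\sum_{\mu=1}^{\alpha-1}(K_\mu)(C_{\alpha-\mu}),
\]
and induct on $\alpha$, starting from the remark above that $(K_\alpha)^i_j$ and $\partial (K_\alpha)^i_j/\partial q^{(0)l}$ are weakly fiberwise linear.
\begin{itemize}
  \item The function $(C_\alpha)$ itself is a sum of $(K_\alpha)$ and products each containing a factor $(K_\mu)$, so it vanishes at $0_p$.
  \item For $\partial (C_\alpha)/\partial q^{(0)l}$, differentiating a product $(K_\mu)(C_{\alpha-\mu})$ with the Leibniz rule gives two terms. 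One contains the undifferentiated factor $(K_\mu)$, which vanishes at $0_p$. The other contains $\partial (C_{\alpha-\mu})/\partial q^{(0)l}$, which vanishes at $0_p$ by the induction hypothesis.
\end{itemize}

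The one step that needs care is justifying the weak fiberwise linearity of $K$ and $\partial K/\partial q^{(0)}$. We argue it by noting that at $0_p$ all coordinates satisfy $q^{(\beta)}=0$ for $\beta\ge1$. By the recursion of Lemma~\ref{lemma: recursive connection coef}, starting from $(K_1)^i_j=q^{(1)l}\Gamma^i_{lj}$, each $(K_\alpha)^i_j$ is a polynomial in the fiber coordinates $q^{(\beta)}$, $\beta\ge1$, with no constant term and with coefficients smooth in $q^{(0)}$. The operator $d_T$ preserves this property, since it multiplies by $q^{(\nu+1)}$. Multiplying by $(K_1)$ also preserves it, since $(K_1)$ itself has no constant term. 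Differentiating in $q^{(0)}$ acts only on the coefficients and so keeps the absence of a constant term.

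Once these facts are in place, the remaining argument is the routine assembly of the vanishing terms described above.
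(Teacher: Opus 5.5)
Your proof is correct and follows essentially the same route as the paper's. Both reduce the claim to the vanishing, at $0_p$, of the coefficients and of their $\partial/\partial q^{(0)}$-derivatives; the paper tests $[\delta/\delta q^{(0)i},\delta/\delta q^{(0)j}]$ on an arbitrary function $f$, whereas you compute its components in the coordinate frame and also make explicit, via Miron's recursion, the passage from the $(K_\alpha)$ to the dual coefficients $(C_\alpha)$ that the paper leaves implicit.
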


\begin{proof}
Let $f\in C^\infty(T^{(k)}M)$. At $0_p$, all connection coefficients vanish, and therefore
\[
        \frac{\delta f}{\delta q^{(0)i}}(0_p)
        =\frac{\partial f}{\partial q^{(0)i}}(0_p).
\]
Thus $\delta f/\delta q^{(0)i}$ is weakly fiberwise linear if and only if $\partial f/\partial q^{(0)i}$ is weakly fiberwise linear. Using also the weak fiberwise linearity of $\partial (K_\alpha)^l_i/\partial q^{(0)j}$, we obtain
\[
        \frac{\delta}{\delta q^{(0)i}}\left(\frac{\delta f}{\delta q^{(0)j}}\right)(0_p)
        =\frac{\partial^2 f}{\partial q^{(0)i}\partial q^{(0)j}}(0_p).
\]
Interchanging $i$ and $j$ and subtracting gives
\[
        \left[\frac{\delta}{\delta q^{(0)i}},\frac{\delta}{\delta q^{(0)j}}\right]_{0_p}(f)=0.
\]
Since this holds for every $f$, the bracket vanishes at $0_p$.
\qed \end{proof}

\subsection{Third-order bracket computations}
\label{app:third-order-brackets}

We now record the computations that lead to Theorem~\ref{thm: liebrac3}.

\begin{lemma}\label{lemma: Brackets h2 3}
 Let  $X, Y \in \mathfrak{X}(M)$. Then
$$\overset{(3)}{K}_3\big( [X^{\overset{(3)}{h}_0}, Y^{\overset{(3)}{h}_2}]_u\big) = -R(X_p,u^{(1)})Y_p - \frac13 R(u^{(1)},Y_p)X_p, \quad \overset{(3)}{K}_3\big([X^{\overset{(3)}{h}_1}, Y^{\overset{(3)}{h}_2}]_u\big)  = \overset{(3)}{K}_3\big([X^{\overset{(3)}{h}_2}, Y^{\overset{(3)}{h}_2}]_u\big)  = 0,$$
 where $u^{(1)}=\overset{(3)}{F}_0(u)$, $u \in T^{(3)}_pM$, $p\in M$.
    \end{lemma}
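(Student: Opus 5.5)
The plan is to work in the adapted frame on $T^{(3)}M$ and compute each bracket $\bigl[\frac{\delta}{\delta q^{(\alpha)i}},\frac{\delta}{\delta q^{(2)l}}\bigr]$ for $\alpha=0,1,2$. We then read off the $\overset{(3)}{K}_3$-component, i.e. the coefficient of $\frac{\delta}{\delta q^{(3)j}}=\frac{\partial}{\partial q^{(3)j}}$. Lemma~\ref{lemma: Brackets fX gY} then transfers the result to arbitrary $X,Y\in\mathfrak X(M)$. The extra terms it produces are proportional to $Y^{\overset{(3)}{h}_2}$ or $X^{\overset{(3)}{h}_\alpha}$ with $\alpha<3$, and these are annihilated by $\overset{(3)}{K}_3$. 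By Equation~\eqref{eq: coordinate basis to adapted basis vector field} we have
\[
\frac{\delta}{\delta q^{(2)l}}=\frac{\partial}{\partial q^{(2)l}}-(K_1)^m_l\frac{\partial}{\partial q^{(3)m}} .
\]
By the recursion of Proposition~\ref{prop: dual-coefficient-recursion}, $\frac{\delta}{\delta q^{(1)i}}$ and $\frac{\delta}{\delta q^{(0)i}}$ can likewise be written as the coordinate field plus correction terms along $\frac{\delta}{\delta q^{(\beta)j}}$ for larger $\beta$. For example,
\[
\frac{\delta}{\delta q^{(0)i}}=\frac{\partial}{\partial q^{(0)i}}-(K_1)^j_i\frac{\delta}{\delta q^{(1)j}}-(K_2)^j_i\frac{\delta}{\delta q^{(2)j}}-(K_3)^j_i\frac{\delta}{\delta q^{(3)j}},
\]
exactly as in the proof of Lemma~\ref{lemma: Brackets h0,1}.

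\textbf{The pairs $(h_2,h_2)$ and $(h_1,h_2)$.} Since $(K_1)^m_l$ depends only on $q^{(0)},q^{(1)}$, the bracket $\bigl[\frac{\delta}{\delta q^{(2)i}},\frac{\delta}{\delta q^{(2)l}}\bigr]$ vanishes identically. For $(h_1,h_2)$ we expand. The $\frac{\partial}{\partial q^{(3)j}}$-coefficient has two sources. The first is $\frac{\partial (K_2)^j_i}{\partial q^{(2)l}}-\frac{\partial (K_1)^j_l}{\partial q^{(1)i}}$, which by Equation~\eqref{eq: partialK} equals $\Gamma^j_{li}-\Gamma^j_{il}=0$ by torsion-freeness. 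The second consists of terms $(K_1)^m_i$ times $\overset{(3)}{K}_3$ of $\bigl[\frac{\delta}{\delta q^{(2)m}},\frac{\delta}{\delta q^{(2)l}}\bigr]$, which are already zero.

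\textbf{The pair $(h_0,h_2)$.} This is the main step. Expanding $\frac{\delta}{\delta q^{(0)i}}$ as above, the terms involving $\bigl[\frac{\delta}{\delta q^{(1)}},\frac{\delta}{\delta q^{(2)}}\bigr]$ and $\bigl[\frac{\delta}{\delta q^{(2)}},\frac{\delta}{\delta q^{(2)}}\bigr]$ contribute nothing to $\overset{(3)}{K}_3$, by the previous paragraph. The surviving coefficient should be
\[
\frac{\partial (K_3)^j_i}{\partial q^{(2)l}}-(K_1)^m_l\frac{\partial (K_3)^j_i}{\partial q^{(3)m}}-\frac{\partial (K_1)^j_l}{\partial q^{(0)i}} .
\]
We evaluate it with Equation~\eqref{eq: partialK-1} for $\alpha=3$ and Equation~\eqref{eq: partialK}, together with $(K_1)^m_l=q^{(1)a}\Gamma^m_{al}$. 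This gives
\[
q^{(1)a}\,dq^j\Bigl(\tfrac13\nabla_{\partial_l}\nabla_{\partial_a}\partial_i+\tfrac23\nabla_{\partial_a}\nabla_{\partial_l}\partial_i-\nabla_{\partial_i}\nabla_{\partial_a}\partial_l\Bigr),
\]
where the last term collects $\partial_i\Gamma^j_{al}+\Gamma^m_{al}\Gamma^j_{im}$.

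We then commute covariant derivatives, using torsion-freeness of the coordinate fields. The relation $\nabla_a\nabla_l\partial_i=\nabla_l\nabla_a\partial_i+R(\partial_a,\partial_l)\partial_i$ handles the middle term. The relation $\nabla_i\nabla_a\partial_l=\nabla_a\nabla_i\partial_l+R(\partial_i,\partial_a)\partial_l=\nabla_l\nabla_a\partial_i+R(\partial_a,\partial_l)\partial_i+R(\partial_i,\partial_a)\partial_l$ handles the last. After these substitutions all second-derivative terms cancel, leaving
\[
q^{(1)a}\Bigl(-R(\partial_i,\partial_a)\partial_l-\tfrac13R(\partial_a,\partial_l)\partial_i\Bigr).
\]
Since $q^{(1)a}=F_0^a$ gives $u^{(1)}$, this is the claimed $-R(X,u^{(1)})Y-\frac13R(u^{(1)},Y)X$ with $X=\partial_i$ and $Y=\partial_l$.

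The main obstacle is the bookkeeping in this last expansion. One must confirm that no $\frac{\partial}{\partial q^{(0)}}$-derivative of $(K_2)$ or $(K_3)$ survives in the $q^{(3)}$-component. Such terms would arise only from $\frac{\delta}{\delta q^{(2)l}}$ acting on coefficients, and $\frac{\delta}{\delta q^{(2)l}}$ has no $\partial_{q^{(0)}}$ part, so they should not appear. One must also get the sign conventions in $R^j_{abi}$ right.
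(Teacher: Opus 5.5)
Your proposal is correct and is essentially the paper's own argument. In the adapted frame, the $(h_2,h_2)$ and $(h_1,h_2)$ brackets have vanishing $\partial/\partial q^{(3)j}$-coefficient by \eqref{eq: partialK} and torsion-freeness; for $(h_0,h_2)$ the paper isolates exactly your coefficient $\frac{\partial (K_3)^j_i}{\partial q^{(2)l}}-(K_1)^m_l\frac{\partial (K_3)^j_i}{\partial q^{(3)m}}-\frac{\partial (K_1)^j_l}{\partial q^{(0)i}}$ and reduces it to $q^{(1)a}\bigl(R^j_{ail}+\frac13 R^j_{lai}\bigr)$, which agrees with your expression, and the expansion of $\delta/\delta q^{(0)i}$ you use comes directly from \eqref{eq: coordinate basis to adapted basis vector field} rather than from the dual-coefficient recursion.
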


\begin{proof}
Using Equation \eqref{eq: coordinate basis to adapted basis vector field} and Lemma \ref{lemma: Brackets vk}, it is easily verified that $[X^{\overset{(3)}{h}_2}, Y^{\overset{(3)}{h}_2}] = 0$ for all $X, Y \in \mathfrak{X}(M)$. Moreover,
        \begin{align*}
            \left[ \frac{\delta}{\delta q^{(1)i}}, \frac{\delta}{\delta q^{(2)l}}\right] &= \left[\frac{\partial}{\partial q^{(1)i}} - (K_1)^j_i \frac{\delta}{\delta q^{(2)j}} - (K_2)^j_i \frac{\partial}{\partial q^{(3)j}}, \ \frac{\delta}{\delta q^{(2)l}}\right] \\
            &= \left[\frac{\partial}{\partial q^{(1)i}}, \frac{\delta}{\delta q^{(2)l}}\right] + \frac{\delta (K_2)^j_i}{\delta q^{(2)l}} \frac{\partial}{\partial q^{(3)j}} \\
            &= \left[\frac{\partial}{\partial q^{(1)i}}, \frac{\partial}{\partial q^{(2)l}} - (K_1)^j_l \frac{\partial}{\partial q^{(3)j}}\right] + \frac{\partial (K_1)^j_i}{\partial q^{(1)l}} \frac{\partial}{\partial q^{(3)j}} \\
            &= \left(\frac{\partial (K_1)^j_i}{\partial q^{(1)l}} - \frac{\partial (K_1)^j_l}{\partial q^{(1)i}}\right) \frac{\partial}{\partial q^{(3)j}} \\
            &= 0.
        \end{align*}
Finally,
        \begin{align*}
            \left[ \frac{\delta}{\delta q^{(0)i}}, \frac{\delta}{\delta q^{(2)l}}\right] &= \left[\frac{\partial}{\partial q^{(0)i}} - (K_1)^j_i \frac{\delta}{\delta q^{(1)j}} - (K_2)^j_i \frac{\delta}{\delta q^{(2)j}} - (K_3)^j_i \frac{\partial}{\partial q^{(3)j}}, \ \frac{\delta}{\delta q^{(2)l}}\right] \\
            &= \left[\frac{\partial}{\partial q^{(0)i}}, \ \frac{\delta}{\delta q^{(2)l}} \right] + \frac{\partial (K_2)^j_i}{\partial q^{(2)l}} \frac{\delta}{\delta q^{(2)j}} + \frac{\delta (K_3)^j_i}{\delta q^{(2)l}} \frac{\partial}{\partial q^{(3)j}} \\
            &= \left[\frac{\partial}{\partial q^{(0)i}}, \ \frac{\partial}{\partial q^{(2)l}} - (K_1)^j_l \frac{\partial}{\partial q^{(3)j}}\right] + \frac{\partial (K_2)^j_i}{\partial q^{(2)l}} \frac{\delta}{\delta q^{(2)j}} + \left(\frac{\partial (K_3)^j_i}{\partial q^{(2)l}} - (K_1)^m_l \frac{\partial (K_3)^j_i}{\partial q^{(3)m}}\right) \frac{\partial}{\partial q^{(3)j}} \\
            &= \frac{\partial (K_1)^j_i}{\partial q^{(1)l}} \frac{\delta}{\delta q^{(2)j}} + \left(\frac{\partial (K_3)^j_i}{\partial q^{(2)l}} - (K_1)^m_l \frac{\partial (K_1)^j_i}{\partial q^{(1)m}} - \frac{\partial (K_1)^j_l}{\partial q^{(0)i}}\right) \frac{\partial}{\partial q^{(3)j}}.
        \end{align*}
        Hence,
        \begin{align*}
            \overset{(3)}{K}_3\circ \left[ \frac{\delta}{\delta q^{(0)i}}, \frac{\delta}{\delta q^{(2)l}}\right]&= \left(\frac{\partial (K_3)^j_i}{\partial q^{(2)l}} - (K_1)^m_l \frac{\partial (K_1)^j_i}{\partial q^{(1)m}} - \frac{\partial (K_1)^j_l}{\partial q^{(0)i}}\right) \partial_j.
        \end{align*}
By direct calculation,
    \begin{align*}
        \frac{\partial (K_3)^j_i}{\partial q^{(2)l}} &= q^{(1)a}dq^j\left(\nabla_{\partial_a} \nabla_{\partial_l} \partial_i\right)+ q^{(1)a}\frac13 R^j_{lai},
    \end{align*}
 \begin{align*}
        (K_1)^m_l \frac{\partial (K_1)^j_i}{\partial q^{(1)m}}+\frac{\partial (K_1)^j_l}{\partial q^{(0)i}}&=  dq^j(\nabla_{\partial_i} \nabla_{\partial_a} \partial_l)
    \end{align*}
         Therefore,
        \begin{align*}
            \overset{(3)}{K}_3\left( \left[ \frac{\delta}{\delta q^{(0)i}}, \frac{\delta}{\delta q^{(2)l}}\right]_u\right)
            &= \left[\frac13 q^{(1)a} R_{lai}^j + q^{(1)a} dq^j(\nabla_{\partial_a} \nabla_{\partial_l} \partial_i - \nabla_{\partial_i} \nabla_{\partial_a} \partial_l)\right]\partial_j\Big{\vert}_{p} \\
            &= q^{(1)a} \left[R_{ail}^j + \frac13 R_{lai}^j \right]\partial_j\Big{\vert}_{p},
        \end{align*}
    from which the result follows upon an application of the first Bianchi identity.
\qed \end{proof}

\begin{lemma}\label{lemma: Brackets h1 3}
   Let $X, Y \in \mathfrak{X}(M)$. Then
    \small{\begin{align*}
      \overset{(3)}{K}_3\big(  [X^{\overset{(3)}{h}_0}, Y^{\overset{(3)}{h}_1}]_u\big) &=  -\frac12\left(R(X_p, Y_p)u^{(2)} + \frac13 R(Y_p, u^{(2)})X_p +(\nabla_{u^{(1)}} R)(X_p, Y_p)u^{(1)} + \frac13 (\nabla_{u^{(1)}} R)(Y_p, u^{(1)})X_p\right),\\
     \overset{(3)}{K}_3\big( [X^{\overset{(3)}{h}_1}, Y^{\overset{(3)}{h}_1}]_u\big)  &= -\frac12 R(X_p,Y_p)u^{(1)},
    \end{align*}}
where $u^{(1)}=\overset{(3)}{F}_0(u)$, $u^{(2)}= \overset{(3)}{F}_1(u)$, $u \in T^{(3)}_pM$, $p\in M$.
\end{lemma}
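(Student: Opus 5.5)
We follow the pattern of Lemmas \ref{lemma: Brackets h1,1} and \ref{lemma: Brackets h2 3}: by Lemma \ref{lemma: Brackets fX gY}, it suffices to compute $\overset{(3)}{K}_3$ on the coordinate brackets $[\delta/\delta q^{(1)i},\delta/\delta q^{(1)l}]$ and $[\delta/\delta q^{(0)i},\delta/\delta q^{(1)l}]$. The extra terms $(Xg)Y^{\overset{(3)}{h}_\beta}$ produced by non-constant coefficients lie in $\overset{(3)}{H}_1$ and are killed by $\overset{(3)}{K}_3$. Both computations use Equation \eqref{eq: coordinate basis to adapted basis vector field} to write
\[
\frac{\delta}{\delta q^{(1)i}}=\frac{\partial}{\partial q^{(1)i}}-(K_1)^j_i\frac{\delta}{\delta q^{(2)j}}-(K_2)^j_i\frac{\partial}{\partial q^{(3)j}},
\]
together with the already established vanishing brackets: $[\delta/\delta q^{(1)},\delta/\delta q^{(2)}]=0$, $[\delta/\delta q^{(2)},\delta/\delta q^{(2)}]=0$ from Lemma \ref{lemma: Brackets h2 3}, and the $\overset{(3)}{v}_3$ brackets from Lemma \ref{lemma: Brackets vk}.

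For the $(h_1,h_1)$ case, we expand as in Lemma \ref{lemma: Brackets h1,1}. The $\delta/\delta q^{(2)}$ component is the torsion expression $\partial(K_1)^j_i/\partial q^{(1)l}-(i\leftrightarrow l)=0$. The $\partial/\partial q^{(3)}$ component collects
\[
\frac{\delta (K_2)^j_l}{\delta q^{(1)i}}-\frac{\delta (K_2)^j_i}{\delta q^{(1)l}}
\]
plus the terms where $\partial/\partial q^{(1)i}$ hits $(K_1)^m_l$ inside $\delta/\delta q^{(2)m}=\partial/\partial q^{(2)m}-(K_1)\partial/\partial q^{(3)}$; the latter cancel by symmetry of $\Gamma$. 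For the remaining terms we use $\partial (K_2)^j_i/\partial q^{(1)l}$ as computed in the proof of Lemma \ref{lemma: Brackets h0,1} (namely $q^{(1)a}dq^j(\nabla_{\partial_a}\nabla_{\partial_l}\partial_i)+\tfrac12 q^{(1)a}R^j_{lai}$) and $\partial (K_2)^j_i/\partial q^{(2)m}=\Gamma^j_{mi}$. The $\nabla\nabla$ terms then antisymmetrize to curvature, and by the first Bianchi identity the result should be $-\tfrac12 q^{(1)a}R^j_{ila}$, i.e.\ $-\tfrac12 R(X,Y)u^{(1)}$. As a consistency check, this must match the general reduction in Proposition \ref{prop: Brackets hk-1} only for $k=2$, not $k=3$, so a nonzero answer is expected here.

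For the $(h_0,h_1)$ case, a direct coordinate computation requires $\partial(K_3)/\partial q^{(1)}$, which involves second derivatives of $\Gamma$ and is messy. Instead, we mimic the Jacobi-identity method of Lemma \ref{lemma: Brackets h0h0}. Write
\[
\Big[\tfrac{\delta}{\delta q^{(0)i}},\tfrac{\delta}{\delta q^{(1)l}}\Big]
\]
with its known lower components (given by Theorem \ref{prop: liebrac2} via Lemma \ref{lemma: K mu}) plus an unknown $S^{(3)j}\,\partial/\partial q^{(3)j}$. Bracketing with $\delta/\delta q^{(2)m}$ and using Jacobi, together with Lemma \ref{lemma: Brackets h2 3} and the $(h_1,h_2)$ vanishing, determines $\partial S/\partial q^{(2)m}$. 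This yields the terms linear in $u^{(2)}$ (recall $F_1=2q^{(2)}+\dots$ by Equation \eqref{eq: coef der F}) up to a function independent of $q^{(2)}$ and $q^{(3)}$. Bracketing with $\delta/\delta q^{(1)m}$ and using the $(h_1,h_1)$ result just obtained, together with Theorem \ref{prop: liebrac2}'s $h_2$-component differentiated along $\delta/\delta q^{(0)}$ (with $\delta F_0/\delta q^{(0)}$ as in Lemma \ref{lemma: Brackets h0h0}), fixes the $q^{(1)}$-dependence. This produces the $\nabla_{u^{(1)}}R$ terms after the second Bianchi identity, and also corrects the $u^{(2)}$ terms into $F_1$ form. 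The residual function of $q^{(0)}$ alone vanishes by the analogue of Lemma \ref{lemma: fiberlinear}, since the bracket is weakly fiberwise linear. The main obstacle is this last step: correctly tracking the $q^{(1)}$-derivative of $u^{(2)}$-terms (since $F_1$ depends on $q^{(1)}$ through $K_1$) and the curvature-symmetry manipulations needed to land on the stated combination with coefficients $\tfrac13$.
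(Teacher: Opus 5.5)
Your $(h_0,h_1)$ argument has a genuine gap at the $\delta/\delta q^{(1)m}$ step. The $\delta/\delta q^{(2)m}$ step is fine and does recover the $u^{(2)}$-terms. Write $B_{il}=[\delta/\delta q^{(0)i},\delta/\delta q^{(1)l}]$ on $T^{(3)}M$, with unknown $3$-vertical coefficient $S^j_{il}$. The Jacobi identity reads
\[
\Big[\frac{\delta}{\delta q^{(1)m}},B_{il}\Big]=\Big[\frac{\delta}{\delta q^{(1)l}},B_{im}\Big]+\Big[\frac{\delta}{\delta q^{(0)i}},\Big[\frac{\delta}{\delta q^{(1)m}},\frac{\delta}{\delta q^{(1)l}}\Big]\Big].
\]
The first term on the right contributes $\delta S^j_{im}/\delta q^{(1)l}$ to the $\overset{(3)}{K}_3$-component, which is the same unknown with $l$ and $m$ exchanged. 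So this identity only determines $\delta S^j_{il}/\delta q^{(1)m}-\delta S^j_{im}/\delta q^{(1)l}$, the part of the $q^{(1)}$-derivative that is skew in $(l,m)$.

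Now replace $S^j_{il}$ by $S^j_{il}+\partial\psi^j_i/\partial q^{(1)l}$, where $\psi^j_i$ is any cubic in $q^{(1)}$, for instance $dq^j\big((\nabla_{u^{(1)}}R)(u^{(1)},\partial_i)u^{(1)}\big)$. This change leaves intact the skew equation above, your $q^{(2)}$- and $q^{(3)}$-equations, and vanishing on the zero section. So the quadratic $\nabla_{u^{(1)}}R$-terms, which are exactly the hard part, remain undetermined.

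In Lemma \ref{lemma: Brackets h0h0} the method works because the right-hand side involves only the already computed $(h_0,h_1)$ brackets, so the unknown appears on one side only. Here the remaining Jacobi identities bring in $[\delta/\delta q^{(0)m},\delta/\delta q^{(0)i}]$ on $T^{(3)}M$. The paper obtains its $3$-vertical part only afterwards, in Lemma \ref{lemma: Brackets h0h0 3}, and uses the present lemma as input there, so the argument would be circular. Separately, $B_{il}$ itself is not weakly fiberwise linear: its $\overset{(3)}{h}_1$-component is $\Gamma^j_{il}$. Your final residual step therefore needs an argument for $S^j_{il}$ specifically.

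The paper does not use the Jacobi identity for this lemma; it computes the bracket directly.
\begin{itemize}
\item It expands $[\delta/\delta q^{(0)i},\delta/\delta q^{(1)l}]$ via Equation \eqref{eq: coordinate basis to adapted basis vector field}.
\item It reduces the $\overset{(3)}{K}_3$-component to an expression in $\partial K_1/\partial q^{(0)}$, $\partial K_2/\partial q^{(0)}$, $\partial K_3/\partial q^{(1)}$ and $\partial K_3/\partial q^{(2)}$.
\item It inserts the explicit $K_1,K_2,K_3$, rewrites $q^{(2)}$ through $F_1$, and cancels the Christoffel-type terms with the first Bianchi identity.
\end{itemize}
You can keep your $\delta/\delta q^{(2)m}$ shortcut for the $u^{(2)}$-terms, but the $q^{(1)}q^{(1)}$-part must come from such a direct computation or from some genuinely new input.

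Your $(h_1,h_1)$ computation follows the paper, but the orientation is wrong. The $\partial/\partial q^{(3)j}$-coefficient of $[\delta/\delta q^{(1)i},\delta/\delta q^{(1)l}]$ is exactly $\partial(K_2)^j_i/\partial q^{(1)l}-\partial(K_2)^j_l/\partial q^{(1)i}$. The expression you wrote, carried through, gives $+\frac12R(X,Y)u^{(1)}$ rather than the stated sign. Also, the $\nabla\nabla$-terms cancel by torsion-freeness; the curvature comes from the $\frac12q^{(1)a}R^j_{lai}$ term in $\partial(K_2)^j_i/\partial q^{(1)l}$.
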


\begin{proof}
Applying the same method as in the previous lemma, we get

    \begin{align*}
        \left[ \frac{\delta}{\delta q^{(1)i}}, \frac{\delta}{\delta q^{(1)l}}\right] &= \left[\frac{\partial}{\partial q^{(1)i}} - (K_1)^j_i \frac{\delta}{\delta q^{(2)j}} - (K_2)^j_i \frac{\partial}{\partial q^{(3)j}}, \ \frac{\delta}{\delta q^{(1)l}}\right] \\
        &= \left[\frac{\partial}{\partial q^{(1)i}}, \frac{\delta}{\delta q^{(1)l}}\right] + \frac{\delta (K_1)^j_i}{\delta q^{(1)l}}\frac{\delta}{\delta q^{(2)j}} + \frac{\delta (K_2)^j_i}{\delta q^{(1)l}}\frac{\partial}{\partial q^{(3)j}} \\
        &= \left[\frac{\partial}{\partial q^{(1)i}}, \frac{\partial}{\partial q^{(1)l}} - (K_1)_l^j \frac{\delta}{\delta q^{(2)j}} - (K_2)_l^j \frac{\partial}{\partial q^{(3)j}}\right] + \frac{\delta (K_1)^j_i}{\delta q^{(1)l}}\frac{\delta}{\delta q^{(2)j}} + \frac{\delta (K_2)^j_i}{\delta q^{(1)l}}\frac{\partial}{\partial q^{(3)j}} \\
        &= -(K_1)^j_l\left[\frac{\partial}{\partial q^{(1)i}}, \frac{\delta}{\delta q^{(2)j}} \right] + \left(\frac{\partial (K_1)^j_i}{\partial q^{(1)l}} -\frac{\partial (K_1)^j_l}{\partial q^{(1)i}}\right)\frac{\delta}{\delta q^{(2)j}} + \left(\frac{\delta (K_2)^j_i}{\delta q^{(1)l}} - \frac{\partial (K_2)^j_l}{\partial q^{(1)i}}\right)\frac{\partial}{\partial q^{(3)j}} \\
        &= \left(\frac{\partial (K_1)^j_i}{\partial q^{(1)l}} -\frac{\partial (K_1)^j_l}{\partial q^{(1)i}}\right)\frac{\delta}{\delta q^{(2)j}} + \left(\frac{\delta (K_2)^j_i}{\delta q^{(1)l}} + (K_1)^m_l \frac{\partial(K_1)^j_m}{\partial q^{(1)i}} - \frac{\partial (K_2)^j_l}{\partial q^{(1)i}}\right)\frac{\partial}{\partial q^{(3)j}}.
    \end{align*}
    Hence,
    \begin{align*}
        \overset{(3)}{K}_3\circ\left[ \frac{\delta}{\delta q^{(1)i}}, \frac{\delta}{\delta q^{(1)l}}\right]  &= \left(\frac{\partial (K_2)^j_i}{\partial q^{(1)l}} -  \frac{\partial (K_2)^j_l}{\partial q^{(1)i}}\right)\partial_j. \\
    \end{align*}
 Thus, we obtain
    \begin{align*}
        \overset{(3)}{K}_3\left( \left[ \frac{\delta}{\delta q^{(1)i}}, \frac{\delta}{\delta q^{(1)l}}\right]_u \right) &= \frac12 q^{(1)a} (R_{lai}^j + R_{ail}^j)\partial_j\Big{\vert}_{p} \\
        &= \frac12 q^{(1)a} R_{lia}^j\partial_j\Big{\vert}_{p},
    \end{align*}
 after applying  the first Bianchi identity.

Next,
\begin{align*}
    \left[\frac{\delta}{\delta q^{(0)i}}, \frac{\delta}{\delta q^{(1)l}} \right] &= \left[\frac{\partial}{\partial q^{(0)i}} - (K_1)^j_i \frac{\delta}{\delta q^{(1)j}} - (K_2)^j_i \frac{\delta}{\delta q^{(2)j}} - (K_3)^j_i \frac{\partial}{\partial q^{(3)j}}, \  \frac{\delta}{\delta q^{(1)l}} \right] \\
    &= \left[\frac{\partial}{\partial q^{(0)i}}, \frac{\delta}{\delta q^{(1)l}}  \right] + \frac{\delta (K_1)^j_i}{\delta q^{(1)l}} \frac{\delta}{\delta q^{(1)j}} + \frac{\delta (K_2)^j_i}{\delta q^{(1)l}} \frac{\delta}{\delta q^{(2)j}} + \frac{\delta (K_3)^j_i}{\delta q^{(1)l}} \frac{\partial}{\partial q^{(3)j}} - (K_1)^j_i \left[\frac{\delta}{\delta q^{(1)j}}, \ \frac{\delta}{\delta q^{(1)l}} \right] \\
    &= \left[\frac{\partial}{\partial q^{(0)i}}, \frac{\delta}{\delta q^{(1)l}}  \right] + \frac{\delta (K_1)^j_i}{\delta q^{(1)l}} \frac{\delta}{\delta q^{(1)j}} + \frac{\delta (K_2)^j_i}{\delta q^{(1)l}} \frac{\delta}{\delta q^{(2)j}} + \left(\frac{\delta (K_3)^j_i}{\delta q^{(1)l}} + \frac12 (K_1)^m_i q^{(1)a} R_{mla}^j\right) \frac{\partial}{\partial q^{(3)j}}.
\end{align*}
Moreover,
\begin{align*}
    \left[\frac{\partial}{\partial q^{(0)i}},\ \frac{\delta}{\delta q^{(1)l}}  \right] &= \left[\frac{\partial}{\partial q^{(0)i}}, \frac{\partial}{\partial q^{(1)l}} - (K_1)^j_l \frac{\delta}{\delta q^{(2)j}} - (K_2)^j_l \frac{\partial}{\partial q^{(3)j}} \right] \\
    &=  -\frac{\partial(K_1)^j_l}{\partial q^{(0)i}} \frac{\delta}{\delta q^{(2)j}} -\frac{\partial(K_2)^j_l}{\partial q^{(0)i}} \frac{\partial}{\partial q^{(3)j}} - (K_1)^j_l \left[\frac{\partial}{\partial q^{(0)i}}, \frac{\delta}{\delta q^{(2)j}}\right] \\
    &= -\frac{\partial (K_1)^j_l}{\partial q^{(0)i}} \frac{\delta}{\delta q^{(2)j}} + \left((K_1)^m_l \frac{\partial (K_1)^j_m}{\partial q^{(0)i}} - \frac{\partial (K_2)^j_l}{\partial q^{(0)i}} \right)\frac{\partial}{\partial q^{(3)j}}.
\end{align*}
Hence, we obtain
\begin{align*}
    \left[\frac{\delta}{\delta q^{(0)i}}, \frac{\delta}{\delta q^{(1)l}} \right] = &\frac{\partial (K_1)^j_i}{\partial q^{(1)l}} \frac{\delta}{\delta q^{(1)j}} + \left(\frac{\delta (K_2)^j_i}{\delta q^{(1)l}} -  \frac{\partial (K_1)^j_l}{\partial q^{(0)i}}\right)\frac{\delta}{\delta q^{(2)j}} \\
    &\qquad + \left((K_1)^m_l \frac{\partial (K_1)^j_m}{\partial q^{(0)i}} - \frac{\partial (K_2)^j_l}{\partial q^{(0)i}} + \frac{\delta (K_3)^j_i}{\delta q^{(1)l}} + \frac12 (K_1)^m_i q^{(1)a} R_{mla}^j\right) \frac{\partial}{\partial q^{(3)j}},
\end{align*}
from which it follows that
\begin{align*}
    \overset{(3)}{K}_3\circ\left[\frac{\delta}{\delta q^{(0)i}}, \frac{\delta}{\delta q^{(1)l}} \right] &= \left((K_1)^m_l \frac{\partial (K_1)^j_m}{\partial q^{(0)i}} - \frac{\partial (K_2)^j_l}{\partial q^{(0)i}} + \frac{\delta (K_3)^j_i}{\delta q^{(1)l}} + \frac12 (K_1)^m_i q^{(1)a} R_{mla}^j\right) \partial_j \\
    &= \Big{(} (K_1)^m_l \frac{\partial (K_1)^j_m}{\partial q^{(0)i}} - \frac{\partial (K_2)^j_l}{\partial q^{(0)i}} + \frac12 (K_1)^m_i q^{(1)a} R_{mla}^j + \frac{\partial (K_3)^j_i}{\partial q^{(1)l}} \\
    &\qquad - (K_1)^m_l \frac{\partial (K_3)^j_i}{\partial q^{(2)m}} + \left( (K_1)_r^m (K_1)^r_l - (K_2)^m_l \right) \frac{\partial (K_1)^j_i}{\partial q^{(1)m}} \Big{)}\partial_j.
\end{align*}
By direct calculation, we get
{\small \begin{align*}
    (K_1)^m_l \frac{\partial (K_1)^j_m}{\partial q^{(0)i}} &= q^{(1)a}q^{(1)b} \left(dq^m(\nabla_{\partial_a} \partial_l) dq^j(\nabla_{\partial_i} \nabla_{\partial_b} \partial_m) - dq^m(\nabla_{\partial_a}\partial_l) dq^r(\nabla_{\p_b}\p_m) dq^j(\nabla_{\p_i} \p_r)\right), \\
    \frac{\p(K_2)^j_l}{\p q^{(0)i}} &= q^{(2)a}\left[dq^j(\nabla_{\p_i} \nabla_{\p_a} \p_l) - dq^m(\nabla_{\p_a} \p_l) dq^j(\nabla_{\p_i} \p_m) \right] \\
    & \qquad\qquad +\frac12 q^{(1)a} q^{(1)b} \left[dq^j(\nabla_{\p_i} \nabla_{\p_a} \nabla_{\p_b} \p_l) - dq^m(\nabla_{\p_a} \nabla_{\p_b} \p_l)dq^j(\nabla_{\p_i} \p_m) \right], \\
    \frac{\p (K_3)^j_i}{\p q^{(1)l}} &= q^{(2)a} \left[ dq^j(\nabla_{\p_l} \nabla_{\p_a} \p_i) + \frac13 R^j_{ali}\right] +\frac16 q^{(1)a} q^{(1)b} dq^j(\nabla_{\p_a} \nabla_{\p_l} \nabla_{\p_b} \p_i + \nabla_{\p_b}\nabla_{\p_a} \nabla_{\p_l}\p_i + \nabla_{\p_l}\nabla_{\p_a}\nabla_{\p_b}\p_i) \\
    &= q^{(2)a} \left[ dq^j(\nabla_{\p_l} \nabla_{\p_a} \p_i) + \frac13 R^j_{ali}\right] + \frac12 q^{(1)a} q^{(1)b}dq^j (\nabla_{\p_a} \nabla_{\p_l} \nabla_{\p_b} \p_i) \\
    & \qquad  + \frac16 q^{(1)a} q^{(1)b} dq^j(\nabla_{\p_a}(R(\p_b, \p_l)\p_i) + R(\p_l, \p_a)\nabla_{\p_b}\p_i), \\
    (K_1)^m_l \frac{\p (K_3)^j_i}{\p q^{(2)m}} &= q^{(1)a}q^{(1)b} \left( dq^m(\nabla_{\p_a} \p_l) dq^j(\nabla_{\p_b} \nabla_{\p_m} \p_i) + \frac13 dq^m(\nabla_{\p_a}\p_l) R^j_{mbi}\right), \\
    (K_1)^m_r (K_1)^r_l - (K_2)^m_l &= -q^{(2)a} dq^m(\nabla_{\p_a}\p_l) + q^{(1)a}q^{(1)b} \left(dq^m(\nabla_{\p_a}\p_r) dq^r(\nabla_{\p_b} \p_l) - \frac12 dq^m(\nabla_{\p_a}\nabla_{\p_b}\p_l)\right).
\end{align*}}
Collecting all terms with leading coefficients $q^{(2)a}$, we obtain
$$q^{(2)a} \left[dq^j(\nabla_{\p_l}\nabla_{\p_a}\p_i) - dq^j(\nabla_{\p_i} \nabla_{\p_a} \p_l) + \frac13 R_{ali}^j\right]\p_j = q^{(2)a} \left[R_{lia}^j + \frac13 R_{ali}^j \right]\partial_j.$$
Moreover, we note that $q^{(2)m} = \frac12\left(F_1^m - q^{(1)a} q^{(1)b} dq^m(\nabla_{\p_a}\p_b)\right)$, so that
\begin{align*}
    q^{(2)a} \left[R_{lia}^j + \frac13 R_{ali}^j \right]\partial_j = \frac12F_1^m \left[ R_{lim}^j + \frac13 R^j_{mli}\right]\partial_j - \frac12 q^{(1)a}q^{(1)b} dq^m(\nabla_{\p_a}\p_b) \left[ R_{lim}^j + \frac13 R^j_{mli}\right]\partial_j.
\end{align*}
Collecting all the terms with leading coefficients $q^{(1)a}q^{(1)b}$ now, we obtain
\begin{align*}
    q^{(1)a}q^{(1)b} &\Big{[}dq^m(\nabla_{\p_a} \p_l) dq^j(\nabla_{\p_i} \nabla_{\p_b}\p_m - \nabla_{\p_b}\nabla_{\p_m}\p_i) - \frac12 dq^j(\nabla_{\p_i} \nabla_{\p_a} \nabla_{\p_b} \p_l -  \nabla_{\p_a} \nabla_{\p_l} \nabla_{\p_b} \p_i) \\
    &\qquad + \frac12 dq^m(\nabla_{\p_b}\p_i) R^j_{mla} + \frac16 dq^m(\nabla_{\p_b}\p_i)R_{lam}^j - \frac13 dq^m(\nabla_{\p_b}\p_l)R^j_{mai} + \frac16 dq^j(\nabla_{\p_a} (R(\p_b, \p_l)\p_i)) \\
    &\quad - \frac12 dq^m(\nabla_{\p_a}\p_b) R_{lim}^j - \frac16 dq^m(\nabla_{\p_a}\p_b) R^j_{mli}\big{]}\partial_j \\
    = q^{(1)a} q^{(1)b} &\Big{[}dq^m(\nabla_{\p_b}\p_l) R^j_{iam} + \frac12 R_{a; lib}^j + \frac12 dq^m(\nabla_{\p_b}\p_l)R_{mia}^j + \frac12 dq^m(\nabla_{\p_b}\p_i)R_{lma}^j + \frac12 dq^m(\nabla_{\p_a}\p_b) R_{lim}^j \\
    &\quad - dq^m(\nabla_{\p_b}\p_l)R^j_{iam} + \frac12 dq^m(\nabla_{\p_b}\p_i) R^j_{mla} + \frac16 dq^m(\nabla_{\p_b}\p_i)R^j_{lam} - \frac13 dq^m(\nabla_{\p_b}\p_l) R^j_{mai} \\
    &\quad - \frac12 dq^m(\nabla_{\p_a}\p_b) R_{lim}^j - \frac16 dq^m(\nabla_{\p_a}\p_b)R^j_{mli} + \frac16 R_{a; bli} + \frac16 dq^m(\nabla_{\p_a}\p_b)R_{mli}^j \\
    & \quad + \frac16 dq^m(\nabla_{\p_b}\p_l)R^j_{ami} + \frac16 dq^m(\nabla_{\p_b}\p_i)R^j_{alm}\Big{]}\partial_j.
\end{align*}
Grouping terms with common leading coefficients, we see that all terms with leading coefficients $dq^m(\nabla_{\p_b}\p_l)$, $dq^m(\nabla_{\p_a}\p_b)$, and $dq^m(\nabla_{\p_b}\p_i)$ cancel due to the first Bianchi identity. Hence,
{\small
\begin{align*}
    \overset{(3)}{K}_3 \circ \left[\frac{\delta}{\delta q^{(0)i}}, \frac{\delta}{\delta q^{(1)l}} \right] &= \left(\frac12F_1^m \left[ R_{lim}^j + \frac13 R^j_{mli}\right] + \frac12 F_0^aF_0^b \Big{[}R^j_{a; lib} + \frac13 R^j_{a; bli} \Big{]} \right)\partial_j,
\end{align*}}
from which the result follows.
\qed \end{proof}

\begin{lemma}\label{lemma: Brackets h0h0 3}
Let $X, Y \in \mathfrak{X}(M)$. Then
    \small{\begin{align*}
        \overset{(3)}{K}_3([X^{\overset{(3)}{h}_0}, Y^{\overset{(3)}{h}_0}]_u) &= -\frac1{3!} R(X_p,Y_p)u^{(3)} - \frac13 (\nabla_{u^{(1)}} R)(X_p,Y_p)u^{(2)}+ \frac13 (\nabla_{u^{(2)}}R)(X_p,Y_p)u^{(1)} - \frac12 (\nabla^2_{u^{(1)}} R)(X_p,Y_p)u^{(1)},
    \end{align*}}\normalsize
    where $u^{(1)}=\overset{(3)}{F}_0(u)$, $u^{(2)}=\overset{(3)}{F}_1(u)$, $u^{(3)}=\overset{(3)}{F}_2(u)$, $u \in T^{(3)}_pM$, $p\in M$.
\end{lemma}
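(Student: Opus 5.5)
We follow the same strategy as in the proof of Lemma \ref{lemma: Brackets h0h0}: we do not differentiate the coefficients $(K_\alpha)^j_i$ directly in $q^{(0)}$. Instead we recover the unknown $3$-vertical component by means of the Jacobi identity. By Lemma \ref{lemma: K mu} and Theorem \ref{prop: liebrac2}, the components of $[\delta/\delta q^{(0)i},\delta/\delta q^{(0)j}]$ below order $3$ are already known. We therefore write
\[
\Big[\frac{\delta}{\delta q^{(0)i}},\frac{\delta}{\delta q^{(0)j}}\Big]=R^l_{ija}q^{(1)a}\frac{\delta}{\delta q^{(1)l}}+S^{(2)l}_{ij}\frac{\delta}{\delta q^{(2)l}}+S^{(3)l}_{(0)i(0)j}\frac{\partial}{\partial q^{(3)l}},
\]
where $S^{(2)l}_{ij}=-\frac12R^l_{ijm}F_1^m-dq^l((\nabla_{u^{(1)}}R)(\partial_i,\partial_j)u^{(1)})$. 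The whole task is to determine $S^{(3)}$. This is done by computing its partial derivatives in $q^{(3)}$, $q^{(2)}$ and $q^{(1)}$ in turn.

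\textbf{Step 1 (derivative in $q^{(3)}$).} Bracket both sides with $\partial/\partial q^{(3)m}$ and apply the Jacobi identity together with Lemma \ref{lemma: Brackets vk}, exactly as in the order-$2$ case. This gives $\partial S^{(3)l}/\partial q^{(3)m}=R^l_{jim}$. Using the normalization $\partial F_2/\partial q^{(3)}=3!\,\mathrm{id}$, we obtain
\[
S^{(3)l}=-\tfrac1{3!}R^l_{ijm}F_2^m+\Phi^l_{ij},
\]
with $\Phi$ independent of $q^{(3)}$.

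\textbf{Step 2 (derivative in $q^{(2)}$).} Bracket with $\delta/\delta q^{(2)m}$ and use Jacobi. On the right-hand side this requires $[\delta/\delta q^{(0)j},[\delta/\delta q^{(0)i},\delta/\delta q^{(2)m}]]$. The inner bracket is given by Theorem \ref{thm: liebrac3} (equivalently Proposition \ref{prop: Brackets h_0 hk-1} with $k=3$). We expand the result with Lemma \ref{lemma: Brackets fX gY}, the derivatives $\delta F_0^a/\delta q^{(0)j}=-dq^a(\nabla_{u^{(1)}}\partial_j)$, and Lemma \ref{lemma: Brackets vk}, and then antisymmetrize in $i,j$. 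The Christoffel terms cancel, and only $\nabla R$-terms linear in $u^{(1)}$ survive. On the left-hand side we must account for the fact that $\delta/\delta q^{(2)m}$ acting on the order-$2$ coefficients and on $F_2^m$ produces extra terms. Indeed, $\delta F_2/\delta q^{(2)}$ involves $\Gamma$ times $q^{(1)}$, and these corrections should combine with the $-\frac1{3!}RF_2$ term. Comparing the two sides gives $\partial\Phi/\partial q^{(2)}$, and hence the coefficient of $u^{(2)}$ in the answer. The ansatz for this step is $-\frac13(\nabla_{u^{(1)}}R)(\partial_i,\partial_j)u^{(2)}+\frac13(\nabla_{u^{(2)}}R)(\partial_i,\partial_j)u^{(1)}$. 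This ansatz is checked by differentiating in $q^{(2)}$ using $\partial F_1/\partial q^{(2)}=2\,\mathrm{id}$ and the second Bianchi identity.

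\textbf{Step 3 (derivative in $q^{(1)}$).} Bracket with $\delta/\delta q^{(1)m}$ and use Jacobi. The inner bracket $[\delta/\delta q^{(0)i},\delta/\delta q^{(1)m}]$ is known from Lemma \ref{lemma: Brackets h1 3} and Theorem \ref{thm: liebrac3}, including its $h_2$ and $v_3$ parts. Bracketing it with $\delta/\delta q^{(0)j}$ produces $\nabla^2R$-terms (from differentiating the $\nabla R$ coefficients in $q^{(0)}$) as well as $\nabla R$-terms. Antisymmetrizing and using the identities in Lemmas \ref{lemma: 2nd curvature symmetries} and \ref{lemma: 3rd curvature symmetries} should yield the $q^{(1)}$-derivative of the remaining part of $\Phi$. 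In particular, the nested curvature identity handles the $\nabla^2 R$ asymmetry, and the $(R(X,Y)R)$ terms should cancel against quadratic curvature terms coming from the $h_1$-components. Integrating in $q^{(1)}$ then determines $\Phi$ up to a function $h^l_{ij}$ of $q^{(0)}$ alone. The expected quadratic term is $-\frac12(\nabla^2_{u^{(1)}}R)(\partial_i,\partial_j)u^{(1)}$, so we will verify that its $q^{(1)}$-derivative matches, using $\partial F_1/\partial q^{(1)}$ from \eqref{eq: coef der F}.

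Finally, Lemma \ref{lemma: fiberlinear} gives $h^l_{ij}=0$, and Lemma \ref{lemma: Brackets fX gY} extends the result from coordinate fields to arbitrary $X,Y$. The main obstacle is Step 3. There the second-order terms in $u^{(1)}$ mix genuine $\nabla^2R$ contributions with products of curvatures and with Christoffel artifacts arising from $F_1$ containing $q^{(1)}q^{(1)}\Gamma$. The plan is to work at a point in normal coordinates ($\Gamma(p)=0$) to suppress the artifacts, and to track only the tensorial terms there.
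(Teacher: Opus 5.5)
Your strategy is the paper's own: the Jacobi identity against $\partial/\partial q^{(3)m}$, $\delta/\delta q^{(2)m}$ and $\delta/\delta q^{(1)m}$, integration in the fibre variables, Lemma~\ref{lemma: fiberlinear}, then Lemma~\ref{lemma: Brackets fX gY}. The problem is that both ansatz checks fail against the displayed formula.

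\textbf{Step 2.} There are no extra left-hand terms here. We have $\delta F_2/\delta q^{(2)}=0$ by \eqref{eq: coef delta der F vanish}, and $[\delta/\delta q^{(2)m},\delta/\delta q^{(1)l}]=0$ by Lemma~\ref{lemma: Brackets h2 3}. Differentiating the $\delta/\delta q^{(2)l}$-coefficient only feeds $\overset{(3)}{H}_2$. So the $3$-vertical part of the left side is exactly $\partial\Phi^l_{ij}/\partial q^{(2)m}$. The Jacobi side reduces, by both Bianchi identities, to
\[
-\tfrac23(\nabla_{u^{(1)}}R)(\p_i,\p_j)\p_m-\tfrac13(\nabla_{\p_m}R)(\p_i,\p_j)u^{(1)} .
\]
Your ansatz instead differentiates to $-\tfrac23(\nabla_{u^{(1)}}R)(\p_i,\p_j)\p_m+\tfrac23(\nabla_{\p_m}R)(\p_i,\p_j)u^{(1)}$. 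So the $u^{(2)}$-linear part is actually $-\tfrac13(\nabla_{u^{(1)}}R)(X,Y)u^{(2)}-\tfrac16(\nabla_{u^{(2)}}R)(X,Y)u^{(1)}$. The printed $+\tfrac13$ is consistent only if $\nabla^2_{u^{(1)}}R$ denotes the iterated derivative $(\nabla^2R)(u^{(1)},u^{(1)};\cdot)+\nabla_{u^{(2)}}R$, which is how the paper's last line rewrites it. You read it as purely cubic in $u^{(1)}$, so your split of work between Steps 2 and 3 is already wrong.

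\textbf{Step 3.} This step fails even with that reading. $S^{(3)}$ has weight $3$ under $q^{(\alpha)}\mapsto\lambda^\alpha q^{(\alpha)}$, so the remainder $\Psi$ (independent of $q^{(2)},q^{(3)}$) is homogeneous cubic in $q^{(1)}$. Hence $\Psi=\tfrac13\,q^{(1)m}\,\partial\Psi/\partial q^{(1)m}$, and correct integration of the Jacobi data gives $-\tfrac16(\nabla^2R)(u^{(1)},u^{(1)};X,Y)u^{(1)}$, not $-\tfrac12$. The paper reaches $-\tfrac12$ by writing $\Psi=F_0^mC^l_{mij}+\cdots$ without dividing by the degree. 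The same slip in Lemma~\ref{lemma: Brackets h0h0}, where the function is quadratic, turns the correct $-\tfrac12(\nabla_{u^{(1)}}R)(X,Y)u^{(1)}$ into $-(\nabla_{u^{(1)}}R)(X,Y)u^{(1)}$.

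An independent check confirms this. Through $\overset{(3)}{F}$, $X^{\overset{(3)}{h}_0}$ is the Levi--Civita horizontal lift on $(TM)^3_\oplus$ plus the vertical field
\[
\Xi_X=\big(0,\ R(X,u^{(1)})u^{(1)},\ (\nabla_{u^{(1)}}R)(X,u^{(1)})u^{(1)}+R(X,u^{(2)})u^{(1)}+2R(X,u^{(1)})u^{(2)}\big).
\]
A $\tau_3$-vertical vector with $F$-components $\xi$ satisfies $3!\,\overset{(3)}{K}_3=\xi_3-R(\xi_1,u^{(1)})u^{(1)}$; both facts follow from Lemma~\ref{lemma: commute cov deriv family}. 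Bracketing, the quadratic curvature terms cancel by the Bianchi and nested-curvature identities, and one gets
\[
\overset{(3)}{K}_3\big([X^{\overset{(3)}{h}_0},Y^{\overset{(3)}{h}_0}]\big)=-\tfrac1{3!}\Big(R(X,Y)u^{(3)}+2(\nabla_{u^{(1)}}R)(X,Y)u^{(2)}+(\nabla_{u^{(2)}}R)(X,Y)u^{(1)}+(\nabla^2R)(u^{(1)},u^{(1)};X,Y)u^{(1)}\Big).
\]
This is $-\tfrac1{3!}\nabla^2_{\dot q}\big(R(X,Y)\dot q\big)$ at $u=j^3_0q$ with $X,Y$ parallel. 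The same method reproduces the paper's $[h_0,h_1]$, $[h_1,h_1]$ and $[h_0,h_2]$ formulas.

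So the verification planned in Step 3 cannot succeed: the last coefficient of the statement must be corrected before any proof goes through. Also, normal coordinates kill $\Gamma(p)$ but not $\p\Gamma(p)$. For example, $\delta F_1^\lambda/\delta q^{(0)i}=q^{(1)a}q^{(1)b}R^\lambda_{iab}$ at $p$ is a genuine curvature term, not an artifact you can suppress.
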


\begin{proof}
First, observe that
\begin{equation}\label{eq: proofh0h0 3}
\left[\frac{\delta}{\delta q^{(0)i}},  \frac{\delta}{\delta q^{(0)j}}\right]=F_0^mR_{ijm}^l\frac{\delta}{\delta q^{(1)l}}-\left(\frac12 F_1^mR_{ijm}^l+ F_0^aF_0^bR_{a;ijb}^l\right)\frac{\delta}{\delta q^{(2)l}}+S_{(0)i(0)j}^{(3)l}\frac{\partial}{\partial q^{(3)l}},
\end{equation}
where  $S_{(0)i(0)j}^{(3)l}\in C^\infty(T^{(3)}M)$.
Then
\begin{align*}
    \left[ \frac{\partial}{\partial q^{(3)m}}, \left[\frac{\delta}{\delta q^{(0)i}},  \frac{\delta}{\delta q^{(0)j}}\right]\right] &= \frac{\p S_{(0)i(0)j}^{(3)l}}{\p q^{(3)m}} \frac{\p}{\p q^{(3)l}},
\end{align*}
On the other hand, from the Jacobi identity,

\small{
\begin{align*}
    \left[ \frac{\partial}{\partial q^{(3)m}}, \left[\frac{\delta}{\delta q^{(0)i}},  \frac{\delta}{\delta q^{(0)j}}\right]\right] &= \left[\frac{\delta}{\delta q^{(0)j}}, \left[\frac{\delta}{\delta q^{(0)i}}, \frac{\partial}{\partial q^{(3)m}}\right] \right] -\left[\frac{\delta}{\delta q^{(0)i}}, \left[\frac{\delta}{\delta q^{(0)j}}, \frac{\partial}{\partial q^{(3)m}}\right] \right] \\
    &= \left[\frac{\delta}{\delta q^{(0)j}}, \left(\nabla_{\partial_i}\partial_m \right)^{\overset{(3)}{v}_3} \right] - \left[\frac{\delta}{\delta q^{(0)i}}, \left(\nabla_{\partial_j}\partial_m \right)^{\overset{(3)}{v}_3} \right] \\
    &= \left(\nabla_{\p_j} \nabla_{\p_i} \p_m - \nabla_{\p_i} \nabla_{\p_j} \p_m \right)^{\overset{(3)}{v}_3} \\
    &= -R_{ijm}^l \frac{\partial}{\partial q^{(3)l}},
\end{align*}}
\normalsize
so that $ \displaystyle \frac{\p S_{(0)i(0)j}^{(3)l}}{\p q^{(3)m}} = -R_{ijm}^l$.
It follows that
\begin{equation}\label{eq: S0,0(3) 1}
S_{(0)i(0)j}^{(3)l}= -\frac 1 {3!}R^l_{ijm} F_2^m + \Phi^l_{ij},
\end{equation}
for some functions $\Phi^l_{ij}\in C^\infty(T^{(3)}M)$ satisfying $\frac{\p \Phi^l_{ij}}{\p q^{(3)\lambda}}=0$, for all $\lambda=1,\dots,n$.

Similarly,
\small{
\begin{align*}
    \left[ \frac{\delta}{\delta q^{(2)m}}, \left[\frac{\delta}{\delta q^{(0)i}},  \frac{\delta}{\delta q^{(0)j}}\right] \right]_u &= \left[\frac{\delta}{\delta q^{(0)j}}, \left[\frac{\delta}{\delta q^{(0)i}}, \frac{\delta}{\delta q^{(2)m}}\right] \right]_u -\left[\frac{\delta}{\delta q^{(0)i}}, \left[\frac{\delta}{\delta q^{(0)j}}, \frac{\delta}{\delta q^{(2)m}}\right] \right]_u\\
    &= \left[\frac{\delta}{\delta q^{(0)j}}, \left(\nabla_{\partial_i}\partial_m \right)^{\overset{(3)}{h}_2} -\left(R(\partial_i, u^{(1)})\partial_m + \frac13 R(u^{(1)}, \partial_m)\partial_i\right)^{\overset{(3)}{v}_3}\right]_u\\
    & \quad - \left[\frac{\delta}{\delta q^{(0)i}}, \left(\nabla_{\partial_j}\partial_m \right)^{\overset{(3)}{h}_2} - \left(R(\partial_j, u^{(1)})\partial_m + \frac13 R(u^{(1)}, \partial_m)\p_j\right)^{\overset{(3)}{v}_3} \right]_u \\
    &= \left(R(\p_j, \p_i)\p_m\right)_u^{\overset{(3)}{h}_2} - \left(R(\p_j, u^{(1)})\nabla_{\p_i} \p_m \right.\\
    & \quad \left.+ \frac13 R(u^{(1)}, \nabla_{\p_i}\p_m)\p_j -  R(\p_i, u^{(1)})\nabla_{\p_j} \p_m - \frac13 R(u^{(1)}, \nabla_{\p_j}\p_m)\p_i\right)_u^{\overset{(3)}{v}_3} \\
    &= \left[\frac{\delta}{\delta q^{(0)j}}, \left(\nabla_{\partial_i}\partial_m \right)^{\overset{(3)}{h}_2} \right]_u- \left[\frac{\delta}{\delta q^{(0)i}}, \left(\nabla_{\partial_j}\partial_m \right)^{\overset{(3)}{h}_2}\right]_u\\
    &\quad -\left[\frac{\delta}{\delta q^{(0)j}},(R(\partial_i, u^{(1)})\partial_m + \frac13 R(u^{(1)}, \partial_m)\partial_i)^{\overset{(3)}{v}_3}\right]_u +\left[\frac{\delta}{\delta q^{(0)i}}, (R(\partial_j, u^{(1)})\partial_m + \frac13 R(u^{(1)}, \partial_m)\partial_j)^{\overset{(3)}{v}_3} \right]_u
    \end{align*}}\normalsize
Then, we calculate the first two Lie brackets.
\small{
\begin{align*}
 \left[\frac{\delta}{\delta q^{(0)j}}, \left(\nabla_{\partial_i}\partial_m \right)^{\overset{(3)}{h}_2} \right]_u- \left[\frac{\delta}{\delta q^{(0)i}}, \left(\nabla_{\partial_j}\partial_m \right)^{\overset{(3)}{h}_2}\right]_u=
    &\left(R(\p_j, \p_i)\p_m\right)_u^{\overset{(3)}{h}_2} - \left(R(\p_j, u^{(1)})\nabla_{\p_i} \p_m + \frac13 R(u^{(1)}, \nabla_{\p_i}\p_m)\p_j \right. \\
    &\left. -  R(\p_i, u^{(1)})\nabla_{\p_j} \p_m - \frac13 R(u^{(1)}, \nabla_{\p_j}\p_m)\p_i\right)_u^{\overset{(3)}{v}_3}
\end{align*}}\normalsize
Note that we cannot directly use Lemma \ref{lemma: Brackets vk} to calculate the two final Lie brackets, as also happened in the proof of Lemma \ref{lemma: Brackets h0h0}. Rather, we must use the following similar identity.
\begin{equation}\label{eq: lie brac Xh0 Rv3}
    \left[X^{\overset{(3)}{h}_0}, (R(Y, u^{(1)})Z)^{\overset{(3)}{v}_3}\right]_u
    = \left((\nabla_X R)(Y,u^{(1)})Z + R(\nabla_X Y, u^{(1)})Z + R(Y, u^{(1)})\nabla_X Z \right)_u^{\overset{(3)}{v}_3}, X, Y, Z \in  \mathfrak{X}(M)
\end{equation}
By making use of the symmetries of the curvature tensor, we obtain similar results when $u^{(1)}$ is in any of the other arguments of $R$. Then, we apply these results to calculate the two final Lie brackets. Combining the four Lie brackets, we obtain
{\small
\begin{align*}
    \overset{(3)}{v}_3\left(\left[ \frac{\delta}{\delta q^{(2)m}}, \left[\frac{\delta}{\delta q^{(0)i}},  \frac{\delta}{\delta q^{(0)j}}\right]\right]_u\right) =
   \left( (\nabla_{\p_i}R)(\p_j,  u^{(1)})\p_m -(\nabla_{\p_j} R)(\p_i,  u^{(1)})\p_m +\frac13 (\nabla_{\p_i}R)( u^{(1)}, \p_m)\p_j- \frac13 (\nabla_{\p_j}R)( u^{(1)}, \p_m)\p_i\right)_u^{\overset{(3)}{v}_3}\!.
\end{align*}}\normalsize

On the other hand, from Equation \eqref{eq: proofh0h0 3}, we also have
\begin{align*}
     \overset{(3)}{K}_3\circ\left[ \frac{\delta}{\delta q^{(2)m}}, \left[\frac{\delta}{\delta q^{(0)i}},  \frac{\delta}{\delta q^{(0)j}}\right]\right] &= \frac{\delta S_{ij}^{(3)l}}{\delta q^{(2)m}} \frac{\partial}{\partial q^{(3)l}} \\
     &= \left(-\frac 1 {3!}R_{ija}^l \frac{\delta F_2^a}{\delta q^{(2)m}}+ \frac{\p \Phi^{(3)l}_{ij}}{\p q^{(2)m}} \right) \frac{\partial}{\partial q^{(3)l}} \\
     &=\frac{\p \Phi^{(3)l}_{ij}}{\p q^{(2)m}} \frac{\partial}{\partial q^{(3)l}}.
\end{align*}
\normalsize
Then, it follows that
\begin{equation}\label{eq: phi }
    \Phi^l_{ij} = \frac 1 2 F_0^{a} F_1^m\left(R_{i; jam}^l - R_{j; iam}^l + \frac13 R_{i; amj}^l - \frac13 R^l_{j; ami} \right) + \Psi^l_{ij},
\end{equation}
for some functions  $\Psi^l_{ij} \in C^\infty(T^{(3)}M)$ satisfying  $\frac{\p \Psi^l_{ij}}{\p q^{(2)\lambda}}= \frac{\p \Psi^l_{ij}}{\p q^{(3)\lambda}} = 0$, for all $\lambda=1,\dots,n$. For the sake of simplicity, in what follows, we will use the notation $Q^l_{ijam}=R_{i; jam}^l - R_{j; iam}^l + \frac13 R_{i; amj}^l - \frac13 R^l_{j; ami}$.
Hence,
\begin{equation}\label{eq: S0,0(3) 2}
S_{(0)i(0)j}^{(3)l}= -\frac 1 {3!}R^l_{ijm} F_2^m + \frac 1 2 F_0^{a} F_1^mQ^l_{ijam} + \Psi^l_{ij}.
\end{equation}

Finally,
{\small \begin{align*}
     \overset{(3)}{v}_3 \circ \left[ \frac{\delta}{\delta q^{(1)m}}, \left[\frac{\delta}{\delta q^{(0)i}},  \frac{\delta}{\delta q^{(0)j}}\right]\right] &= \frac{\delta S^{(3)l}_{ij}}{\delta q^{(1)m}} \frac{\p}{\p q^{(3)l}} -F_0^b R_{ijb}^k\left[ \frac{\delta}{\delta q^{(1)m}}, \frac{\delta}{\delta q^{(1)k}}\right] \\
    &=   \left[R_{ijk}^l \left( (K_2)^k_m - \frac 1 {3!}\frac{\p F_2^k}{\p q^{(1)m}} \right)+\frac 1 2 F_1^k Q_{ijmk}^l   + \frac{\p \Psi^l_{ij}}{\p q^{(1)m}} + \frac12 F_0^a  F_0^bR_{mk a}^l R_{ijb}^k  \right]\frac{\partial}{\partial q^{(3)l}}.
\end{align*}}
Moreover, since
\begin{align*}
    (K_2)^k_m - \frac 1 {3!}\frac{\p F_2^k}{\p q^{(1)m}} &=  \frac1{3!} F_0^aF_0^bR_{amb}^k,
\end{align*}
we conclude that
{\small \begin{align*}
     \overset{(3)}{v}_3\circ \left[ \frac{\delta}{\delta q^{(1)m}}, \left[\frac{\delta}{\delta q^{(0)i}},  \frac{\delta}{\delta q^{(0)j}}\right]\right] &=   \left(\frac 1 2 F_1^k Q_{ijmk}^l   + \frac1{6} F_0^aF_0^bR_{amb}^k+\frac12 F_0^a  F_0^bR_{mk a}^l R_{ijb}^k+ \frac{\p \Psi^l_{ij}}{\p q^{(1)m}}  \right)\frac{\partial}{\partial q^{(3)l}}.
\end{align*}}
Hence, using the notation
$$C = \overset{(3)}{K}_3 \circ \left[ \frac{\delta}{\delta q^{(1)m}}, \left[\frac{\delta}{\delta q^{(0)i}},  \frac{\delta}{\delta q^{(0)j}}\right]\right]$$
with  $C=C_{mij}^l\p_l$, we have
$$\frac{\p \Psi^l_{ij}}{\p q^{(1)m}} = -\frac 1 2 F_1^k Q_{ijmk}^l   - \frac1{6} F_0^aF_0^bR_{amb}^k-\frac12 F_0^a  F_0^bR_{mk a}^l R_{ijb}^k+ C_{mij}^l,$$
from which it follows that
\begin{equation}\label{eq: S0,0(3) 3}
\Psi^l_{ij}= -\frac 1 2 F_0^mF_1^k Q_{ijmk}^l   - \frac1{6} F_0^MF_0^aF_0^bR_{amb}^k-\frac12 F_0^MF_0^a  F_0^bR_{mk a}^l R_{ijb}^k+ F_0^mC_{mij}^l+h_{ij}^l,
\end{equation}
for some functions  $h_{ij}^l \in C^\infty(T^{(3)}M)$ satisfying  $\frac{\p h_{ij}^l}{\p q^{(1)m}}=\frac{\p h_{ij}^l}{\p q^{(2)m}}= \frac{\p h_{ij}^l}{\p q^{(3)m} }= 0$, for all $m=1,\dots,n$.

 In summary, from Equations \eqref{eq: S0,0(3) 2}-\eqref{eq: S0,0(3) 3},
 we have
$$
S_{(0)i(0)j}^{(3)l}= -\frac 1 {3!}R^l_{ijm} F_2^m + \frac 1 2 F_0^{a} F_1^mQ^l_{ijam}  -\frac 1 2 F_0^mF_1^k Q_{ijmk}^l   - \frac1{6} F_0^MF_0^aF_0^bR_{amb}^k-\frac12 F_0^MF_0^a  F_0^bR_{mk a}^l R_{ijb}^k+ F_0^mC_{mij}^l+h_{ij}^l,
$$
where $C_{mij}^l$  are the coefficient functions of $C$.

Finally, through Lemma \ref{lemma: fiberlinear}, we know that $S_{(0)i(0)j}^{(3)l}$ is weakly
fiberwise linear, which implies that
\begin{equation}\label{eq: S 3 final}
S_{(0)i(0)j}^{(3)l}= -\frac 1 {3!}R^l_{ijm} F_2^m + \frac 1 2 F_0^{a} F_1^mQ^l_{ijam}  -\frac 1 2 F_0^mF_1^k Q_{ijmk}^l   - \frac1{6} F_0^MF_0^aF_0^bR_{amb}^k-\frac12 F_0^MF_0^a  F_0^bR_{mk a}^l R_{ijb}^k+ F_0^mC_{mij}^l.
\end{equation}
On the other hand, to obtain the functions $C_{mij}^l$, we apply once again the Jacobi identity. Thus, we get
\begin{equation}\label{eq: Jacobi 3}
\left[ \frac{\delta}{\delta q^{(1)m}}, \left[\frac{\delta}{\delta q^{(0)i}},  \frac{\delta}{\delta q^{(0)j}}\right]\right] = \left[\frac{\delta}{\delta q^{(0)j}}, \left[\frac{\delta}{\delta q^{(0)i}}, \frac{\delta}{\delta q^{(1)m}} \right] \right] - \left[\frac{\delta}{\delta q^{(0)i}}, \left[\frac{\delta}{\delta q^{(0)j}}, \frac{\delta}{\delta q^{(1)m}} \right] \right].
\end{equation}
Now,  we calculate
{\small
\begin{align*}
    \left[\frac{\delta}{\delta q^{(0)j}}, \left[\frac{\delta}{\delta q^{(0)i}}, \frac{\delta}{\delta q^{(1)m}} \right] \right] &= \left[\frac{\delta}{\delta q^{(0)j}}, \left(\nabla_{\p_i} \p_m \right)^{\overset{(3)}{h}_1} \right] + \frac12 \left[\frac{\delta}{\delta q^{(0)j}}, \left(R(u^{(1)}, \p_i)\p_m - R(\p_i, \p_m)u^{(1)} \right)^{\overset{(3)}{h}_2} \right] \\
    &\quad - \frac12 \left[ \frac{\delta}{\delta q^{(0)j}}, \left(R(\p_i, \p_m)^{(2)} + \frac13 R(\p_m, u^{(2)})\p_i + (\nabla_{u^{(1)}} R)(\p_i, \p_m)u^{(1)} + \frac13 (\nabla_{u^{(1)}} R)(\p_m, u^{(1)})\p_i \right)^{\overset{(3)}{v}_3}\right].
    \end{align*}}
       The first Lie bracket simplifies as follows.
{\small\begin{align*}
\left[\frac{\delta}{\delta q^{(0)j}}, \left(\nabla_{\p_i} \p_m \right)^{\overset{(3)}{h}_1} \right] &= (\nabla_{\p_j} \nabla_{\p_i} \p_m)^{\overset{(3)}{h}_1} + \frac12\left(R(u^{(1)}, \p_j)\nabla_{\p_i}\p_m - R(\p_j, \nabla_{\p_i}\p_m)u^{(1)} \right)^{\overset{(3)}{h}_2} \\
- \frac12 \Big{(}&R(\p_j, \nabla_{\p_i}\p_m)u^{(2)} + \frac13 R(\nabla_{\p_i}\p_m, u^{(2)})\p_j + (\nabla_{u^{(1)}} R)(\p_j, \nabla_{\p_i}\p_m)u^{(1)} + \frac13 (\nabla_{u^{(1)}} R)(\nabla_{\p_i}\p_m, u^{(1)})\p_j \Big{)}^{\overset{(3)}{v}_3}
 \end{align*}}
  With regard to the second Lie bracket, we have
{\small\begin{align*}
&\Big{[}\frac{\delta}{\delta q^{(0)j}}, \Big{(}R(u^{(1)}, \p_i)\p_m - R(\p_i, \p_m)u^{(1)} \Big{)}^{\overset{(3)}{h}_2} \Big{]} =\\
&\qquad \Big{[} (\nabla_{\p_j} R)(u^{(1)}, \p_i)\p_m + R(u^{(1)}, \nabla_{\p_j}\p_i)\p_m + R(u^{(1)}, \p_i)\nabla_{\p_j}\p_m - (\nabla_{\p_j}R)(\p_i, \p_m)u^{(1)} - R(\nabla_{\p_j} \p_i, \p_m)u^{(1)}- R(\p_i, \nabla_{\p_j}\p_m)u^{(1)}\Big{]}^{\overset{(3)}{h}_2} \\
&\qquad -\Big{[} R(\p_j, u^{(1)})R(u^{(1)}, \p_i)\p_m - R(\p_j, u^{(1)})R(\p_i, \p_m)u^{(1)} + \frac13 R(u^{(1)}, R(u^{(1)}, \p_i)\p_m)\p_j - \frac13 R(u^{(1)}, R(\p_i, \p_m)u^{(1)})\p_j \Big{]}^{\overset{(3)}{v}_3}.
\end{align*}}

 Regarding the third Lie bracket, we first need to see how terms involving $u^{(3)}$ transform within the Lie bracket. For all $X, Y,Z \in \mathfrak{X}(M)$, we have
\begin{align*}
    \left[X^{\overset{(3)}{h}_0}, (R(Y, Z)u^{(2)})^{\overset{(3)}{v}_3} \right] &= \left[X^{\overset{(3)}{h}_0}, F_1^\lambda (R(Y,Z)\p_\lambda)^{\overset{(3)}{v}_3} \right] \\
    &= X^{\overset{(3)}{h}_0}(F_1^\lambda)  (R(Y,Z)\p_\lambda)^{\overset{(3)}{v}_3}  + F_1^\lambda \left[X^{\overset{(3)}{h}_0}, (R(Y,Z)\p_\lambda)^{\overset{(3)}{v}_3} \right]\\
    &= X^i \frac{\delta F_1^\lambda}{\delta q^{(0)i}}  (R(Y,Z)\p_\lambda)^{\overset{(3)}{v}_3}  + F_1^\lambda \left[X^{\overset{(3)}{h}_0}, (R(Y,Z)\p_\lambda)^{\overset{(3)}{v}_3} \right].
\end{align*}
Then, we use Equations \eqref{eq: coef der F}-\eqref{eq: coef delta der F vanish} to obtain
$$\frac{\delta F_1^\lambda}{\delta q^{(0)i}}=\frac{\partial F_1^\lambda}{\partial q^{(0)i}} - 2(K_2)^\lambda_i,$$
which we simplify due to
$$\frac{\p F_1^\lambda}{\p q^{(0)i}} =2(K_2)^\lambda_i -F_1^\lambda\Gamma_{ai}^\lambda+ q^{(1)a}q^{(1)b}R^\lambda_{iab}.$$
Thus, we apply the result to the first term and
use  Lemma~\ref{lemma: Brackets vk} to calculate the second term in order to have
\begin{align*}
    \left[X^{\overset{(3)}{h}_0}, (R(Y, Z)u^{(2)})^{\overset{(3)}{v}_3} \right] &= (-F_1^\lambda\Gamma_{ai}^\lambda+ q^{(1)a}q^{(1)b}R^\lambda_{iab})X^i   (R(Y,Z)\p_\lambda)^{\overset{(3)}{v}_3}  + F_1^\lambda \left(\nabla_X(R(Y,Z)\p_\lambda)\right)^{\overset{(3)}{v}_3} .
\end{align*}
Hence,
{\small \begin{equation}\label{eq: formula 1}
    \left[X^{\overset{(3)}{h}_0}, (R(Y, Z)u^{(2)})^{\overset{(3)}{v}_3} \right] = \left(R(Y,Z)R(X,u^{(1)})u^{(1)} + (\nabla_X R)(Y,Z)u^{(2)} + R(\nabla_X Y, Z)u^{(2)} + R(Y, \nabla_X Z)u^{(2)} \right)^{\overset{(3)}{v}_3}
\end{equation}}
Similarly,
\begin{equation}\label{eq: formula 2}
\left[X^{\overset{(3)}{h}_0}, (R(Y,u^{(2)})Z)^{\overset{(3)}{v}_3} \right] =\left(R(Y, R(X,u^{(1)})u^{(1)})Z + (\nabla_X R)(Y, u^{(2)})Z + R(\nabla_X Y, u^{(2)})Z + R(Y, u^{(2)})\nabla_X Z\right)^{\overset{(3)}{v}_3}
\end{equation}
Next,
{\small \begin{align*}
    [X^{\overset{(3)}{h}_0}, (\nabla_{u^{(1)}} R)(Y, Z)u^{(1)})^{\overset{(3)}{v}_3}] &= [X^{\overset{(3)}{h}_0}, F_0^aF_0^b ((\nabla_{\p_a} R)(Y, Z)\p_b)^{\overset{(3)}{v}_3}] \\
    &= X^{\overset{(3)}{h}_0}(F_0^aF_0^b) ((\nabla_{\p_a} R)(Y, Z)\p_b)^{\overset{(3)}{v}_3} + q^{(1)a}q^{(1)b} [X^{\overset{(3)}{h}_0}, ((\nabla_{\p_a} R)(Y, Z)\p_b)^{\overset{(3)}{v}_3}] \\
    &= [-q^{(1)a} (\nabla_{\nabla_X \p_a} R)(Y, Z)V - q^{(1)b}(\nabla_{V} R)(Y, Z)\nabla_X \p_b + q^{(1)a} q^{(1)b} \nabla_X ((\nabla_{\p_a} R)(Y,Z)\p_b)]^{\overset{(3)}{v}_3}
\end{align*}}
and we obtain
\begin{equation}\label{eq: formula 3}
    [X^{\overset{(3)}{h}_0}, (\nabla_{u^{(1)}} R)(Y, Z)u^{(1)})^{\overset{(3)}{v}_3}]  = [(\nabla^2 R)(X,u^{(1)}; Y,Z)u^{(1)} + (\nabla_{u^{(1)}} R)(\nabla_X Y,Z)u^{(1)} + (\nabla_{u^{(1)}} R)(Y, \nabla_X Z)u^{(1)}]^{\overset{(3)}{v}_3}.
\end{equation}
Now, we apply the identities \eqref{eq: formula 1}-\eqref{eq: formula 2}-\eqref{eq: formula 3} to simplify the third Lie bracket and use the identities given in Lemma \ref{lemma: 2nd curvature symmetries}.
\begin{align*}
    \left[ \frac{\delta}{\delta q^{(0)j}}, \left(R(\p_i, \p_m)u^{(2)} + \frac13 R(\p_m, u^{(2)})\p_i + (\nabla_{u^{(1)}} R)(\p_i, \p_m)u^{(1)} + \frac13 (\nabla_{u^{(1)}} R)(\p_m, u^{(1)})\p_i \right)^{\overset{(3)}{v}_3}\right] = \\
 \Big{[}R(\p_i, \p_m)R(\p_j, u^{(1)})u^{(1)} + (\nabla_{\p_j}R)(\p_i, \p_m)u^{(2)} + R(\nabla_{\p_j}\p_i, \p_m)u^{(2)} + R(\p_i, \nabla_{\p_j}\p_m)u^{(2)} \qquad \quad\\
     + \frac13 R(\p_m, R(\p_j, u^{(1)})u^{(1)})\p_i + \frac13 (\nabla_{\p_j}R)(\p_m, u^{(2)})\p_i + \frac13 R(\nabla_{\p_j}\p_m, u^{(2)})\p_i + \frac13 R(\p_i, u^{(2)})\nabla_{\p_j}\p_i \\
     + (\nabla_{u^{(1)}} R)(\p_i, \nabla_{\p_j} \p_m)u^{(1)} + (\nabla_{u^{(1)}} R)(\nabla_{\p_j} \p_i, \p_m)u^{(1)} + \frac13 (\nabla_{u^{(1)}} R)(\nabla_{\p_j} \p_m, u^{(1)})\p_i \qquad \qquad \quad \\+ \frac13 (\nabla_{u^{(1)}} R)(\p_m, u^{(1)})\nabla_{\p_j} \p_i
     + (\nabla^2 R)(\p_j, u^{(1)}; \p_i, \p_m)u^{(1)} + \frac13 (\nabla^2 R)(\p_j, u^{(1)}; \p_m, u^{(1)})\p_i \Big{]}^{\overset{(3)}{v}_3}  \quad
\end{align*}
Combining the three Lie brackets to simplify the two Lie brackets in Equation \eqref{eq: Jacobi 3}, we obtain
{\small \begin{align*}
    C(u) &= \frac12 \Big{[}  -R(\p_j, u^{(1)})R(u^{(1)},\p_i)\p_m +  R(\p_j, V)R(\p_i, \p_m)u^{(1)}- \frac13 R(u^{(1)}, R(u^{(1)},\p_i)\p_m)\p_j+ \frac13 R(u^{(1)}, R(\p_i, \p_m)u^{(1)})\p_j \\
    &\quad - R(\p_i, \p_m)R(\p_j, u^{(1)})u^{(1)} - (\nabla_{\p_j} R)(\p_i, \p_m)u^{(2)} - \frac13 R(\p_m, R(\p_j, u^{(1)})u^{(1)})\p_i - \frac13 (\nabla_{\p_j} R)(\p_m, u^{(2)})\p_i \\
    & \quad + R(\p_i, u^{(1)})R(u^{(1)},\p_j)\p_m - R(\p_i, u^{(1)})R(\p_j, \p_m)u^{(1)} + \frac{1}{3} R(u^{(1)}, R(u^{(1)},\p_j)\p_m)\p_i- \frac{1}{3} R(u^{(1)}, R(\p_j, \p_m)u^{(1)})\p_i \\
&\quad + R(\p_j, \p_m)R(\p_i, u^{(1)})u^{(1)} + (\nabla_{\p_i} R)(\p_j, \p_m)u^{(2)} + \frac{1}{3} R(\p_m, R(\p_i, u^{(1)})u^{(1)})\p_j + \frac{1}{3} (\nabla_{\p_i} R)(\p_m, u^{(2)})\p_j \\
    &\quad - (\nabla^2 R)(\p_j; u^{(1)}, \p_i, \p_m)u^{(1)} - \frac13 (\nabla^2 R)(\p_j, u^{(1)}; \p_m, u^{(1)})\p_i + (\nabla^2 R)(\p_i, u^{(1)}; \p_j, \p_m)u^{(1)} + \frac{1}{3} (\nabla^2 R)(\p_i, u^{(1)}; \p_m, u^{(1)})\p_j \Big{]}_p
\end{align*}
}
Now, if we conjugate  Equation \eqref{eq: S 3 final} with the previous equation and  take into account Lemma \ref{lemma: fX}, we conclude that
{\small
\begin{align*}
    \overset{(3)}{K}_3\circ [X^{\overset{(3)}{h}_0}, Y^{\overset{(3)}{h}_0}] &= \frac12\Big{[} -\frac1{3} R(X, Y)u^{(3)} + 3R(Y, u^{(1)})R(X,u^{(1)})u^{(1)} - 3R(X,u^{(1)})R(Y,u^{(1)})u^{(1)} + R(u^{(1)}, R(X, u^{(1)})u^{(1)})Y \\
    &\quad - R(u^{(1)}, R(Y,u^{(1)})u^{(1)})X - R(u^{(1)},R(X,Y)u^{(1)})u^{(1)} + (\nabla_{X} R)(Y, u^{(1)})u^{(2)}  - (\nabla_{Y} R)(X, u^{(1)})u^{(2)} \\
    &\quad + \frac{1}{3} (\nabla_{X} R)(u^{(1)}, u^{(2)})Y - \frac13(\nabla_{Y} R)(u^{(1)}, u^{(2)})X + (\nabla^2 R)(X, u^{(1)}; Y, u^{(1)})u^{(1)} - (\nabla^2 R)(Y, u^{(1)}; X, u^{(1)})u^{(1)}\Big{]}.
\end{align*}
}
 We will now make use of the identities given in Lemma \ref{lemma: curvature symmetries} to group as many terms together as possible.
Hence,
\begin{align*}
   & 3R(Y, u^{(1)})R(X,u^{(1)})u^{(1)} - 3R(X,u^{(1)})R(Y,u^{(1)})u^{(1)}
    + R(u^{(1)}, R(X, u^{(1)})u^{(1)})Y- R(u^{(1)}, R(Y,u^{(1)})u^{(1)})X \\
    &- R(u^{(1)},R(X,Y)u^{(1)})u^{(1)} \\
    &= (R(Y,u^{(1)})R)(X,u^{(1)})u^{(1)} - (R(X,u^{(1)})R)(Y,u^{(1)})u^{(1)}.
\end{align*}
Then, using the identities given in Lemma \ref{lemma: 3rd curvature symmetries}, we obtain
\begin{align*}
&  (R(Y,u^{(1)})R)(X,u^{(1)})u^{(1)} - (R(X,u^{(1)})R)(Y,u^{(1)})u^{(1)} \\
    &= (\nabla^2 R)(Y,u^{(1)}; X,u^{(1)})u^{(1)} - (\nabla^2 R)(X,u^{(1)}; Y,u^{(1)})u^{(1)} - (\nabla^2 R)(u^{(1)},u^{(1)}; X,Y)u^{(1)}.
\end{align*}

Therefore,
{\small
\begin{align*}
   \overset{(3)}{K}_3\circ [X^{\overset{(3)}{h}_0}, Y^{\overset{(3)}{h}_0}] &= -\frac1{3!} R(X,Y)u^{(3)} - \frac12 (\nabla^2 R)(u^{(1)},u^{(1)}; X,Y)u^{(1)} \\
    &\quad + \frac12 \left[(\nabla_{X} R)(Y, u^{(1)})u^{(2)} -  (\nabla_{Y} R)(X, u^{(1)})u^{(2)} + \frac{1}{3} (\nabla_{X} R)(u^{(1)}, u^{(2)})Y - \frac13(\nabla_{Y} R)(u^{(1)}, u^{(2)})X \right]
\end{align*}
}
Next, from the first and second Bianchi identities given in Lemma \ref{lemma: 2nd curvature symmetries}, we get
\begin{align*}
    (\nabla_X R)(Y,u^{(1)})&u^{(2)} - (\nabla_Y R)(X,u^{(1)})u^{(2)} + \frac13 (\nabla_X R)(u^{(1)}, u^{(2)})Y - \frac1{3} (\nabla_Y R)(u^{(1)}, u^{(2)})X \\
    &= -\frac23 (\nabla_{u^{(1)}} R)(X,Y)u^{(2)}-\frac1{3} (\nabla_{u^{(2)}}R)(X,Y)u^{(1)}
\end{align*}
Finally, using the definition of the second covariant derivative of $R$, we conclude that
\begin{align*}
    \overset{(3)}{K}_3\circ [X^{\overset{(3)}{h}_0}, Y^{\overset{(3)}{h}_0}] &= -\frac1{3!} R(X,Y)u^{(3)} - \frac13 (\nabla_{u^{(1)}} R)(X,Y)u^{(2)} + \frac13 (\nabla_{u^{(2)}}R)(X,Y)u^{(1)} - \frac12 (\nabla^2_{u^{(1)}} R)(X,Y)u^{(1)}
\end{align*}
\qed \end{proof}

\begin{proof}[Proof of Theorem~\ref{thm: liebrac3}]
The identities involving $\overset{(3)}{v}_3$ follow from Lemma~\ref{lemma: Brackets vk}.  The brackets involving $\overset{(3)}{h}_2$ are given by Lemma~\ref{lemma: Brackets h2 3}; the brackets involving $\overset{(3)}{h}_1$ are given by Lemma~\ref{lemma: Brackets h1 3}; and the $\overset{(3)}{h}_0$-$\overset{(3)}{h}_0$ bracket is given by Lemma~\ref{lemma: Brackets h0h0 3}.  The lower components are inherited from the second-order bracket formulas through Lemma~\ref{lemma: K mu}.
\qed \end{proof}
\subsection{General reduction of $k$-vertical bracket components}
\label{app:general-bracket-reduction}

This appendix gives the coordinate reduction used to obtain the general partial bracket formulas in Propositions~\ref{prop: Brackets hk-1} and~\ref{prop: Brackets h_0 hk-1}.

\begin{lemma}\label{lem: Liebrac}
Consider the vector fields defined by
$$\frac{\overset{(k-1)\qquad}{\delta_{k-1}}}{\delta q^{(\alpha)i}}=\frac{\overset{(k)}{\delta}}{\delta q^{(\alpha)i}}+(C_{k-\alpha})_{i}^j\frac{\overset{(k)}{\delta}}{\delta q^{(k)j}}, \;  \alpha=0,1, \ldots, k-1,$$
where $(C_{\alpha})_{i}^l$ are the dual coefficients given by (\ref{eq: dual coefficients recursion}). Then
\begin{align*}
\overset{(k)}{K}_k\circ \bigg[\frac{\overset{(k)}{\delta}}{\delta q^{(\alpha)i}},\frac{\overset{(k)}{\delta}}{\delta q^{(\beta)l}}\bigg]
=&\Bigg(\frac{\overset{(k-1)}{\delta} (C_{k-\alpha})_i^j}{\delta q^{(\beta)l}}-\frac{\overset{(k-1)}{\delta} (C_{k-\beta})_l^j}{\delta q^{(\alpha)i}}+(C_{k-\alpha})_i^a\frac{\overset{(k)}{\delta} (C_{k-\beta})_l^j}{\delta q^{(k)a}}-(C_{k-\beta})_l^a\frac{\overset{(k)}{\delta} (C_{k-\alpha})_i^j}{\delta q^{(k)a}}\Bigg) \partial_j\\&
+ \overset{(k)}{K}_k\circ \bigg[\frac{\overset{(k-1)\qquad}{\delta_{k-1}}}{\delta q^{(\alpha)i}},\frac{\overset{(k-1) \qquad}{\delta_{k-1}}}{\delta q^{(\beta)l}}\bigg],
\end{align*}
for all $ \alpha,\beta=0, 1, \ldots, k-1$.
\end{lemma}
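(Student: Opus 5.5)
The plan is to expand the bracket bilinearly after rewriting each $k$-th order adapted vector field in terms of the auxiliary field and a $k$-vertical correction. Since $\overset{(k)}{\delta}/\delta q^{(k)j}=\partial/\partial q^{(k)j}$, the definition of the auxiliary fields gives
\[
\frac{\overset{(k)}{\delta}}{\delta q^{(\alpha)i}}=\frac{\overset{(k-1)\qquad}{\delta_{k-1}}}{\delta q^{(\alpha)i}}-(C_{k-\alpha})_i^j\frac{\partial}{\partial q^{(k)j}} .
\]
Abbreviate the auxiliary fields by $A_{(\alpha)i}$ and write $\partial_{(k)j}=\partial/\partial q^{(k)j}$. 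For $\alpha,\beta\le k-1$, expanding the bracket by bilinearity and the Leibniz rule yields
\[
[A_{(\alpha)i},A_{(\beta)l}]
-A_{(\alpha)i}\big((C_{k-\beta})_l^j\big)\partial_{(k)j}
+A_{(\beta)l}\big((C_{k-\alpha})_i^j\big)\partial_{(k)j}
-(C_{k-\beta})_l^b[A_{(\alpha)i},\partial_{(k)b}]
+(C_{k-\alpha})_i^a[A_{(\beta)l},\partial_{(k)a}]
+(C_{k-\alpha})_i^a\,\partial_{(k)a}(C_{k-\beta})_l^j\,\partial_{(k)j}
-(C_{k-\beta})_l^a\,\partial_{(k)a}(C_{k-\alpha})_i^j\,\partial_{(k)j},
\]
where the term with $[\partial_{(k)a},\partial_{(k)b}]$ has already been dropped because it vanishes.

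The key structural step is to describe $A_{(\alpha)i}$ in coordinates. By the defining formula for $\delta/\delta q^{(\alpha)i}$ in Appendix~\ref{app:adapted-basis-details},
\[
A_{(\alpha)i}=\partial_{(\alpha)i}-\sum_{\beta=1}^{k-1-\alpha}(C_\beta)^j_i\,\partial_{(\alpha+\beta)j}.
\]
This is the coordinate expression of the $(k-1)$-st order adapted field, pulled back to $T^{(k)}M$, and it has no $\partial_{(k)}$-component. By the recursion \eqref{eq: dual coefficients recursion}, $C_\beta$ is built from $K_1,\ldots,K_\beta$. By the connection tower property, these depend only on $q^{(\mu)}$ with $\mu\le\beta\le k-1$. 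Hence every coefficient of $A_{(\alpha)i}$ is independent of $q^{(k)}$, so $[A_{(\alpha)i},\partial_{(k)b}]=0$ and the two cross terms vanish.

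For the same reason, applying $A_{(\alpha)i}$ to a function $f$ on $T^{(k)}M$ is precisely the derivation $\overset{(k-1)}{\delta}f/\delta q^{(\alpha)i}$, with $q^{(k)}$ treated as a parameter; this is the meaning of the notation in the statement, and it matters because $C_k$ may depend on $q^{(k)}$. Likewise $\partial_{(k)a}=\overset{(k)}{\delta}/\delta q^{(k)a}$. Finally, apply $\overset{(k)}{K}_k$, which sends $\partial_{(k)j}$ to $\partial_j$ (see \eqref{eq: connection map coefficients}). The remaining scalar terms then reproduce exactly the parenthesized expression in the statement, with the signs as displayed, and $\overset{(k)}{K}_k\circ[A_{(\alpha)i},A_{(\beta)l}]$ survives as the final term.

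The main obstacle I anticipate is the independence claim in the second paragraph. It requires checking, via the triangular recursion \eqref{eq: dual coefficients recursion} together with the tower condition $\partial(K_\alpha)^i_j/\partial q^{(\mu)l}=0$ for $\mu>\alpha$, that the $C_\beta$ appearing in $A_{(\alpha)i}$ all have index $\beta\le k-1$. The bookkeeping of which $C$'s may depend on $q^{(k)}$ (only $C_k$, which occurs for $\alpha=0$) must be handled carefully. Everything else is routine Leibniz-rule expansion.
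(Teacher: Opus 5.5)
Your proposal is correct and follows essentially the same route as the paper. Both arguments write each $\overset{(k)}{\delta}/\delta q^{(\alpha)i}$ as the auxiliary field minus $(C_{k-\alpha})_i^j\,\partial/\partial q^{(k)j}$, expand the bracket, discard the cross brackets between the auxiliary fields and $\partial/\partial q^{(k)a}$, and then apply $\overset{(k)}{K}_k$. The differences are minor: you expand both slots at once rather than one at a time, and you explicitly justify the vanishing of those cross brackets from the tower property, whereas the paper simply asserts it.
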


\begin{proof}
First of all, we observe that each vector field $\displaystyle  \frac{\overset{(k)}{\delta}}{\delta q^{(\alpha)i}}$ decomposes into the auxiliary  vector field $\displaystyle \frac{\overset{(k-1)\qquad}{\delta_{k-1}}}{\delta q^{(\alpha)i}}$ and the $k$-vertical vector fields $\displaystyle \frac{\overset{(k)}{\delta}}{\delta q^{(k)j}}$, $j=1, \ldots, n$, as follows.
$$\frac{\overset{(k)}{\delta}}{\delta q^{(\alpha)i}}=\frac{\overset{(k-1)\qquad}{\delta_{k-1}}}{\delta q^{(\alpha)i}}-(C_{k-\alpha})_{i}^j\frac{\overset{(k)}{\delta}}{\delta q^{(k)j}}$$
Hence, we have
\begin{align*}
\bigg[\frac{\overset{(k)}{\delta}}{\delta q^{(\alpha)i}},\frac{\overset{(k)}{\delta}}{\delta q^{(\beta)l}}\bigg]=&
\bigg[\frac{\overset{(k)}{\delta}}{\delta q^{(\alpha)i}},\frac{\overset{(k-1)\qquad}{\delta_{k-1}}}{\delta q^{(\beta)i}}-(C_{k-\beta})_{i}^j\frac{\overset{(k)}{\delta}}{\delta q^{(k)j}}\bigg]\\
=&\left[\frac{\overset{(k)}{\delta}}{\delta q^{(\alpha)i}},\frac{\overset{(k-1)\qquad}{\delta_{k-1}}}{\delta q^{(\beta)i}}\right]-
\frac{\overset{(k)}{\delta} (C_{k-\beta})_l^j}{\delta q^{(\alpha)i}}\frac{\delta}{\delta q^{(k)j}}- (C_{k-\beta})_{i}^j\left[\frac{\overset{(k)}{\delta}}{\delta q^{(\alpha)i}},\frac{\overset{(k)}{\delta}}{\delta q^{(k)j}}\right].
\end{align*}
Moreover,
\begin{align*}
\left[\frac{\overset{(k)}{\delta}}{\delta q^{(\alpha)i}},\frac{\overset{(k-1)\qquad}{\delta_{k-1}}}{\delta q^{(\beta)i}}\right]=&
\left[\frac{\overset{(k-1)\qquad}{\delta_{k-1}}}{\delta q^{(\alpha)i}}-(C_{k-\alpha})_{i}^j\frac{\overset{(k)}{\delta}}{\delta q^{(k)j}},\frac{\overset{(k-1)\qquad}{\delta_{k-1}}}{\delta q^{(\beta)i}}\right]\\
=&\left[\frac{\overset{(k-1)\qquad}{\delta_{k-1}}}{\delta q^{(\alpha)i}},\frac{\overset{(k-1)\qquad}{\delta_{k-1}}}{\delta q^{(\beta)i}}\right]+
\frac{\overset{(k-1)\qquad}{\delta_{k-1}}\!\!(C_{k-\alpha})_i^j}{\delta q^{(\beta)l}} \frac{\delta}{\delta q^{(k)j}}- (C_{k-\alpha})_i^j\left[\frac{\overset{(k)}{\delta}}{\delta q^{(k)i}},\frac{\overset{(k-1)\qquad}{\delta_{k-1}}}{\delta q^{(\beta)i}}\right]
\end{align*}
and, since

\begin{align*}
\frac{\overset{(k-1)\qquad}{\delta_{k-1}}\!\!(C_{k-\alpha})_i^j}{\delta q^{(\beta)l}}
= \frac{\overset{(k)}{\delta} (C_{k-\alpha})_i^j}{\delta q^{(\beta)i}}+(C_{k-\beta})_{i}^j\frac{\overset{(k)}{\delta} (C_{k-\alpha})_i^j}{\delta q^{(k)j}}
\end{align*}
and
\begin{align*}
\left[\frac{\overset{(k)}{\delta}}{\delta q^{(k)i}},\frac{\overset{(k-1)\qquad}{\delta_{k-1}}}{\delta q^{(\beta)i}}\right]
=0,
\end{align*}
we  conclude that

\begin{align*}
\left[\frac{\overset{(k)}{\delta}}{\delta q^{(\alpha)i}},\frac{\overset{(k-1)\qquad}{\delta_{k-1}}}{\delta q^{(\beta)i}}\right]=&
\left[\frac{\overset{(k-1)\qquad}{\delta_{k-1}}}{\delta q^{(\alpha)i}}-(C_{k-\alpha})_{i}^j\frac{\overset{(k)}{\delta}}{\delta q^{(k)j}},\frac{\overset{(k-1)\qquad}{\delta_{k-1}}}{\delta q^{(\beta)i}}\right]\\
=&\left[\frac{\overset{(k-1)\qquad}{\delta_{k-1}}}{\delta q^{(\alpha)i}},\frac{\overset{(k-1)\qquad}{\delta_{k-1}}}{\delta q^{(\beta)i}}\right]+
\frac{\overset{(k-1)\qquad}{\delta_{k-1}}\!\!(C_{k-\alpha})_i^j}{\delta q^{(\beta)l}} \frac{\delta}{\delta q^{(k)j}}- (C_{k-\alpha})_i^j\left[\frac{\overset{(k)}{\delta}}{\delta q^{(k)i}},\frac{\overset{(k-1)\qquad}{\delta_{k-1}}}{\delta q^{(\beta)i}}\right]\\
=&\left(\frac{\overset{(k)}{\delta} (C_{k-\alpha})_i^j}{\delta q^{(\beta)i}}+(C_{k-\beta})_{i}^j\frac{\overset{(k)}{\delta} (C_{k-\alpha})_i^j}{\delta q^{(k)j}}+
\frac{\overset{(k-1)\qquad}{\delta_{k-1}}\!\!(C_{k-\alpha})_i^j}{\delta q^{(\beta)l}}\right) \frac{\delta}{\delta q^{(k)j}}.
\end{align*}

On the other hand, we have
\begin{align*}
\left[\frac{\overset{(k)}{\delta}}{\delta q^{(\alpha)i}},\frac{\overset{(k)}{\delta}}{\delta q^{(k)j}}\right]=&\left[\frac{\overset{(k-1)\qquad}{\delta_{k-1}}}{\delta q^{(\alpha)i}}-(C_{k-\alpha})_{i}^j\frac{\overset{(k)}{\delta}}{\delta q^{(k)j}},\frac{\overset{(k)}{\delta}}{\delta q^{(k)j}}\right]\\
=&\left[\frac{\overset{(k-1)\qquad}{\delta_{k-1}}}{\delta q^{(\alpha)i}},\frac{\overset{(k)}{\delta}}{\delta q^{(k)j}}\right]+\frac{\overset{(k)}{\delta} (C_{k-\alpha})_{i}^j}{\delta q^{(k)j}}\frac{\overset{(k)}{\delta}}{\delta q^{(k)j}}\\
=&\frac{\overset{(k)}{\delta} (C_{k-\alpha})_{i}^j}{\delta q^{(k)j}}\frac{\overset{(k)}{\delta}}{\delta q^{(k)j}}.
\end{align*}
Hence,
\begin{align*}
\left[\frac{\overset{(k)}{\delta}}{\delta q^{(\alpha)i}},\frac{\overset{(k)}{\delta}}{\delta q^{(\beta)l}}\right]
=&\left(\frac{\overset{(k-1)}{\delta} (C_{k-\alpha})_i^j}{\delta q^{(\beta)l}}-\frac{\overset{(k-1)}{\delta} (C_{k-\beta})_l^j}{\delta q^{(\alpha)i}}+(C_{k-\alpha})_i^a\frac{\overset{(k)}{\delta} (C_{k-\beta})_l^j}{\delta q^{(k)a}}-(C_{k-\beta})_l^a\frac{\overset{(k)}{\delta} (C_{k-\alpha})_i^j}{\delta q^{(k)a}}\right) \frac{\overset{(k)}{\delta}}{\delta q^{(k)j}}\\&
+ \left[\frac{\overset{(k-1)\qquad}{\delta_{k-1}}}{\delta q^{(\alpha)i}},\frac{\overset{(k-1) \qquad}{\delta_{k-1}}}{\delta q^{(\beta)l}}\right],
\end{align*}
and the result follows immediately.
\qed \end{proof}

\begin{proposition}\label{prop: coordinteLiebrac}
Let $S_{(\alpha)i, (\beta)l}^{(\mu)j}$ be the $j$th-coordinate function  of $\overset{(k)}{K}_\mu\circ \left[\frac{\overset{(k)}{\delta}}{\delta q^{(\alpha)i}},\frac{\overset{(k)}{\delta}}{\delta q^{(\beta)l}}\right]$, for each $ \alpha,\beta, \mu=0, 1, \ldots, k$. Then
\begin{align*}
S_{(\alpha)i, (\beta)l}^{(k)j}=&\frac{\overset{(k-1)}{\delta} (C_{k-\alpha})_i^j}{\delta q^{(\beta)l}}-\frac{\overset{(k-1)}{\delta} (C_{k-\beta})_l^j}{\delta q^{(\alpha)i}}+(C_{k-\alpha})_i^a\frac{\overset{(k)}{\delta} (C_{k-\beta})_l^j}{\delta q^{(k)a}}-(C_{k-\beta})_l^a\frac{\overset{(k)}{\delta} (C_{k-\alpha})_i^j}{\delta q^{(k)a}}
+ \sum_{a=1}^{n}\sum_{\mu=0}^{k-1}S_{(\alpha)i, (\beta)l}^{(\mu)a}(C_{k-\mu})_a^j,
\end{align*}
for all $ \alpha,\beta=0, 1, \ldots, k-1$.
\end{proposition}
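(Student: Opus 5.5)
We start from Lemma~\ref{lem: Liebrac}, which already isolates the explicit coefficient terms and leaves only the correction
\[
\overset{(k)}{K}_k\circ\bigg[\frac{\overset{(k-1)\qquad}{\delta_{k-1}}}{\delta q^{(\alpha)i}},\frac{\overset{(k-1)\qquad}{\delta_{k-1}}}{\delta q^{(\beta)l}}\bigg].
\]
So the proof reduces to showing that this $K_k$-component equals $\sum_{a}\sum_{\mu=0}^{k-1}S^{(\mu)a}_{(\alpha)i,(\beta)l}(C_{k-\mu})^j_a$.

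\textbf{Step 1: the auxiliary fields are lifts of the order-$(k-1)$ adapted frame.} Use the dual relation from Proposition~\ref{prop: coordinate basis to adapted basis}. The field
\[
\frac{\overset{(k)}{\delta}}{\delta q^{(\alpha)i}}+(C_{k-\alpha})^j_i\frac{\partial}{\partial q^{(k)j}}
\]
is obtained from $\partial/\partial q^{(\alpha)i}$ by subtracting only the $C$-terms in the directions $\partial/\partial q^{(\mu)}$ with $\mu\le k-1$. By the tower property, the coefficients $(C_m)$ with $m\le k-1$ depend only on $q^{(0)},\dots,q^{(k-1)}$. Hence $\delta_{k-1}/\delta q^{(\alpha)i}$ has no $\partial/\partial q^{(k)}$ component. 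Its remaining components are exactly those of $\overset{(k-1)}{\delta}/\delta q^{(\alpha)i}$, with coefficients pulled back through $\overset{(k)}{\tau}_{k-1}$. It is therefore the trivial extension (coordinate-wise lift) of the order-$(k-1)$ adapted field.

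\textbf{Step 2: bracket of two such lifts.} Trivially extended fields on $\tau^{-1}(U)$ whose coefficients do not depend on $q^{(k)}$ have a bracket of the same type. It equals the trivial extension of
\[
\bigg[\frac{\overset{(k-1)}{\delta}}{\delta q^{(\alpha)i}},\frac{\overset{(k-1)}{\delta}}{\delta q^{(\beta)l}}\bigg]
=\sum_{\mu\le k-1}{}^{k-1}S^{(\mu)a}\,\frac{\overset{(k-1)}{\delta}}{\delta q^{(\mu)a}}.
\]
By Lemma~\ref{lemma: K mu}, ${}^{k-1}S^{(\mu)a}=S^{(\mu)a}$ for $\mu\le k-1$. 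Replacing each $\overset{(k-1)}{\delta}/\delta q^{(\mu)a}$ by its lift $\delta_{k-1}/\delta q^{(\mu)a}$, we get
\[
\bigg[\frac{\delta_{k-1}}{\delta q^{(\alpha)i}},\frac{\delta_{k-1}}{\delta q^{(\beta)l}}\bigg]
=\sum_{\mu=0}^{k-1}S^{(\mu)a}\frac{\delta_{k-1}}{\delta q^{(\mu)a}}.
\]

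\textbf{Step 3: apply $\overset{(k)}{K}_k$.} By the definition in Lemma~\ref{lem: Liebrac},
\[
\overset{(k)}{K}_k\Big(\frac{\delta_{k-1}}{\delta q^{(\mu)a}}\Big)=(C_{k-\mu})^j_a\,\partial_j,
\]
since $\overset{(k)}{K}_k$ kills $\overset{(k)}{\delta}/\delta q^{(\mu)a}$ for $\mu<k$ and sends $\overset{(k)}{\delta}/\delta q^{(k)j}$ to $\partial_j$. Substituting into Lemma~\ref{lem: Liebrac} gives the formula.

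\textbf{Main obstacle.} The key point is Step 1: checking that the $C$-coefficients in the expansion of $\overset{(k)}{\delta}/\delta q^{(\alpha)i}$ agree with those at order $k-1$ and are independent of $q^{(k)}$. I would verify this by induction from the recursion \eqref{eq: dual coefficients recursion}, together with the tower property $\partial(K_\alpha)/\partial q^{(\mu)}=0$ for $\mu>\alpha$.
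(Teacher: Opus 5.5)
Your proposal is correct and follows the paper's proof. You reduce via Lemma~\ref{lem: Liebrac}, recognize the auxiliary fields as the order-$(k-1)$ adapted frame, expand their bracket with the order-$(k-1)$ structure functions (which coincide with ${}^kS^{(\mu)a}_{(\alpha)i,(\beta)l}$ for $\mu\le k-1$ by Lemma~\ref{lemma: K mu}), and evaluate $\overset{(k)}{K}_k$ on each auxiliary field to get $(C_{k-\mu})^j_a\partial_j$. Your Step~2 observation, that the auxiliary fields have no $\partial/\partial q^{(k)j}$-components and $q^{(k)}$-independent coefficients, is a real gain in rigor: the paper only records that they are $\overset{(k)}{\tau}_{\!k-1}$-related to $\overset{(k-1)}{\delta}/\delta q^{(\alpha)i}$, and that alone fixes their bracket only modulo $\overset{(k)}{V}_k$.
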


\begin{proof}
Note that
 $\overset{(k)}{\tau}_{\!k-1\ast}(\frac{\overset{(k-1)\qquad}{\delta_{k-1}}}{\delta q^{(\alpha)i}})=\frac{\overset{(k-1)}{\delta}}{\delta q^{(\alpha)i}}.$ Then

\begin{align*}
\overset{(k)}{K}_k\circ \big[\frac{\overset{(k-1)\qquad}{\delta_{k-1}}}{\delta q^{(\alpha)i}},\frac{\overset{(k-1) \qquad}{\delta_{k-1}}}{\delta q^{(\beta)l}}\big]
=&\sum_{\mu=0}^{k-1}S_{(\alpha)i, (\beta)l}^{(\mu)j}\overset{(k)}{K}_k(\frac{\overset{(k-1)\qquad}{\delta_{k-1}}}{\delta q^{(\mu)j}})=\sum_{a=1}^{n}\sum_{\mu=0}^{k-1}S_{(\alpha)i, (\beta)l}^{(\mu)a}(C_{k-\mu})_a^j\partial_j.
\end{align*}
Now, if we combine this equation with Lemma \ref{lem: Liebrac}, the result follows.
\qed \end{proof}

\begin{proof}[Proof of Proposition~\ref{prop: Brackets hk-1}]
Using Proposition \ref{prop: coordinteLiebrac}, we get
           $$\overset{(k)}{K}_k\circ \left[\frac{\delta}{\delta q^{(k-1)i}},  \frac{\delta}{\delta q^{(k-1)l}}\right]=  \Bigg( \frac{\partial (C_{1})_i^j}{\partial q^{(k-1)l}}-\frac{\partial (C_{1})_l^j}{\partial q^{(k-1)i}}+(C_{1})_i^a\frac{\partial (C_{1})_l^j}{\partial q^{(k)a}}-(C_{1})_l^a\frac{\partial (C_{1})_i^j}{\partial q^{(k)a}}+ \sum_{a=1}^{n}\sum_{\mu=0}^{k-1}S_{(k-1)i, (k-1)l}^{(\mu)a}(C_{k-\mu})_a^j\Bigg)\partial_j$$
and, since $S_{(k-1)i, (k-1)l}^{(\mu)a}=0$, for $\mu=0,1, \ldots, k-1$, and $(C_{1})_i^j$ do not depend on $q^{(\lambda)a}$, for $\lambda>1$, the result is valid for $\alpha=1$ (for all $k\in \N$).
         Now, we suppose that the result is valid for $\alpha-1$ (for all $k\in \N$) and $\alpha<k-1$. Then, $\phantom{T}^kS_{(k-\alpha)i, (k-1)l}^{(\mu)a}=\phantom{T}^{k-1}S_{(k-(\alpha-1))i, (k-1)l}^{(\mu)a}=0$ and
           $$\overset{(k)}{K}_k\circ \left[\frac{\delta}{\delta q^{(k-\alpha)i}},  \frac{\delta}{\delta q^{(k-1)l}}\right]=  \Bigg( \frac{\partial (C_{\alpha})_i^j}{\partial q^{(k-1)l}}-\frac{\overset{(k-1)}{\delta} (C_{1})_l^j}{\delta q^{(k-\alpha)i}}+(C_{\alpha})_i^a\frac{\partial (C_{1})_l^j}{\partial q^{(k)a}}-(C_{1})_l^a\frac{\partial (C_{\alpha})_i^j}{\partial q^{(k)a}}+ \sum_{a=1}^{n}\sum_{\mu=0}^{k-1}S_{(k-\alpha)i, (k-1)l}^{(\mu)a}(C_{k-\mu})_a^j\Bigg)\partial_j$$
  is still zero, so the result is valid for $\alpha$. This means that the identity is hereditary  in $\alpha$ (up to $\alpha=k-2$). Therefore, we conclude the result is valid for $\alpha<k-1$.

  Finally, for $\alpha=k-1$, and because $\phantom{T}^kS_{(1)i, (k-1)l}^{(\mu)a}=\phantom{T}^{k-1}S_{(1)i, (k-1)l}^{(\mu)a}=0$, for $\mu=0,1, \ldots, k-1$, we have
 {\small \begin{align*}
           \overset{(k)}{K}_k\circ \left[\frac{\delta}{\delta q^{(1)i}},  \frac{\delta}{\delta q^{(k-1)l}}\right]=&
            \Bigg( \frac{\partial (C_{k-1})_i^j}{\partial q^{(k-1)l}}-\frac{\overset{(k-1)}{\delta} (C_{1})_l^j}{\delta q^{(1)i}}+(C_{k-1})_i^a\frac{\partial (C_{1})_l^j}{\partial q^{(k)a}}-(C_{1})_l^a\frac{\partial (C_{\alpha})_i^j}{\partial q^{(k)a}}+ \sum_{a=1}^{n}\sum_{\mu=0}^{k-1}S_{(1)i, (k-1)l}^{(\mu)a}(C_{k-\mu})_a^j\Bigg)\partial_j\\
            = &
             \Bigg( \frac{\partial (K_{k-1})_i^j}{\partial q^{(k-1)l}}-\frac{\overset{(k-1)}{\delta} (K_{1})_l^j}{\delta q^{(1)i}}\Bigg)\partial_j\\
           = & \Bigg( \frac{\partial (K_{k-1})_i^j}{\partial q^{(k-1)l}}-\frac{\partial (K_{1})_l^j}{\partial q^{(1)i}}\Bigg)\partial_j,
         \end{align*}
 }
         which is zero due to Equation  \ref{eq: partialK}.
\qed \end{proof}

\begin{proof}[Proof of Proposition~\ref{prop: Brackets h_0 hk-1}]
Due to  Lemma \ref{lemma: Brackets vk}, we have that $\phantom{T}^k\!S_{(0)i, (k-1)l}^{(k-1)j}= \phantom{T}^{k-1}\!\!S_{(0)i, (k-1)l}^{(k-1)j}=\Gamma_{il}^j$ and $\phantom{T}^k\!S_{(0)i, (k-1)l}^{(\mu)j}= \phantom{T}^{k-1}\!\!S_{(0)i, (k-1)l}^{(\mu)j}=0$, for $\mu=0, 1, \ldots, k-2$.

On the other hand, using Proposition \ref{prop: coordinteLiebrac} and Equations \eqref{eq: partialK}--\eqref{eq: partialK-1}, we get
\small{
\begin{align*}
\phantom{T}^kS_{(0)i, (k-1)l}^{(k-1)j}  %& =  \frac{\overset{(k-1)}{\delta}(C_{k})_i^j}{\delta q^{(k-1)l}}-\frac{\overset{(k-1)}{\delta} (C_{1})_l^j}{\delta q^{(0)i}}+(C_{k})_i^a\frac{\partial (C_{1})_l^j}{\partial q^{(k)a}}-(C_{1})_l^a\frac{\partial (C_{k})_i^j}{\partial q^{(k)a}}+ \sum_{a=1}^{n}\sum_{\mu=0}^{k-1}S_{(0)i, (k-1)l}^{(\mu)a}(C_{k-\mu})_a^j \\
                & =   \frac{\partial(C_{k})_i^j}{\partial q^{(k-1)l}}-\frac{\partial (C_{1})_l^j}{\partial q^{(0)i}}+(K_1)_i^a\frac{\partial (C_{1})_l^j}{\partial q^{(1)a}}-(C_{1})_l^a\frac{\partial (C_{k})_i^j}{\partial q^{(k)a}}+ S_{(0)i, (k-1)l}^{(k-1)a}(C_{1})_a^j \\
                & =  -q^{(1)a} dq^j\left(\nabla_{\partial_i} \nabla_{\partial_a} \partial_l\right) +\frac 1 kq^{(1)a} dq^j\left(\nabla_{\partial_l} \nabla_{\partial_a} \partial_i+(k-1)\nabla_{\partial_a} \nabla_{\partial_l} \partial_i\right)
                \\
                & = \frac 1 kq^{(1)a} dq^j\left(\nabla_{\partial_l} \nabla_{\partial_i} \partial_a -\nabla_{\partial_i} \nabla_{\partial_l} \partial_a\right) +\frac{k-1} kq^{(1)a} dq^j\left(\nabla_{\partial_a} \nabla_{\partial_i} \partial_l-\nabla_{\partial_i} \nabla_{\partial_a} \partial_l\right)
                \\
                & = -\frac 1 kq^{(1)a}\left(R_{ila}^j+(k-1)R_{ial}^j\right).
              \end{align*}
              }
\qed \end{proof}

\section{Koszul computations for the $k$-Sasaki metric}\label{app:koszul-computations}
\subsection{Metric reductions for the $k$-Sasaki metric}\label{app:sasaki-koszul-reductions}

This appendix contains the general Koszul-formula reductions used in Section 4.

\begin{proof}[Proof of Lemma \ref{lem: appKoszul}]
The result follows from the metric compatibility of the Levi--Civita connection on $M$, together with Lemma \ref{lemma: prop} and the identity
\[
        \llangle Y^{\overset{(k)}{h}_\beta},Z^{\overset{(k)}{h}_\mu}\rrangle
        =\delta_{\beta\mu}\langle Y,Z\rangle\circ\tau_k.
\]
Indeed, only the $\overset{(k)}{h}_0$-lift differentiates functions pulled back from $M$ non-trivially, and so
\[
        X^{\overset{(k)}{h}_\alpha}\big(\llangle Y^{\overset{(k)}{h}_\beta},Z^{\overset{(k)}{h}_\mu}\rrangle\big)
        =\delta_{\alpha0}\delta_{\beta\mu}X\langle Y,Z\rangle\circ\tau_k.
\]
Metric compatibility on $M$ gives the stated formula.
\qed \end{proof}

\begin{proof}[Proof of Lemma \ref{lemma: nablahh}]
To prove the first $k-1$ identities, it is enough to show that
\begin{equation}\label{eq:connk-app}
        \llangle \overset{(k)}\nabla_{X^{\overset{(k)}{h}_\alpha}}Y^{\overset{(k)}{h}_\beta},Z^{\overset{(k)}{h}_\mu}\rrangle_k
        =\llangle \overset{(k-1)}\nabla_{X^{\overset{(k-1)}{h}_\alpha}}Y^{\overset{(k-1)}{h}_\beta},Z^{\overset{(k-1)}{h}_\mu}\rrangle_{k-1},
\end{equation}
for all $X,Y,Z\in\mathfrak X(M)$ and $\alpha,\beta,\mu=0,1,\ldots,k-1$.  This follows by applying the Koszul formula \eqref{eq: Koszul}, Lemma \ref{lem: appKoszul}, and the compatibility of the lift brackets under $\overset{(k)}{\tau}_{\!k-1}$.

For the last identity, let $W$ be $k$-horizontal.  By the Koszul formula, Lemma \ref{lem: appKoszul}, and the bracket formulas involving $\overset{(k)}{v}_k$-lifts,
\[
        \llangle\overset{(k)}\nabla_WW,Z^{\overset{(k)}{v}_k}\rrangle_k=0,
        \qquad Z\in\mathfrak X(M).
\]
Thus
\begin{equation}\label{eq: conKk-app}
        \overset{(k)}{K}_k\circ\left(\overset{(k)}\nabla_WW\right)=0.
\end{equation}
Polarizing this identity with $W=X^{\overset{(k)}{h}_\alpha}+Y^{\overset{(k)}{h}_\beta}$ gives
\[
        2\overset{(k)}{K}_k\circ\left(\overset{(k)}\nabla_{X^{\overset{(k)}{h}_\alpha}}Y^{\overset{(k)}{h}_\beta}\right)
        =\overset{(k)}{K}_k\circ\left([X^{\overset{(k)}{h}_\alpha},Y^{\overset{(k)}{h}_\beta}]\right),
\]
which proves the claim.
\qed \end{proof}

\begin{proof}[Proof of Proposition \ref{prop: Levi--Civita kSasaki}]
Let $X,Y,Z\in\mathfrak X(M)$.  Using the Koszul formula and the bracket identities involving $\overset{(k)}{v}_k$-lifts, one obtains
\[
        \llangle \overset{(k)}\nabla_{X^{\overset{(k)}{h}_\alpha}}Y^{\overset{(k)}{v}_k},Z^{\overset{(k)}{v}_k}\rrangle=0,
        \qquad \alpha=1,\ldots,k-1,
\]
and
\begin{align*}
        2\llangle \overset{(k)}\nabla_{X^{\overset{(k)}{h}_0}}Y^{\overset{(k)}{v}_k},Z^{\overset{(k)}{v}_k}\rrangle
        &=X^{\overset{(k)}{h}_0}\llangle Y^{\overset{(k)}{v}_k},Z^{\overset{(k)}{v}_k}\rrangle
        +\llangle[X^{\overset{(k)}{h}_0},Y^{\overset{(k)}{v}_k}],Z^{\overset{(k)}{v}_k}\rrangle
        -\llangle[X^{\overset{(k)}{h}_0},Z^{\overset{(k)}{v}_k}],Y^{\overset{(k)}{v}_k}\rrangle\\
        &=2\langle\nabla_XY,Z\rangle.
\end{align*}
This gives the $k$-vertical components.  Finally, applying the same formula with $Z^{\overset{(k)}{h}_\mu}$, $\mu=0,\ldots,k-1$, gives
\[
        \llangle \overset{(k)}\nabla_{X^{\overset{(k)}{v}_k}}Y^{\overset{(k)}{v}_k},Z^{\overset{(k)}{h}_\mu}\rrangle=0,
\]
which yields the remaining components.
\qed \end{proof}

\subsection{Third-order Levi--Civita computations}\label{app:sasaki-third-order-lc}

\begin{proof}[Proof of Theorem \ref{teo: Levi--Civita Sasaki 3}]
The proof is obtained by the same method as Theorem \ref{teo: Levi--Civita Sasaki 2}.  Lemma \ref{lemma: nablahh} gives the components involving only $\overset{(3)}{h}_\alpha$-lifts from the third-order bracket formulas in Theorem \ref{thm: liebrac3}.  Proposition \ref{prop: Levi--Civita kSasaki} gives the $\overset{(3)}{v}_3$-components of the derivatives involving one $3$-vertical lift.  It remains only to compute the lower components of these latter derivatives.

For example, the $\overset{(3)}{h}_0$-component of $\overset{(3)}\nabla_{X^{\overset{(3)}{h}_0}}Y^{\overset{(3)}{v}_3}$ is obtained by pairing against $Z^{\overset{(3)}{h}_0}$:
\begin{align*}
        \llangle \overset{(3)}\nabla_{X^{\overset{(3)}{h}_0}}Y^{\overset{(3)}{v}_3},Z^{\overset{(3)}{h}_0}\rrangle
        &=-\llangle Y^{\overset{(3)}{v}_3},\overset{(3)}\nabla_{X^{\overset{(3)}{h}_0}}Z^{\overset{(3)}{h}_0}\rrangle\\
        &=\left\langle Y,
        \frac1{12}R(X,Z)u^{(3)}
        +\frac16(\nabla_{u^{(1)}}R)(X,Z)u^{(2)}
        -\frac16(\nabla_{u^{(2)}}R)(X,Z)u^{(1)}\right.\\
        &\qquad\left.
        +\frac14(\nabla_{u^{(1)}}^2R)(X,Z)u^{(1)}
        \right\rangle\\
        &=\left\langle
        \frac1{12}R(u^{(3)},Y)X
        +\frac16(\nabla_{u^{(1)}}R)(u^{(2)},Y)X
        -\frac16(\nabla_{u^{(2)}}R)(u^{(1)},Y)X\right.\\
        &\qquad\left.
        +\frac14(\nabla_{u^{(1)}}^2R)(u^{(1)},Y)X,
        Z\right\rangle.
\end{align*}
Thus
\[
        \tau_{3*}\left(\overset{(3)}\nabla_{X^{\overset{(3)}{h}_0}}Y^{\overset{(3)}{v}_3}\right)
        =\frac1{12}R(u^{(3)},Y)X
        +\frac16(\nabla_{u^{(1)}}R)(u^{(2)},Y)X
        -\frac16(\nabla_{u^{(2)}}R)(u^{(1)},Y)X
        +\frac14(\nabla_{u^{(1)}}^2R)(u^{(1)},Y)X.
\]
The remaining components are obtained analogously by applying the Koszul formula against $\overset{(3)}{h}_0$-, $\overset{(3)}{h}_1$-, $\overset{(3)}{h}_2$-, and $\overset{(3)}{v}_3$-lifts and using the bracket formulas of Section 3.  Combining all components gives the theorem.
\qed \end{proof}

\section{Geodesic computations}\label{app:geodesic-computations}

\subsection{Third-order geodesic computations}\label{app:sasaki-third-order-geodesics}

\begin{proof}[Proof of Theorem \ref{teo: geod eq 3}]
Using Equation \eqref{eq: decomposition gamma T^3} and the torsion-free property of $\overset{(3)}\nabla$, the covariant acceleration is
\begin{align*}
        \overset{(3)}\nabla_{\dot\Gamma}\dot\Gamma
        &=\overset{(3)}\nabla_{\dot q^{\overset{(3)}{h}_0}}\dot q^{\overset{(3)}{h}_0}
        +2\overset{(3)}\nabla_{\dot q^{\overset{(3)}{h}_0}}(Y^{(1)})^{\overset{(3)}{h}_1}
        -[\dot q^{\overset{(3)}{h}_0},(Y^{(1)})^{\overset{(3)}{h}_1}]\\
        &\quad+2\overset{(3)}\nabla_{\dot q^{\overset{(3)}{h}_0}}(Y^{(2)})^{\overset{(3)}{h}_2}
        -[\dot q^{\overset{(3)}{h}_0},(Y^{(2)})^{\overset{(3)}{h}_2}]
        +2\overset{(3)}\nabla_{\dot q^{\overset{(3)}{h}_0}}(Y^{(3)})^{\overset{(3)}{v}_3}
        -[\dot q^{\overset{(3)}{h}_0},(Y^{(3)})^{\overset{(3)}{v}_3}]\\
        &\quad+\overset{(3)}\nabla_{(Y^{(1)})^{\overset{(3)}{h}_1}}(Y^{(1)})^{\overset{(3)}{h}_1}
        +2\overset{(3)}\nabla_{(Y^{(1)})^{\overset{(3)}{h}_1}}(Y^{(2)})^{\overset{(3)}{h}_2}
        -[(Y^{(1)})^{\overset{(3)}{h}_1},(Y^{(2)})^{\overset{(3)}{h}_2}]\\
        &\quad+2\overset{(3)}\nabla_{(Y^{(1)})^{\overset{(3)}{h}_1}}(Y^{(3)})^{\overset{(3)}{v}_3}
        -[(Y^{(1)})^{\overset{(3)}{h}_1},(Y^{(3)})^{\overset{(3)}{v}_3}]
        +2\overset{(3)}\nabla_{(Y^{(2)})^{\overset{(3)}{h}_2}}(Y^{(3)})^{\overset{(3)}{v}_3}
        -[(Y^{(2)})^{\overset{(3)}{h}_2},(Y^{(3)})^{\overset{(3)}{v}_3}].
\end{align*}
The terms involving $\overset{(3)}\nabla_{(Y^{(2)})^{\overset{(3)}{h}_2}}(Y^{(2)})^{\overset{(3)}{h}_2}$ and $\overset{(3)}\nabla_{(Y^{(3)})^{\overset{(3)}{v}_3}}(Y^{(3)})^{\overset{(3)}{v}_3}$ vanish by Theorem \ref{teo: Levi--Civita Sasaki 3}.  Applying Theorem \ref{teo: Levi--Civita Sasaki 3} and the bracket formulas from Section 3 to the remaining terms gives the four component equations displayed in Theorem \ref{teo: geod eq 3}.
\qed \end{proof}

% To print the credit authorship contribution details
%\printcredits

%% Loading bibliography style file

\bibliographystyle{abbrv}%{ieeetr}
\bibliography{bibliopaper1}

\begin{table*}[t]
\centering
\caption{Notation used throughout the paper.}
\label{tab:notation}
\renewcommand{\arraystretch}{1.18}
\begin{tabularx}{\textwidth}{>{\raggedright\arraybackslash}p{0.23\textwidth}
                            >{\raggedright\arraybackslash}X}
\toprule
Notation & Meaning \\
\midrule
\(T^{(k)}M\) & The \(k\)-th order tangent bundle of \(M\), whose elements are \(k\)-jets \(j^k_0(q)\) of curves \(q\) at \(0\). \\

\(\tau_k:T^{(k)}M\to M\) & The canonical projection onto the base point, given by \(\tau_k(j^k_0(q))=q(0)\). \\

\(\overset{(k)}{\tau}_\alpha:T^{(k)}M\to T^{(\alpha)}M\) & The canonical truncation map, given by \(\overset{(k)}{\tau}_\alpha(j^k_0(q))=j^\alpha_0(q)\), for \(0\leq \alpha\leq k\). \\

\((q^{(0)i},q^{(1)i},\ldots,q^{(k)i})\) & Induced coordinates on \(T^{(k)}M\), with
\(q^{(\alpha)i}=\displaystyle\frac{1}{\alpha!}\frac{d^\alpha q^i}{dt^\alpha}(0)\). \\

\(j^kq\) & The \(k\)-jet of a curve \(q\), given by \(t\mapsto j^k_0(q_t)\), where \(q_t(s)=q(t+s)\). \\

\(J\) & The canonical almost-tangent structure on \(T^{(k)}M\). In coordinates,
\[
J=\frac{\partial}{\partial q^{(1)i}}\otimes dq^{(0)i}
+\cdots+
\frac{\partial}{\partial q^{(k)i}}\otimes dq^{(k-1)i}.
\] \\

\(TM_\oplus^k\) & The Whitney sum of $k$ tangent bundles, i.e. \(TM_\oplus^k := \bigoplus_{i=1}^k TM\). \\

\(V_\alpha\) & The \(\alpha\)-vertical bundle, defined as \(V_\alpha=\operatorname{Im}(J^\alpha)=\ker(\tau^{\alpha-1}_{k*})\), for \(\alpha=1,\ldots,k\). \\

\(\overset{(k)}{K}=(\overset{(k)}{K}_1,\ldots,\overset{(k)}{K}_k)\) & A connection map on \(T^{(k)}M\). When compatible with the tower of projections, it is called a connection tower. \\

% \(\overset{(\alpha)}{K}_\mu\) & The \(\mu\)-th component of the connection map on \(T^{(\alpha)}M\). For a connection tower, these components are compatible under the truncation maps. \\

\(\overset{(k)}{H}_0\) & The horizontal bundle \(\ker(\overset{(k)}{K})\subset TT^{(k)}M\). \\

\(\overset{(k)}{H}_\alpha\) & The \(\alpha\)-th component of the associated multiconnection, defined by \(\overset{(k)}{H}_\alpha=J^\alpha(\overset{(k)}{H}_0)\), for \(\alpha=0,\ldots,k-1\). \\

\(\overset{(k)}{N}_\alpha\) & The \(\alpha\)-horizontal bundle
$\overset{(k)}{N}_\alpha=\overset{(k)}{H}_0\oplus\cdots\oplus \overset{(k)}{H}_{\alpha-1}.$ \\

\(\overset{(k)}{h}_\alpha\), \(\overset{(k)}{v}_\alpha\), \(\overset{(k)}{n}_\alpha\)  & Projection operators onto \(\overset{(k)}{H}_\alpha\), \(\overset{(k)}{V}_\alpha\), and \(\overset{(k)}{N}_\alpha\), respectively. \\

\(\overset{(k)}\delta q^{(\alpha)i}\) & The adapted coframe determined by a connection map, defined such that
\(
\overset{(k)}{K}_\alpha=\overset{(k)}\delta q^{(\alpha)i}\otimes\partial_i .
\) \\

\(\displaystyle \frac{\overset{(k)}\delta}{\delta q^{(\alpha)i}}\) & The adapted frame dual to \(\overset{(k)}\delta q^{(\alpha)i}\). \\

\(X^{\overset{(k)}{h}\alpha}\) & The $\overset{(k)}{h}_\alpha$-lift of a vector field \(X\in\mathfrak X(M)\) to \(T^{(k)}M\), characterized by
\(\overset{(k)}{K}_\alpha(X^{\overset{(k)}{h}\alpha})=X\circ\tau_k\) and \(\overset{(k)}{K}_\beta(X^{h^k_\alpha})=0\) for \(\beta\neq\alpha\). \\

\(X^{\overset{(k)}{v}_k}\) & The \(k\)-vertical lift of \(X\) to \(T^{(k)}M\). This is also denoted by \(X^{\overset{(k)}{h}_k}\) when treating all lift components uniformly. \\

\(\overset{(k)}{d_T}\) & Tulczyjew's operator
\(
\overset{(k)}{d_T}:T^{(k)}M\to TT^{(k-1)}M\) defined by
$\displaystyle\overset{(k)}{d_T}(j^k_0q)
=
\frac{d}{dt}\Big|_{t=0}(j^{k-1}q)(t).$
\\

\(\overset{(k)}{F}:T^{(k)}M\to (TM)^k_\oplus\) & The diffeomorphism induced by a connection tower which pulls back the vector bundle structure of \((TM)_\oplus^k\) to \(T^{(k)}M\). \\

\(\nabla\) & The Levi--Civita connection of the Riemannian metric \(g\) on \(M\). \\

\(R\) & The curvature tensor of \(\nabla\), with the convention:
\(
R(X,Y)Z=\nabla_X\nabla_YZ-\nabla_Y\nabla_XZ-\nabla_{[X,Y]}Z.
\) \\

\(\overset{(k)}{g}\) & The \(k\)-Sasaki metric on \(T^{(k)}M\) induced by the adapted splitting and the base metric \(g\). \\

\(\overset{(k)}{\nabla}\) & The Levi--Civita connection of the \(k\)-Sasaki metric \(\overset{(k)}{g}\). \\

\bottomrule
\end{tabularx}
\end{table*}

%\bibliographystyle{model1-num-names}
%\bibliographystyle{cas-model2-names}

% Loading bibliography database
%\bibliography{cas-refs}

% Biography
%\bio{}
% Here goes the biography details.
%\endbio

%\bio{pic1}
% Here goes the biography details.
%\endbio

\end{document}